\theoremstyle{plain}
\newtheorem{theorem}{Theorem}[section]
\newtheorem{proposition}[theorem]{Proposition}
\newtheorem{lemma}[theorem]{Lemma}
\theoremstyle{definition}
\theoremstyle{remark}
\newtheorem{example}{Example}
\newcommand{\bbobdatapath}{} 
\providecommand{\bbobecdfcaptionsinglefunctionssingledim}[1]{
Empirical cumulative distribution of simulated (bootstrapped)
             runtimes, measured in number of objective function evaluations 
             divided by dimension (FEvals/DIM) in 
             dimension #1 and for those targets in
             $10^{[-8..2]}$ that have just not been reached by 
             the best algorithm from BBOB 2009 in a given budget of $k$ $\times$ DIM, with 
             $31$ different values of $k$ chosen 
             equidistant in logscale within the interval $\{0.5, \dots, 50\}$.
}
\providecommand{\bbobppfigslegend}[1]{
Expected running time (\ERT\ in number of $f$-evaluations
                        as $\log_{10}$ value) divided by dimension versus dimension. The target function value
                        is chosen such that the best algorithm from BBOB 2009 just failed to achieve
                        an \ERT\ of $10\times\DIM$. Different symbols correspond to different algorithms given in the legend of #1. Light symbols give the maximum number of function evaluations from the longest trial divided by dimension. Black stars indicate a statistically better result compared to all other algorithms with $p<0.01$ and Bonferroni correction number of dimensions (six).  
Legend: 
{\color{NavyBlue}$\circ$}: \algorithmA
, {\color{Magenta}$\diamondsuit$}: \algorithmB
, {\color{Orange}$\star$}: \algorithmC
, {\color{CornflowerBlue}$\triangledown$}: \algorithmD
, {\color{red}$\varhexagon$}: \algorithmE
, {\color{YellowGreen}$\triangle$}: \algorithmF
, {\color{cyan}$\pentagon$}: \algorithmG
, {\color{ForestGreen}$\downY$}: \algorithmH
, {\color{Lavender}$\Diamond$}: \algorithmI
, {\color{SkyBlue}$\triangleleft$}: \algorithmJ
}
\definecolor{NavyBlue}{HTML}{000080}
\definecolor{Magenta}{HTML}{FF00FF}
\definecolor{Orange}{HTML}{FFA500}
\definecolor{CornflowerBlue}{HTML}{6495ED}
\definecolor{YellowGreen}{HTML}{9ACD32}
\definecolor{Gray}{HTML}{BEBEBE}
\definecolor{Yellow}{HTML}{FFFF00}
\definecolor{GreenYellow}{HTML}{ADFF2F}
\definecolor{ForestGreen}{HTML}{228B22}
\definecolor{Lavender}{HTML}{FFC0CB}
\definecolor{SkyBlue}{HTML}{87CEEB}
\definecolor{NavyBlue}{HTML}{000080}
\definecolor{Goldenrod}{HTML}{DDF700}
\definecolor{VioletRed}{HTML}{D02090}
\definecolor{CornflowerBlue}{HTML}{6495ED}
\definecolor{LimeGreen}{HTML}{32CD32}
\providecommand{\pptablesfooter}{
\end{tabularx}
}
\providecommand{\pptablesheader}{
\begin{tabularx}{1.0\textwidth}{@{}c@{}|*{5}{@{}r@{}X@{}}|@{}r@{}@{}l@{}}
\#FEs/D & \multicolumn{2}{@{}c@{}}{0.5} & \multicolumn{2}{@{}c@{}}{1.2} & \multicolumn{2}{@{}c@{}}{3} & \multicolumn{2}{@{}c@{}}{10} & \multicolumn{2}{@{}c@{}}{50} & \multicolumn{2}{|@{}l@{}}{\#succ}\\\hline
}
\providecommand{\algsfolder}{EXPLO_GLOBA_MCS_h_NEWUO_lmm-C_SMAC-_fminc_et_al/}
\providecommand{\algorithmA}{EXPLO2 mul25par32lamLin}
\providecommand{\algorithmB}{GLOBAL pal noiseless}
\providecommand{\algorithmC}{MCS huyer noiseless}
\providecommand{\algorithmD}{NEWUOA ros noiseless}
\providecommand{\algorithmE}{lmm-CMA-ES auger noiseless}
\providecommand{\algorithmF}{SMAC-BBOB hutter noiseless}
\providecommand{\algorithmG}{fmincon pal noiseless}
\providecommand{\algorithmH}{DTS-CMA-ES Pitra}
\providecommand{\algorithmI}{CMA-ES-2019 Hansen}
\providecommand{\algorithmJ}{lq-CMA-ES Hansen}
\newcommand{\DIM}{\ensuremath{\mathrm{DIM}}}
\newcommand{\ERT}{\ensuremath{\mathrm{ERT}}}
\newcommand{\Df}{\ensuremath{\Delta f}}
\newcommand{\fopt}{\ensuremath{f_\mathrm{opt}}}
\newcommand{\ftarget}{\ensuremath{f_\mathrm{t}}}
\icmltitlerunning{Parallel black-box optimization of expensive high-dimensional multimodal functions via magnitude}
\begin{document}

\twocolumn[
\icmltitle{Parallel black-box optimization of expensive \\ high-dimensional multimodal functions via magnitude}



\icmlsetsymbol{equal}{*}

\begin{icmlauthorlist}
\icmlauthor{Steve Huntsman}{str}
\end{icmlauthorlist}

\icmlaffiliation{str}{Systems and Technology Research, Arlington, Virginia, USA}

\icmlcorrespondingauthor{Steve Huntsman}{steve.huntsman@str.us}

\icmlkeywords{magnitude,optimization}

\vskip 0.3in
]




\begin{abstract}
Building on the recently developed theory of magnitude, we introduce the optimization algorithm \texttt{EXPLO2} and carefully benchmark it. \texttt{EXPLO2} advances the state of the art for optimizing high-dimensional ($D \gtrapprox 40$) multimodal functions that are expensive to compute and for which derivatives are not available, such as arise in hyperparameter optimization or via simulations. 
\end{abstract}

\section{\label{sec:Introduction}Introduction}

It is a truism that machine learning problems are optimization problems \cite{sun2019survey}. Some of the most challenging problems in machine learning involve optimizing multimodal functions without information about derivatives on high-dimensional domains, with hyperparameter optimization \cite{loshchilov2016cma,koch2018autotune} an example \emph{par excellence}. More generally, functions whose values are obtained from complex simulations or experiments are frequently multimodal, and optimizing them is a foundational engineering problem. 

Most current optimization algorithms for this regime are not suited for parallel optimization, or have runtime that scales quadratically with dimension; we outline an approach that overcomes both problems simultaneously while outperforming large-scale algorithms (i.e., algorithms that use only sparse linear algebra on any matrices that scale with problem dimension) that do the same. Our \texttt{EXPLO2} algorithm
\footnote{
After initial writing, we discovered that \cite{clerc:hal-01930529,clerc2019iterative} discuss an optimization algorithm called \texttt{Explo2}, with exactly the same etymology of trading off between formalized notions of exploration and exploitation. We hope that context and the use of all caps are sufficient to distinguish these.
}
is a wrapper around an arbitrary large-scale optimizer and serves not least as an initial demonstration of ideas involving the recently developed notion of \emph{magnitude} \cite{leinster2017magnitude,leinster2021entropy} that are likely to be more applicable to a wide range of problems in optimization, sampling, machine learning, and other areas. 

The paper is structured as follows: \S \ref{sec:Weightings} introduces the basic concepts of weightings and magnitude that underlie \texttt{EXPLO2}; \S \ref{sec:Intuition} gives intuition; and \S \ref{sec:backgroundOptimization} gives background on black-box optimization and related work. We describe \texttt{EXPLO2} in \S \ref{sec:Algorithm} and benchmark it in \S \ref{sec:Benchmarking} before making closing remarks in \S \ref{sec:Remarks}. Appendices are after the references.

\section{\label{sec:Weightings}Weightings and magnitude}

For background on this section, see \S 6 of \cite{leinster2021entropy}.

A square nonnegative matrix $Z$ is a \emph{similarity matrix} iff its diagonal is strictly positive. For example, let $t \in (0,\infty)$ and let $d$ be a square matrix whose entries are in $[0,\infty]$ and satisfy the triangle inequality. Then $Z = \exp[-td]$ (where the $[ \cdot ]$ notation indicates a function applied to the individual entries of a matrix) is a similarity matrix. In this paper, all similarity matrices will be of this form, and $d$ will always be a distance matrix for a finite subset of Euclidean space.

We say that a column vector $w$ is a \emph{weighting} iff $Zw = 1$, where $1$ denotes a vector of all ones. The transpose of a weighting for $Z^T$ is called a \emph{coweighting}. If $Z$ has both a well-defined weighting $w$ and a well-defined coweighting $v$, then its \emph{magnitude} is $\text{Mag}(Z) := \sum_j w_j = \sum_k v_k$.

In the event that $Z = \exp[-td]$ and $d$ is the distance matrix of a finite subset of $\mathbb{R}^D$, $Z$ is positive definite \cite{steinwart2008support}, hence invertible, and so its weighting and magnitude are well-defined and unique. More generally, $\text{Mag}(Z) = \sum_{jk} (Z^{-1})_{jk}$ if $Z$ is invertible. The \emph{magnitude function} $\text{Mag}(t;d)$ is the map $t \mapsto \text{Mag}(\exp[-td])$.  

Magnitude is a very general scale-dependent notion of effective size (for finite sets, the effective number of points) that encompasses both cardinality and Euler characteristic, as well as encoding other rich geometrical data \cite{leinster2017magnitude}. Meanwhile, the components of a weighting meaningfully encode a notion of effective size per point (that can be negative ``just behind boundaries'') at a given scale \cite{willerton2009heuristic,meckes2015magnitude,bunch2020practical}. 

\begin{example}
\label{ex:3PointSpace}
Let $\{x_j\}_{j=1}^3 \subset \mathbb{R}^2$ have pairwise distances $d_{jk} := d(x_j,x_k)$ given by $d_{12} = d_{13} = 1 = d_{21} = d_{31}$ and $d_{23} = \delta = d_{32}$ with $\delta<2$. It turns out that 
$$w_1 = \frac{e^{(\delta+2)t}-2e^{(\delta+1)t}+e^{2t}}{e^{(\delta+2)t}-2e^{\delta t}+e^{2t}};$$
$$w_2 = w_3 = \frac{e^{(\delta+2)t}-e^{(\delta+1)t}}{e^{(\delta+2)t}-2e^{\delta t}+e^{2t}}.$$
This is shown in Figure \ref{fig:3pointSpace20210402} for $\delta = 10^{-3}$. At $t = 10^{-2}$, the effective sizes of $x_2$ and $x_3$ are $\approx 0.25$; that of $x_1$ is $\approx 0.5$, so the effective number of points is $\approx 1$. At $t = 10$, these effective sizes are respectively $\approx 0.5$ and $\approx 1$, so the effective number of points is $\approx 2$. Finally, at $t = 10^4$, the effective sizes are all $\approx 1$, so the effective number of points is $\approx 3$. 


\begin{figure}[h]
  \centering
  \includegraphics[trim = 40mm 105mm 40mm 105mm, clip, width=\columnwidth,keepaspectratio]{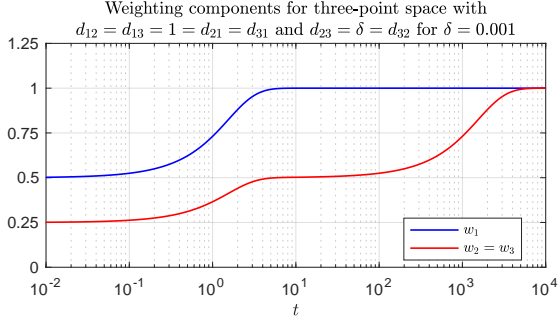}
  \caption{Weighting components for an ``isoceles'' metric space. 
  }
  \label{fig:3pointSpace20210402}
\end{figure}
\end{example}

\section{\label{sec:Intuition}Intuition}

For large enough values of the scale parameter $t$, the weighting of a finite metric space is proportional to the distribution on the space that maximizes an axiomatically supported notion of diversity \cite{leinster2016maximizing,leinster2021entropy}. Along with this special case, the more general intuition that components of a weighting measure an effective size per point suggests maximizing the differential magnitude due to a new point (which we compute in \S \ref{sec:DifferentialMagnitude}) as a mechanism for efficiently exploring an ambient space. 

This idea for ``exploration'' dovetails with another idea underlying ``exploitation.'' In $\mathbb{R}^D$, $Z = \exp[-td]$ is a \emph{radial basis function} (RBF) interpolation matrix \cite{buhmann2003radial} and the equation $Zw = 1$ amounts to defining a weighting $w$ as the vector whose components are coefficients for interpolating the unit function. That is, if $\{x_j\}_{j=1}^n$ are points in $\mathbb{R}^D$ with distance matrix $d$ and we have a weighting $w$ satisfying $\sum_k w_k \exp(-td_{jk}) = 1$, then in fact 
$u(x) := \sum_k w_k \exp(-t|x-x_k|) \approx 1$,
where $\approx$ indicates an optimal interpolation in the sense of a representer theorem \cite{scholkopf2001generalized}. In particular, the triangle inequality implies that $u(x_j + \delta x_j) \ge \exp(-t|\delta x_j|)$. If now also $f : \mathbb{R}^D \rightarrow \mathbb{R}$ and $y_j := f(x_j)$, then its RBF interpolation is
\begin{equation}
\label{eq:rbfInterpolation}
f(x) \approx y Z^{-1} \zeta(x)
\end{equation}
where $\zeta_k(x) := \exp(-t|x-x_k|)$ and we treat $y$ and $\zeta$ respectively as row and column vectors.

We seek to optimize a function by selecting new evaluation points in a way that progressively shifts from exploration (embodied by the differential magnitude due to a new point, for which see Proposition \ref{prop:magnitudePlusOneProp}) to exploitation (embodied by \eqref{eq:rbfInterpolation}, which we obtain at marginal cost). This shift is controlled by an interpretable regularization parameter, and the interpolations are designed to incorporate recent knowledge while requiring constant runtime, even as more points are successively evaluated. For details, see Algorithm \ref{alg:EXPLO2}.

\section{\label{sec:backgroundOptimization}Background on black-box optimization}

Unconstrained global optimization algorithms can be usefully categorized according to characteristics of the intended objective function. 
\footnote{
The no free lunch theorem \cite{wolpert1997no} requires that optimization algorithms be understood in the context of a specific set of problems they are designed to address.
}
For example, (semi-) differentiable functions are readily optimized using gradient descent or quasi-Newton methods such as \texttt{L-BFGS} \cite{liu1989limited}, 
whereas if we dispense with any regularity assumptions, a random search is as good an approach as any other. 

One intermediate regime of interest is where the objective function is structured and reasonably well behaved (e.g., continuous almost everywhere; Lipschitz continuous, etc.) but only accessible via oracle queries. This regime is referred to in the literature under the terms \emph{deriviative-free} or \emph{black-box optimization} \cite{rios2013derivative,audet2017derivative}. 
Meanwhile, in an ``expensive'' regime, the appropriate measure of performance is the current best function value for a given number of function evaluations. For example, the objective function might be defined in terms of a computer simulation that takes a significant amount of time to execute, or an experiment that must be performed. 
\footnote{
In the ``non-expensive'' regime, the appropriate measure of performance is the number of function evaluations required to achieve a given target.
}

At a high level of abstraction, global optimization algorithms for expensive black-box functions are about making tradeoffs between exploration of the solution space and exploitation of information that has already been acquired through the course of the algorithm's execution. A practical and reasonably complete taxonomy of useful techniques is i) metaheuristics such as \texttt{CMA-ES} \cite{li2018fast,varelas2018comparative}; 
\footnote{
\texttt{CMA-ES} is a \emph{de facto} standard algorithm for problems of up to tens of dimensions and function evaluation budgets as low as tens of thousands. It can be pushed into more demanding regimes, but has runtime quadratic in problem dimension \cite{li2018fast,varelas2018comparative} and usually algorithm variants are employed.
}
ii) deterministic algorithms such as \texttt{DIRECT} \cite{jones2021direct} or \texttt{MCS} \cite{huyer1999global}; 
and iii) \emph{surrogate or metamodel-assisted algorithms}. 
In dimension $\lessapprox 5$, metaheuristics and deterministic algorithms are broadly competitive with each other \cite{sergeyev2018efficiency}, but surrogates can yield substantial improvements 
\cite{haftka2016parallel,vu2017surrogate,stork2020open,xia2020gops}.

Surrogate-assisted algorithms can be broadly classified into statistically and geometrically-informed interpolation techniques. The former class of \emph{Bayesian optimization} \cite{frazier2018bayesian} is exemplified by Gaussian process regression (``kriging''); the latter class is exemplified by RBF interpolation.

In Bayesian optimization, a Gaussian process prior is placed on the objective, 
and after initialization points are selected for evaluation/sampling according to an \emph{acquisition function} that embodies an explore/exploit tradeoff. 
While the Bayesian optimization framework is flexible and theoretically appealing, it scales poorly for problems with more than a few tens of dimensions and execution budgets in the thousands. In this regime, statistics are dimensionally cursed, and more geometrically-oriented approaches are needed.
\footnote{
In dimension $\lessapprox 20$, mirroring the ideas of this paper by using an exponential kernel in Bayesian optimization for both surrogate construction and diversity optimization (the latter as an acquisition function) may be fruitful. However, since statistical approaches are intrinsically disadvantaged in the high-dimensional/low execution budget regime that interests us, we do not pursue this idea here.
}

Meanwhile, RBF surrogate techniques for global optimization were introduced in \cite{gutmann2001radial} and elaborated in \cite{bjorkman2000global}: here, after initialization, points are selected for evaluation according to the expected ``bumpiness'' of the resulting surrogate. Subsequent RBF surrogate techniques \cite{regis2005constrained,regis2007improved} performed exploration by considering distances from previously evaluated points. 
These RBF surrogate techniques all cycle through a range of distance lower bounds to make exploration/exploitation tradeoffs.

\subsection{\label{sec:related}Related work}

Our approach is fairly close to a hybrid of \cite{regis2005constrained} and \cite{ulrich2011maximizing}. The underlying motivation is that the principled constructs for diversity optimization (``exploration'') and RBF interpolation (to facilitate surrogate ``exploitation'')
that these approaches respectively build on are actually the same provided only that we use an exponential kernel for the former. Because our notion of diversity optimization is nonlocal, and our unconventional choice of kernel acts as a ``gentle funnel,'' it is natural to expect that our approach is well-suited to optimizing functions with global structure, as well as multimodal functions, and particularly those functions exhibiting both characteristics.

To keep the runtime per timestep to an acceptably low constant, we borrow an idea of \cite{booker1999rigorous} to use a ``balanced'' set of points that includes points with the largest errors for the previous surrogate as well as points with the current most optimal values. For the sake of simplicity, the balancing between these two subsets is determined by the regularizer. Meanwhile, we follow the advice of \cite{villanueva2013locating} for expensive problems by using multiple starting points in conjunction with surrogates.

Work that is only tangentially related but should be mentioned here applies magnitude to primitives in machine learning such as geometry-aware information theory \cite{posada2020gait} and boundary detection \cite{bunch2020practical}. These and the present paper indicate that magnitude is fertile ground for growing new ideas in machine learning.

\section{\label{sec:Algorithm}Algorithm}

The ``explore/exploit'' (\texttt{EXPLO2}) algorithm described in Algorithm \ref{alg:EXPLO2} tries to minimize $f : \mathbb{R}^D \rightarrow \mathbb{R}$ on $B:= \prod_{j=1}^D [\ell_j,u_j]$ with a budget of $N$ function evaluations. \texttt{EXPLO2} is basically a wrapper around another optimization algorithm (e.g., MATLAB's default \texttt{fmincon}, which is a large-scale algorithm) that acts on surrogates that gradually shift the balance of a trade between 
\begin{itemize}
	\item[i)] exploration, as measured by the differential magnitude of a new point at which to evaluate $f$ relative to the set of points at which $f$ has already been evaluated, and 
	\item[ii)] exploitation, as measured by an exponential RBF interpolation of $f$ at a mix of points where a previous interpolation had a) the largest errors and b) the most optimal function values. 
\end{itemize}
Both this mix of interpolation points and the tradeoff between exploration and exploitation are controlled by a single regularization term $\lambda : [0,1] \rightarrow \mathbb{R}$, nominally decreasing from $1$ to $0$, e.g. our default choice $\lambda(\tau) = 1-\tau$.
\footnote{Our experiments suggest that the details of $\lambda$ do not make much difference in practice, though it may be slightly more advantageous to use a ``sawtooth'' with ``teeth'' of decaying width.
}
The course of the algorithm's execution is shown for the two-dimensional Rastrigin function in Figure \ref{fig:2dRastriginSchematic}.

\begin{figure}[h]
  \centering
  \includegraphics[trim = 0mm 115mm 0mm 110mm, clip, width=\columnwidth,keepaspectratio]{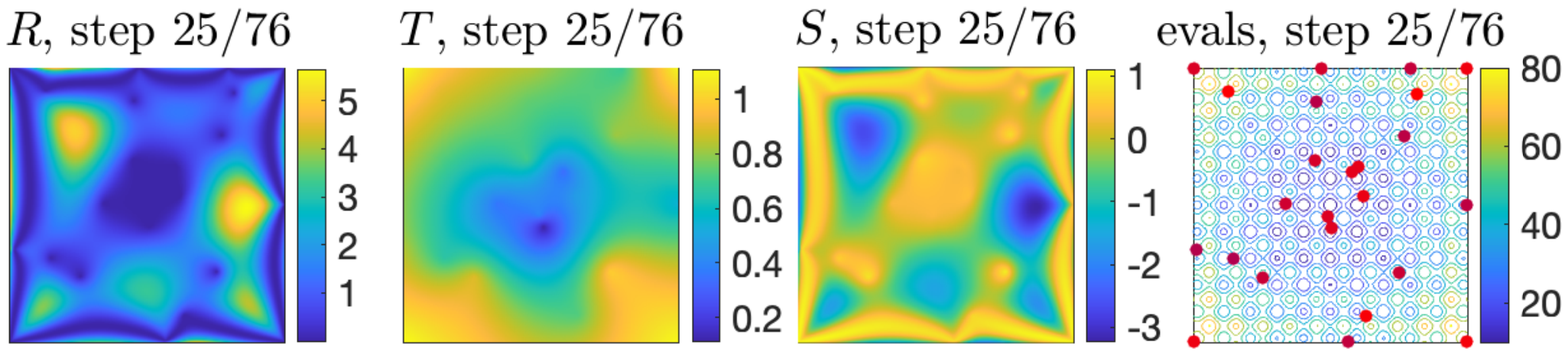} \\
  \includegraphics[trim = 0mm 115mm 0mm 110mm, clip, width=\columnwidth,keepaspectratio]{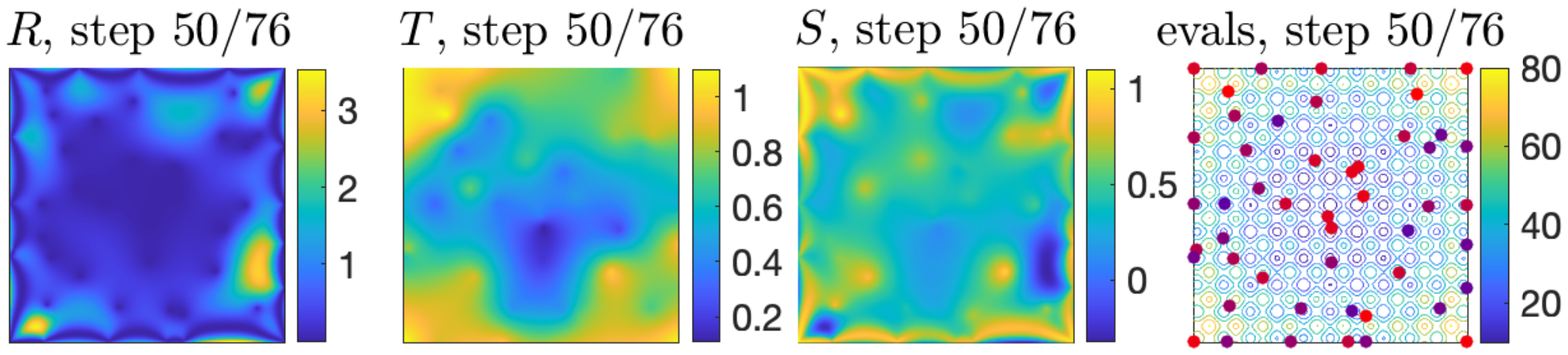} \\
  \includegraphics[trim = 0mm 115mm 0mm 110mm, clip, width=\columnwidth,keepaspectratio]{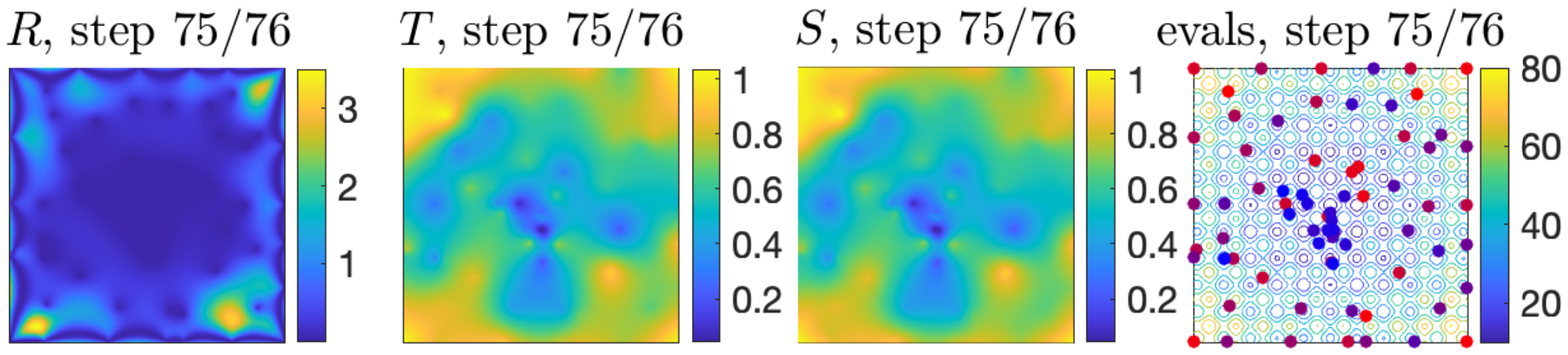}
  \caption{(Top left; center left; center right) Differential magnitude $R$; RBF interpolation $T$; surrogate $S$ after the 25th of 76 function evaluations for the two-dimensional Rastrigin function on $[-5.12,5.12]^2$. (Top right) First 25 evaluation points for \texttt{EXPLO2} overlaid on a contour plot of the objective. (Middle; bottom) As in the top row, but for 50th and 75th of 76 evaluations. Evaluation points ordering is indicated as a transition from {\color{red}red for old points} to {\color{blue}blue for new points}.
  }
  \label{fig:2dRastriginSchematic}
\end{figure}

The rationale behind an exponential RBF interpolation is simply that this is computed anyway for exploration. That is, the interpolation coefficients are of the form $yZ^{-1}$, where $y$ and $Z = \exp[-td]$ respectively indicate function values and the similarity matrix of evaluation points (see \eqref{eq:rbfInterpolation} and Algorithm \ref{alg:EXPLO2}). Meanwhile, the limit $t \downarrow 0$ corresponds to an interpolation using very shallow decaying exponentials, which simultaneously avoids degeneracies that arise for $t$ bounded away from zero, mitigates nondifferentiable behavior at interpolation points, and helps make (surrogate) global optima more easily accessible to the internal optimizer. 

\definecolor{shadecolor}{RGB}{200,200,200}
\renewcommand{\algorithmicrequire}{\textbf{Input:}}
\renewcommand{\algorithmicensure}{\textbf{Output:}}
\begin{algorithm}
  \caption{Explore/exploit (\texttt{EXPLO2}) optimizer}
  \label{alg:EXPLO2}
  \begin{algorithmic}[1]
    \REQUIRE Function $f$, lower bounds $\ell$, upper bounds $u$, evaluation budget $N$, {\bf optional parameters:} number $n_\parallel$ of parallel function evaluations (default: $n_\parallel = 1$), number $n_\sigma$ of downsampling points (default: $n_\sigma = 100$), number $n_\leftrightarrow$ of points to estimate exploration range (default: $n_\leftrightarrow = 100$), number $n_\downarrow$ of tries for each surrogate optimization from uniformly random initial points (default: $n_\downarrow = 3$), explore/exploit regularizer $\lambda$ (default: $\lambda(\tau) = 1-\tau$), and initialization strategy (default: uniformly random)    
    \STATE $t \leftarrow \sqrt{\varepsilon}$ \hfill \emph{// Machine epsilon (see line 13)} \label{line:epsilon}	
    \STATE Select $N_0 \leftarrow D+1$ initial points
    \STATE Evaluate $f$ on the $N_0$ initial points
    \STATE $n \leftarrow N_0+1$
    \WHILE{$n < N$}
      \IF{$n > n_\sigma$}
        \STATE Form $I_\sigma$ using $n_\rho \approx n_\sigma \min(1, \lambda(\frac{n}{N})/\lambda(\frac{1}{N}))$ points with greatest interpolation relative error and $n_\mu = n_\sigma -n_\rho$ points with least function values \hfill \label{line:downsampleIndices}
      \ELSE
        \STATE $I_\sigma \leftarrow \{1,2,\dots,n-1\}$
      \ENDIF
      \STATE $x_\sigma \leftarrow (x_i)_{i \in I_\sigma}$, $y_\sigma \leftarrow (y_i)_{i \in I_\sigma}$ \hfill \emph{// Downsample}\label{line:downsample}
      \STATE $d \leftarrow $ distance matrix for $x_\sigma$ \label{line:d}
      \STATE $Z \leftarrow \exp[-td]$; \hfill \emph{// Use $t \downarrow 0$ limit if convenient} \label{line:Z} 
      \STATE $w \leftarrow Z^{-1}1$ \label{line:w}
      \STATE $\zeta_{i'}(x) \leftarrow \exp(-td(x_{\sigma,i'},x))$ \hfill \emph{// see \eqref{eq:rbfInterpolation} and Prop. \ref{prop:magnitudePlusOneProp}} \label{line:zeta}
      \STATE $T(x) \leftarrow y_\sigma Z^{-1} \zeta(x)$ \hfill \emph{// RBF exploi\underline{T}ation}: see \eqref{eq:rbfInterpolation} \label{line:exploit}
      \FOR{$j$ from 1 to $n_\parallel$}
        \STATE $R \leftarrow \frac{(1-\zeta^T w)^2}{1-\zeta^T Z^{-1} \zeta}$ \hfill \emph{// explo\underline{R}ation: see Prop. \ref{prop:magnitudePlusOneProp}} \label{line:explore}
	\STATE $C \leftarrow \min \{2^D, n_\leftrightarrow\}$ corners of bounding box
        \STATE $R^\vee \leftarrow \max_{x \in C} R(x)$
        \STATE $S \leftarrow \frac{T}{\max y_\sigma - \min y_\sigma} - \lambda(\frac{n}{N}) \frac{R}{R^\vee}$ \hfill \emph{// Surrogate} 
        \label{line:surrogate}
        \FOR{$k$ from 1 to $n_\downarrow$}
          \STATE Minimize $S$ \hfill \emph{// Using, e.g., \texttt{fmincon}} \label{line:optimize}
          \STATE Keep result iff best so far in current loop
        \ENDFOR
      \STATE Adjoin best result from line 23 to $x_\sigma$ 
      \STATE Update $d$, $Z$, $w$ and $\zeta$ as in lines 12-15
      \ENDFOR
      \STATE Memorialize/evaluate in parallel the $n_\parallel$ new points
      \STATE Get relative errors of RBF interpolation for downsampling per lines 7, 11, and 16
      \STATE $n \leftarrow n+n_\parallel$
    \ENDWHILE
    \ENSURE $(x_i,y_i)_{i = 1}^N$
  \end{algorithmic}
\end{algorithm}

The definition of $R(x)$ in Algorithm \ref{alg:EXPLO2} arises from
\begin{proposition}
\label{prop:magnitudePlusOneProp}
Let $Z$ be positive definite and consider a positive definite matrix of the form $Z[\zeta] := \left ( \begin{smallmatrix} Z & \zeta \\ \zeta^T & 1 \end{smallmatrix} \right )$. Then
\end{proposition}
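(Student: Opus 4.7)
The proposition almost certainly claims
\[
\mathrm{Mag}(Z[\zeta]) \;=\; \mathrm{Mag}(Z) \;+\; \frac{(1-\zeta^T w)^2}{1-\zeta^T Z^{-1}\zeta},
\]
where $w=Z^{-1}\mathbf{1}$ is the weighting of $Z$, since the second term is exactly the quantity $R$ computed on line 18 of Algorithm \ref{alg:EXPLO2} and labeled as ``differential magnitude.''

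My plan is a direct block-matrix inversion. Let $s := 1-\zeta^T Z^{-1}\zeta$, which is the Schur complement of $Z$ in $Z[\zeta]$; positive definiteness of $Z[\zeta]$ guarantees $s>0$, so everything in sight is well-defined. The standard block-inverse identity then gives
\[
Z[\zeta]^{-1} \;=\; \begin{pmatrix} Z^{-1} + s^{-1} Z^{-1}\zeta\zeta^T Z^{-1} & -s^{-1} Z^{-1}\zeta \\ -s^{-1}\zeta^T Z^{-1} & s^{-1} \end{pmatrix}.
\]
Because $Z[\zeta]$ is invertible, its magnitude is the sum of all entries of this inverse (as noted right after the definition of magnitude in \S\ref{sec:Weightings}). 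Writing this as a block quadratic form against the all-ones vector $\bigl(\mathbf{1}^T,\,1\bigr)^T$ and using $Z^{-1}\mathbf{1} = w$ (so $\mathbf{1}^T Z^{-1} = w^T$ and $\mathrm{Mag}(Z)=\mathbf{1}^T w$), the four blocks contribute respectively
\[
\mathbf{1}^T w \;+\; s^{-1}(w^T\zeta)^2 \;-\; 2 s^{-1} w^T\zeta \;+\; s^{-1}.
\]
Collecting the last three terms yields $s^{-1}(1-\zeta^T w)^2$, giving the claimed identity.

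The only real obstacle is bookkeeping: making sure the Schur complement is taken of the correct block (here the $1\times 1$ block at the bottom-right has Schur complement $s$ inside $Z$, not the other way around, and one must pick the form of the block inverse that exposes $Z^{-1}$ rather than $Z[\zeta]/Z$) and that the symmetry $\zeta^T w = w^T\zeta = \mathbf{1}^T Z^{-1}\zeta$ is used consistently. Nothing deeper is required; in particular no new properties of magnitude beyond $\mathrm{Mag}(M)=\sum_{jk}(M^{-1})_{jk}$ and $w=Z^{-1}\mathbf{1}$ are needed.
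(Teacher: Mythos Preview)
Your proposal is correct and follows essentially the same route as the paper: both use the block-inverse formula via the Schur complement $Z[\zeta]/Z = 1-\zeta^T Z^{-1}\zeta$ and then read off the magnitude. The only cosmetic difference is that the paper first writes down the augmented weighting $w[\zeta]=Z[\zeta]^{-1}\mathbf{1}$ and sums its components, whereas you sum all entries of $Z[\zeta]^{-1}$ directly; the algebra is identical.
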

\begin{equation}
\text{Mag}(Z[\zeta]) = \text{Mag}(Z) + \frac{(1-\zeta^T w)^2}{1-\zeta^T Z^{-1} \zeta}. \qed
\end{equation}

\texttt{EXPLO2} ``just'' requires a positive definite metric space in the sense of \cite{meckes2013positive} 
\footnote{
In fact, an even weaker requirement suffices here: roughly, that the space is endowed with an extended quasipseudometric $d'$ such that the similarity matrices $Z = \exp[-td]$ are all invertible for some finite interval $[0,\varepsilon]$, where $d$ denotes a restriction of $d'$ to an arbitrary finite set. However, we are not aware of an example of such a space that is not positive definite. A good starting place to try to construct such a space may be \cite{gurvich2012characterizing}.
}
and a suitable solver for the surrogate functions. In particular, any subset of Euclidean space is positive definite, as is any ultrametric space or weighted tree. In the Euclidean setting, we can always use an extremum of a continuous RBF interpolation to approximate an extremum of the underlying black-box function: while there are no guarantees on the errors that result, in practice such errors can usually be reasonably expected to be tolerably small. \texttt{EXPLO2} is thus applicable \emph{de novo} to a wide range of discrete problems.

It is important to note that the runtime of \texttt{EXPLO2} is not directly affected by the dimension $D$. The impact of dimension is felt mainly through the inner optimizer, which for large-scale algorithms such as \texttt{fmincon} is typically linear. However, while we use the default value $n_\sigma = 100$ at all times, it is conceivable that one might want $n_\sigma = O(D)$, which would nominally introduce a cubic runtime dependence on $D$ (i.e., even worse than \texttt{CMA-ES}). To avoid this, we note that experiments on time series (described in \S \ref{sec:HighDimTimeSeries}) indicate that it is possible to get good approximations to weightings from nearest neighbors alone, even in problems with hundreds of thousands of dimensions. Along similar lines, an efficient approximate nearest neighbor algorithm \cite{datar2004locality,andoni2018approximate,li2020approximate} could be used to efficiently create a sparse similarity matrix $Z$, yielding a large-scale algorithm, and probably one with little performance penalty.

\section{\label{sec:Benchmarking}Benchmarking}

\subsection{\label{sec:Comparison}Algorithms for comparison}

Most black-box optimization algorithms are tailored to problems in dimension $\lessapprox 20$ and/or inexpensive functions. On one hand, many optimization algorithms suitable for higher-dimensional applications (including variants of the popular \texttt{CMA-ES} and \texttt{L-BFGS} algorithms) require thousands or tens of thousands of function evaluations per dimension to yield acceptable results \cite{varelas2018comparative,varelas2019benchmarking}. On the other hand, even state-of-the-art Bayesian optimization algorithms such as \texttt{SMAC-BBOB} \cite{hutter2013evaluation} or \cite{eriksson2019scalable} are generally not competitive on problems with more than a few tens of dimensions. In short, the list of candidate algorithms for optimizing high-dimensional expensive functions is not long, and it becomes shorter when considering multimodal functions. 

For low evaluation budgets in dimension $\lessapprox 40$, \texttt{NEWUOA} \cite{powell2006newuoa,ros2009benchmarking}, \texttt{MCS}, and \texttt{GLOBAL} \cite{csendes1988nonlinear,csendes2008global} are the best algorithms on the \texttt{BBOB} testbed \cite{hansen2010fun,hansen2010comparing}. Meanwhile, \cite{brockhoff2015comparison} points out that for expensive black-box optimization using surrogates in dimension $\lessapprox 40$,
\begin{quote}
the three algorithms \texttt{SMAC-BBOB} (for very low budgets below $\approx 3 \cdot D$ function evaluations), \texttt{NEWUOA} (for medium budgets), and \texttt{lmm-CMA-ES}
\footnote{
See \cite{kern2006local,auger2013benchmarking}; also \texttt{DTS-CMA-ES} \cite{pitra2016doubly} and \texttt{lq-CMA-ES} \cite{hansen2019global}.
}
(for relatively large budgets of $\ge 30 \cdot D$ evaluations) build a good portfolio that constructs the upper envelope over all compared algorithms for almost all problem groups.
\end{quote}

With this in mind, a representative set of algorithms to benchmark against for low-dimensional, expensive, multimodal functions is \texttt{NEWUOA}, \texttt{MCS}, \texttt{GLOBAL}, \texttt{SMAC-BBOB}, and \texttt{*-CMA-ES} with \texttt{*} $\in$ \{\texttt{lmm}, \texttt{DTS}, \texttt{lq}\}. Of these, only \texttt{NEWUOA} is well-suited to problems with hundreds of dimensions (indeed, most of these algorithms have not even been benchmarked for dimension $40$ in \texttt{COCO} \cite{hansen2021coco} as of this writing, and 
\begin{itemize}
	\item while \texttt{NEWUOA} scales to hundreds of dimensions, its time complexity ranges between quadratic and quintic in dimension--with quadratic or cubic the case in practice and as benchmarked \cite{ros2009benchmarking}; 
	\item \texttt{MCS} 
	is unsuitable for high-dimensional problems as it relies on partitioning the search space;
	\item \texttt{GLOBAL} is unsuitable for high-dimensional problems because it relies on clustering \cite{assent2012clustering};
	\item as a Bayesian optimization technique, \texttt{SMAC-BBOB} is not suited to high-dimensional problems;
	\item for ``noisy and multimodal functions, the speedup [of surrogate-assisted variants of \texttt{CMA-ES} relative to \texttt{CMA-ES}] \dots vanishes with increasing dimension'' \cite{kern2006local}; i.e., ``\texttt{lmm-} and \texttt{DTS-CMA-ES} become quickly computationally infeasible with increasing dimension, hence their main application domain is in moderate dimension'' \cite{hansen2019global}, where they embody the state of the art for relatively large budgets of $\ge 30 \cdot D$ evaluations \cite{bajer2019gaussian}.
\end{itemize}

Finally, since \texttt{EXPLO2} is (as benchmarked) a wrapper around \texttt{fmincon},
\footnote{Of course, other internal optimizers could be chosen, and this offers a way to either accentuate the strengths or mitigate the weaknesses of the search and surrogate aspects of \texttt{EXPLO2}.}
it also makes sense to compare the performance of these two algorithms.

\subsection{\label{sec:Results}Results}


Results from experiments according to \cite{hansen2016exp,hansen2016perfass} on the
\texttt{BBOB} benchmark functions given in \cite{hansen2010fun} are
presented in Figures \ref{fig:ECDFsingleOne20}-\ref{fig:ECDFsingleOne40last} (see also \S \ref{sec:ERTscaling}-\ref{sec:mixint}).
The experiments were performed and plots were produced with \texttt{COCO} \cite{hansen2021coco}, version 2.4.
\footnote{
\label{foot:plots}
These plots (using the \texttt{--expensive} option) have a rigid predetermined format. We also produced fixed-budget plots of cumulative best values using the same data (and substituting, e.g. \texttt{DTS-CMA-ES} for \texttt{lmm-CMA-ES}) via the web interface of \texttt{IOHanalyzer} \cite{doerr2018iohprofiler} at \url{https://iohanalyzer.liacs.nl/}, but these plots produced neither additional nor conflicting insights. (We provide an exhaustive set of these plots in \S \ref{sec:iohAnalyzer}.)
}

Because of the runtime overhead of \texttt{EXPLO2}, our experiments used a fixed budget of 25 function evaluations per dimension.
\footnote{
While \cite{tuvsar2017anytime} points out how an anytime benchmark of a budget-dependent algorithm such as \texttt{EXPLO2} can be performed with linear overhead, the cost-to-benefit ratio in our case was still prohibitive. In any event, this would not have made larger budgets any easier (or much more relevant) to obtain.
}
As \cite{hansen2016exp} points out, algorithms are only comparable up to the smallest budget given to any of them, corresponding in this case to $\log_{10} 25 \approx 1.40$ on the horizontal axis for Figures 
\ref{fig:ECDFsingleOne20} and \ref{fig:ECDFsingleOne40}. This corresponds in our case exactly to the location of crosses ($\times$), which indicate where bootstrapping of experimental data begins to estimate results for larger numbers of function evaluations. At the same time, we used $n_\parallel = 32$, and $\log_{10} (25/32) \approx -0.107$. Thus allowing for parallel resources (see \S \ref{sec:Parallel}) suggests comparing \texttt{EXPLO2} at the value $\log_{10} 25$ on the horizontal axes with the other algorithms at $\log_{10} (25/32)$, except for \texttt{*-CMA-ES}, which is parallelizable.
\footnote{
\label{foot:parallelCMAES}
While \texttt{*-CMA-ES} is easily parallelizable \cite{hansen2003reducing,khan2018parallel}, the quadratic scaling of non-large-scale variants with dimension creates a serious disadvantage for high-dimensional problems. In high dimensions it is appropriate to compare \texttt{EXPLO2} to large-scale variants of \texttt{CMA-ES} \cite{varelas2018comparative,varelas2019benchmarking}, and as we discuss below our (necessarily limited) experiments in this regard yielded good results.
}

\begin{figure}
\centering
\begin{tabular}{@{}c@{}c@{}c@{}c@{}}
\includegraphics[width=0.238\textwidth]{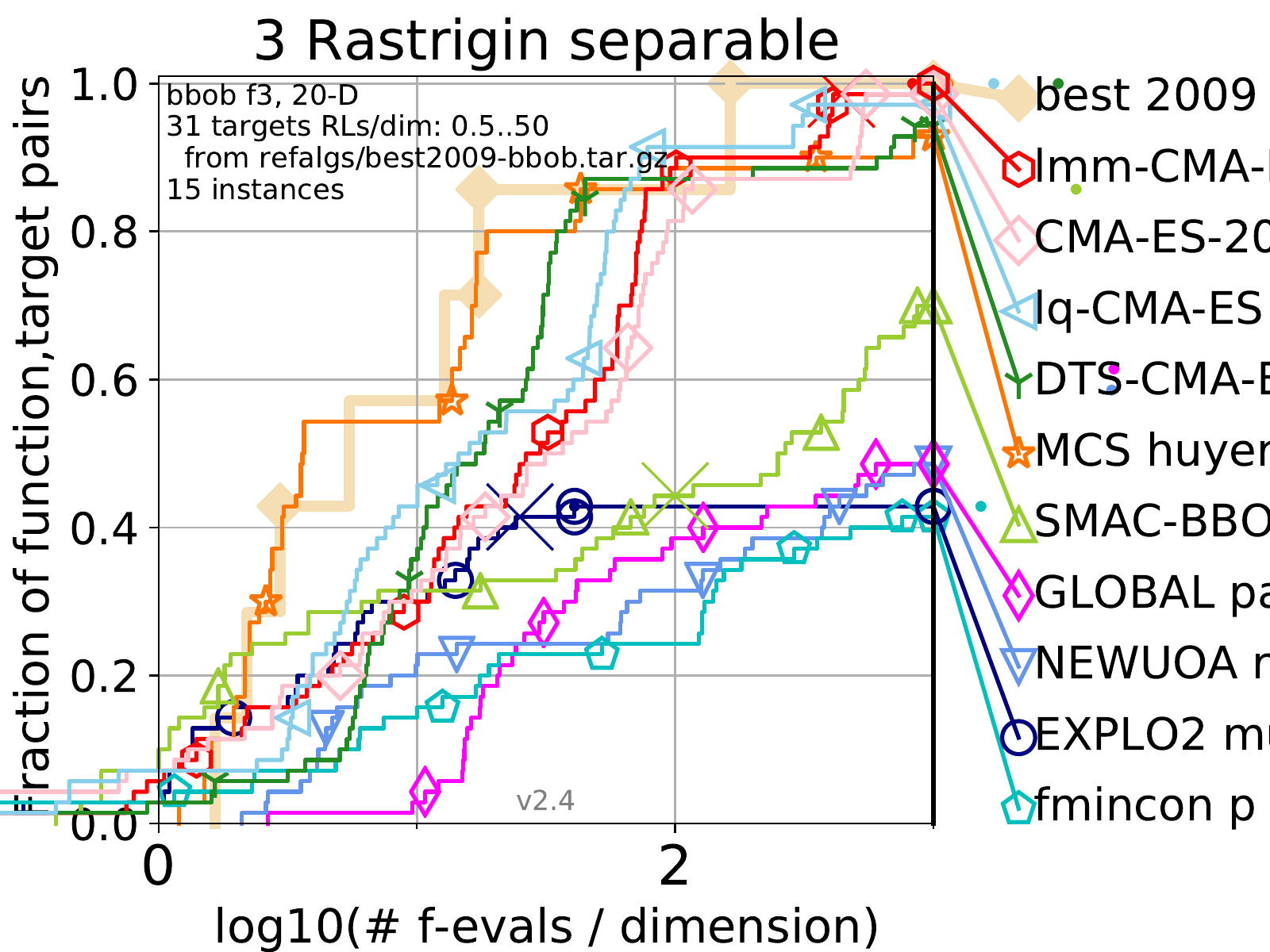}&
\includegraphics[width=0.238\textwidth]{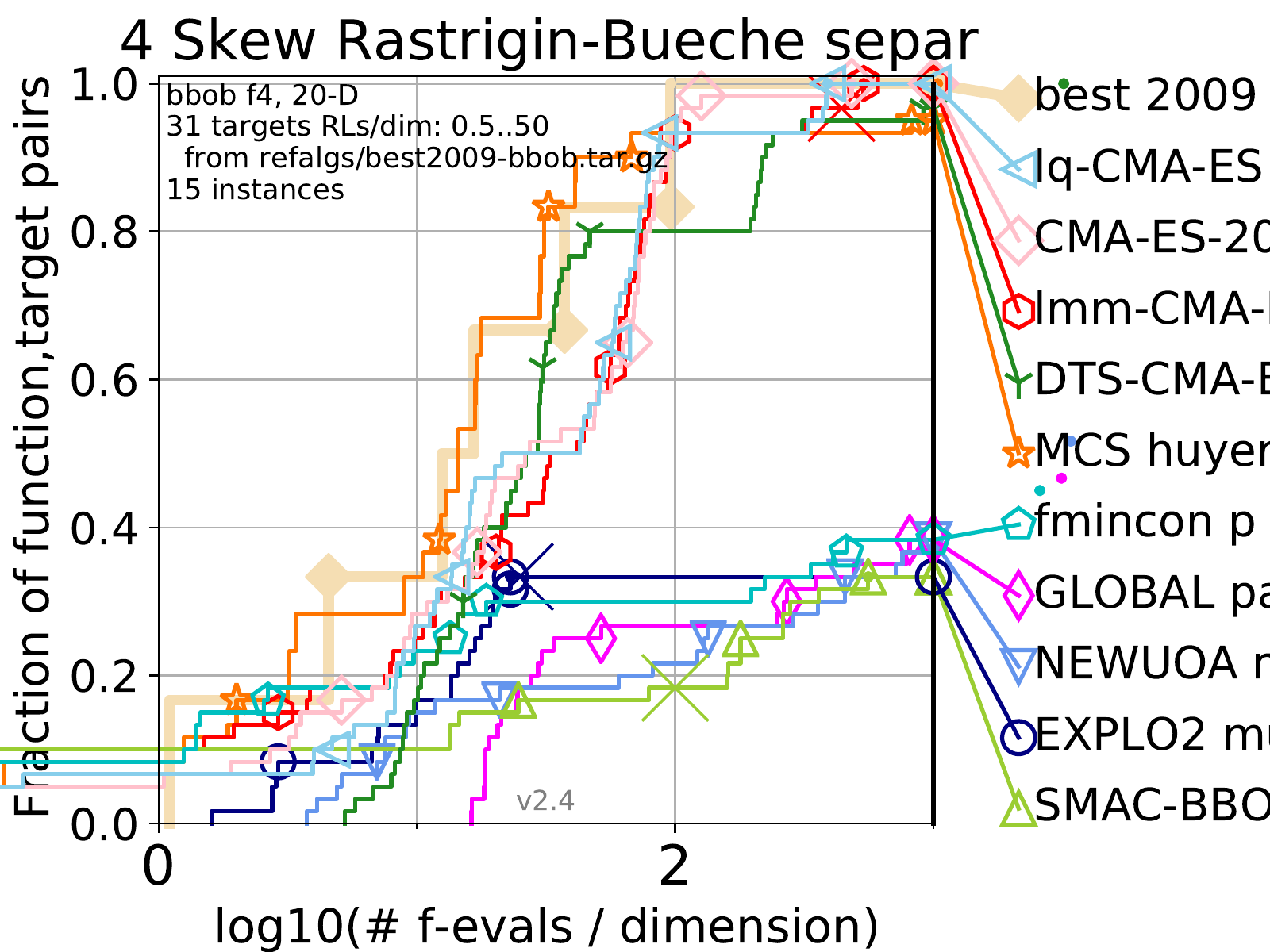}\\
\end{tabular}
\vspace*{-0.2cm}
\caption{
\label{fig:ECDFsingleOne20}
\bbobecdfcaptionsinglefunctionssingledim{20} {\bf Crosses ($\times$) indicate where experimental data ends and bootstrapping begins; algorithms are not comparable after this point.} \texttt{EXPLO2} used default options except for $n_\parallel = 32$. While \texttt{*-CMA-ES} outperforms \texttt{EXPLO2} here, in high dimensions only large-scale variants of \texttt{CMA-ES} are appropriate for most purposes, and \texttt{EXPLO2} outperforms the best of these on structured multimodal functions: see Figures \ref{fig:RastriginVDCMAESvsEXPLO220}-\ref{fig:GRF8F2VDCMAESvsEXPLO2320}.
}
\end{figure}

\begin{figure}
\centering
\begin{tabular}{@{}c@{}c@{}c@{}c@{}}
\includegraphics[width=0.238\textwidth]{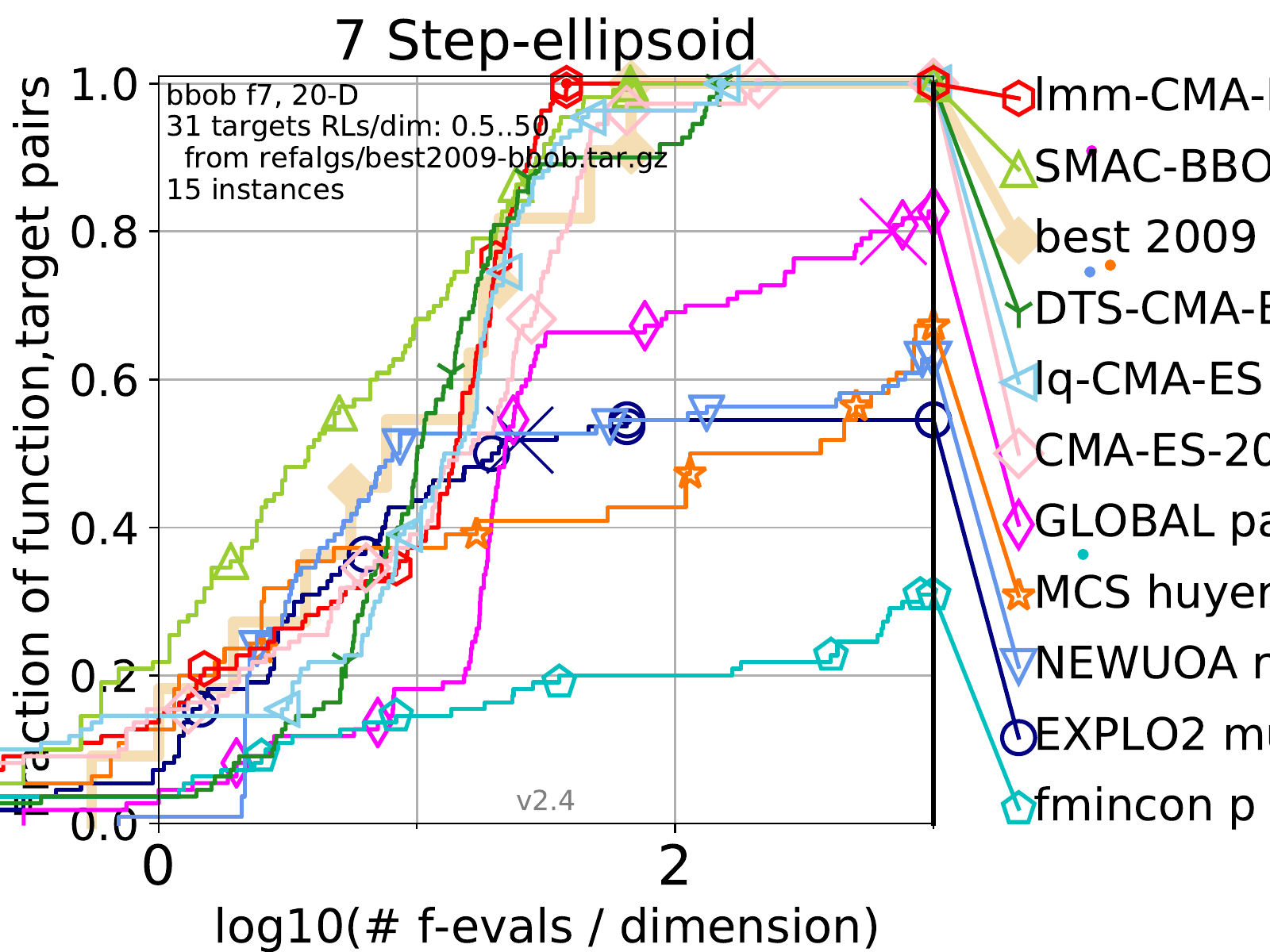}&
\includegraphics[width=0.238\textwidth]{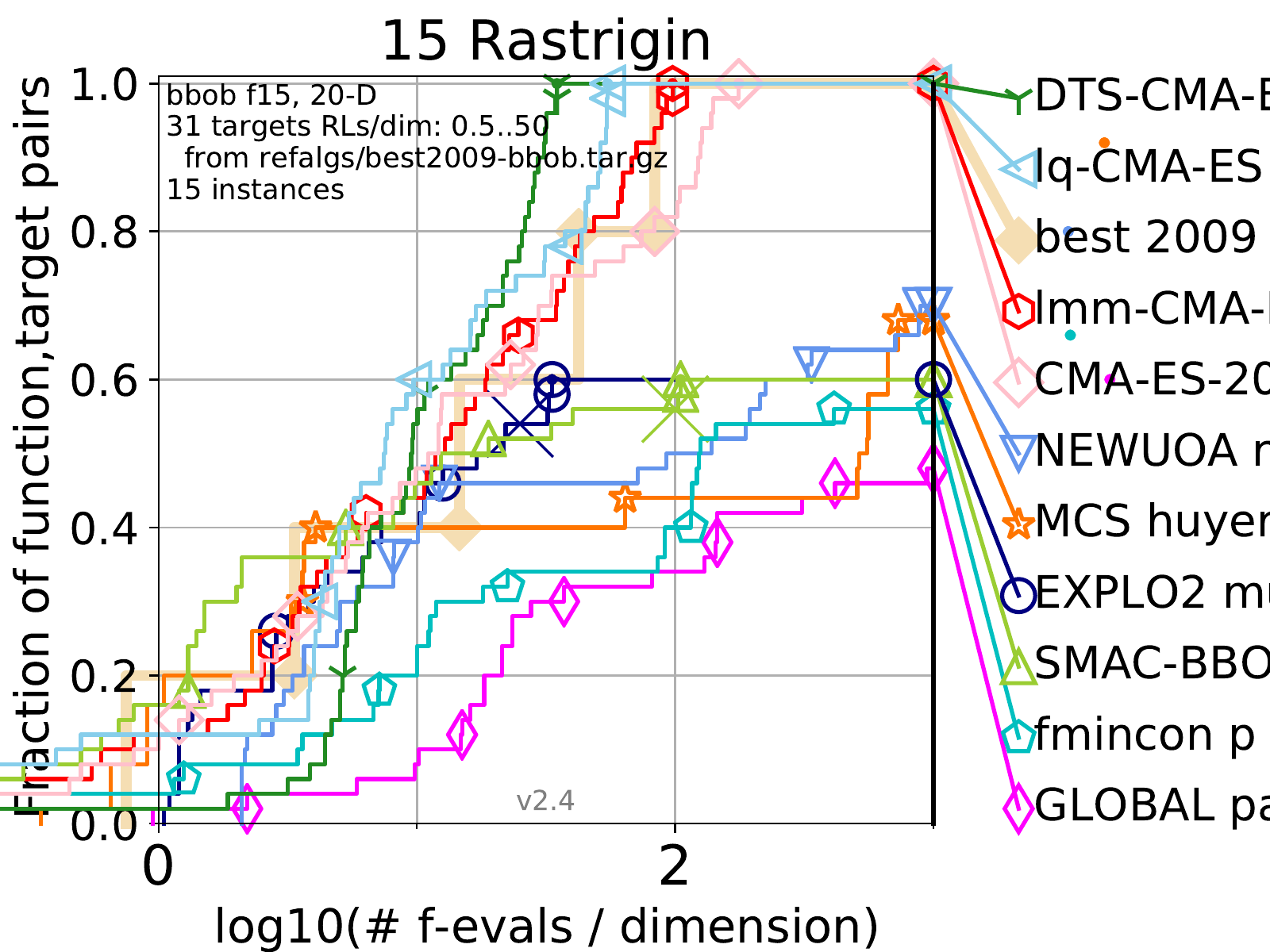}\\
\end{tabular}
\vspace*{-0.2cm}
\caption{
As in Figure \ref{fig:ECDFsingleOne20}, but for $f_{7}$ and $f_{15}$.
}
\end{figure}

\begin{figure}
\centering
\begin{tabular}{@{}c@{}c@{}c@{}c@{}}
\includegraphics[width=0.238\textwidth]{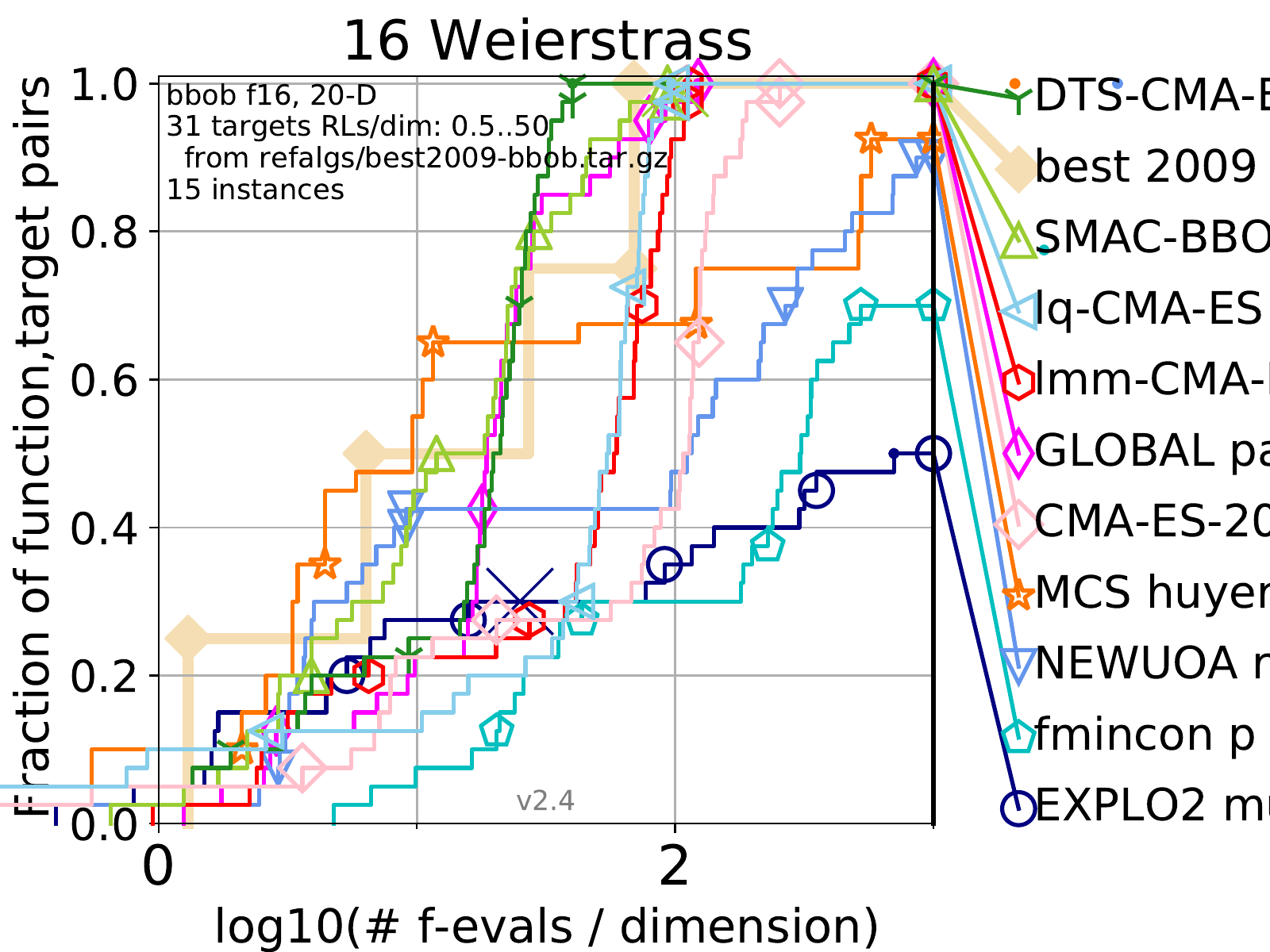}&
\includegraphics[width=0.238\textwidth]{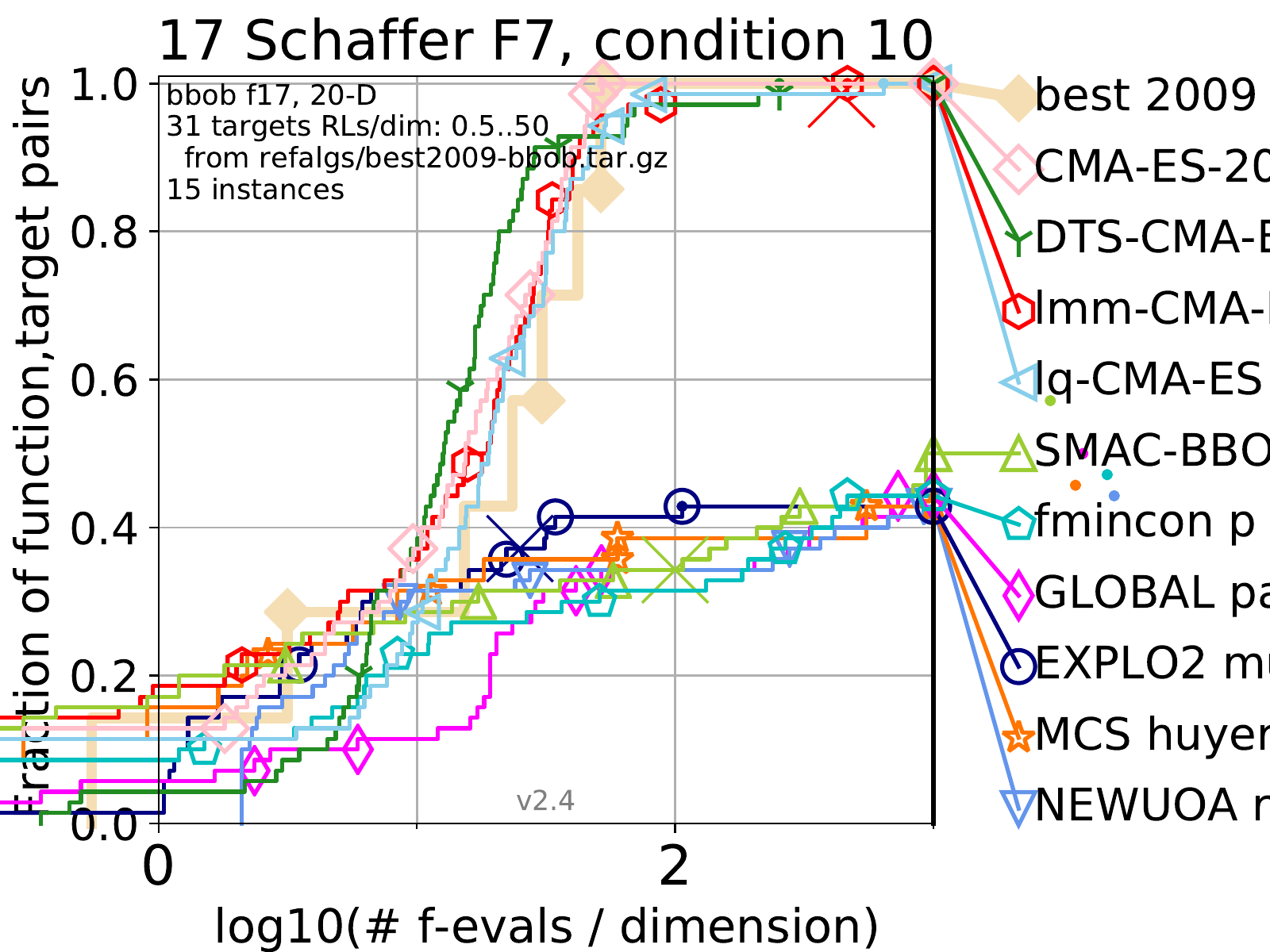}\\
\end{tabular}
\vspace*{-0.2cm}
\caption{
As in Figure \ref{fig:ECDFsingleOne20}, but for $f_{16}$ and $f_{17}$.
}
\end{figure}

\begin{figure}
\centering
\begin{tabular}{@{}c@{}c@{}c@{}c@{}}
\includegraphics[width=0.238\textwidth]{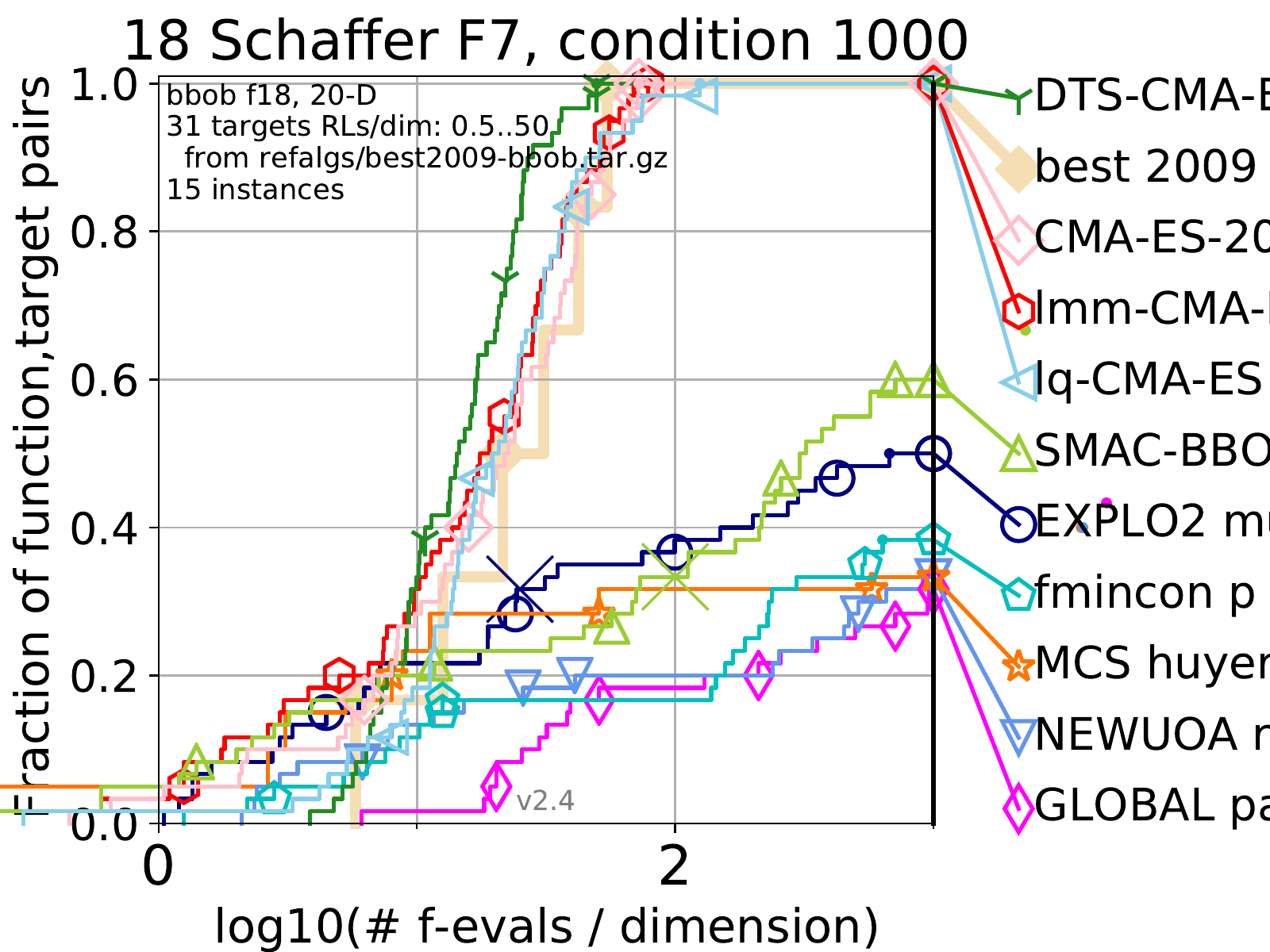}&
\includegraphics[width=0.238\textwidth]{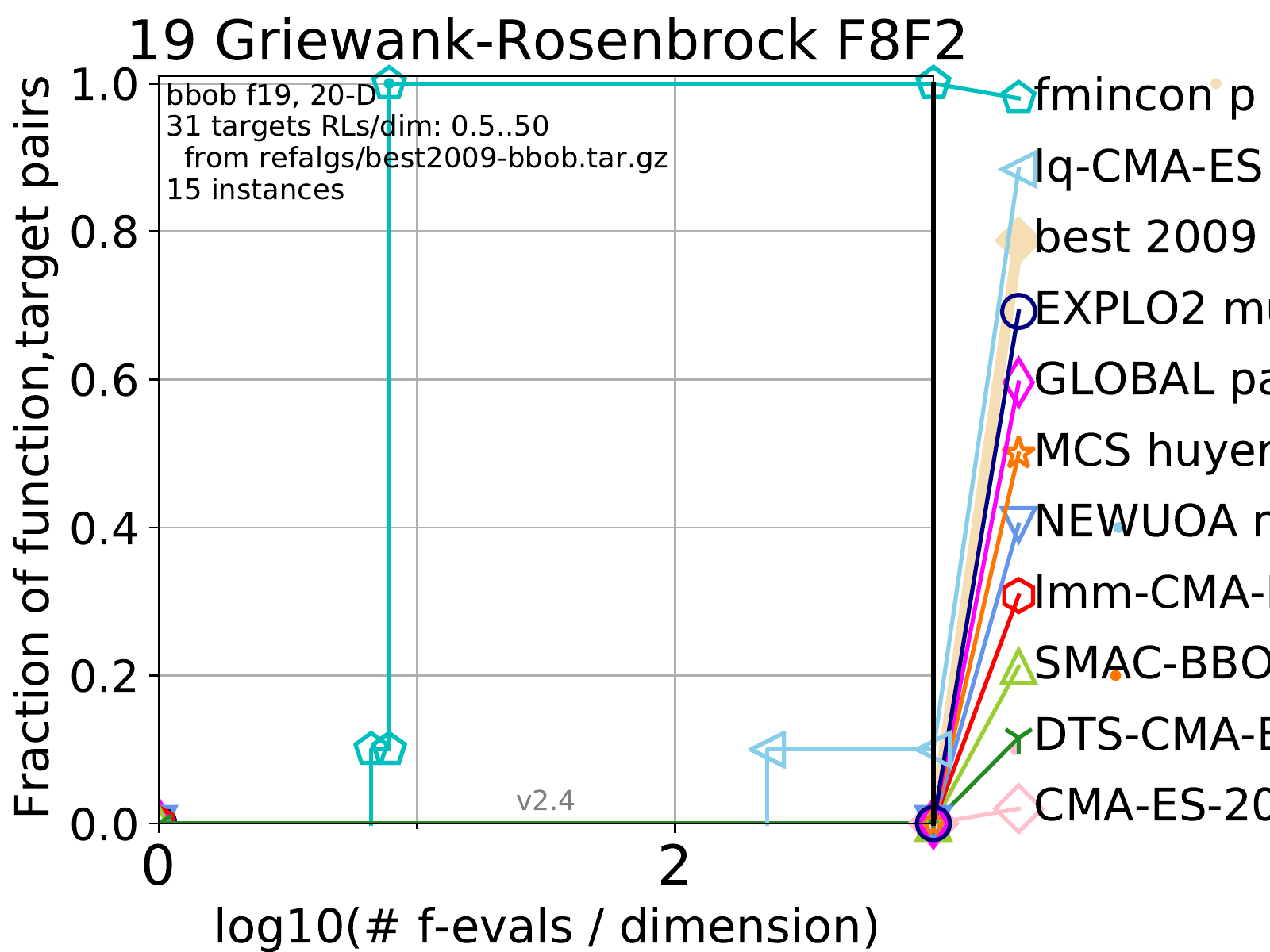}\\
\end{tabular}
\vspace*{-0.2cm}
\caption{
\label{fig:ECDFsingleOne20last}
As in Figure \ref{fig:ECDFsingleOne20}, but for $f_{18}$ and $f_{19}$. Note that Figures \ref{fig:GRF8F2VDCMAESvsEXPLO220} and \ref{fig:GRF8F2VDCMAESvsEXPLO2320} present an alternative analysis for $f_{19}$ in dimensions 20 and 320, respectively.
}
\end{figure}

\begin{figure}
\centering
\begin{tabular}{@{}c@{}c@{}c@{}c@{}}
\includegraphics[width=0.238\textwidth]{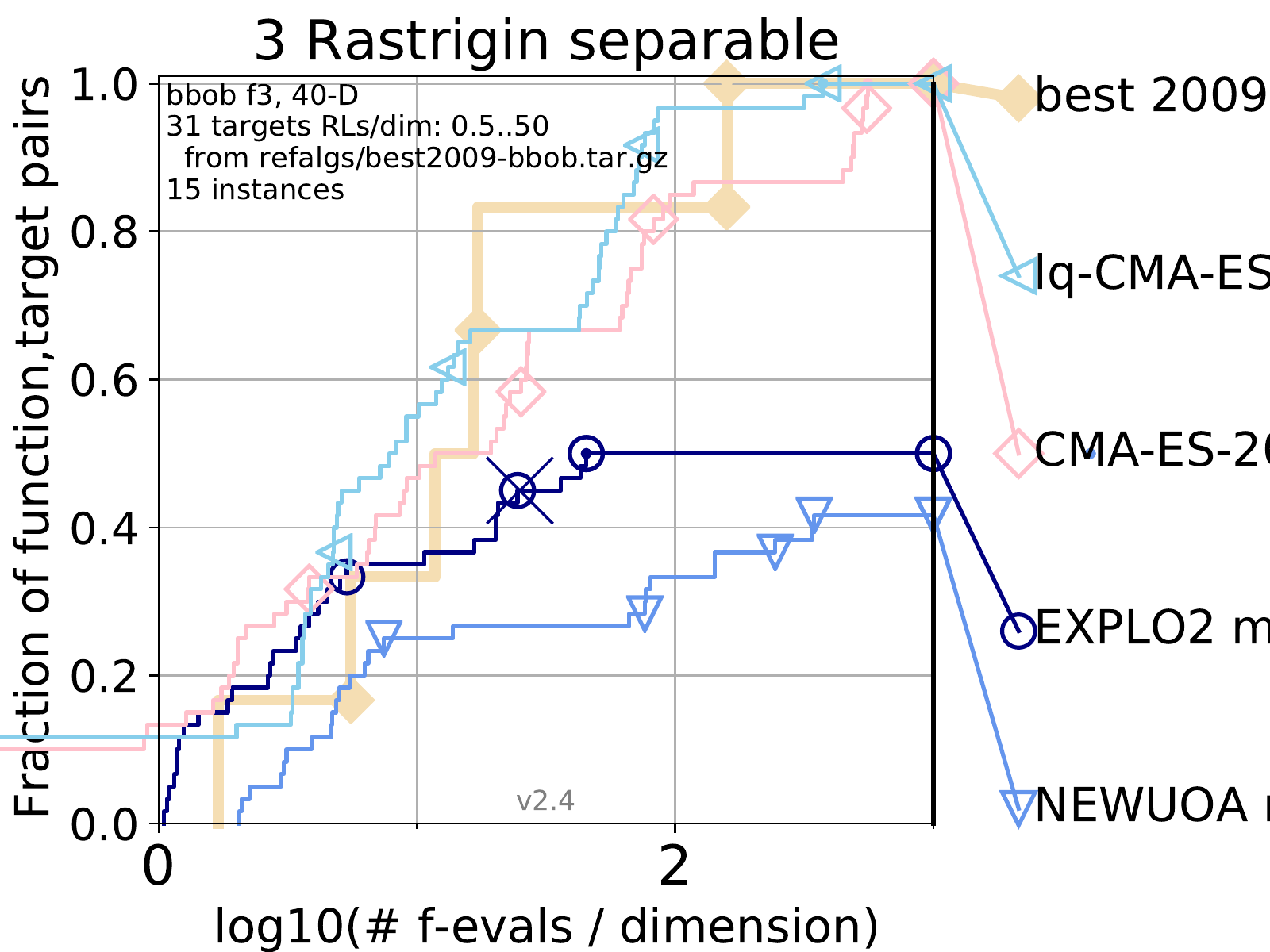}&
\includegraphics[width=0.238\textwidth]{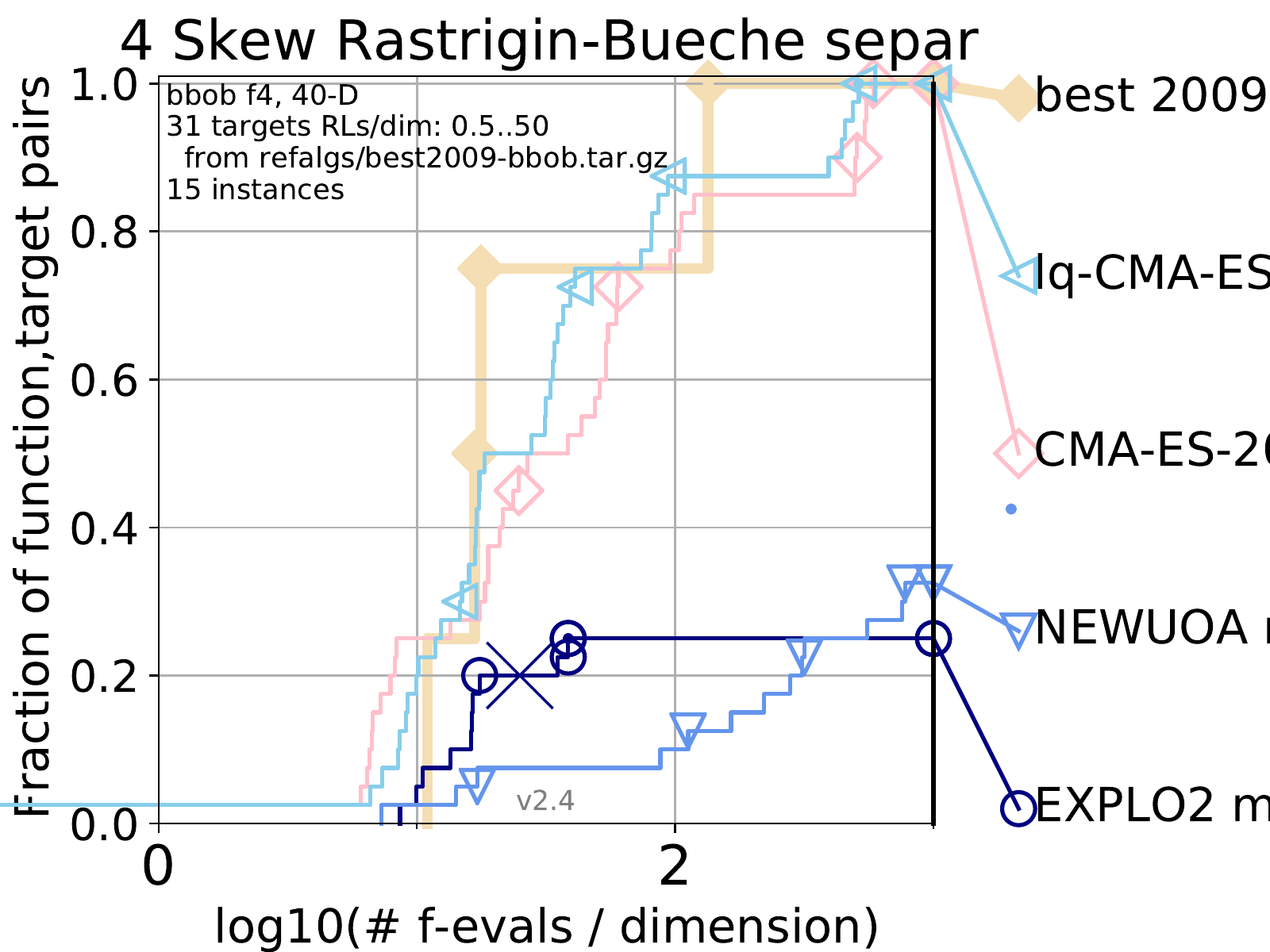}\\
\end{tabular}
\vspace*{-0.2cm}
\caption{
\label{fig:ECDFsingleOne40}
As in Figure \ref{fig:ECDFsingleOne20}, but for $D = 40$. Note that fewer algorithms have benchmark data for $D = 40$ than for $D = 20$. 
}
\end{figure}

\begin{figure}
\centering
\begin{tabular}{@{}c@{}c@{}c@{}c@{}}
\includegraphics[width=0.238\textwidth]{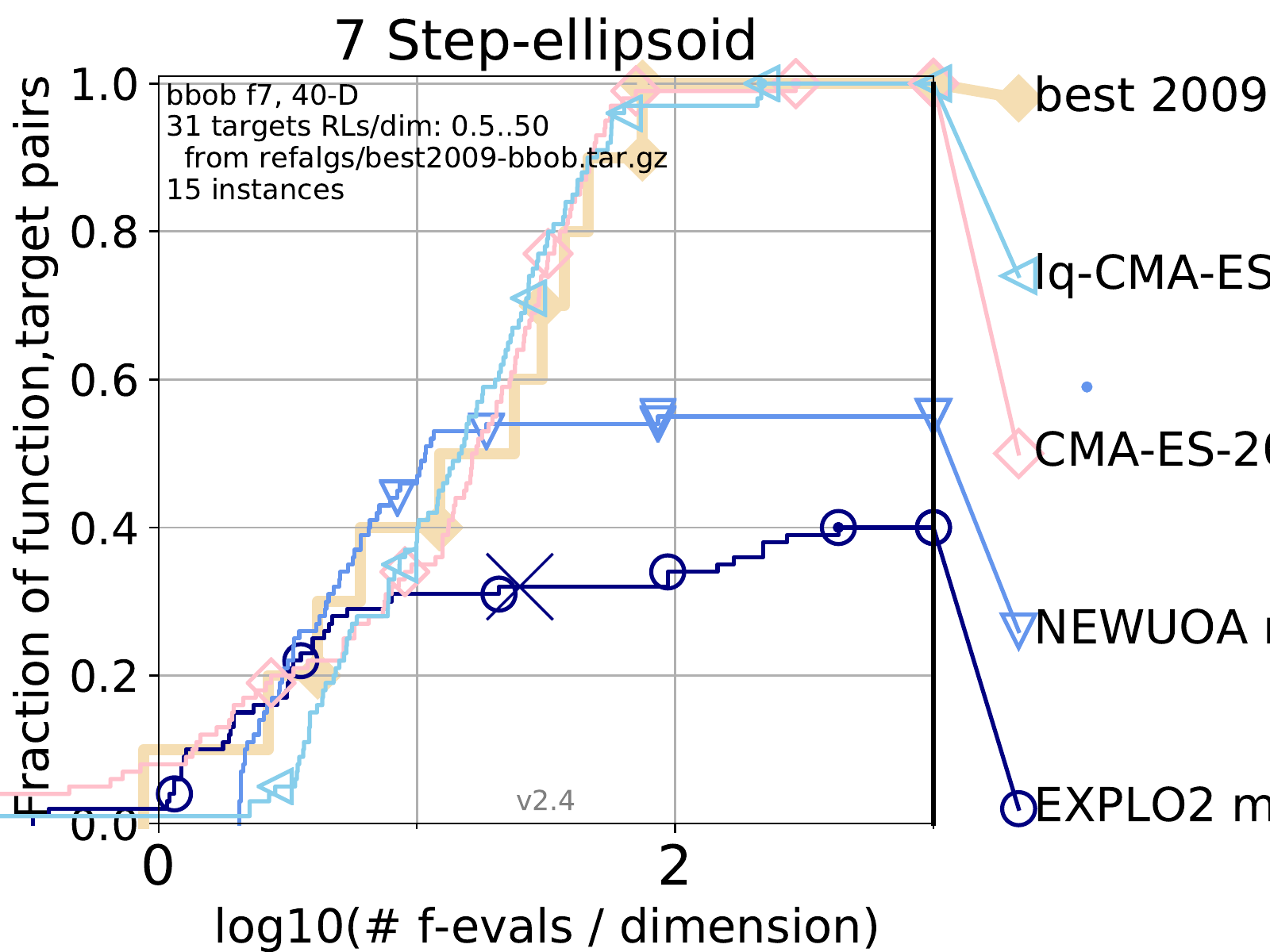}&
\includegraphics[width=0.238\textwidth]{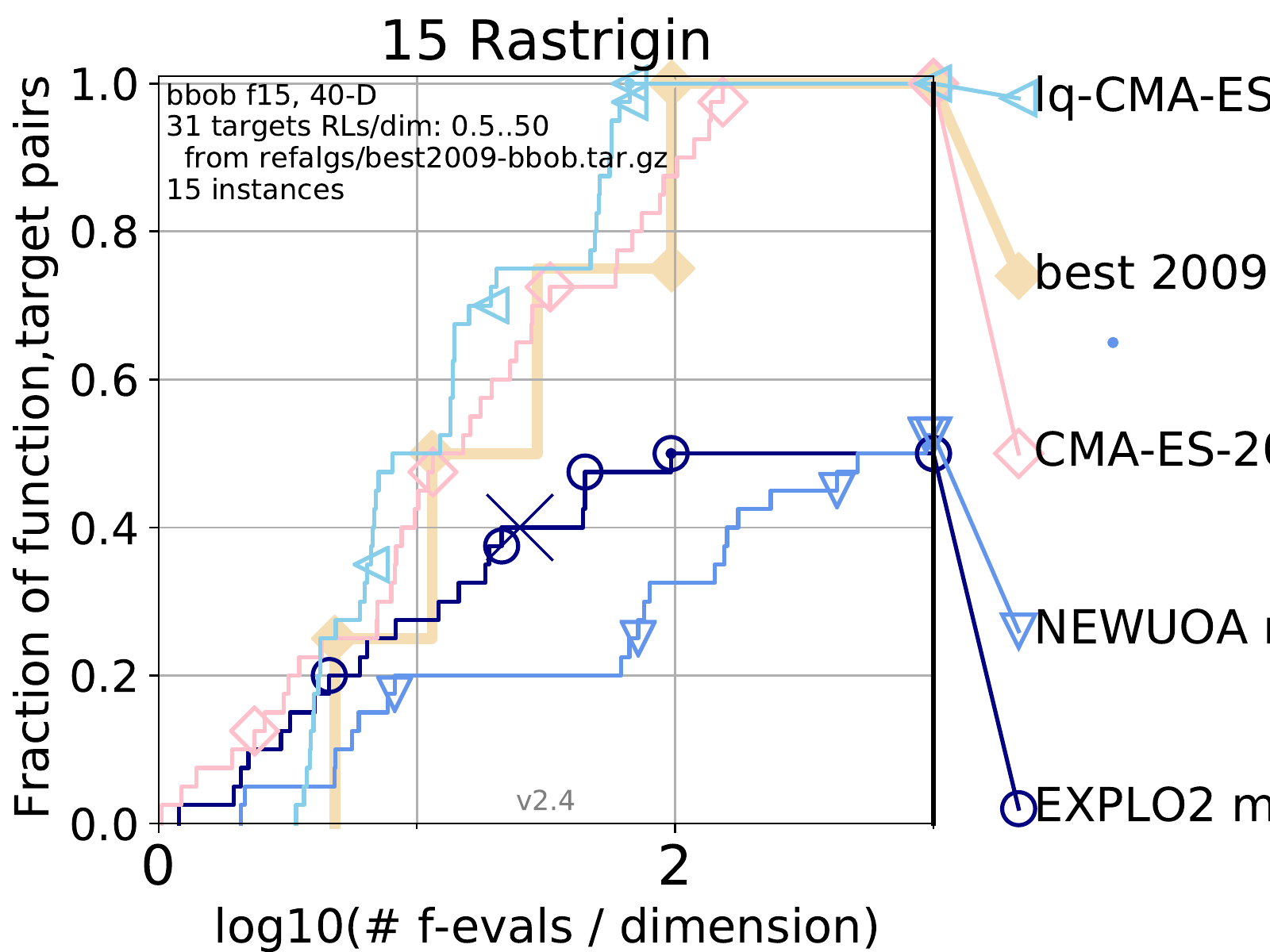}\\
\end{tabular}
\vspace*{-0.2cm}
\caption{
As in Figure \ref{fig:ECDFsingleOne20}, but for $f_{7}$ and $f_{15}$ and $D = 40$. 
}
\end{figure}

\begin{figure}
\centering
\begin{tabular}{@{}c@{}c@{}c@{}c@{}}
\includegraphics[width=0.238\textwidth]{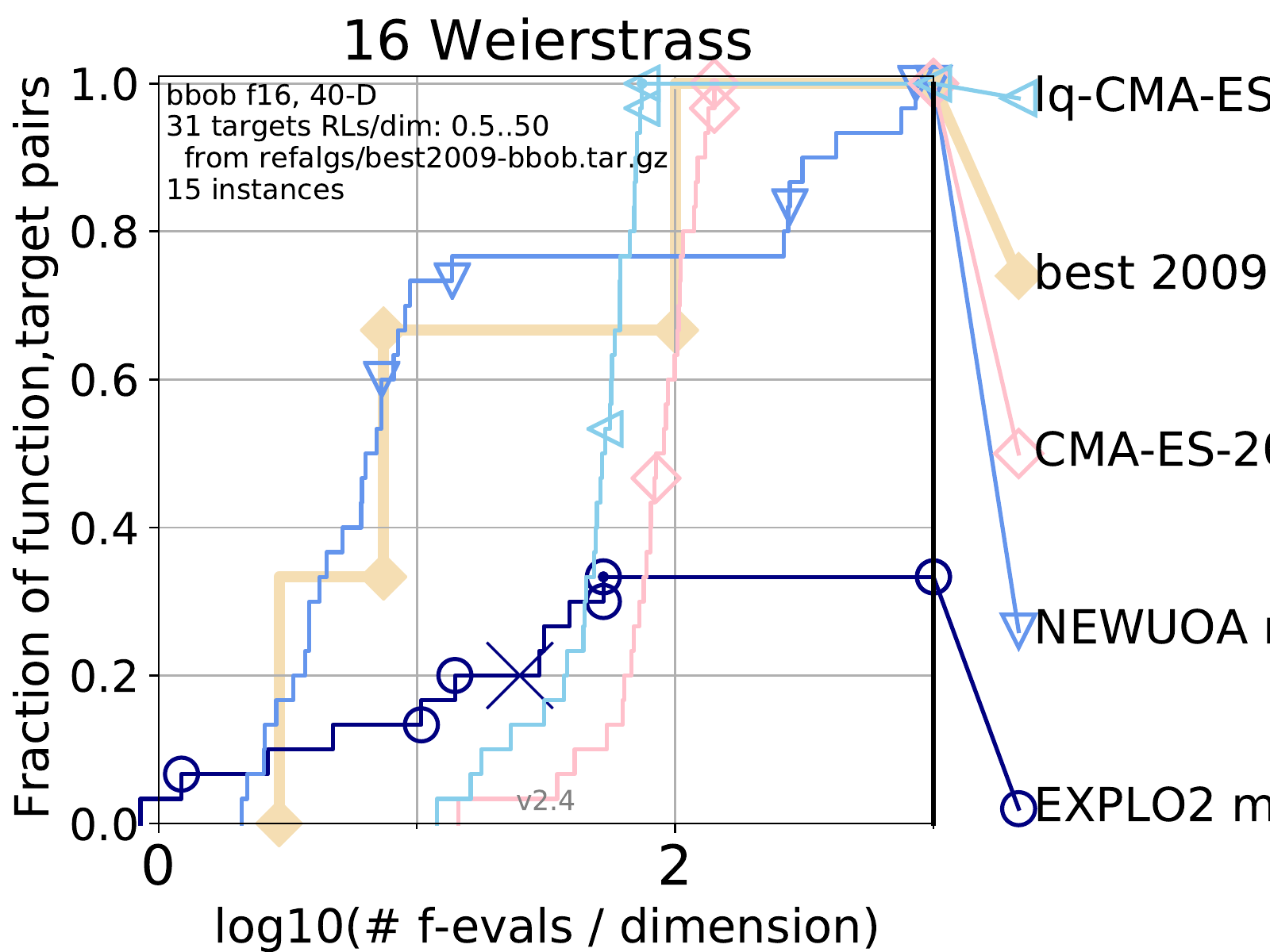}&
\includegraphics[width=0.238\textwidth]{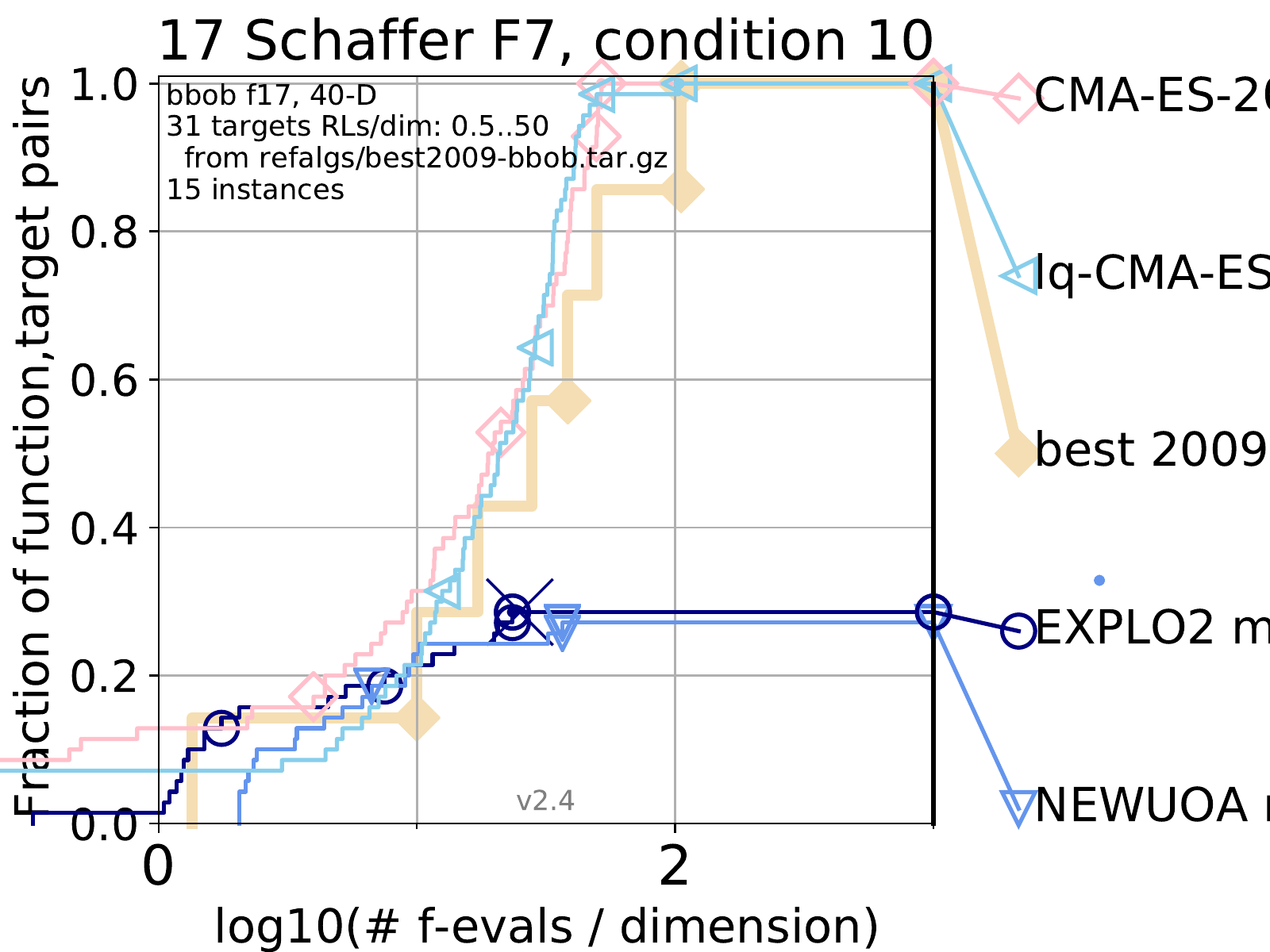}\\
\end{tabular}
\vspace*{-0.2cm}
\caption{
As in Figure \ref{fig:ECDFsingleOne20}, but for $f_{16}$ and $f_{17}$ and $D = 40$. 
}
\end{figure}

\begin{figure}
\centering
\begin{tabular}{@{}c@{}c@{}c@{}c@{}}
\includegraphics[width=0.238\textwidth]{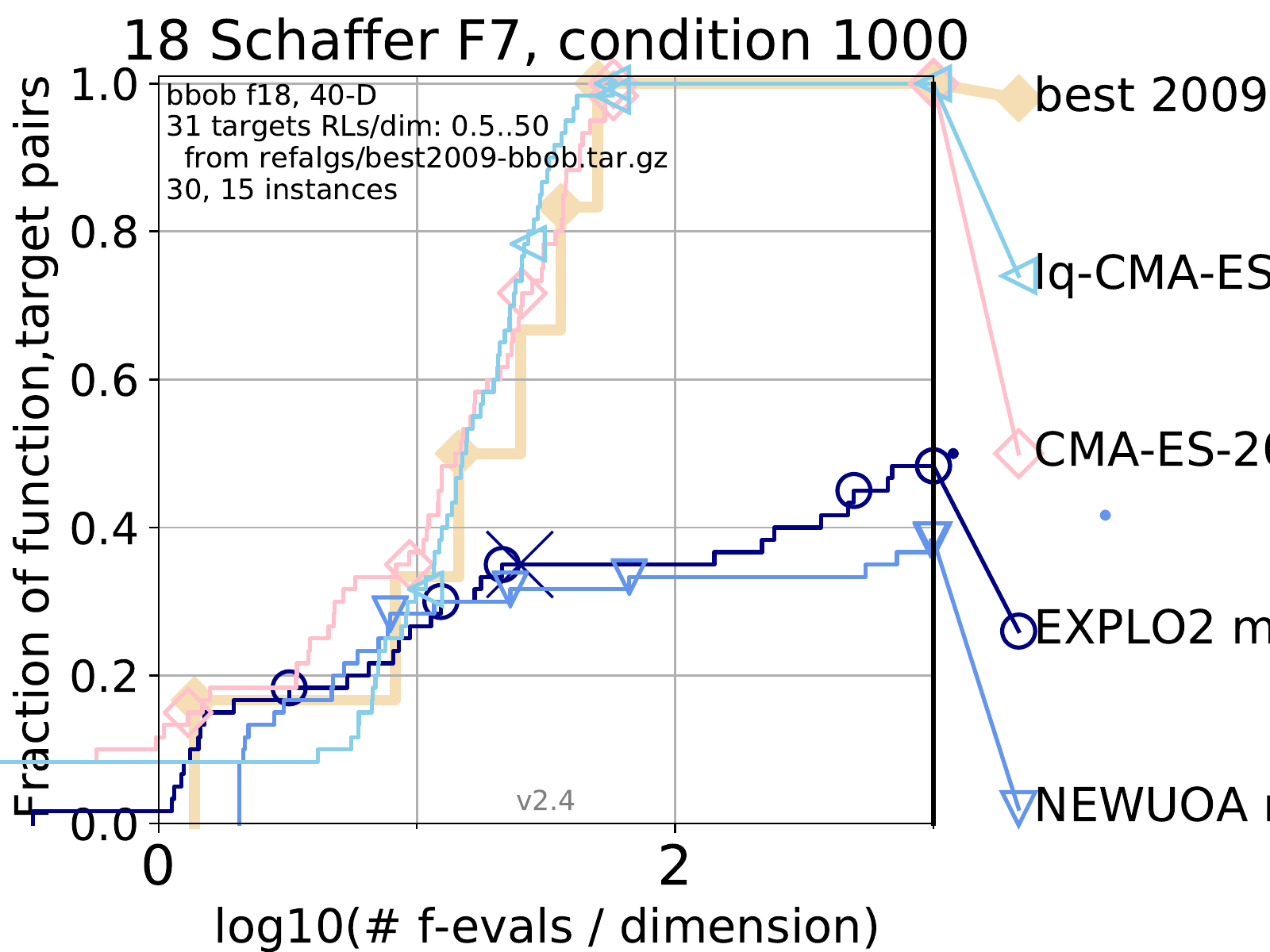}&
\includegraphics[width=0.238\textwidth]{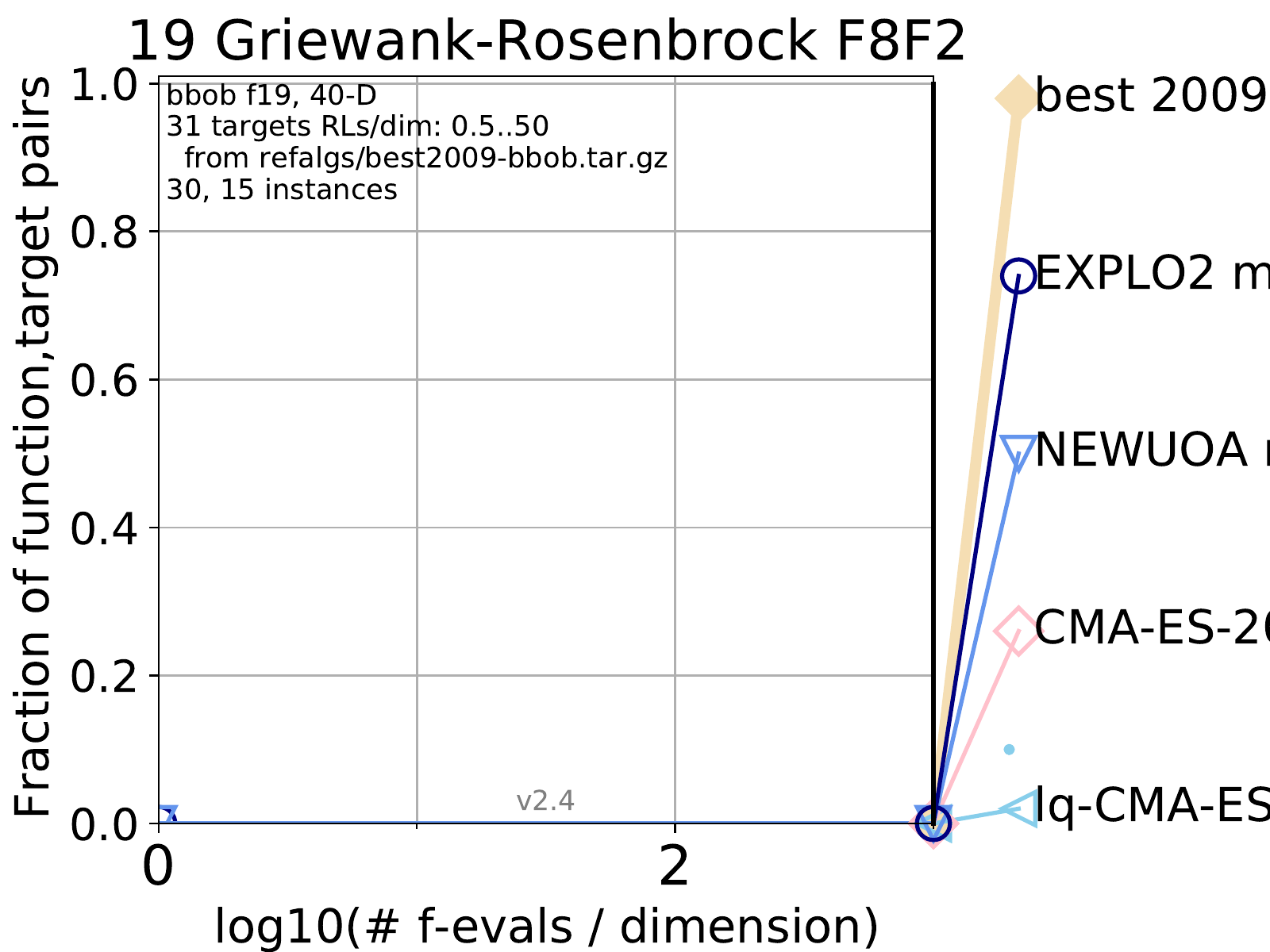}\\
\end{tabular}
\vspace*{-0.2cm}
\caption{
\label{fig:ECDFsingleOne40last}
As in Figure \ref{fig:ECDFsingleOne20}, but for $f_{18}$ and $f_{19}$ and $D = 40$. 
}
\end{figure}

Our experiments show that \texttt{EXPLO2} consistently outperforms \texttt{fmincon} on the sorts of problems for which it was designed, viz., multimodal functions with adequate global structure ($\{f_{15},\dots,f_{19}\}$), with the exception of the function $f_{19}$, which has many shallow minima. On the other hand, for most other problems in the \texttt{BBOB} suite (with the notable exceptions of $f_3$, $f_4$, and $f_7$, which respectively gauge the ability to exploit separability, not rely on symmetry, and to avoid getting trapped on plateaux), \texttt{EXPLO2} does worse than \texttt{fmincon}.

\texttt{EXPLO2} outperforms \texttt{NEWUOA} on $f_{15}$, $f_{17}$, and $f_{18}$, while the converse is true for $f_{16}$ and $f_{19}$, suggesting that \texttt{EXPLO2} (as a wrapper around \texttt{fmincon}) is best for multimodal functions with global structure that are not rugged, repetitive, or with many shallow minima. 
\footnote{\texttt{EXPLO2} also does serviceably well on multimodal functions with weak global structure: see appendices for detailed results.}
In other words, \texttt{EXPLO2} is best for problems with fairly structured landscapes. Even without accounting for the benefits of parallelism, \texttt{EXPLO2} is arguably the best available method besides \texttt{*-CMA-ES} for problems in dimension $\approx 20$ to $40$; once we do account for parallelism and/or in higher dimensions, only certain members of \texttt{*-CMA-ES} perform comparably: see \S \ref{sec:Parallel}. 

Meanwhile, in high enough dimensions, \texttt{*-CMA-ES} also ceases to be practical, with the exception of large-scale variants. \texttt{EXPLO2} also tied with or outperformed all of the large-scale algorithms benchmarked in \cite{varelas2019benchmarking} on the large-scale version of every \texttt{BBOB} function in $\{f_{15},\dots,f_{24}\}$ for dimensions 80, 160, and 320 \cite{elhara2019coco} with a budget of 2 evaluations/dimension. \texttt{EXPLO2} still performed fairly close to (and sometimes still better than) other algorithms in dimension 640 on the same budget per dimension. Finally, it is reasonable to hypothesize that a relative performance degradation in dimension 640 could be mitigated by replacing the inner \texttt{fmincon} optimization with \texttt{L-BFGS}. 

Because these experiments did not facilitate plots that conveyed meaningful information (an ``expensive plot'' option is not available in \texttt{COCO} for the large-scale \texttt{BBOB} suite), we also performed a more \emph{ad hoc} comparison of \texttt{EXPLO2} to \texttt{VD-CMA-ES} \cite{akimoto2014comparison}, the best performing large-scale algorithm for structured multimodal functions in \cite{varelas2019benchmarking}. As Figures \ref{fig:RastriginVDCMAESvsEXPLO220}-\ref{fig:GRF8F2VDCMAESvsEXPLO2320}  show, \texttt{EXPLO2} performs better, regardless of parallelism.

\begin{figure}[h]
  \centering
  \includegraphics[trim = 20mm 105mm 15mm 105mm, clip, width=\columnwidth,keepaspectratio]{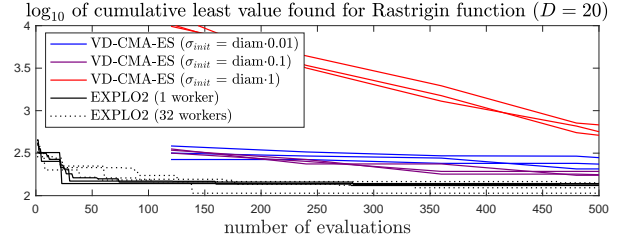}
  \caption{A comparison of \texttt{VD-CMA-ES} and \texttt{EXPLO2} on the Rastrigin function (without rotations or shifts, but otherwise corresponding to $f_{15}$ in the \texttt{BBOB} testbed) on $[-5.12,5.12]^D$ for $D = 20$ and a budget of 500 function evaluations. For \texttt{VD-CMA-ES}, various initial step sizes $\sigma_{init}$ were selected as shown and logging is intermittent; meanwhile, for \texttt{EXPLO2}, we considered $n_\parallel \in \{1,32\}$. Each algorithm/parameter pair was run three times, all shown. Note that the initialization phase of \texttt{VD-CMA-ES} is not plotted.
  }
  \label{fig:RastriginVDCMAESvsEXPLO220}
\end{figure}

\begin{figure}[h]
  \centering
  \includegraphics[trim = 20mm 105mm 15mm 105mm, clip, width=\columnwidth,keepaspectratio]{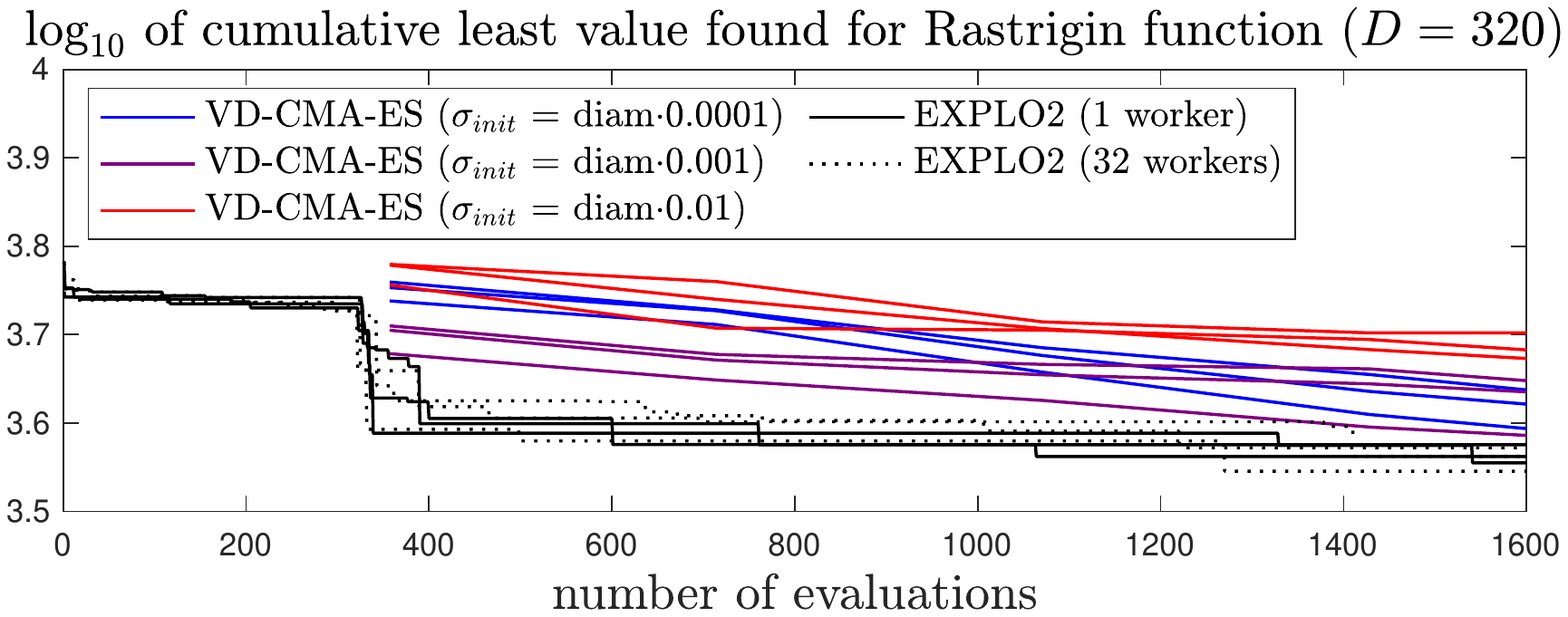}
  \caption{As in Figure \ref{fig:RastriginVDCMAESvsEXPLO220}, but for $D = 320$ and a budget of 1600. 
  }
  \label{fig:RastriginVDCMAESvsEXPLO2320}
\end{figure}


\begin{figure}[h]
  \centering
  \includegraphics[trim = 20mm 105mm 15mm 105mm, clip, width=\columnwidth,keepaspectratio]{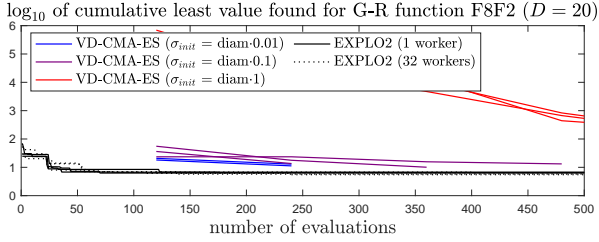}
  \caption{As in Figure \ref{fig:RastriginVDCMAESvsEXPLO220}, but for the Griewank-Rosenbrock function F8F2 (without rotations or shifts, but otherwise corresponding to $f_{19}$ in the \texttt{BBOB} testbed) on $[-5,5]^D$ for $D = 20$. Note that \texttt{VD-CMA-ES} often terminates before the budget is reached.
  }
  \label{fig:GRF8F2VDCMAESvsEXPLO220}
\end{figure}

Finally, in an experiment shown in \S \ref{sec:mixint}, we found that \texttt{EXPLO2} outperformed differential evolution and had performance almost indistinguishable from \texttt{CMA-ES} on the mixed-integer version of $f_{15}$ \cite{tuvsar2019mixed} in dimension $D = 20$ (the only function/dimension pair we tried among the mixed-integer suite \cite{tuvsar2019mixed}). 

To summarize, as the dimension of structured multimodal problems grows, \texttt{EXPLO2} outperforms \texttt{NEWUOA}, particularly when taking parallelism into account; meanwhile, the quadratic scaling with dimension of non-large-scale \texttt{*-CMA-ES} algorithms puts them at an increasing disadvantage since \texttt{EXPLO2} has essentially no runtime dependence on dimension other than through its inner solver, which here is the default large-scale interior-point \texttt{fmincon} algorithm. Finally, on structured multimodal functions, \texttt{EXPLO2} outperforms \texttt{VD-CMA-ES}, which is otherwise the best-performing large-scale algorithm that we are aware of.

\subsection{\label{sec:Parallel}Parallel performance}

While \texttt{EXPLO2} is parallelized (at marginal cost to performance), to the best of our knowledge no competing technique is except for \texttt{*-CMA-ES}. Indeed, as \cite{gao2017distributed} points out, most surrogate-based derivative-free optimizers--including \texttt{NEWUOA}--require sequential function evaluations. Though parallel algorithms exist \cite{haftka2016parallel,rehbach2018comparison,xia2020gops}, these are relatively few in number outside the context of Bayesian optimization (which is unsuitable for high-dimensional problems), and parallel techniques appropriate for high-dimensional problems have been considered in our design and/or benchmarking.
\footnote{One technique that we have not mentioned is the distributed quasi-Newton algorithm of \cite{gao2020distributed} (see also \cite{gao2021performance}), which is designed for extremely expensive situations on the order of $\ge$ 1 day/evaluation. This and related algorithms such as \texttt{SPMI} do not appear to be publically available and/or comprehensively benchmarked in the literature: indeed, \texttt{SPMI} is described in \cite{alpak2013techniques} as ``an \emph{in-house} massively parallel mixed integer and real variable optimization tool'' (emphasis added).}

With this in mind, since our experiments use $n_\parallel = 32$, 
\footnote{NB. For benchmarking, it was necessary to simulate parallelism, i.e., we replaced \texttt{parfor} loops with ordinary \texttt{for} loops.}
exceeding our per-dimension evaluation budget of 25, it is obvious that \texttt{EXPLO2} can outperform any of the competing algorithms considered here on the number of rounds of parallel function evaluations, except for \texttt{*-CMA-ES}, which becomes (depending on the variant employed) ill-suited for use or less performant than \texttt{EXPLO2} in high dimensions.

\section{\label{sec:Remarks}Remarks}

As mentioned above, it would be interesting--and likely useful--to replace \texttt{fmincon} with, e.g., \texttt{L-BFGS}. 

\begin{figure}[h]
  \centering
  \includegraphics[trim = 20mm 105mm 15mm 105mm, clip, width=\columnwidth,keepaspectratio]{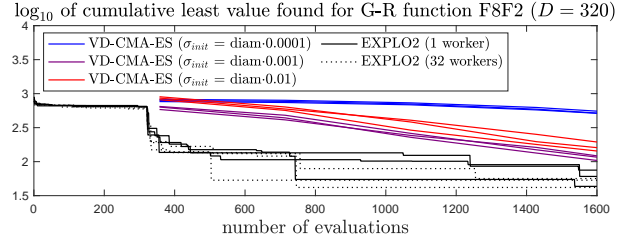}
  \caption{As in Figure \ref{fig:GRF8F2VDCMAESvsEXPLO220}, but for $D = 320$ and a budget of 1600. 
  }
  \label{fig:GRF8F2VDCMAESvsEXPLO2320}
\end{figure}

Though the runtime overhead of \texttt{EXPLO2} scales favorably with dimension, it is still high for any given function evaluation: the surrogate is complicated and even a large-scale inner optimizer takes resources. \texttt{EXPLO2} is therefore only suited for high-dimensional functions that are expensive to evaluate. While as mentioned earlier, hyperparameter optimization or simulation-defined functions are in this vein, it will generally be advisable to evaluate the suitability of \texttt{*-CMA-ES} as well in any particular application.

While \texttt{EXPLO2} seeks to balance exploration and exploitation through a regularizer, an approach in the vein of \cite{bischl2014moi} (cf. \cite{xia2020gops}) is also reasonable and lends itself to parallel execution. The idea here would be to consider the RBF interpolation and the differential magnitude as proxy objectives and evaluate the actual objective function in parallel at points on the Pareto front. This approach is likely to be useful for ``illuminating'' the search space or providing ``quality diversity'' \cite{mouret2015illuminating,pugh2016quality,fontaine2020covariance,chatzilygeroudis2021quality} in a way that focuses on diversity promotion versus the related idea \cite{vassiliades2017comparing} of niching techniques for multimodal optimization.
\footnote{
Cf. novelty search \cite{lehman2011abandoning} and its application to global optimization \cite{fister2019novelty}.
}


We have experimented with alternative choices for $t$ in Algorithm \ref{alg:EXPLO2} besides $\sqrt{\varepsilon}$, but all yielded inferior results (the exact $t=0$ limit caused trouble with \texttt{fmincon}). However, our experiments were not exhaustive or conclusive: it is possible that a more substantial modification of Algorithm \ref{alg:EXPLO2} that also lets $t$ vary would yield superior results. Nevertheless, we were not able to identify a competitive variant, let alone a clearly superior one.
\footnote{
A reasonable heuristic to try is $\exp(-t\Delta) = \varepsilon$, where machine epsilon is indicated on the right hand side and here $\Delta$ denotes the expected distance between two uniformly random points in the bounding box. While computing $\Delta$ is a notoriously hard problem without closed form solution in general and a very intricate result even for rectangles, a result of \cite{bonnet2021sharp} for a general convex body is that $\frac{3D+1}{2(D+1)(2D+1)} < \frac{\Delta}{\text{diam}} < \frac{\sqrt{\pi}}{3} \frac{\Gamma(\frac{D+1}{2})}{\Gamma(\frac{D}{2})}$, where $\text{diam}$ indicates the diameter of the body, i.e., $| u-\ell |$. However, our results with the associated lower bound on $t$ were disappointing, as were our results with data-dependent minimal values of $t$ that ensured weighting components were nonnegative.
} 

Finally, while we have already pointed out that differential magnitude is an attractive way to perform exploration for Bayesian optimization, it may also be useful for generating proposals in Markov chain Monte Carlo methods, including fast parallel algorithms such as those in \cite{huntsman2020fast,huntsman2021sampling}. More generally, magnitude-based methods hold promise for many other problems in machine learning.

\section*{Acknowledgements}

Thanks to Andy Copeland, Megan Fuller, Zac Hoffman, Rachelle Horwitz-Martin, and Jimmy Vogel for many patient questions, answers, and observations that influenced and improved this paper. This research was developed with funding from the Defense Advanced Research Projects Agency (DARPA). The views, opinions and/or findings expressed are those of the author and should not be interpreted as representing the official views or policies of the Department of Defense or the U.S. Government. DISTRIBUTION STATEMENT A. Approved for public release; distribution is unlimited.



\bibliography{singleObjOpt}
\bibliographystyle{icml2022}

\clearpage
\appendix
%
%
%


\section{\label{sec:appendix}Outline of appendices}

This appendices are organized as follows: 
\begin{itemize}
	\item \S \ref{sec:scaleZero} discusses the limit $t \downarrow 0$ on (co)weightings; 
	\item \S \ref{sec:DifferentialMagnitude} discusses the differential magnitude of a point; 
	\item \S \ref{sec:HighDimTimeSeries} discusses how to compute (co)weightings in dimension $> 10^5$ in the context of time series analysis, with an eye towards more general problems of similar scale using similarity search; 
	\item \S \ref{sec:efficientWeighting} discusses the efficient computation of weightings from the perspective of linear algebra;
	\item \S \ref{sec:iohAnalyzer} contains plots generated using \texttt{IOHAnalyzer};
	\item \S \ref{sec:ERTscaling} contains plots generated using \texttt{COCO} for scaling of runtime with dimension;
	\item \S \ref{sec:ECDFs20} contains plots generated using \texttt{COCO} for runtime distributions (ECDFs) per function for $D = 20$;
	\item \S \ref{sec:ECDFs40} contains plots generated using \texttt{COCO} for runtime distributions (ECDFs) per function for $D = 40$;
	\item \S \ref{sec:largescale} contains plots generated using \texttt{COCO} for the large-scale \texttt{BBOB} suite;
	\item \S \ref{sec:mixint} contains a plot generated using \texttt{COCO} for the mixed-integer \texttt{BBOB} suite;
	\item \S \ref{sec:source} contains MATLAB source code for \texttt{EXPLO2};
	\item \S \ref{sec:sourceBench} contains MATLAB source code for benchmarking.
\end{itemize}

\section{\label{sec:scaleZero}The scale $t = 0$}

The limit $t \uparrow \infty$ of (co)weightings and the magnitude function is uninteresting, since the (co)weightings are (co)vectors of all ones, so the limiting value of the magnitude function is just the number of points in the space under consideration. However, the limit $t \downarrow 0$ contains more detailed structural information:

\begin{lemma} 
\label{lemma:scale0} 
For $d \in GL(n,\mathbb{R})$ the solution to $\exp[-td] w(t) = 1$ has the well-defined limit $w(0) := \lim_{t \downarrow 0} w(t) = \frac{d^{-1}1}{1^T d^{-1} 1}$. Similarly, the solution to $v(t) \exp[-td] = 1$ has the limit $v(0) = \frac{1^T d^{-1}}{1^T d^{-1} 1}$.
\end{lemma}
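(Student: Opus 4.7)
The key observation is that while $Z(t) := \exp[-td]$ degenerates at $t = 0$ to the rank-one matrix $J := 11^T$, the entrywise Taylor expansion reads $Z(t) = J - t\,d + O(t^2)$, and the ``correction'' $A(t) := Z(t) - J$ is invertible on a punctured neighborhood of the origin. The plan is thus to write $Z(t) = A(t) + 1 \cdot 1^T$ as a rank-one update of an invertible matrix, invert via the Sherman--Morrison formula, and then take $t \downarrow 0$.

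More explicitly, I would factor $A(t) = -t\,E(t)$ where $E(t) = d - \tfrac{t}{2} d^{\circ 2} + O(t^2)$ (with $d^{\circ 2}$ denoting the Hadamard square of $d$). Because $E(0) = d$ is invertible and $E$ depends analytically on $t$, both $E(t)$ and $A(t)$ are invertible for all sufficiently small $t \neq 0$, with $A(t)^{-1} = -t^{-1} E(t)^{-1}$. Applying Sherman--Morrison to $Z(t) = A(t) + 1 \cdot 1^T$ and contracting on the right with $1$ gives the compact identity
\[
  w(t) \;=\; Z(t)^{-1} 1 \;=\; \frac{A(t)^{-1} 1}{1 + 1^T A(t)^{-1} 1}.
\]
Substituting $A(t)^{-1} = -t^{-1} E(t)^{-1}$ and clearing the factor $-1/t$ common to numerator and denominator yields
\[
  w(t) \;=\; \frac{E(t)^{-1} 1}{1^T E(t)^{-1} 1 \;-\; t},
\]
whose limit as $t \downarrow 0$ is $\frac{d^{-1} 1}{1^T d^{-1} 1}$ by continuity of matrix inversion at $d$. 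The coweighting statement is then immediate by transposition: $v(t) Z(t) = 1^T$ is equivalent to $Z(t)^T v(t)^T = 1$, so applying the argument just given with $d$ replaced by $d^T$ and then transposing yields $v(0) = 1^T d^{-1}/(1^T d^{-1} 1)$.

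The one thing to watch for is that the manipulation tacitly requires $1^T d^{-1} 1 \neq 0$, since otherwise the denominator $1 + 1^T A(t)^{-1} 1$ need not grow like $t^{-1}$ and the limit formula can degenerate. This nondegeneracy is automatic in the positive-definite setting motivating the paper (where $d^{-1}$ has enough structure to force $1^T d^{-1} 1 > 0$), so the lemma as stated is fine in context, though I would flag this as an implicit hypothesis. Beyond this caveat, the argument is essentially Sherman--Morrison bookkeeping, with no genuine obstacle.
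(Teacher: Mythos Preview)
Your proof is correct and follows essentially the same route as the paper's sketch: write $Z(t)$ as a rank-one perturbation $11^T + A(t)$ and apply Sherman--Morrison. Your execution is in fact a bit cleaner than the paper's, which works only with the first-order approximation $11^T - td$ and then invokes L'H\^opital, whereas you keep the exact $E(t)$ and pass to the limit by continuity of inversion; your flagging of the implicit hypothesis $1^T d^{-1} 1 \neq 0$ is also a useful addition.
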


\begin{proof}[Proof (sketch)]
We have the first-order approximation $(11^T - td) w(t) \approx 1$. Applying the Sherman-Morrison-Woodbury formula \cite{horn2012matrix} and L'H\^opital's rule yields the result.
\end{proof}

\section{\label{sec:DifferentialMagnitude}The differential magnitude of a point}

Define $M := \left ( \begin{smallmatrix} A & B \\ C & D \end{smallmatrix} \right )$. Block inversion yields the formula
\begin{equation}
\begin{pmatrix} A^{-1} + A^{-1}B(M / A)^{-1}CA^{-1} & -A^{-1}B(M / A)^{-1} \\ 
-(M / A)^{-1}CA^{-1} & (M / A)^{-1} \end{pmatrix} 
\nonumber
\end{equation}
for $M^{-1}$, where the Schur complements are given by $M / D := A-BD^{-1}C$ and $M / A := D-CA^{-1}B$, and they satisfy $(M / A)^{-1} = D^{-1} + D^{-1}C(M / D)^{-1} BD^{-1}$ and $(M / D)^{-1} = A^{-1} + A^{-1}B(M / A)^{-1}CA^{-1}$.

If $C = B^T$ and $A$, $D$, and $M$ are all positive definite, then Theorem 7.7.7 of \cite{horn2012matrix} yields that $M / A$, $M / D$, and $M$ are also all positive definite. With this in mind (and similarly to \cite{bunch2020practical}), let $Z$ be positive definite and consider a positive definite matrix of the form
\begin{equation}
\label{eq:similarityPlusOne}
Z[\zeta] := \begin{pmatrix} Z & \zeta \\ \zeta^T & 1 \end{pmatrix}.
\end{equation}
The relevant Schur complement is
\footnote{
Note that if the entries of $-\log[Z[\zeta]]$ satisfy the triangle inequality, then $\zeta \le Z \zeta \le n \zeta$ componentwise, where $n$ is the dimension of $\zeta$.
}
\begin{equation}
Z[\zeta] / Z = 1-\zeta^T Z^{-1} \zeta,
\nonumber
\end{equation}
so $Z[\zeta]^{-1}$ equals
\begin{align}
 & \ \begin{pmatrix} Z^{-1} & 0 \\ 0 & 0 \end{pmatrix} + \frac{1}{1-\zeta^T Z^{-1} \zeta} \begin{pmatrix} Z^{-1} \zeta \zeta^T Z^{-1} & -Z^{-1} \zeta \\ \zeta^T Z^{-1} & 1 \end{pmatrix} \nonumber \\
= & \ \begin{pmatrix} Z^{-1} & 0 \\ 0 & 0 \end{pmatrix} + \frac{1}{1-\zeta^T Z^{-1} \zeta} \begin{pmatrix} -Z^{-1} \zeta \\ 1 \end{pmatrix} \begin{pmatrix} -\zeta^T Z^{-1} & 1 \end{pmatrix}
\nonumber
\end{align}
so that if $w = Z^{-1}1$ and $w[\zeta] := Z[\zeta]^{-1}1$ are respectively the weightings of $Z$ and $Z[\zeta]$, then
\begin{equation}
\label{eq:weightingPlusOne}
w[\zeta] = \begin{pmatrix} w \\ 0 \end{pmatrix} + \frac{1-\zeta^T w}{1-\zeta^T Z^{-1} \zeta} \begin{pmatrix} -Z^{-1} \zeta \\ 1 \end{pmatrix}.
\end{equation}
This yields


\begin{proposition}
\end{proposition}
\begin{equation}
\label{eq:magnitudePlusOne}
\text{Mag}(Z[\zeta]) = \text{Mag}(Z) + \frac{(1-\zeta^T w)^2}{1-\zeta^T Z^{-1} \zeta}. \qed
\end{equation}

It is tempting to try to use this result to try to establish the conjecture of \cite{yevseyeva2019application} that magnitude is submodular on Euclidean point sets,
\footnote{
Recall that a function $f : 2^\Omega \rightarrow \mathbb{R}$ is submodular iff for every $X\subseteq \Omega$ and $x_1,x_2 \in \Omega \backslash X$ such that $x_{1}\neq x_{2}$ it is the case that $f(X\cup \{x_1\})+f(X\cup \{x_2\})\geq f(X\cup \{x_1,x_2\})+f(X)$.
} 
but there is a simple counterexample:

\begin{example}
Let $\Omega = \{(1,0),(0,1),(-1,0),(2,0)\}$ endowed with Euclidean distance; let $X = \{(1,0),(0,1)\}$; let $x_1 = (-1,0)$, and $x_2 = (2,0)$. Then (abusing notation) $\text{Mag}(X\cup \{x_1\})+\text{Mag}(X\cup \{x_2\}) \approx 4.1773$ while $\text{Mag}(X\cup \{x_1,x_2\})+\text{Mag}(X)\approx 4.1815$. 
\end{example}

Still, for $t \rightarrow \infty$ there is an asymptotic inclusion-exclusion formula for magnitudes of compact convex bodies, as pointed out by \cite{gimperlein2021magnitude}. In this sense the magnitude is approximately submodular. This suggests using the standard greedy approach for approximate submodular maximization \cite{nemhauser1978analysis,krause2014submodular} despite the fact that we do not have any theoretical guarantees.

On the other hand, if $Z = \exp[-td]$, $\zeta = \exp[-t\delta]$, and we write $\omega = d^{-1}1$, we have the following:

\begin{lemma}
\label{lem:mag1at0}
\begin{align}
\label{eq:mag1at0}
& \frac{(1-\zeta^T w)^2}{1-\zeta^T Z^{-1} \zeta} = o(t) + \nonumber \\
& t \left( \frac{\delta^T\omega-1}{1^T \omega-t} \right)^2 \frac{1^T \omega - t}{-1+2\delta^T \omega + \delta^T[(1^T \omega)d^{-1}-\omega \omega^T]\delta}.
\end{align}
\end{lemma}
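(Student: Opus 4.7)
The plan is to Taylor-expand everything in $t$ to low order and mirror the Sherman--Morrison--Woodbury computation already used in Lemma \ref{lemma:scale0}. From the definitions $Z_{ij} = \exp(-t d_{ij})$ and $\zeta_i = \exp(-t \delta_i)$ (with $d_{ii}=0$), I would record the first-order expansions $Z = 11^T - t d + O(t^2)$ and $\zeta = 1 - t\delta + O(t^2)$. Writing $A := 11^T - td = -td + 1 \cdot 1^T$ as a rank-one update of $-td$, Sherman--Morrison yields
\[
A^{-1} = -\tfrac{1}{t}\, d^{-1} + \tfrac{1}{t(1^T\omega - t)}\, \omega\omega^T,
\]
so that $Z^{-1}$ agrees with this formula up to $O(1)$ corrections coming from the $O(t^2)$ terms in the expansion of $Z$. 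This is the same maneuver that drives Lemma \ref{lemma:scale0}, and it is the engine of the whole argument.

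Next I would compute $w = Z^{-1} 1$ to leading order. Using $d^{-1} 1 = \omega$ and $\omega^T 1 = 1^T \omega$, the two pieces of $A^{-1} 1$ combine over the common denominator $t(1^T\omega - t)$ and collapse to $w = \omega/(1^T\omega - t) + O(t)$, which recovers Lemma \ref{lemma:scale0} in the $t\downarrow 0$ limit. Substituting into the numerator $1 - \zeta^T w$ and clearing denominators gives
\[
1 - \zeta^T w = \frac{t(\delta^T\omega - 1)}{1^T \omega - t} + O(t^2),
\]
so $(1-\zeta^T w)^2$ is already $O(t^2)$, with the explicit $t^2(\delta^T\omega-1)^2/(1^T\omega-t)^2$ as the leading contribution.

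For the denominator $1 - \zeta^T Z^{-1} \zeta$, I would insert $\zeta = 1 - t\delta + O(t^2)$ and the leading expression for $Z^{-1}$ above. The singular $\frac{1}{t}$ contributions from the two terms of $A^{-1}$ cancel modulo an explicit residual. Collecting the surviving terms over the common denominator $1^T\omega - t$ and identifying the quadratic form $\delta^T[(1^T\omega) d^{-1} - \omega\omega^T]\delta$ that naturally appears yields
\[
1 - \zeta^T Z^{-1} \zeta = \frac{t\bigl[-1 + 2\delta^T\omega + \delta^T((1^T\omega)d^{-1} - \omega\omega^T)\delta\bigr]}{1^T \omega - t} + O(t^2).
\]
Dividing the two expansions then gives precisely the claimed formula, with the error degrading from $O(t^2)/O(t) = O(t)$ down to $o(t)$ after cancellation of the leading terms.

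The main obstacle is bookkeeping of orders: the denominator $1 - \zeta^T Z^{-1}\zeta$ is itself only $O(t)$, so an error at order $t^2$ inside it translates to an $O(t)$ correction of the ratio, matching but not beating the leading $O(t)$ behaviour. I would therefore have to argue that the second-order corrections to $Z$ and $\zeta$ contribute only $o(t)$ to the numerator and $o(t)$ (beyond the explicit $O(t^2)$ part already absorbed in $-t \delta^T d^{-1} \delta$ in the denominator) — this is essentially because the Sherman--Morrison residual depends smoothly on the perturbation of $Z$. The rest is routine algebra of the form carried out in the proof of Proposition \ref{prop:magnitudePlusOneProp}.
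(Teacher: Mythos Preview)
Your proposal is correct and follows essentially the same route as the paper: expand $Z$ and $\zeta$ to first order, apply Sherman--Morrison to invert $11^T - td$, read off $w$, and then compute numerator and denominator separately before forming the ratio. If anything, your explicit tracking of $O(t^2)$ versus $O(t)$ errors is more careful than the paper's own sketch, which works throughout with $\approx$ and leaves the order-of-error bookkeeping implicit.
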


    %
    %
    %

\begin{proof}
Using $Z \approx 11^T - td$, the Sherman-Morrison formula gives $Z^{-1} \approx -\frac{1}{t} \left (d^{-1}+\frac{\omega \omega^T}{t-1^T \omega} \right )$. A line of algebra yields $w = Z^{-1}1 \approx \frac{\omega}{1^T \omega - t}$, and similarly $1-\zeta^T w \approx t \frac{\delta^T \omega - 1}{1^T \omega - t}$. Now
\begin{align}
\zeta^T Z^{-1} \zeta & \approx -\frac{1}{t}(1^T-t\delta^T) \left ( d^{-1} + \frac{\omega \omega^T}{t-1^T \omega} \right ) (1-t\delta) \nonumber \\
& = \frac{1^T \omega}{1^T \omega - t} - 2t \frac{\delta^T \omega}{1^T \omega - t} - t \delta^T \left ( d^{-1} + \frac{\omega \omega^T}{t-1^T \omega} \right ) \delta \nonumber \\
& \approx \frac{(1^T-2t\delta^T)\omega-t \delta^T[(1^T \omega) d^{-1}-\omega \omega^T]\delta}{1^T \omega-t}, \nonumber
\end{align}
so 
\begin{equation}
1-\zeta^T Z^{-1} \zeta \approx t \frac{-1 + 2\delta^T \omega + \delta^T [(1^T \omega) d^{-1}-\omega \omega^T] \delta}{1^T \omega - t}, \nonumber
\end{equation}
from which the result follows.
\end{proof}

Another way to write \eqref{eq:mag1at0} is using the identities 
\begin{equation}
(\delta^T \omega-1)^2 = \begin{pmatrix}\delta^T & 1\end{pmatrix} \begin{pmatrix} \omega \omega^T & -\omega \\ -\omega^T & 1 \end{pmatrix} \begin{pmatrix}\delta \\ 1\end{pmatrix} \nonumber
\end{equation}
and
\begin{align}
& -1+2\delta^T \omega + \delta^T[(1^T \omega)d^{-1}-\omega \omega^T]\delta = \nonumber \\
& \begin{pmatrix}\delta^T & 1\end{pmatrix} \begin{pmatrix} (1^T \omega) d^{-1}-\omega \omega^T & \omega \\ \omega^T & -1 \end{pmatrix} \begin{pmatrix}\delta \\ 1\end{pmatrix}. \nonumber
\nonumber
\end{align}
In $\mathbb{R}^n$, we have the ``far-field'' asymptotic $\delta \sim s(\delta) 1$, where $s(\cdot)$ is any function on tuples that takes values between the minimum and maximum (e.g., an order statistic or quasi-arithmetic mean). That is, in the limit $t \rightarrow 0$ and $\delta \rightarrow \infty$, \eqref{eq:mag1at0} takes the form (with an obvious but helpful abuse of notation)
\begin{align}
& \frac{(1-\zeta^T w)^2}{1-\zeta^T Z^{-1} \zeta} \sim \nonumber \\
& \frac{t}{1^T \omega} \cdot \frac{\begin{pmatrix}s 1^T & 1\end{pmatrix} \begin{pmatrix} \omega \omega^T & -\omega \\ -\omega^T & 1 \end{pmatrix} \begin{pmatrix}s 1 \\ 1\end{pmatrix}}{\begin{pmatrix}s 1^T & 1\end{pmatrix} \begin{pmatrix} (1^T \omega) d^{-1}-\omega \omega^T & \omega \\ \omega^T & -1 \end{pmatrix} \begin{pmatrix}s 1\\ 1\end{pmatrix}} + o(t). \nonumber
\end{align}
Expanding this out, we find that the $s^2$ term in the denominator is zero, and we end up with
\begin{equation}
\label{eq:mag1at0limit}
\frac{(1-\zeta^T w)^2}{1-\zeta^T Z^{-1} \zeta} \sim \frac{s(\delta)}{2}t + o(t).
\end{equation}

That is, the change in magnitude is asymptotically linear with respect to distance. Among other things, this provides a foundation for building diverse point sets in unbounded regions of Euclidean space by maximizing the ratio of the increase of magnitude and a suitable function $s(\delta)$.

\section{\label{sec:HighDimTimeSeries}Coweightings in dimension $> 10^5$ and time series analysis}

Here we sketch how coweightings can reliably identify ``boundary'' elements of large datasets in high dimension within the restricted context of time series analysis. Efficient analogues of the construction below can likely be produced in more general settings using similarity search techniques \cite{zezula2006similarity}, provided they are available.

Consider a reasonably nice time series $\{x_j\}_{j=1}^{n+L-1}$ with $x_j \in \mathbb{R}$ 
and $n \gg L \gg 1$. By considering contiguous subsequences of length $L$, we obtain $n$ points $x^{[L]}_j := (x_j,\dots,x_{j+L-1})\in \mathbb{R}^L$. The \emph{matrix profile} \cite{yeh2016matrix} 
\footnote{
See also \cite{alaee2020matrix} and intervening papers.
}
operates on this latter representation to find maximally conserved motifs, anomalous subsequences, etc. In particular, the matrix profile efficiently yields (an anytime approximation of) a sequence of pairs $(d_j,\mathcal{I}(j))$ such that $x^{[L]}_{\mathcal{I}(j)}$ is the nearest neighbor to $x^{[L]}_j$ under $d^{(z)}$, with $d^{(z)}(x^{[L]}_j,x^{[L]}_{\mathcal{I}(j)}) = d_j$, and where $d^{(z)}$ is a variant of the $\ell^2$ distance obtained by subtracting mean values and normalizing by the standard deviations of arguments.
\footnote{
Note that generally $\mathcal{I}(\mathcal{I}(j)) \ne j$. This fact is presumably responsible for some numerical annoyances that manifest as \texttt{Inf} and/or \texttt{NaN} entries that can be swept under the rug in practice.
}
It turns out that the information provided by the matrix profile can efficiently address many if not most problems in basic time series analysis. 

In fact, the index alone $\mathcal{I}$ suffices to yield impressive information about time series when coupled with a coweighting. The idea is as follows: let $d$ be a matrix of size $n$ with all entries equal to $\infty$ except for a zero diagonal and $d_{j,\mathcal{I}(j)} := d_j$ (or any other finite number on the right hand side). Then the matrix $Z(0) := \lim_{t \downarrow 0} \exp(-td)$ is a 0-1 matrix with precisely two nonzero entries per row.
\footnote{
In principle, linear algebra can be done on this matrix in linear time: see \cite{saunders2015matrices}. In any event, MATLAB performs the requisite computations quickly. On the other hand, for $n$ large the actual distance matrix will be totally intractable to store. To the extent that any algorithmic approaches to producing weightings might work for either that situation or the case $t > 0$ in the present context, they would presumably require sophistication.
}
While this matrix is obviously singular, dividing a row vector of all ones on the left by it generally yields a reasonably nice result: i.e., the number of singular entries is reasonably small if not zero. 
\footnote{
Surprisingly, considering $Z' := \max(Z(0),Z(0)^T) = \lim_{t \downarrow 0} \exp(-t \cdot \min(d,d^T))$ breaks things in practice. We have not attempted to understand why, though the mechanism does not appear obvious.
}
Setting these to zero (or exploiting the structure of the problem to justify interpolation), the resulting approximate coweighting yields information about motifs which are presumably extremal in some sense. 

Figures \ref{fig:LDPA_5_20201112}-\ref{fig:LDPA_7_20201112} show plots of subsequences with successive highest and lowest coweightings for (slightly smoothed) coordinate data in a localization application \cite{kaluvza2010agent} with $n = 163861$ and $L = 1000$. The figures suggest that these subsequences are near each other and respectively on or ``adjacent to'' a ``boundary'' of $\{x^{[L]}_j\}_{j=1}^n \subset \mathbb{R}^L$, since their coweightings are large in absolute value and approximately cancel. Note that in some cases the pairing is not exact, but the coweighting values indicate this and can suggest alternative pairings.
\footnote{
Note that for a subsequence with a large coweighting we could consider instead the nearest subsequence with a negative coweighting, but this would be more computationally expensive and would presuppose and/or largely reproduce the qualitative results here.
}

\begin{figure}[h]
  \centering
  \includegraphics[trim = 10mm 110mm 10mm 110mm, clip, width=\columnwidth,keepaspectratio]{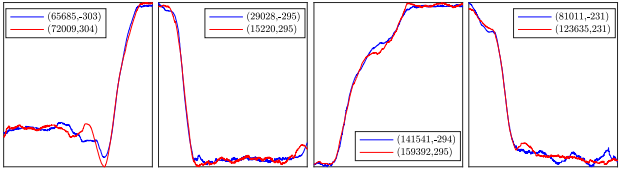}
  \caption{Plots (for the first spatial coordinate of data from \cite{kaluvza2010agent}) of $x^{[L]}_j$ after subtracting means and normalizing by standard deviations. From right to left, we show pairs of successively largest {\color{blue}negative} and {\color{red}positive} values of the (approximate) coweighting $\hat v$ in {\color{blue}blue} and {\color{red}red}, respectively. The legend entries are of the form $(j, \hat v_j)$.}
  \label{fig:LDPA_5_20201112}
\end{figure}

\begin{figure}[h]
  \centering
  \includegraphics[trim = 10mm 110mm 10mm 110mm, clip, width=\columnwidth,keepaspectratio]{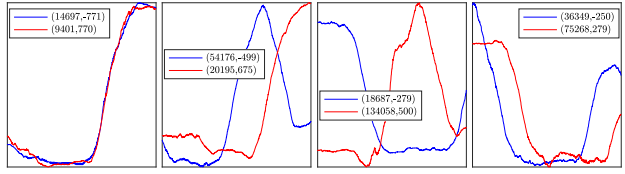}
  \caption{As in Figure \ref{fig:LDPA_5_20201112}, but for the second spatial coordinate of data from \cite{kaluvza2010agent}. Note that the pairing is not exact, but the coweighting values indicate this.}
  \label{fig:LDPA_6_20201112}
\end{figure}

\begin{figure}[h]
  \centering
  \includegraphics[trim = 10mm 110mm 10mm 110mm, clip, width=\columnwidth,keepaspectratio]{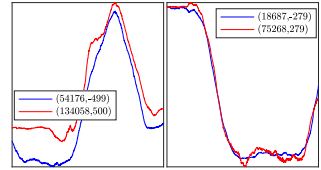}
  \caption{An improved pairing for some of the data in Figure \ref{fig:LDPA_6_20201112}. Not shown: $x^{[L]}_{20195}$ in the center left panel of Figure \ref{fig:LDPA_6_20201112} and the data in the leftmost panel of Figure \ref{fig:LDPA_6_20201112} also match decently.}
  \label{fig:LDPA_6b_20201112}
\end{figure}

\begin{figure}[h]
  \centering
  \includegraphics[trim = 10mm 110mm 10mm 110mm, clip, width=\columnwidth,keepaspectratio]{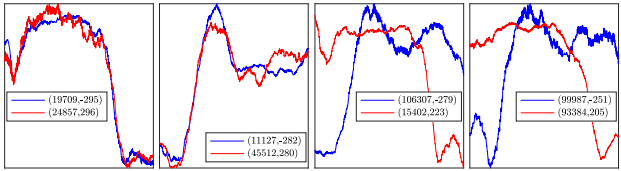}
  \caption{As in Figure \ref{fig:LDPA_5_20201112}, but for the third spatial coordinate of data from \cite{kaluvza2010agent}. Note that the pairing is not exact, but the coweighting values indicate this. It seems likely that in this particular case these points ``interfere'' with each other since there are obvious structures that are roughly conserved across the plots shown.}
  \label{fig:LDPA_7_20201112}
\end{figure}

To reinforce the geometrical characterization above, we applied the UMAP dimension reduction algorithm \cite{mcinnes2018umap} to subsets of $\{x^{[L]}_j\}_{j=1}^n$ for the first spatial coordinate of data from \cite{kaluvza2010agent} (the other spatial coordinates yield qualitatively similar results that we do omit for economy). Figure \ref{fig:UMAPunstructured} shows how this approach captures salient aspects of the geometry. Figure \ref{fig:UMAPstructured} takes a more structured approach by considering the union of $x^{[L]}_j$ with i) large (in absolute value) coweightings; ii) equispaced indices; and iii) a sample stratified by coweighting values (i.e., the lowest decile of coweights corresponds to 10\% of the [sub]sample, and so on). Figure \ref{fig:UMAPstructured} also highlights the nine points resulting from an erosion-type procedure as shown in Figure \ref{fig:SortOfErosion}. Specifically, we consider the points whose coweighting is above a threshold $\tau$, and exploit the problem structure to isolate points with the largest coweighting within an index range of $\pm L/2$. We then compute the dissimilarity at $t=0$, and check to see if any of \emph{its} coweighting components are negative. We then take the largest $\tau$ that produces a nonnegative coweighting on these isolated points: in this case $\tau \approx 119.8$. This procedure evidently retains reliable ``witnesses'' to the geometry.

\begin{figure}[h]
  \centering
  \includegraphics[trim = 30mm 80mm 25mm 70mm, clip, width=.49\textwidth,keepaspectratio]{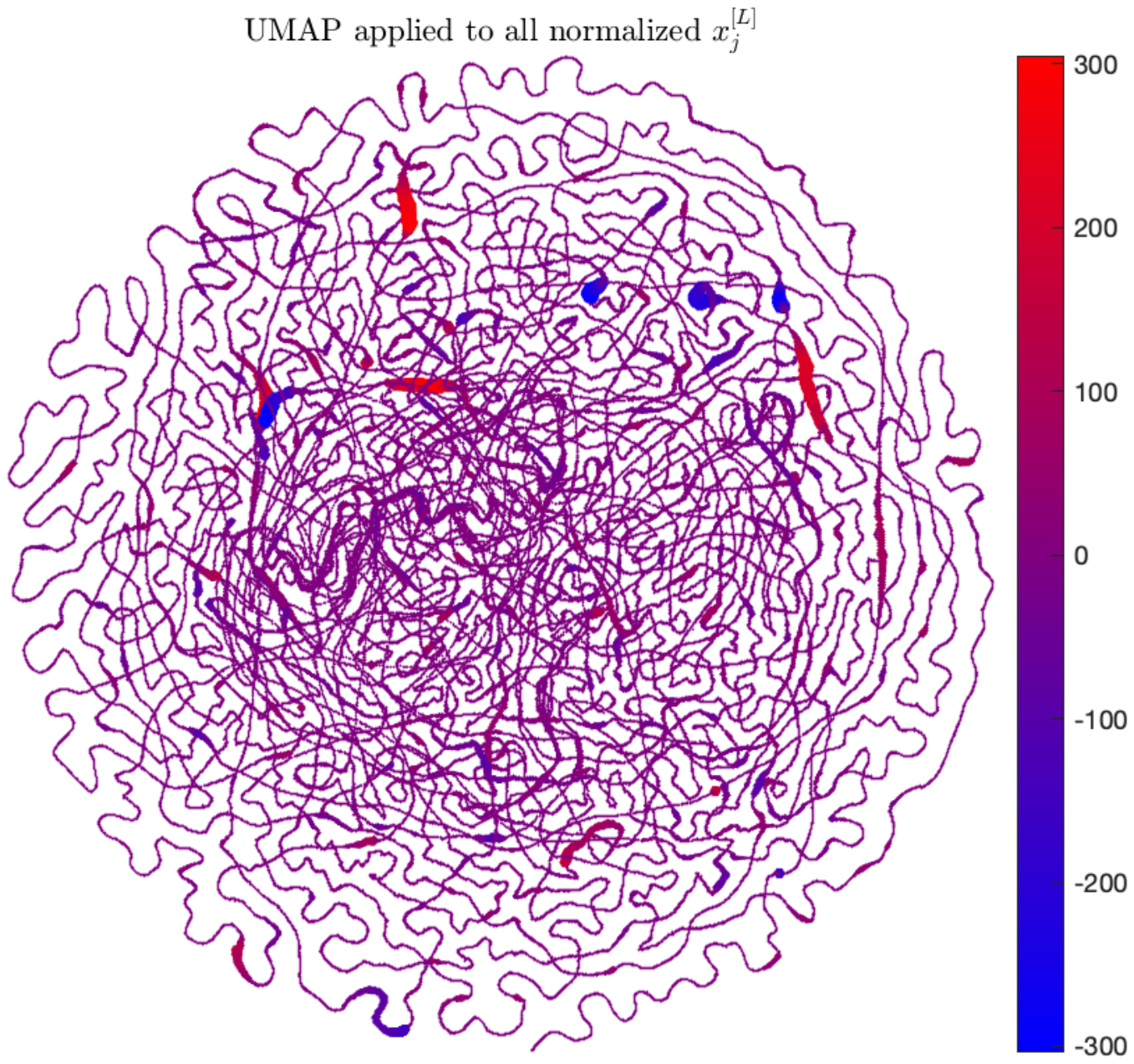}
  \includegraphics[trim = 30mm 80mm 25mm 70mm, clip, width=.49\textwidth,keepaspectratio]{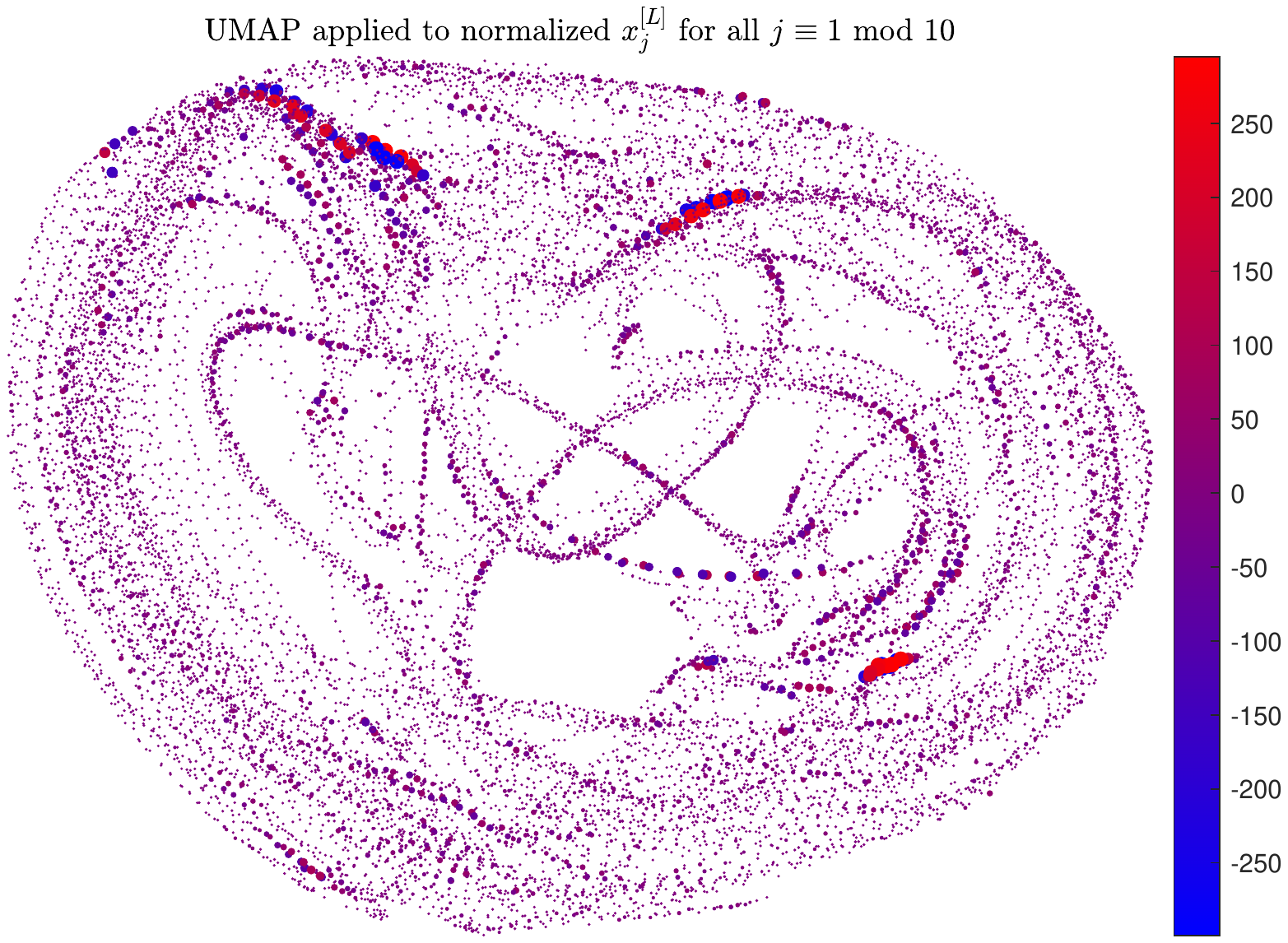}
  \caption{(L) UMAP applied to $\{x^{[L]}_j\}_{j=1}^n$ after normalization. Points are colored by (approximate) coweighting components, and sized by the absolute value of these components. While this accurately captures the intrinsic 1-dimensional geometry of the data, this is actually uninformative from the point of view of bulk geometry. (R) UMAP applied to $\{x^{[L]}_j\}_{j \equiv 1 \text{ mod } 10}$ after normalization. Here the colocation of positively and negatively coweighted points is visually apparent, as is their distribution along ``ridges'' in the data.}
  \label{fig:UMAPunstructured}
\end{figure}

\begin{figure}[h]
  \centering
  \includegraphics[trim = 30mm 80mm 30mm 65mm, clip, width=.5\textwidth,keepaspectratio]{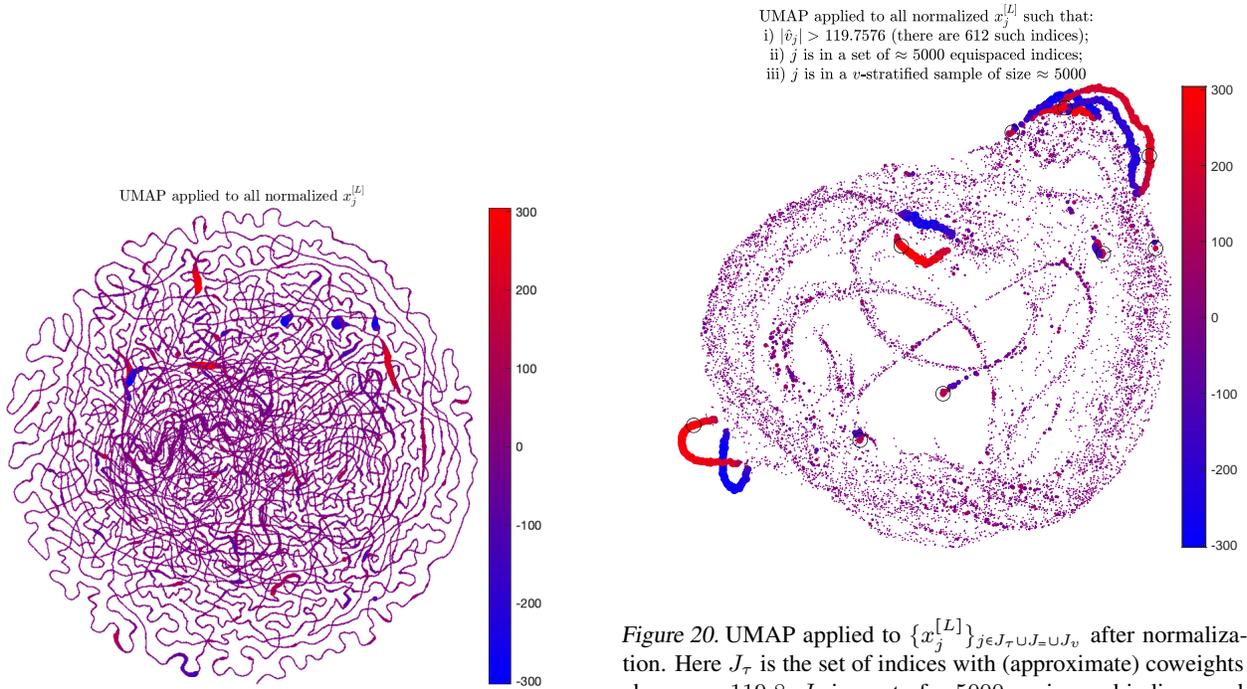}
  \caption{UMAP applied to $\{x^{[L]}_j\}_{j \in J_\tau \cup J_= \cup J_v}$ after normalization. Here $J_\tau$ is the set of indices with (approximate) coweights above $\tau \approx 119.8$, $J_=$ is a set of $\approx 5000$ equispaced indices, and $J_v$ is a set of $\approx 5000$ indices corresponding to an approximately uniformly stratified random sample according to the coweighting. Here the ``boundaryness'' of points with large (approximate; in absolute value) coweighting is apparent. Black circles indicate the nine points depicted in the right panel of Figure \ref{fig:SortOfErosion}.}
  \label{fig:UMAPstructured}
\end{figure}

\begin{figure}[h]
  \centering
  \includegraphics[trim = 30mm 100mm 20mm 100mm, clip, width=.49\textwidth,keepaspectratio]{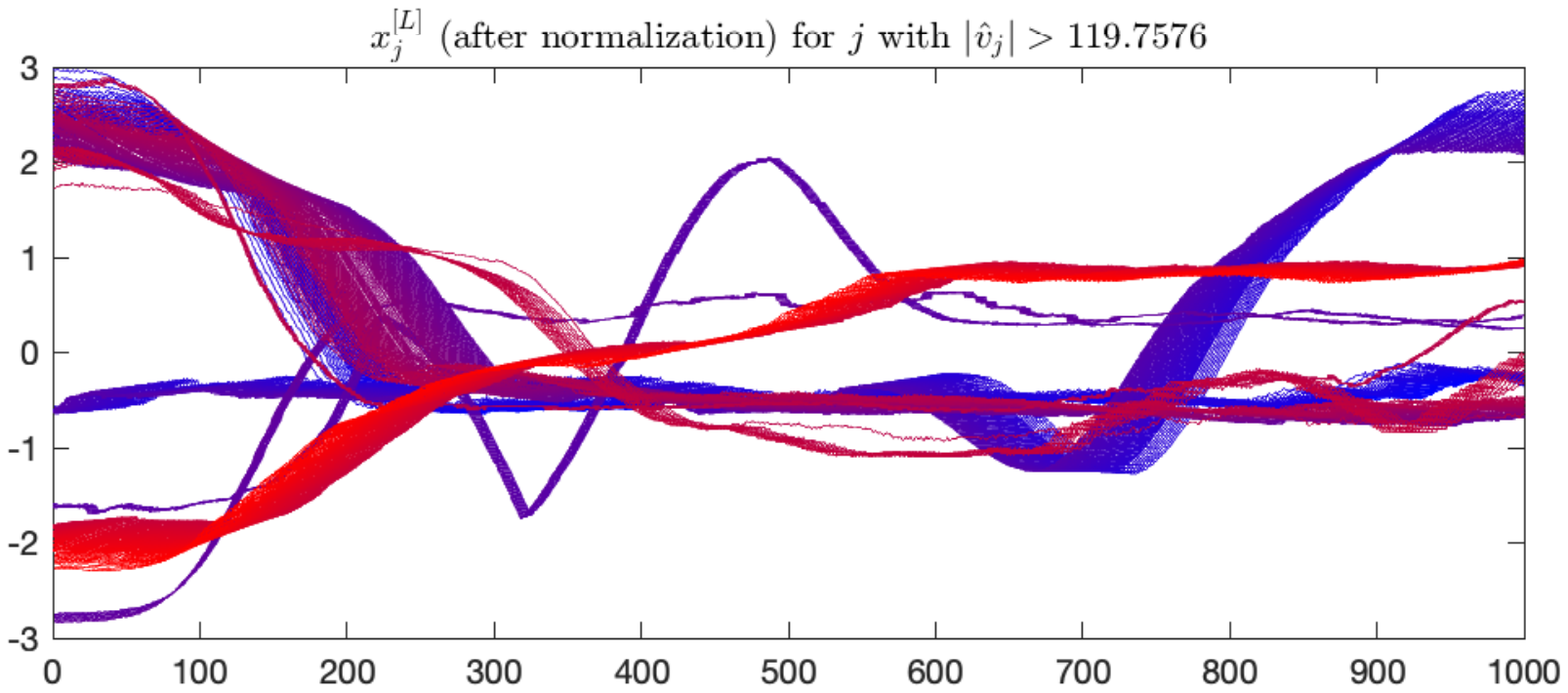}
  \includegraphics[trim = 30mm 100mm 20mm 100mm, clip, width=.49\textwidth,keepaspectratio]{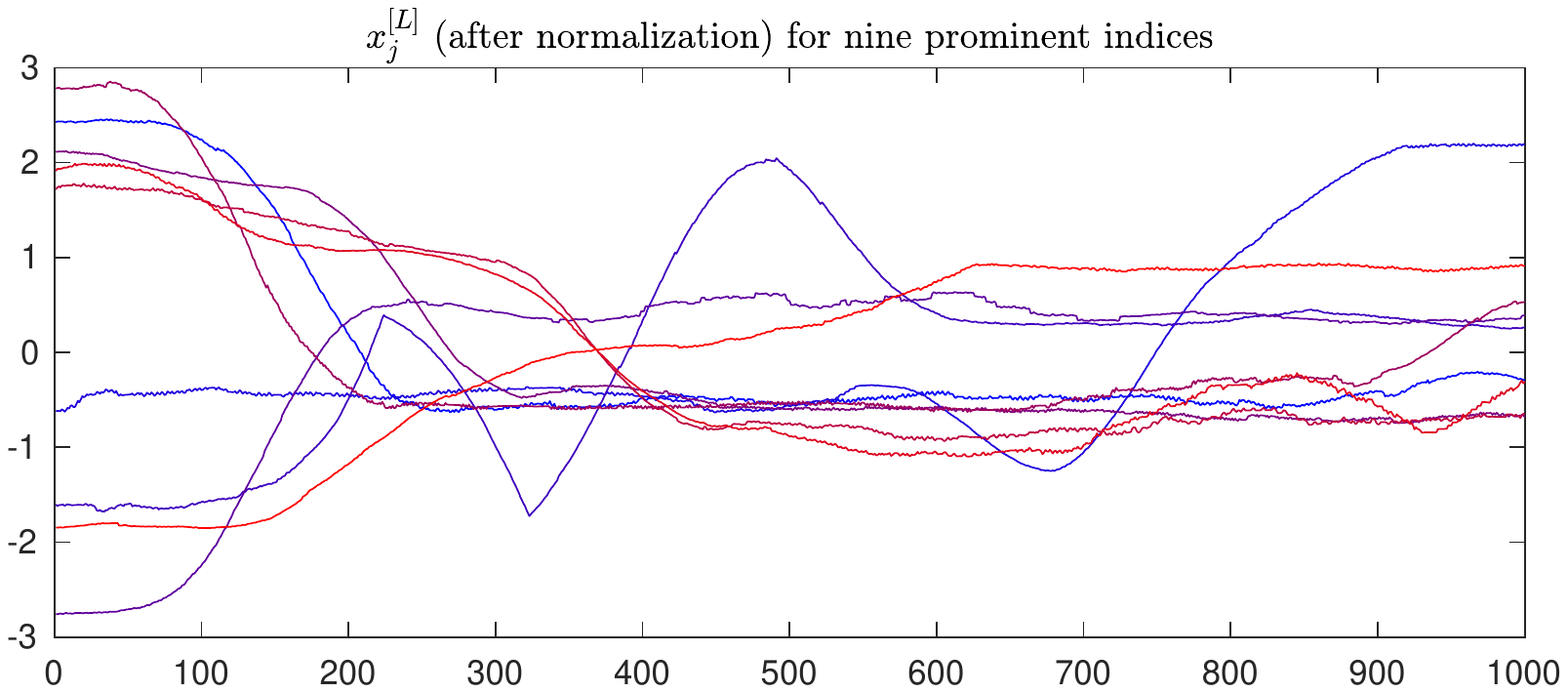}
  \caption{(L) Plots of normalized $x^{[L]}_j$ for $j$ such that $|\hat v_j| > \tau$. (R) Plots of the nine $x^{[L]}_j$ resulting from an ``erosion-like'' procedure that repeatedly discards points with negative (co)weighting components. These remaining points evidently capture the bulk behavior of the larger set in the left-hand panel.}
  \label{fig:SortOfErosion}
\end{figure}

\section{\label{sec:efficientWeighting}Efficiently computing weightings}

Besides a clever use of inclusion/exclusion to accelerate weighting computations in the cases where a few points are added and/or perturbed (for which see \S \ref{sec:DifferentialMagnitude} and \S 2 of \cite{bunch2020practical}), we briefly discuss methods for efficiently solving the weighting equation $Zw = 1$.

For the usual metric on $\mathbb{R}^D$, the similarity matrix $Z$ is positive definite, so we get a reproducing kernel Hilbert space to which Mercer's theorem applies. In practice this amounts to the existence of a Cholesky decomposition $Z = LL^T$ and a positive square root.

However, the ideal method to solve $Zw = 1$ at scale is to use efficient solvers that exploit kernel structure in a more detailed way. As \cite{rebrova2018study} points out, ``kernel matrices are good candidates for hierarchical low-rank solvers'' like \cite{rouet2016distributed,chenhan2017n} that offer subquadratic performance.
\footnote{
In particular, see \url{https://padas.oden.utexas.edu/libaskit/}.
}

Still, using a special-purpose solver may not be worthwhile at intermediate scales. As an alternative, the system $Zw = 1$ can be solved without explicitly forming the entire matrix $Z$ via the the (preconditioned) conjugate gradient method, which has computational complexity $O(\text{nnz} \cdot \sqrt{\kappa})$, where $\text{nnz}$ and $\kappa$ respectively denote the number of nonzero entries and the condition number of the matrix \cite{shewchuk1994introduction}. Moreover, we can improve on this complexity with a good initial guess for a weighting: this is likely to be of particular relevance for iterative algorithms.

%
%

%

The conjugate gradient approach is not really worthwhile unless we can set small entries of $Z$ to zero (so that $\text{nnz} \ll n^2$) while maintaining positive definiteness. Although ``hard thresholding'' generic positive definite matrices in this way generally does not preserve the property of positive definiteness \cite{guillot2012retaining}, it does for sufficiently large $t$ (e.g., above the minimal value that ensures a weighting is positive; cf. Example 6.3.27 of \cite{leinster2021entropy}), and probably more generally.

In any event, we can hard-threshold $Z$ and efficiently solve the resulting system, exploiting positive definiteness if we have it by applying the conjugate gradient method. One way to produce such a sparse version of $Z$ \emph{de novo} without forming the full matrix is to follow the approach of \S \ref{sec:HighDimTimeSeries} and only compute entries for nearby points. Ultimately, hard thresholding requires an efficient similiarity search \cite{zezula2006similarity,leskovec2020mining} to achieve subquadratic runtimes. Fortunately, a wide variety of nearest neighbor techniques are available for Euclidean point clouds \cite{datar2004locality,andoni2018approximate,li2020approximate}. 
\footnote{
Outside the setting of Euclidean space and at general scales, we are (probably) reduced to using a generic sparse solver on a similarity matrix with nonzero entries obtained by approximate similarity searches based on locality sensitive (or preserving) hashing. For example, Jaccard distance and its variants lead to the efficient MinHash family of algorithms \cite{shrivastava2014defense,wu2020review}.
}

\clearpage

\section{\label{sec:iohAnalyzer}\texttt{IOHAnalyzer} plots}

Figures \ref{fig:IOHanalyzer/EE_mul25par32lamLin/EE_mul25par32lamLin_f03d20}-\ref{fig:IOHanalyzer/EE_mul25par32lamLin/EE_mul25par32lamLin_f19d40vScipyECDF} were produced by \texttt{IOHanalyzer} \cite{doerr2018iohprofiler} at \url{https://iohanalyzer.liacs.nl/} to supplement Figures \ref{fig:ECDFsingleOne20}-\ref{fig:ECDFsingleOne40last} (see footnote \ref{foot:plots}). {\bf NB. In these plots, \texttt{EXPLO2} is indicated by \texttt{EE}.}


\begin{figure}[h]
  \centering
  \includegraphics[trim = 0mm 0mm 0mm 0mm, clip, width=\columnwidth,keepaspectratio]{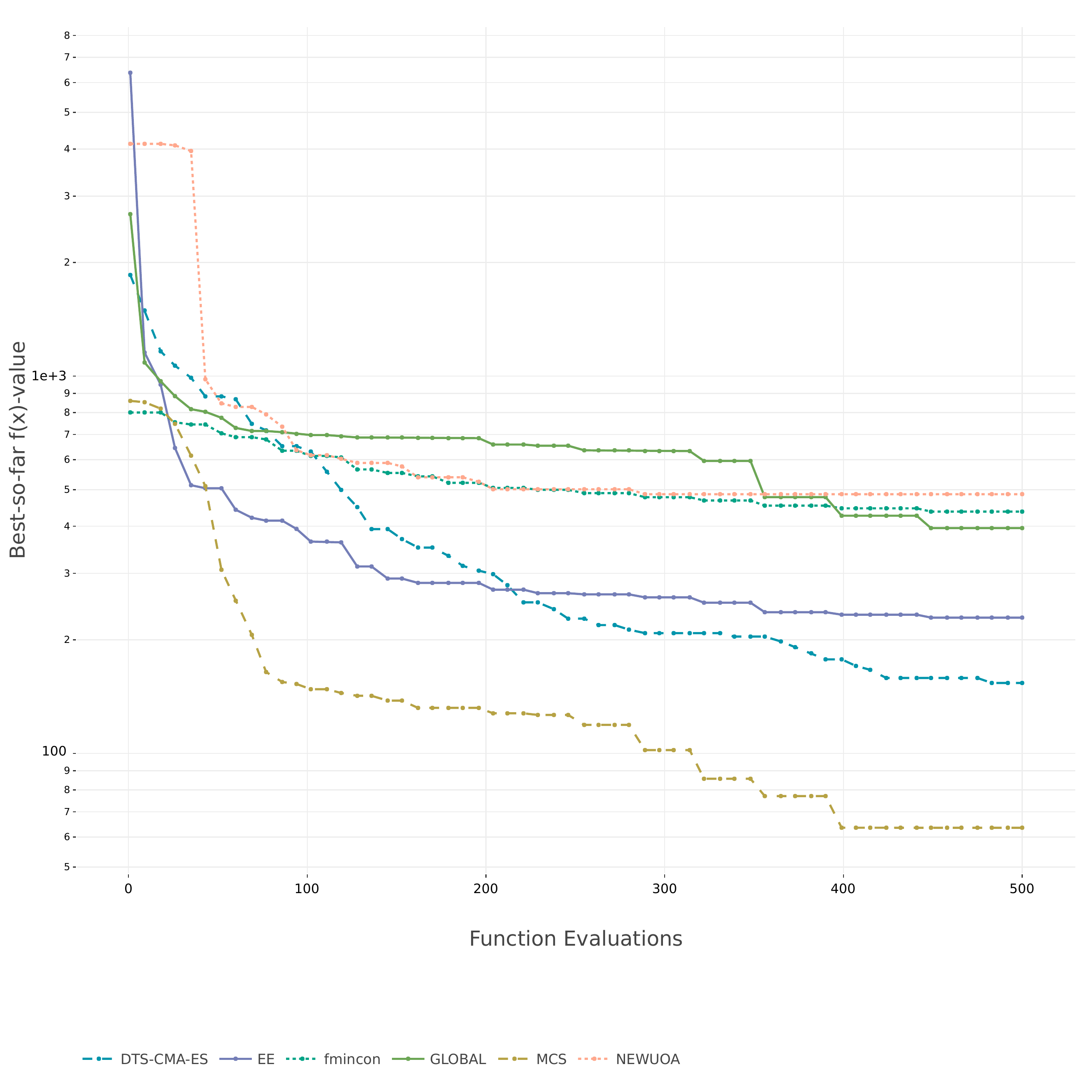}
  \caption{$f_{3}$, $D = 20$. Here \texttt{EXPLO2} is indicated by \texttt{EE}.}
  \label{fig:IOHanalyzer/EE_mul25par32lamLin/EE_mul25par32lamLin_f03d20}
\end{figure}

\begin{figure}[h]
  \centering
  \includegraphics[trim = 0mm 0mm 0mm 0mm, clip, width=\columnwidth,keepaspectratio]{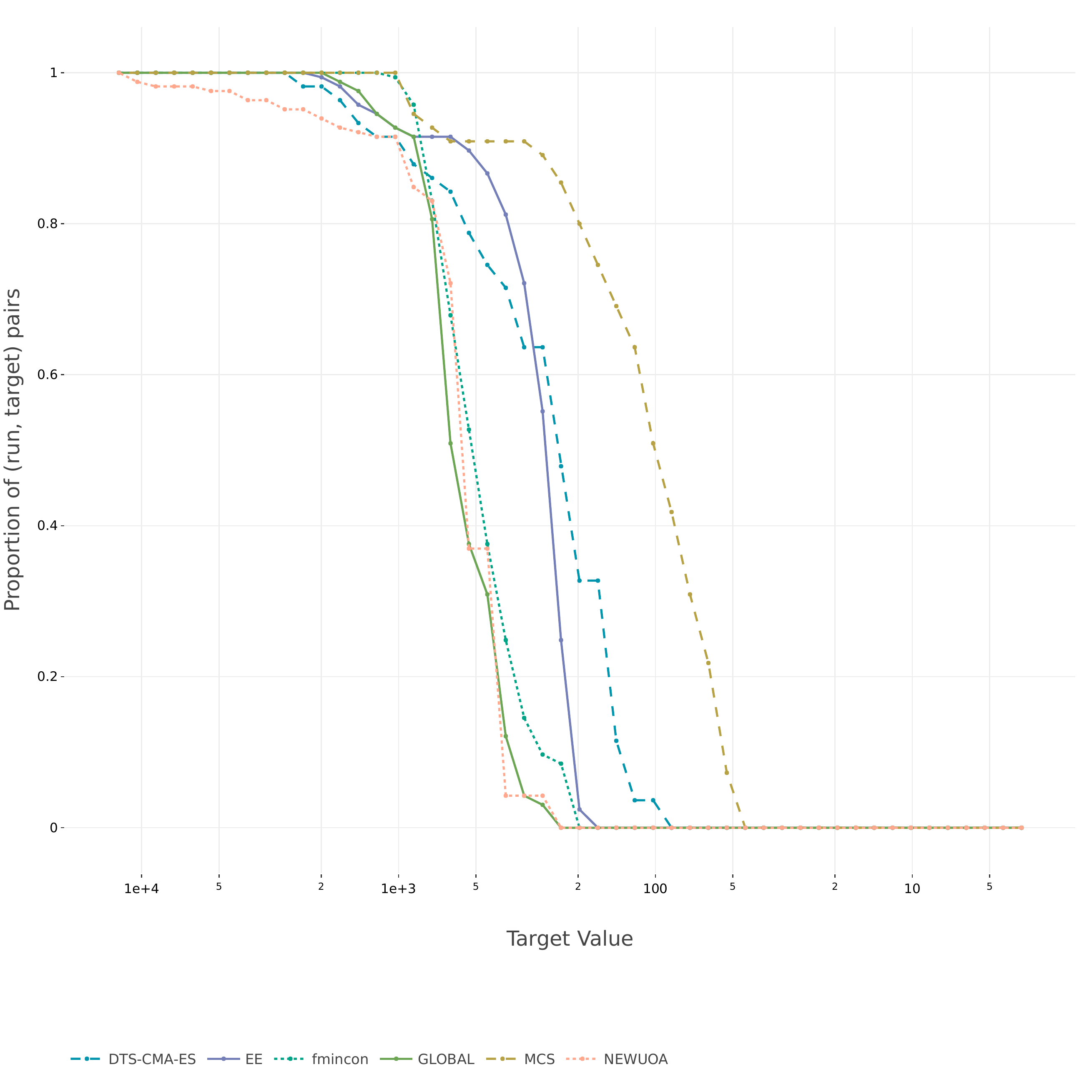}
  \caption{$f_{3}$, $D = 20$. Here \texttt{EXPLO2} is indicated by \texttt{EE}.}
  \label{fig:IOHanalyzer/EE_mul25par32lamLin/EE_mul25par32lamLin_f03d20ECDF}
\end{figure}

\begin{figure}[h]
  \centering
  \includegraphics[trim = 0mm 0mm 0mm 0mm, clip, width=\columnwidth,keepaspectratio]{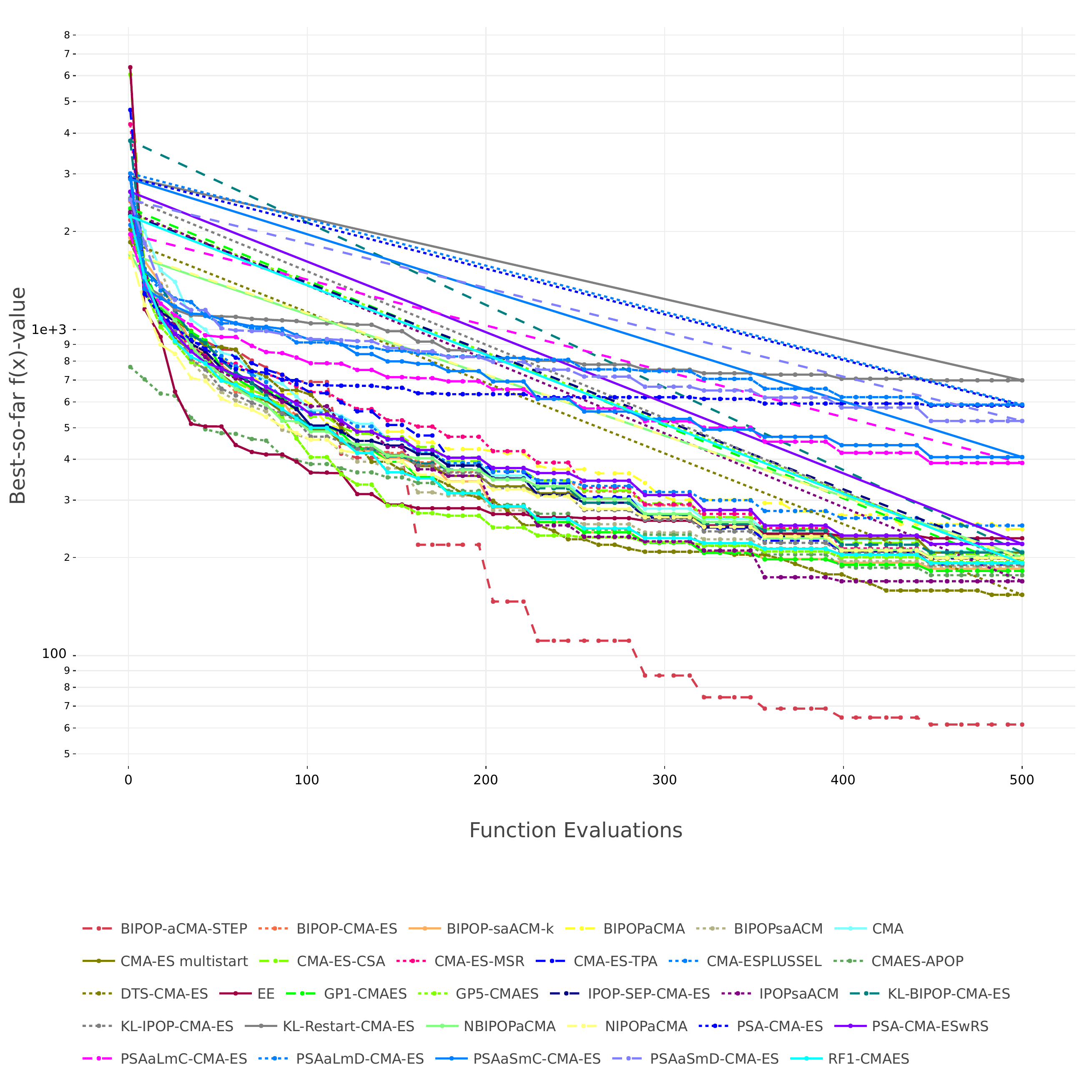}
  \caption{$f_{3}$, $D = 20$. Here \texttt{EXPLO2} is indicated by \texttt{EE}.}
  \label{fig:IOHanalyzer/EE_mul25par32lamLin/EE_mul25par32lamLin_f03d20allCMA}
\end{figure}

\begin{figure}[h]
  \centering
  \includegraphics[trim = 0mm 0mm 0mm 0mm, clip, width=\columnwidth,keepaspectratio]{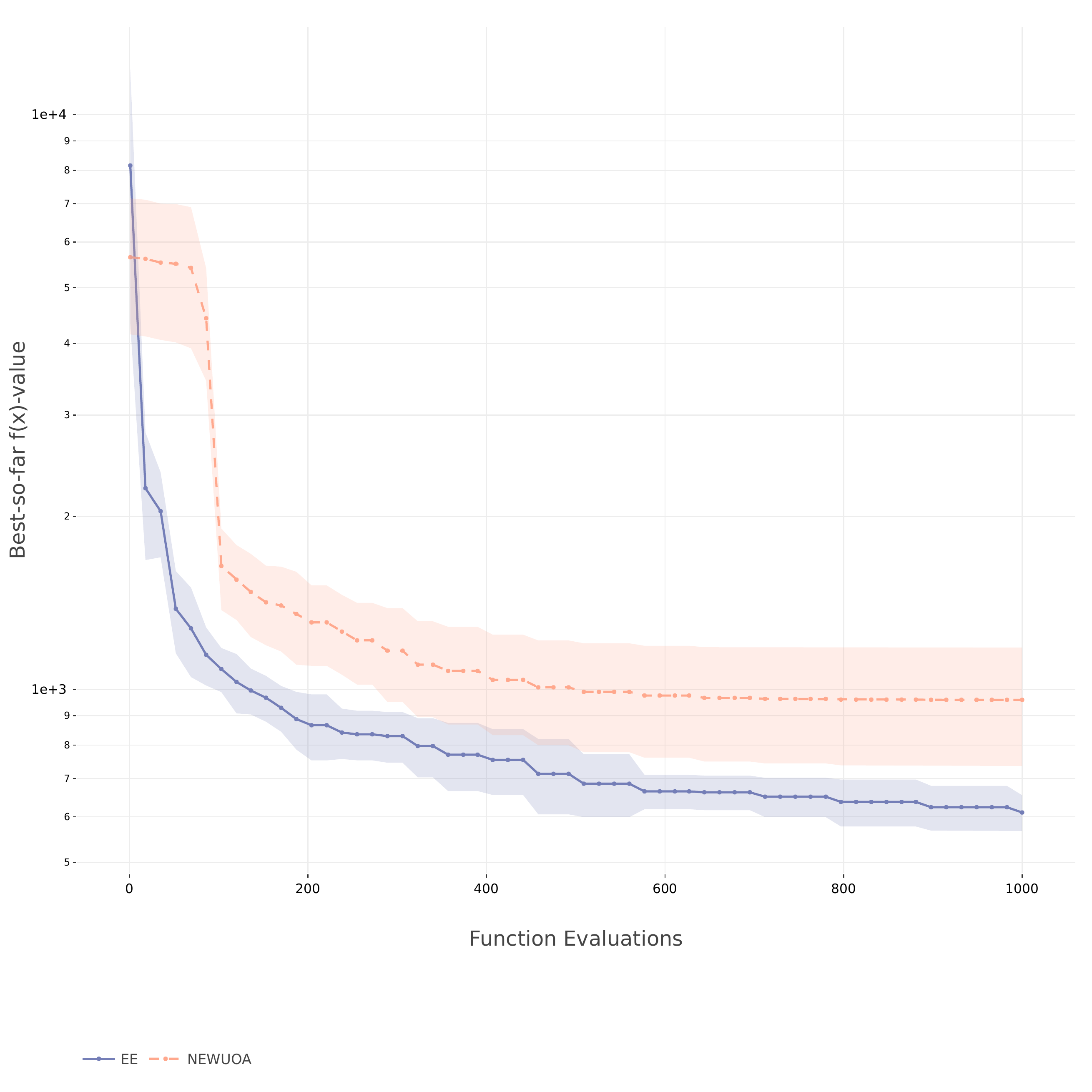}
  \caption{$f_{3}$, $D = 40$. Here \texttt{EXPLO2} is indicated by \texttt{EE}.}
  \label{fig:IOHanalyzer/EE_mul25par32lamLin/EE_mul25par32lamLin_f03d40}
\end{figure}

\begin{figure}[h]
  \centering
  \includegraphics[trim = 0mm 0mm 0mm 0mm, clip, width=\columnwidth,keepaspectratio]{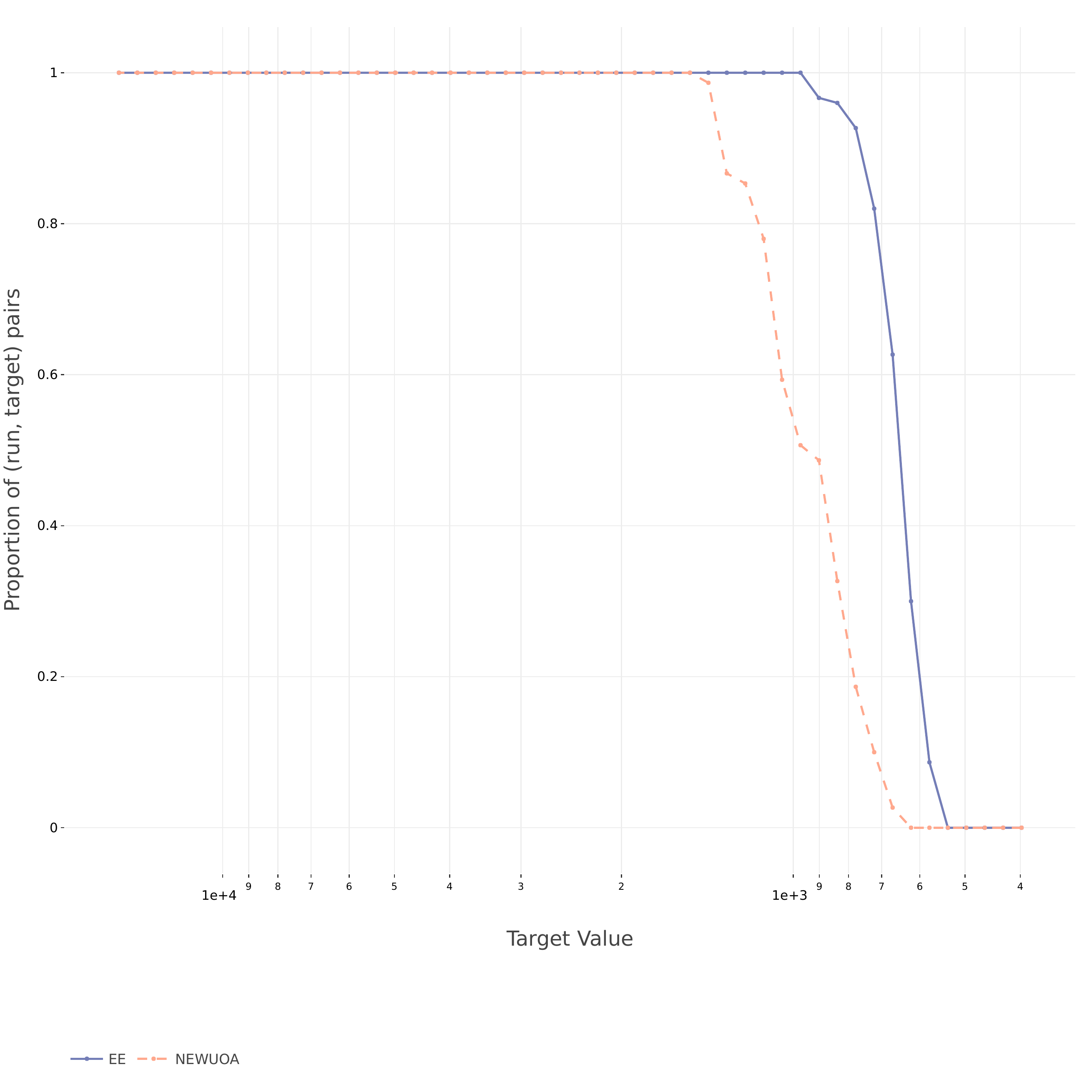}
  \caption{$f_{3}$, $D = 40$. Here \texttt{EXPLO2} is indicated by \texttt{EE}.}
  \label{fig:IOHanalyzer/EE_mul25par32lamLin/EE_mul25par32lamLin_f03d40ECDF}
\end{figure}

\begin{figure}[h]
  \centering
  \includegraphics[trim = 0mm 0mm 0mm 0mm, clip, width=\columnwidth,keepaspectratio]{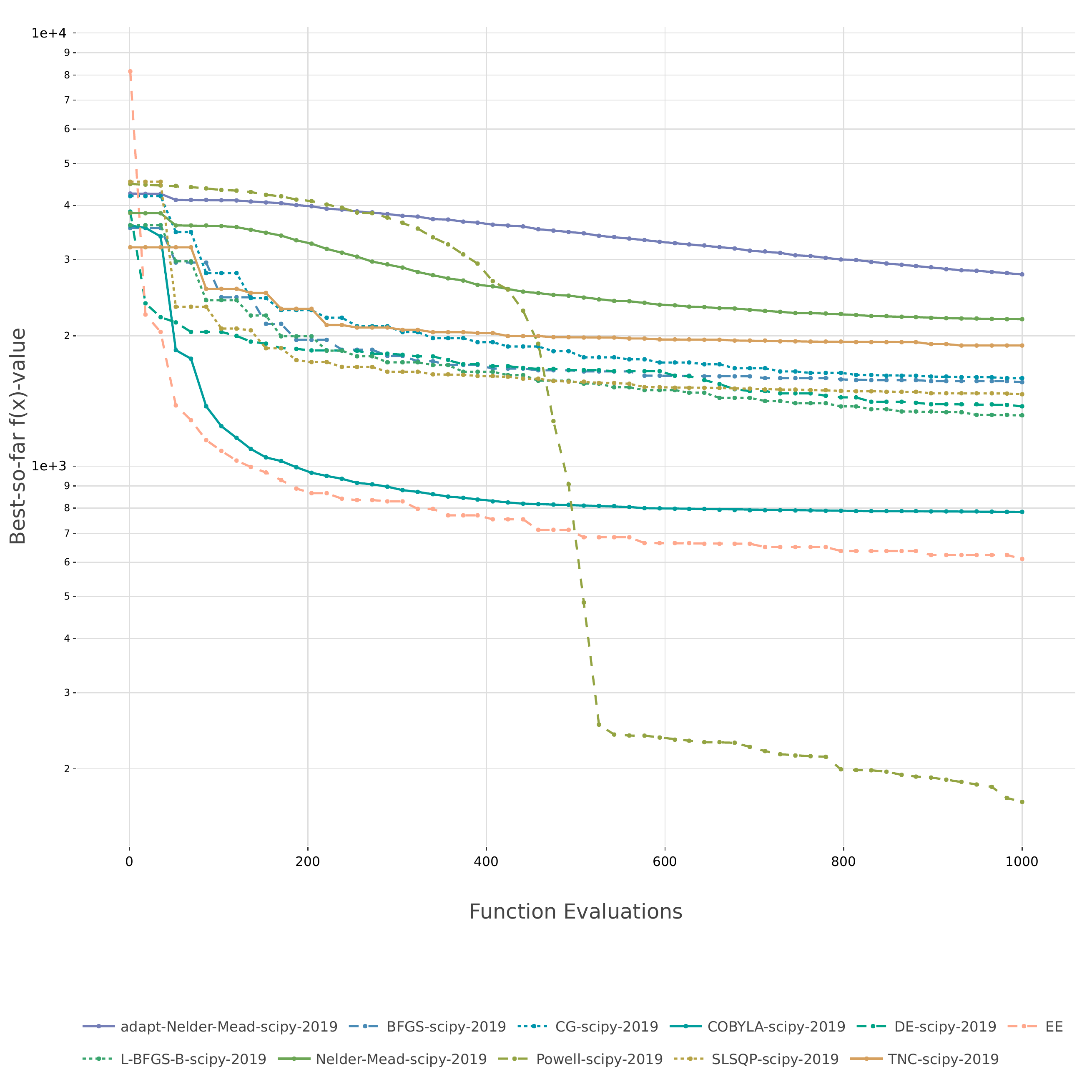}
  \caption{$f_{3}$, $D = 40$. Here \texttt{EXPLO2} is indicated by \texttt{EE}.}
  \label{fig:IOHanalyzer/EE_mul25par32lamLin/EE_mul25par32lamLin_f03d40vScipy}
\end{figure}

\begin{figure}[h]
  \centering
  \includegraphics[trim = 0mm 0mm 0mm 0mm, clip, width=\columnwidth,keepaspectratio]{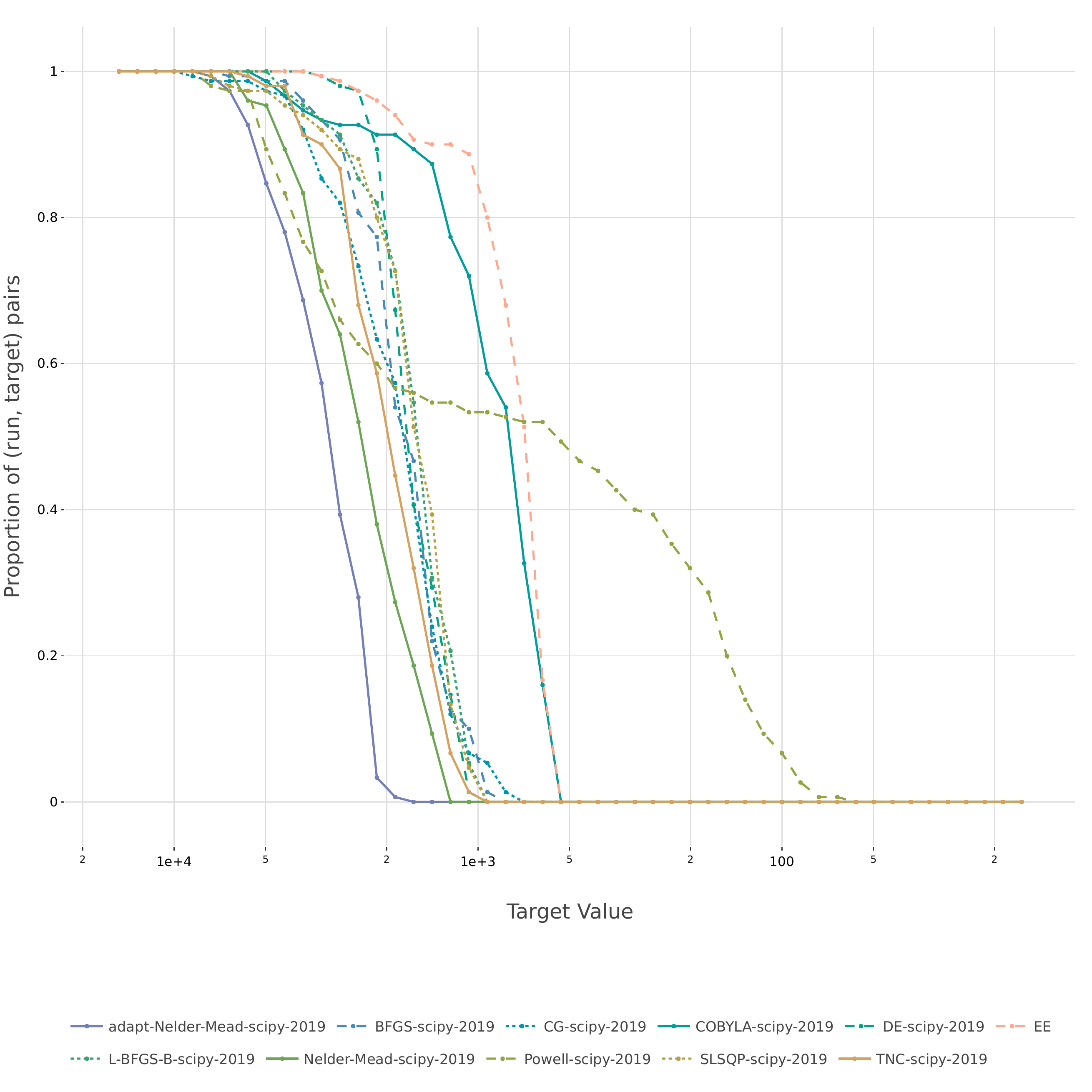}
  \caption{$f_{3}$, $D = 40$. Here \texttt{EXPLO2} is indicated by \texttt{EE}.}
  \label{fig:IOHanalyzer/EE_mul25par32lamLin/EE_mul25par32lamLin_f03d40vScipyECDF}
\end{figure}

\clearpage


\begin{figure}[h]
  \centering
  \includegraphics[trim = 0mm 0mm 0mm 0mm, clip, width=\columnwidth,keepaspectratio]{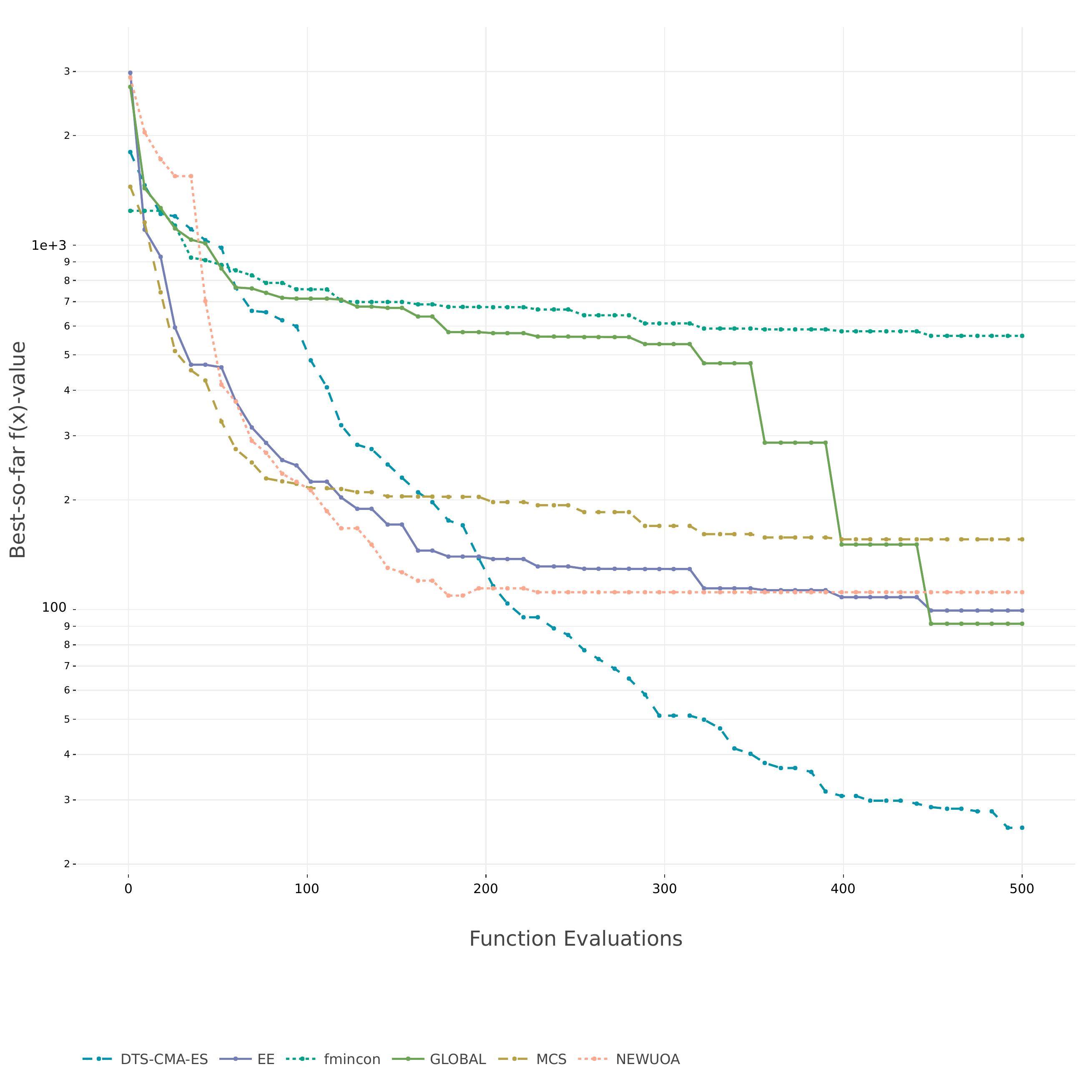}
  \caption{$f_{7}$, $D = 20$. Here \texttt{EXPLO2} is indicated by \texttt{EE}.}
  \label{fig:IOHanalyzer/EE_mul25par32lamLin/EE_mul25par32lamLin_f07d20}
\end{figure}

\begin{figure}[h]
  \centering
  \includegraphics[trim = 0mm 0mm 0mm 0mm, clip, width=\columnwidth,keepaspectratio]{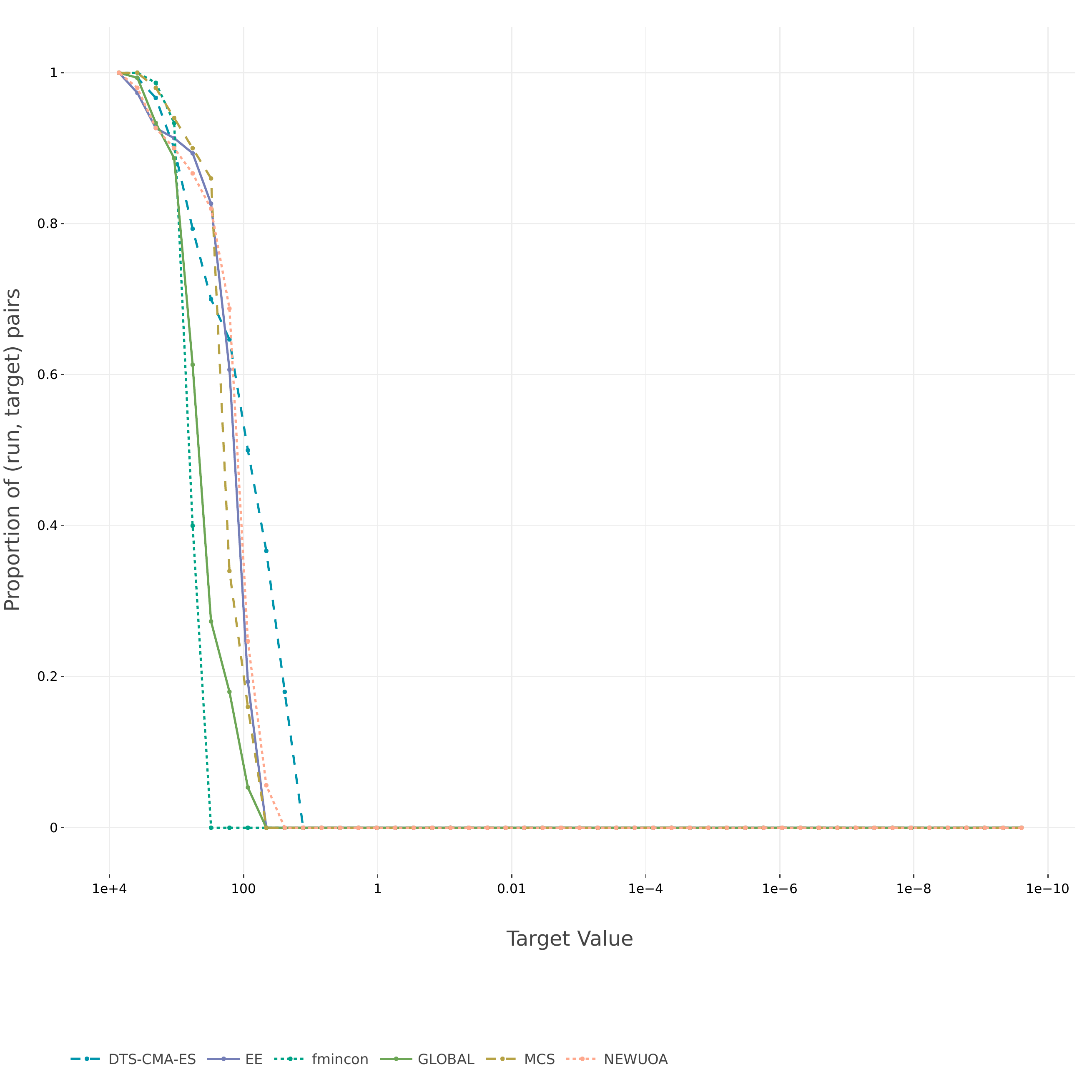}
  \caption{$f_{7}$, $D = 20$. Here \texttt{EXPLO2} is indicated by \texttt{EE}.}
  \label{fig:IOHanalyzer/EE_mul25par32lamLin/EE_mul25par32lamLin_f07d20ECDF}
\end{figure}

\begin{figure}[h]
  \centering
  \includegraphics[trim = 0mm 0mm 0mm 0mm, clip, width=\columnwidth,keepaspectratio]{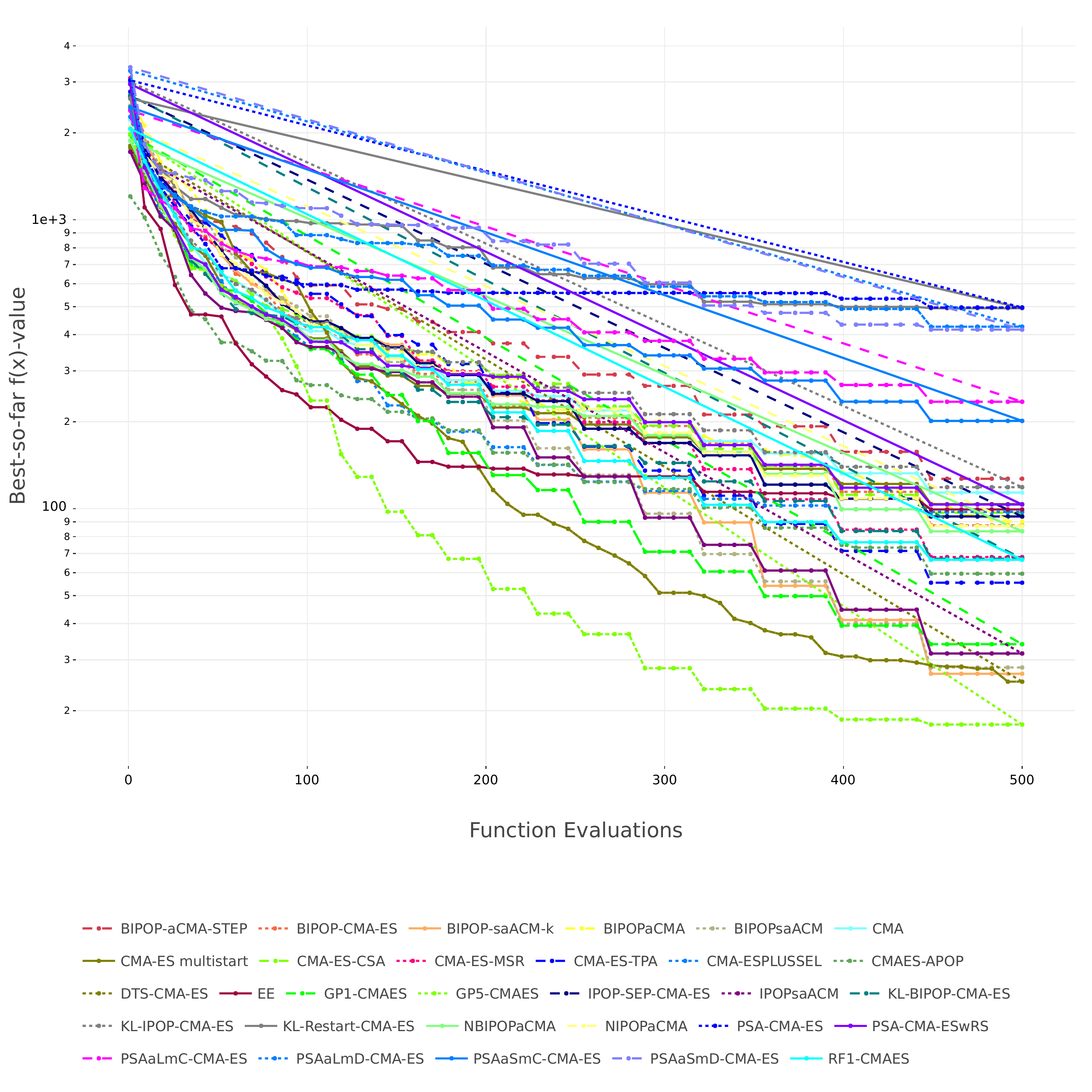}
  \caption{$f_{7}$, $D = 20$. Here \texttt{EXPLO2} is indicated by \texttt{EE}.}
  \label{fig:IOHanalyzer/EE_mul25par32lamLin/EE_mul25par32lamLin_f07d20allCMA}
\end{figure}

\begin{figure}[h]
  \centering
  \includegraphics[trim = 0mm 0mm 0mm 0mm, clip, width=\columnwidth,keepaspectratio]{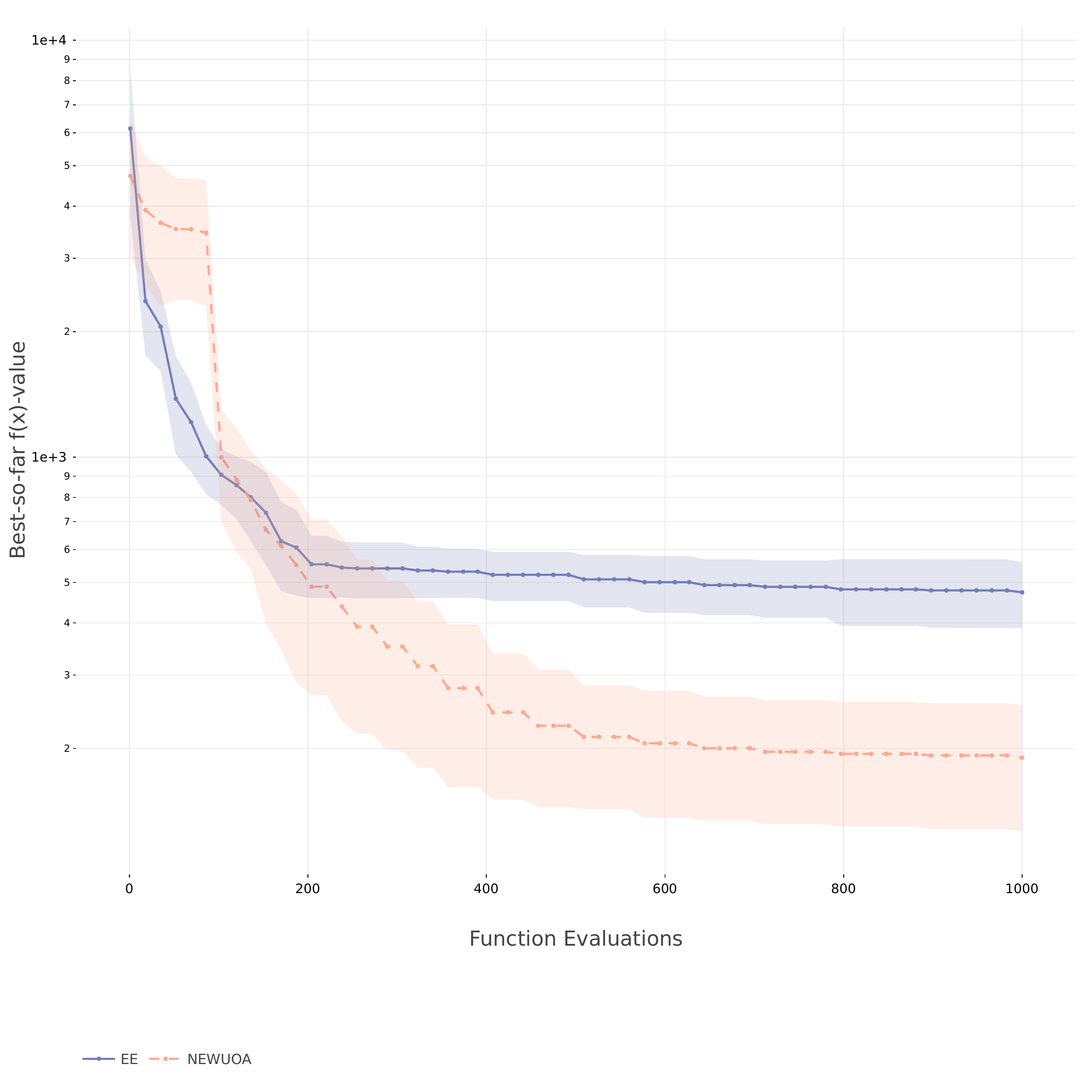}
  \caption{$f_{7}$, $D = 40$. Here \texttt{EXPLO2} is indicated by \texttt{EE}.}
  \label{fig:IOHanalyzer/EE_mul25par32lamLin/EE_mul25par32lamLin_f07d40}
\end{figure}

\begin{figure}[h]
  \centering
  \includegraphics[trim = 0mm 0mm 0mm 0mm, clip, width=\columnwidth,keepaspectratio]{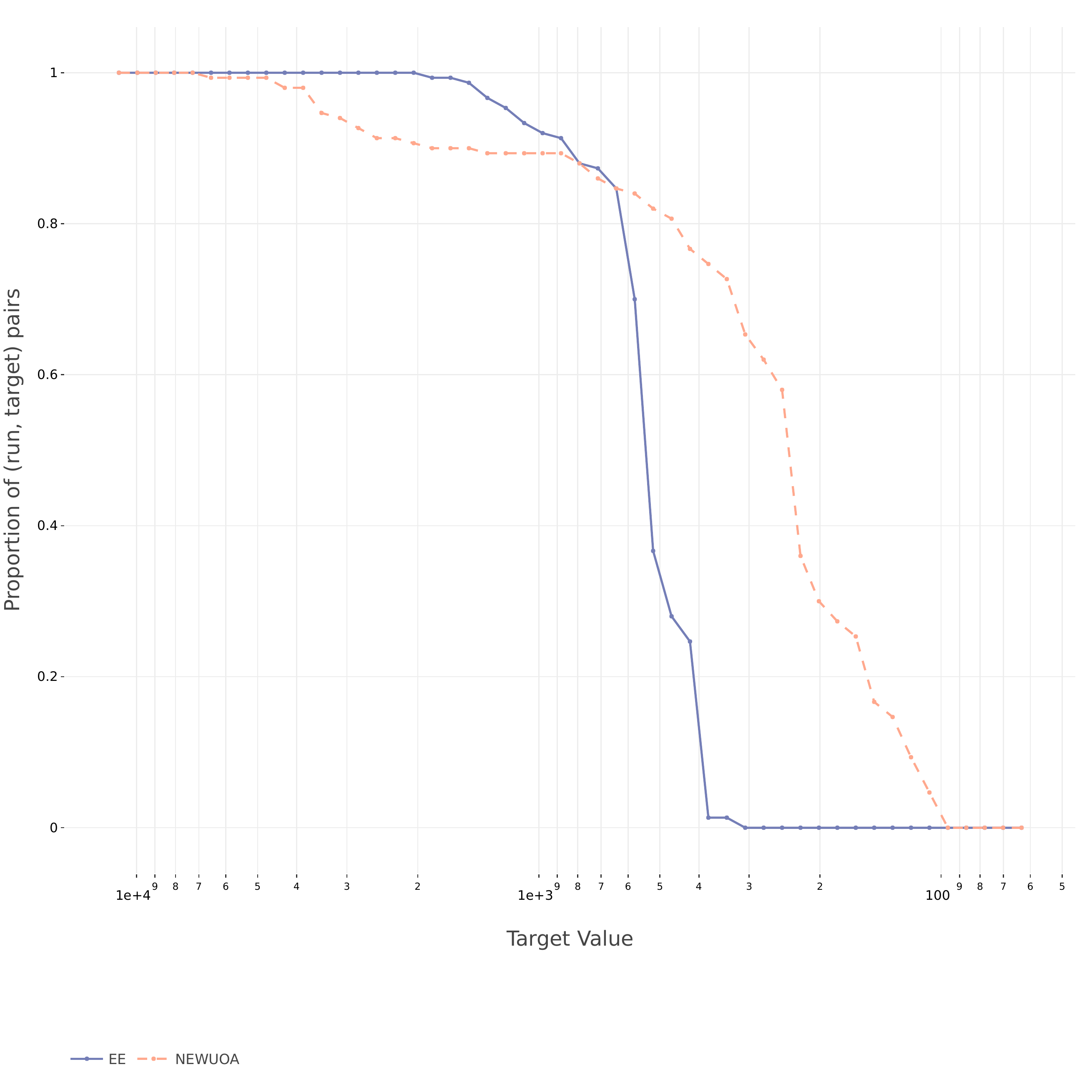}
  \caption{$f_{7}$, $D = 40$. Here \texttt{EXPLO2} is indicated by \texttt{EE}.}
  \label{fig:IOHanalyzer/EE_mul25par32lamLin/EE_mul25par32lamLin_f07d40ECDF}
\end{figure}

\begin{figure}[h]
  \centering
  \includegraphics[trim = 0mm 0mm 0mm 0mm, clip, width=\columnwidth,keepaspectratio]{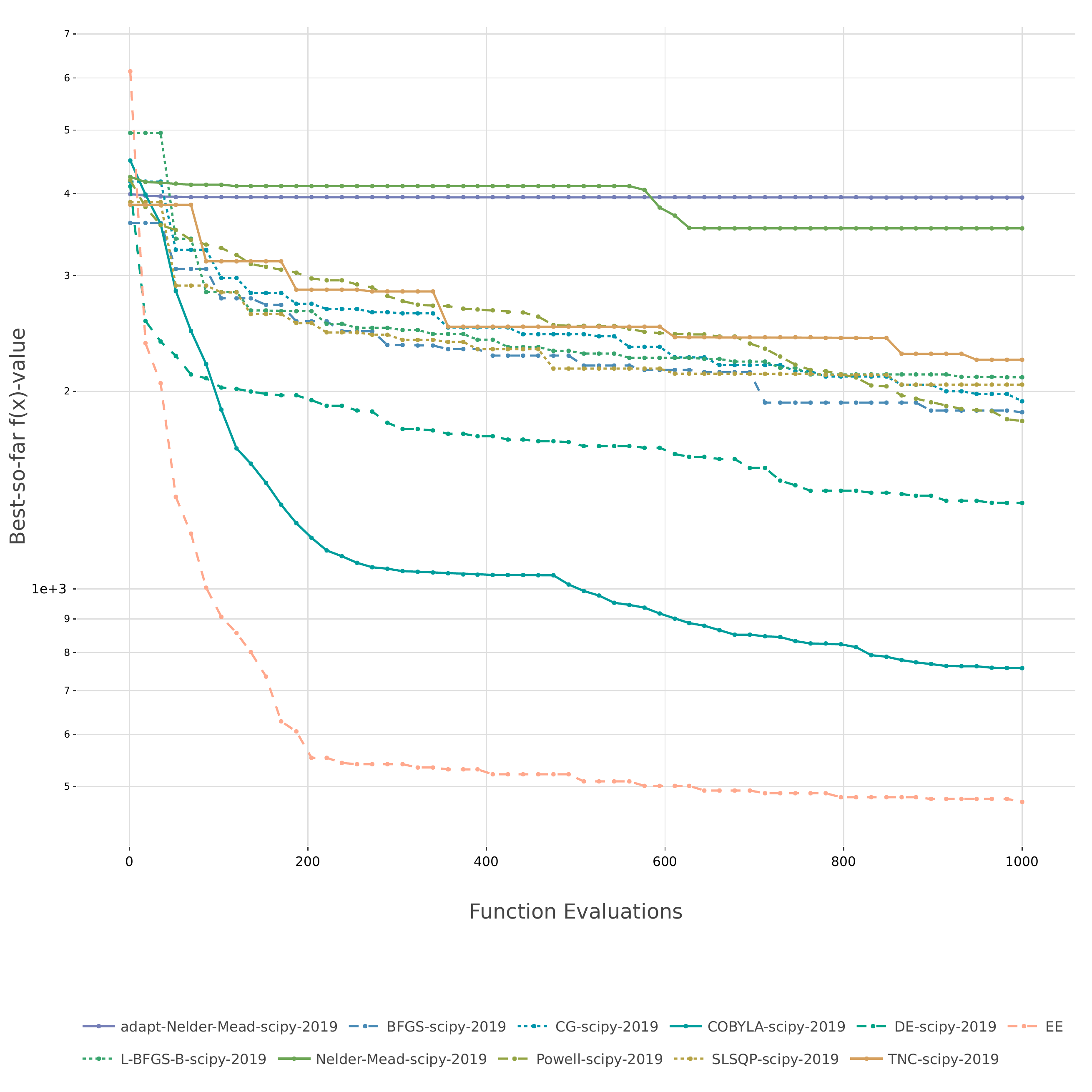}
  \caption{$f_{7}$, $D = 40$. Here \texttt{EXPLO2} is indicated by \texttt{EE}.}
  \label{fig:IOHanalyzer/EE_mul25par32lamLin/EE_mul25par32lamLin_f07d40vScipy}
\end{figure}

\begin{figure}[h]
  \centering
  \includegraphics[trim = 0mm 0mm 0mm 0mm, clip, width=\columnwidth,keepaspectratio]{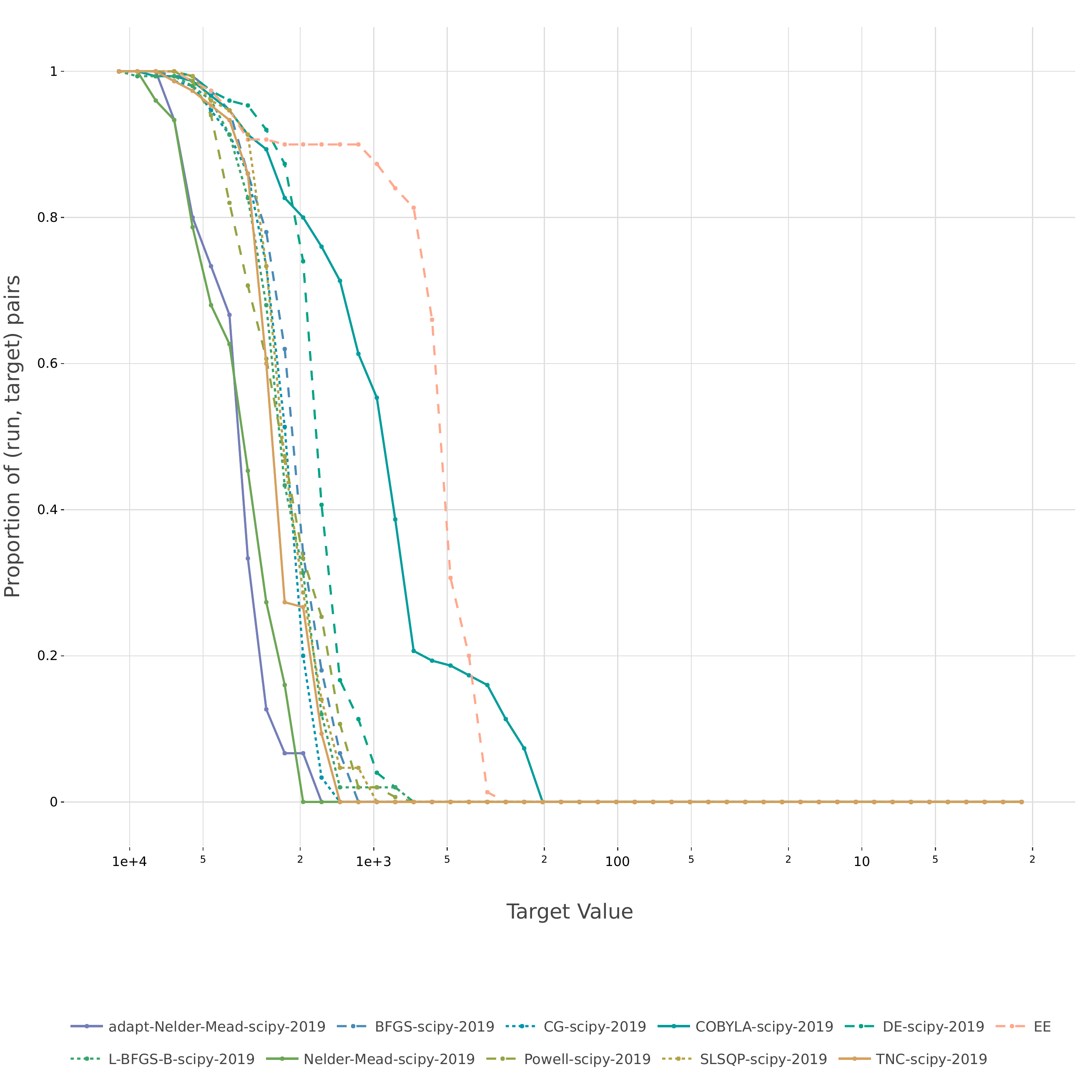}
  \caption{$f_{7}$, $D = 40$. Here \texttt{EXPLO2} is indicated by \texttt{EE}.}
  \label{fig:IOHanalyzer/EE_mul25par32lamLin/EE_mul25par32lamLin_f07d40vScipyECDF}
\end{figure}

\clearpage


\begin{figure}[h]
  \centering
  \includegraphics[trim = 0mm 0mm 0mm 0mm, clip, width=\columnwidth,keepaspectratio]{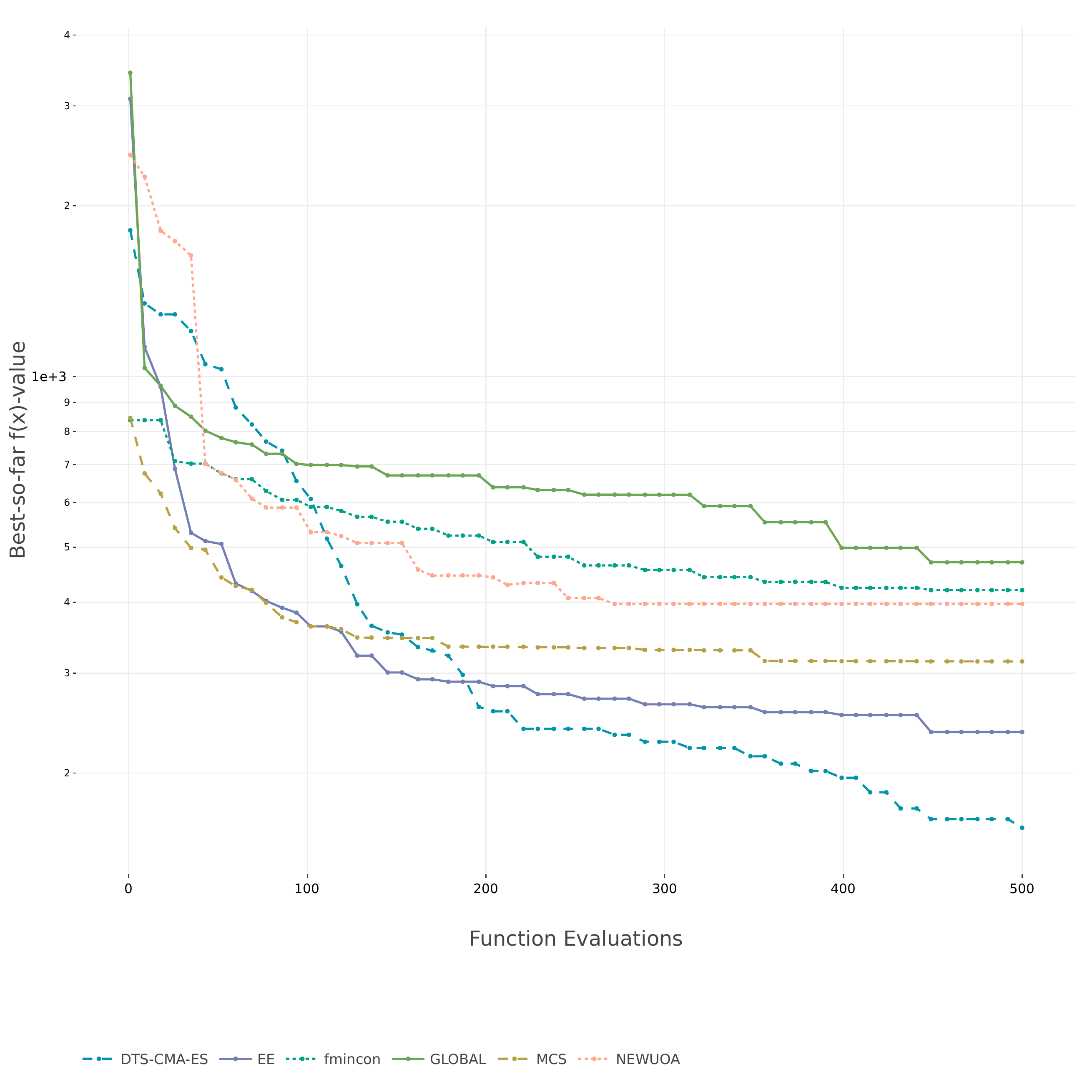}
  \caption{$f_{15}$, $D = 20$. Here \texttt{EXPLO2} is indicated by \texttt{EE}.}
  \label{fig:IOHanalyzer/EE_mul25par32lamLin/EE_mul25par32lamLin_f15d20}
\end{figure}

\begin{figure}[h]
  \centering
  \includegraphics[trim = 0mm 0mm 0mm 0mm, clip, width=\columnwidth,keepaspectratio]{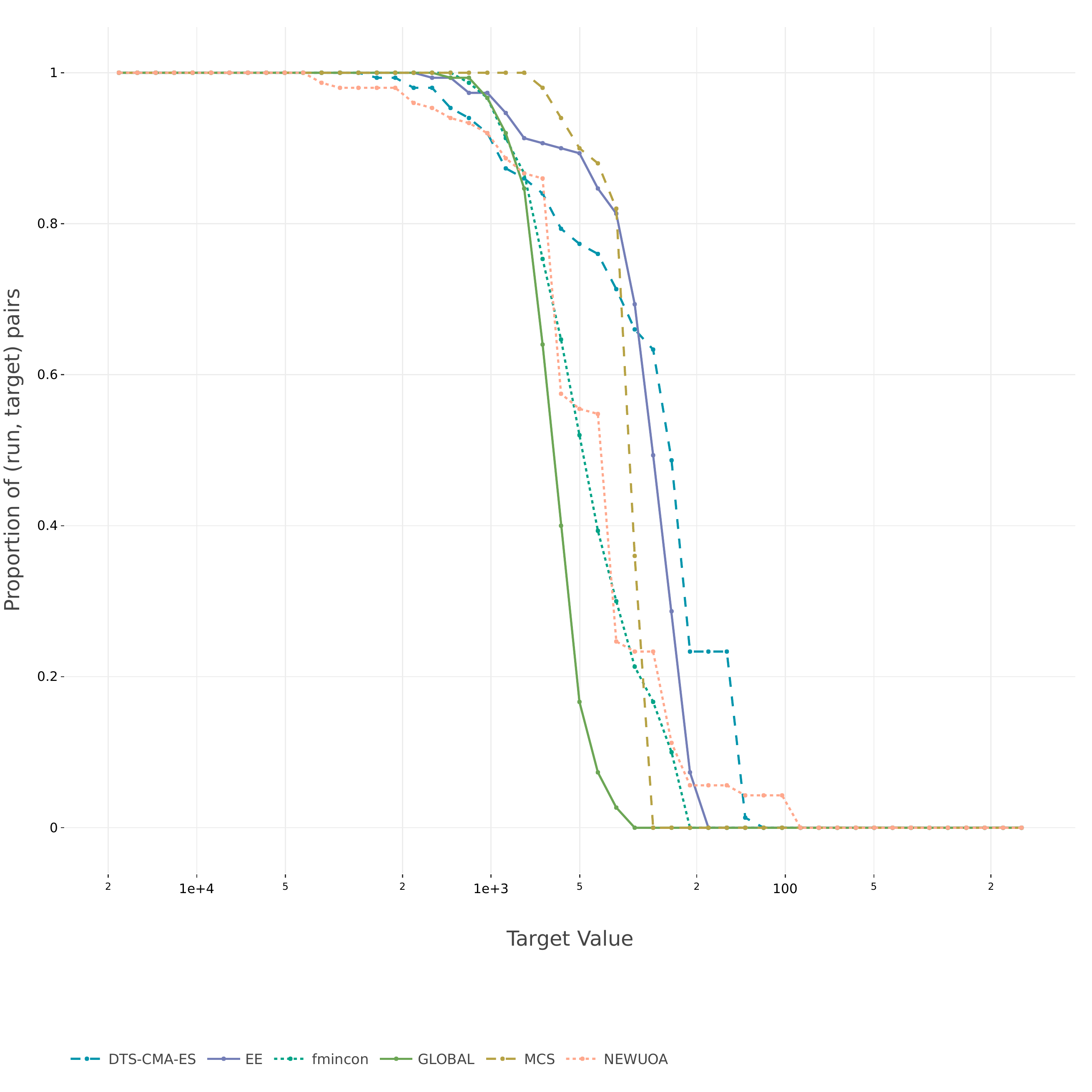}
  \caption{$f_{15}$, $D = 20$. Here \texttt{EXPLO2} is indicated by \texttt{EE}.}
  \label{fig:IOHanalyzer/EE_mul25par32lamLin/EE_mul25par32lamLin_f15d20ECDF}
\end{figure}

\begin{figure}[h]
  \centering
  \includegraphics[trim = 0mm 0mm 0mm 0mm, clip, width=\columnwidth,keepaspectratio]{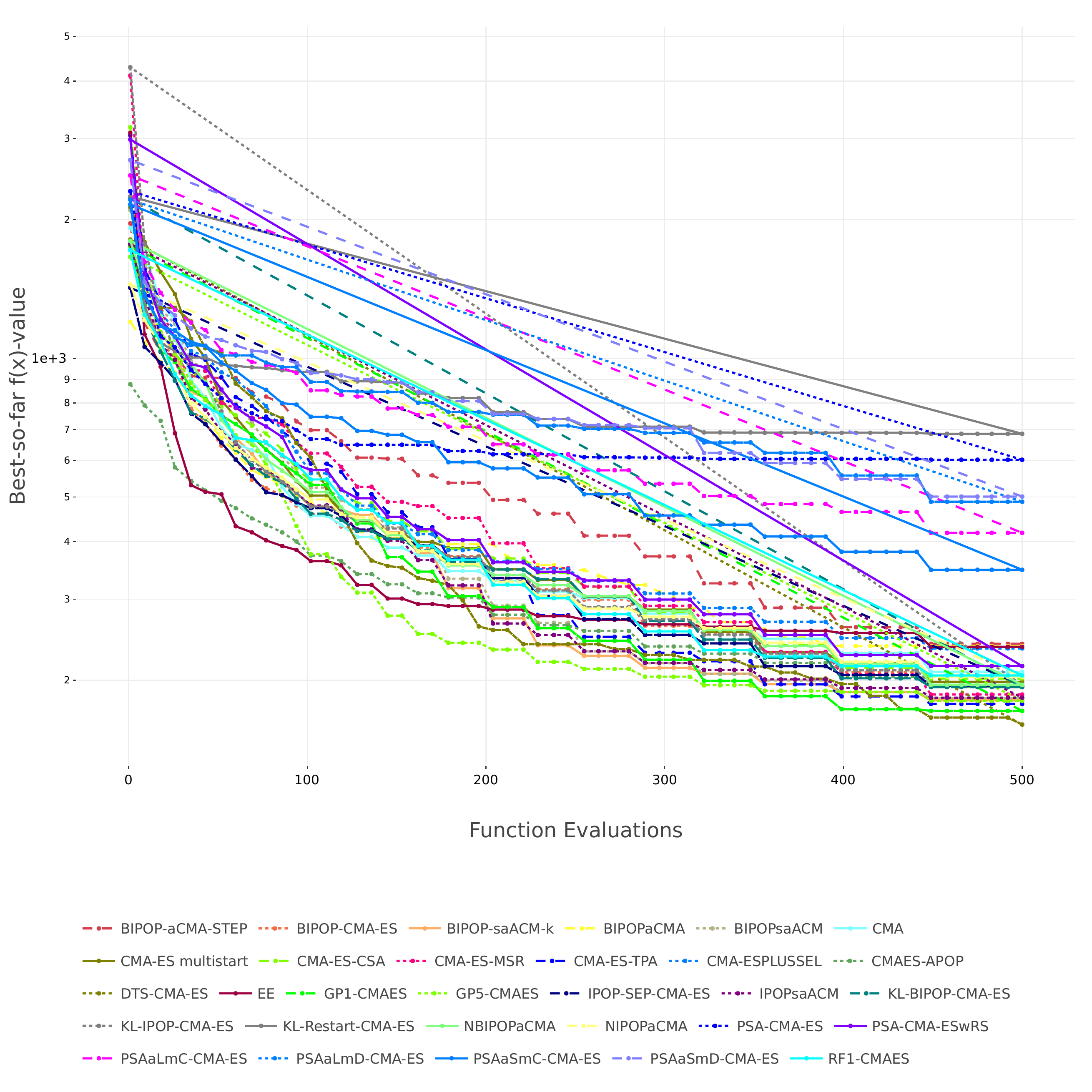}
  \caption{$f_{15}$, $D = 20$. Here \texttt{EXPLO2} is indicated by \texttt{EE}.}
  \label{fig:IOHanalyzer/EE_mul25par32lamLin/EE_mul25par32lamLin_f15d20allCMA}
\end{figure}

\begin{figure}[h]
  \centering
  \includegraphics[trim = 0mm 0mm 0mm 0mm, clip, width=\columnwidth,keepaspectratio]{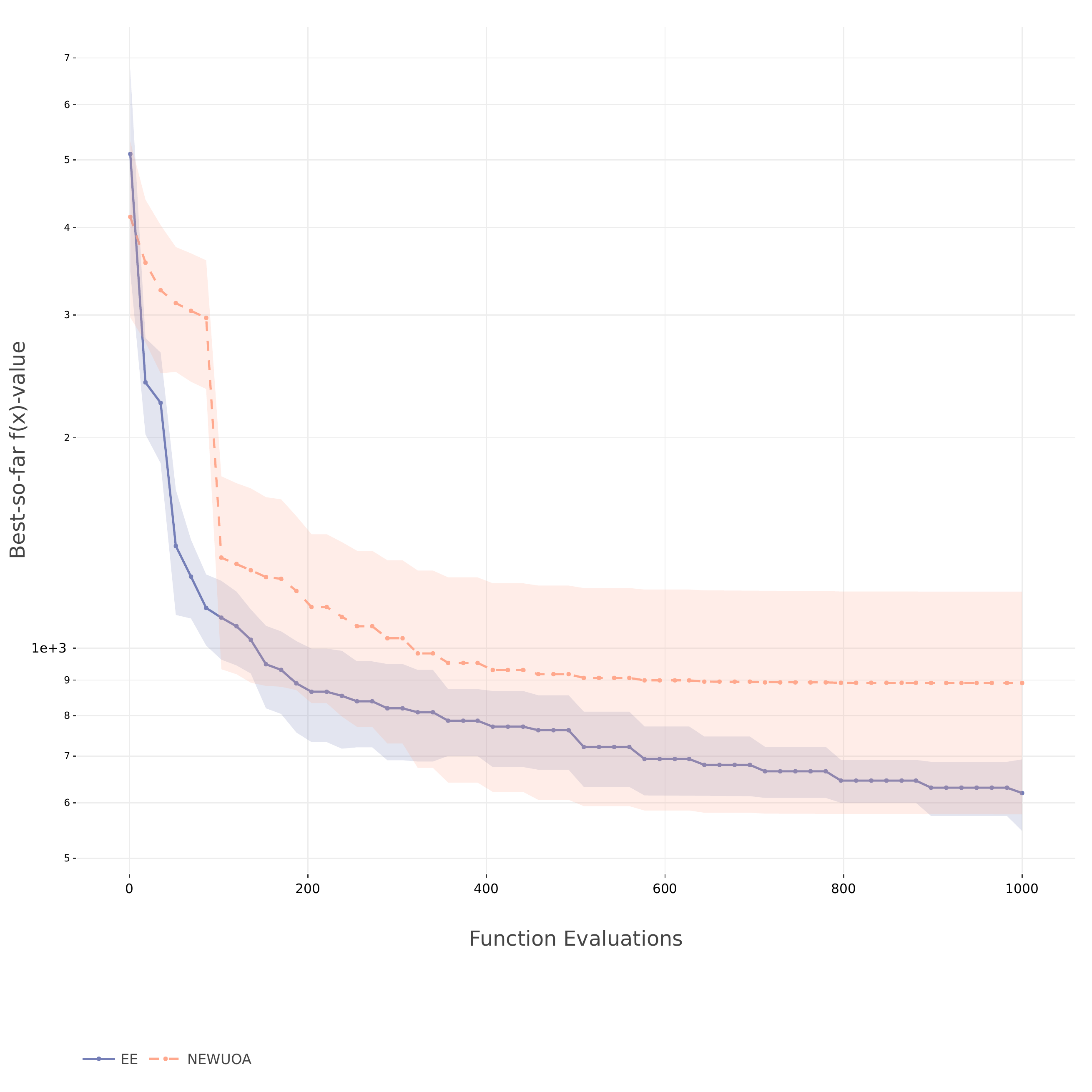}
  \caption{$f_{15}$, $D = 40$. Here \texttt{EXPLO2} is indicated by \texttt{EE}.}
  \label{fig:IOHanalyzer/EE_mul25par32lamLin/EE_mul25par32lamLin_f15d40}
\end{figure}

\begin{figure}[h]
  \centering
  \includegraphics[trim = 0mm 0mm 0mm 0mm, clip, width=\columnwidth,keepaspectratio]{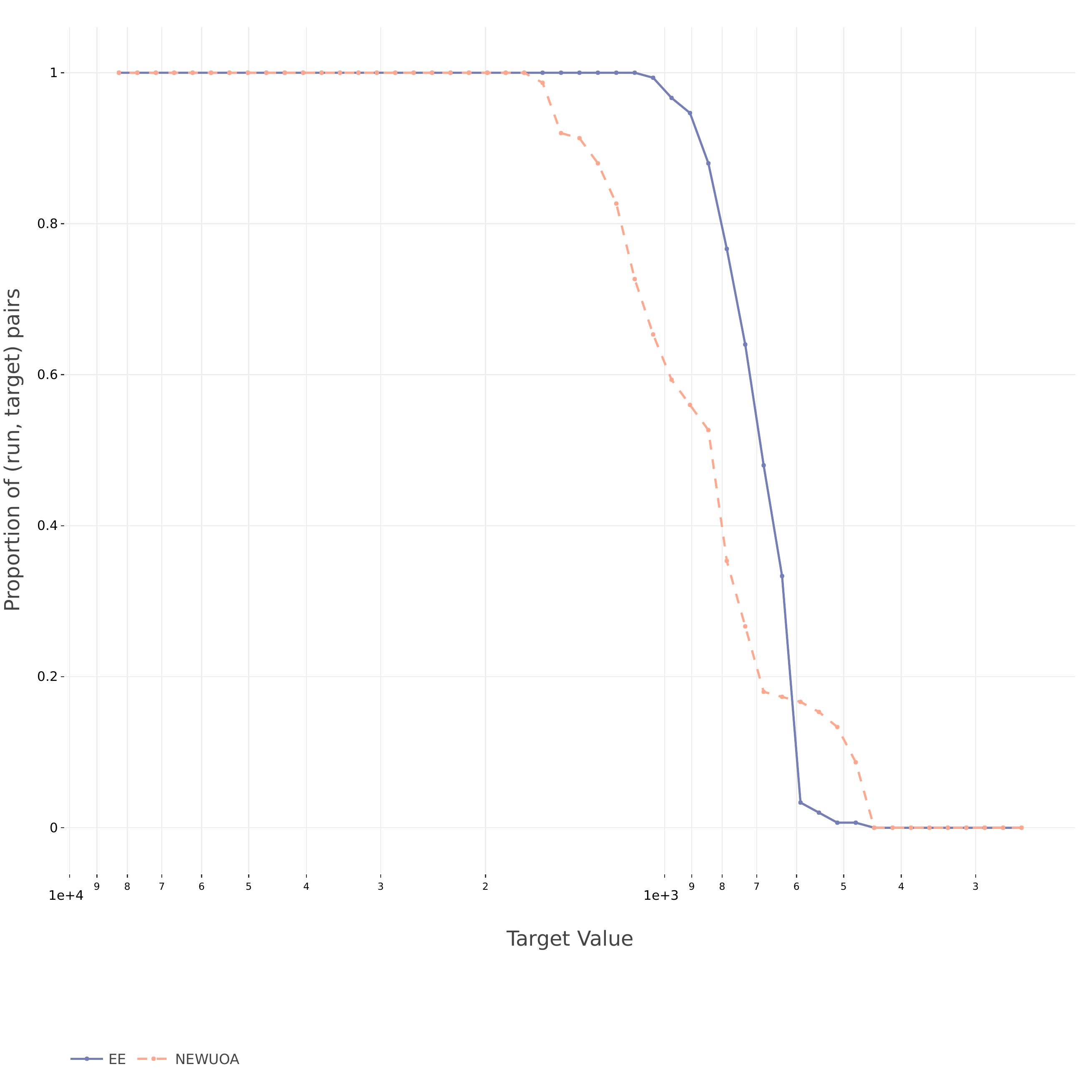}
  \caption{$f_{15}$, $D = 40$. Here \texttt{EXPLO2} is indicated by \texttt{EE}.}
  \label{fig:IOHanalyzer/EE_mul25par32lamLin/EE_mul25par32lamLin_f15d40ECDF}
\end{figure}

\begin{figure}[h]
  \centering
  \includegraphics[trim = 0mm 0mm 0mm 0mm, clip, width=\columnwidth,keepaspectratio]{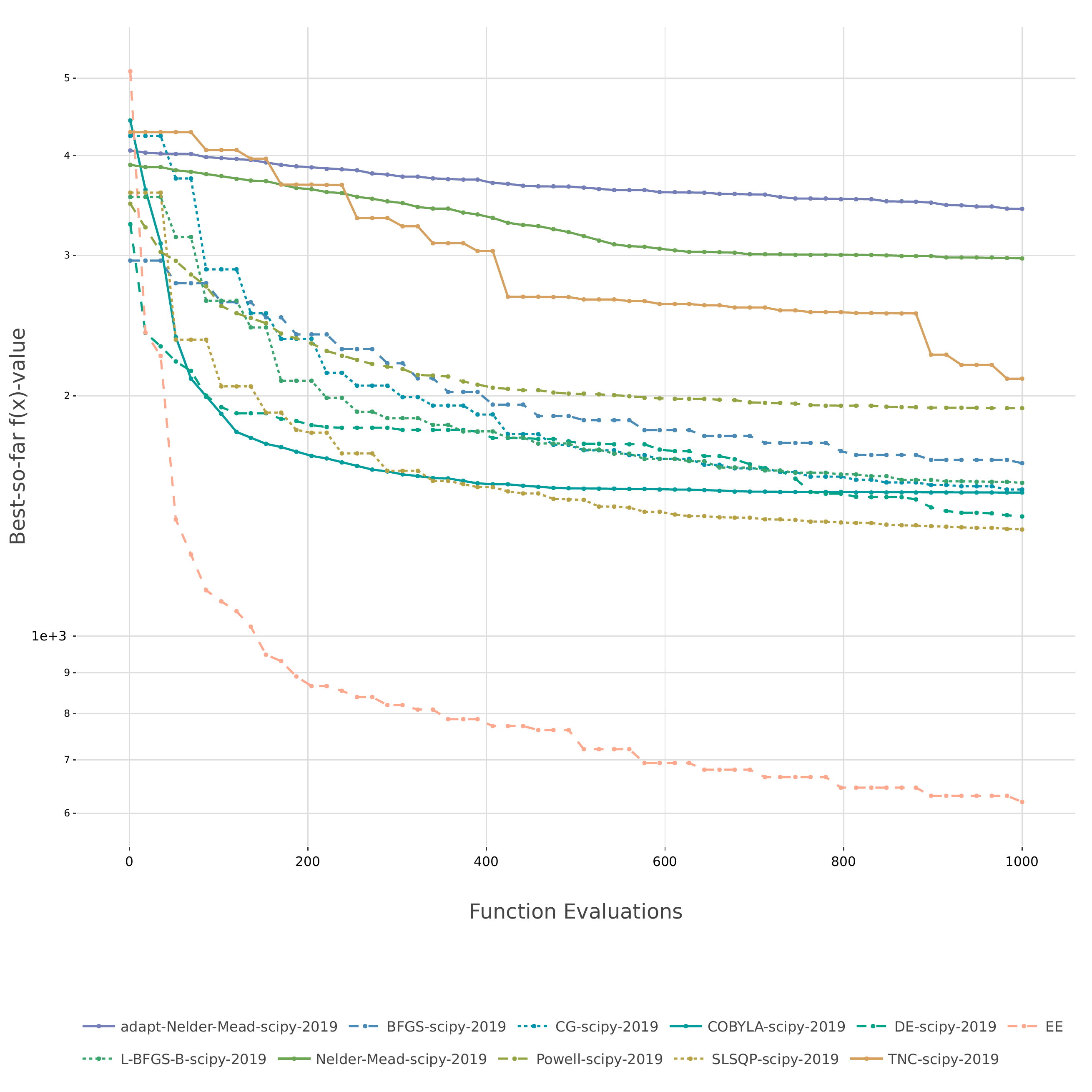}
  \caption{$f_{15}$, $D = 40$. Here \texttt{EXPLO2} is indicated by \texttt{EE}.}
  \label{fig:IOHanalyzer/EE_mul25par32lamLin/EE_mul25par32lamLin_f15d40vScipy}
\end{figure}

\begin{figure}[h]
  \centering
  \includegraphics[trim = 0mm 0mm 0mm 0mm, clip, width=\columnwidth,keepaspectratio]{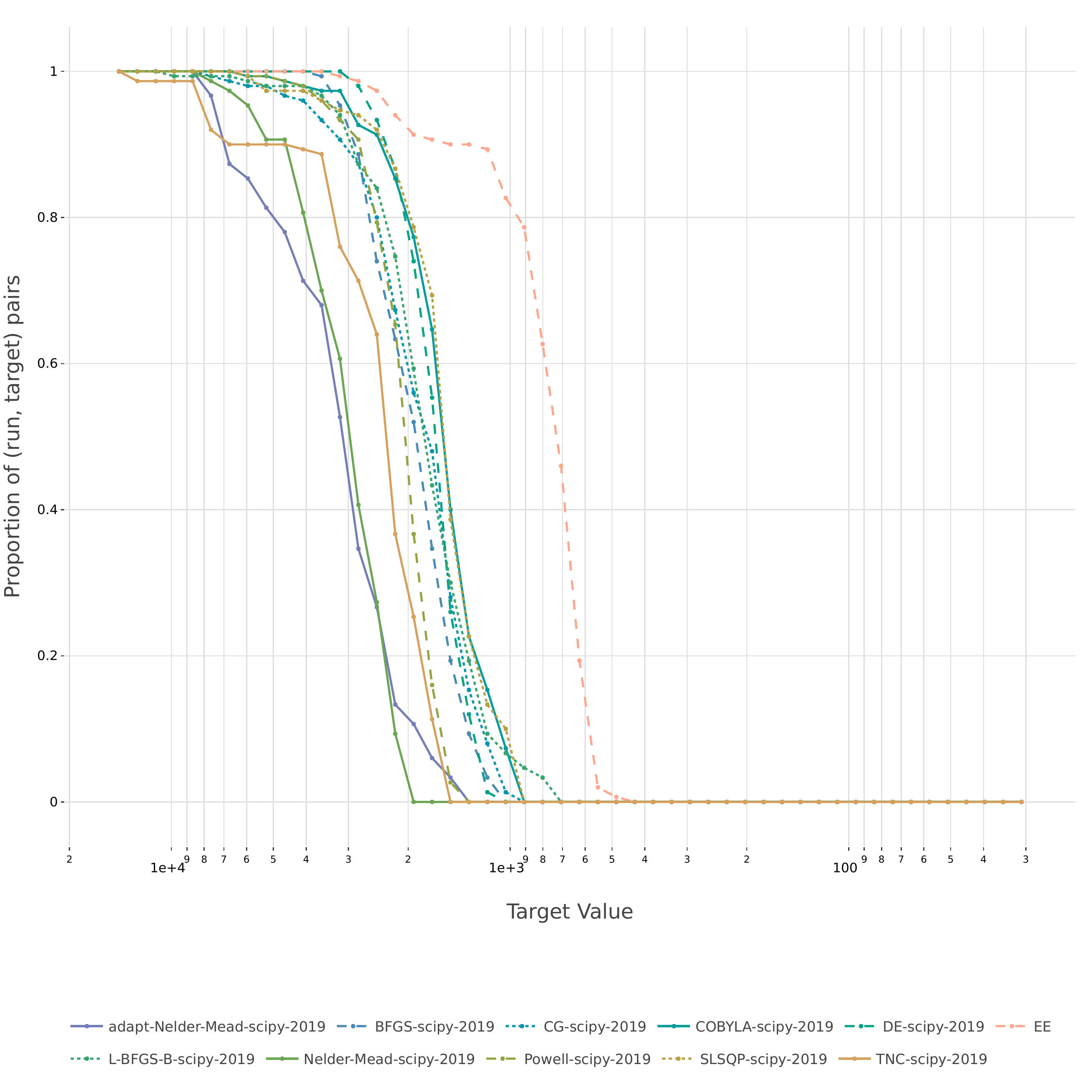}
  \caption{$f_{15}$, $D = 40$. Here \texttt{EXPLO2} is indicated by \texttt{EE}.}
  \label{fig:IOHanalyzer/EE_mul25par32lamLin/EE_mul25par32lamLin_f15d40vScipyECDF}
\end{figure}

\clearpage


\begin{figure}[h]
  \centering
  \includegraphics[trim = 0mm 0mm 0mm 0mm, clip, width=\columnwidth,keepaspectratio]{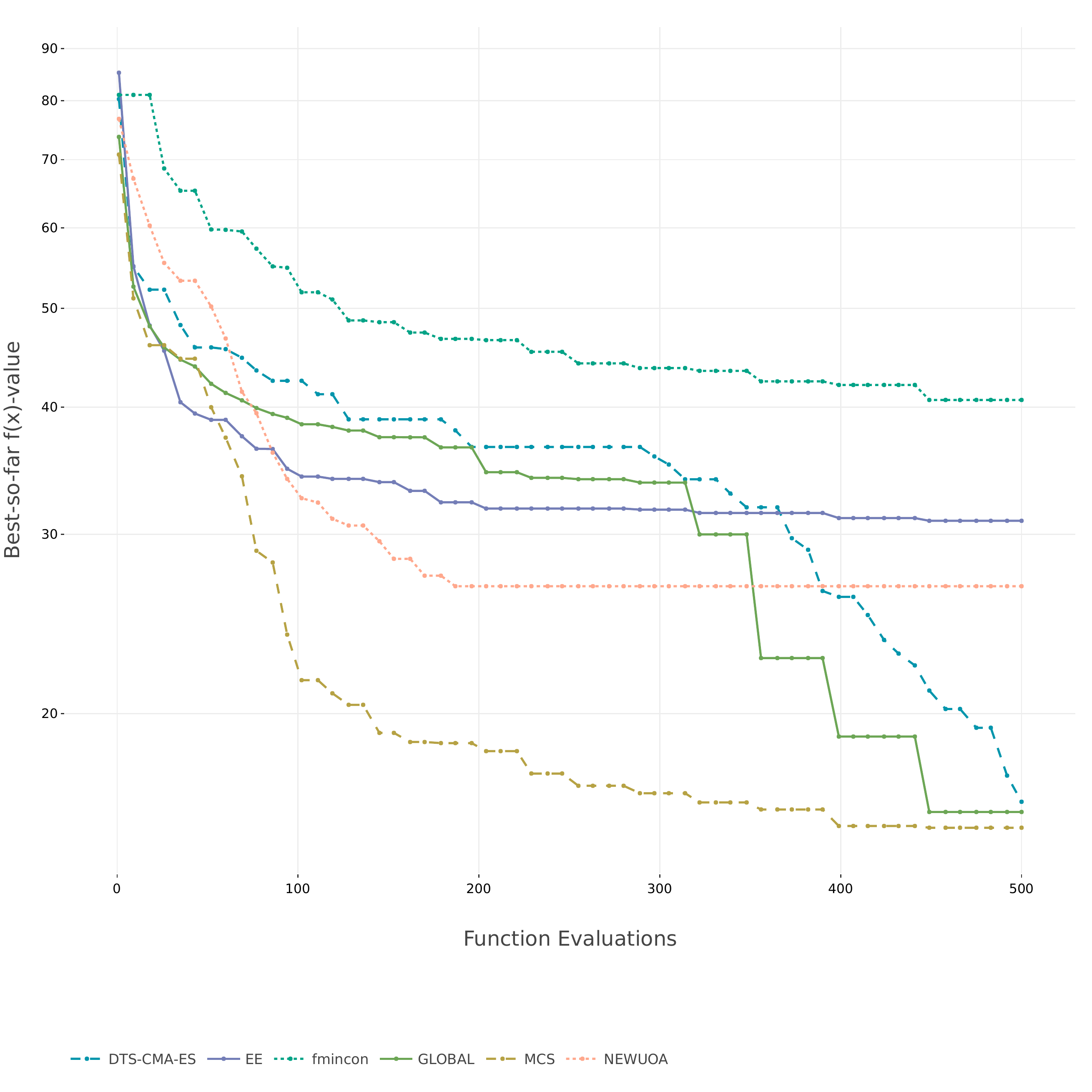}
  \caption{$f_{16}$, $D = 20$. Here \texttt{EXPLO2} is indicated by \texttt{EE}.}
  \label{fig:IOHanalyzer/EE_mul25par32lamLin/EE_mul25par32lamLin_f16d20}
\end{figure}

\begin{figure}[h]
  \centering
  \includegraphics[trim = 0mm 0mm 0mm 0mm, clip, width=\columnwidth,keepaspectratio]{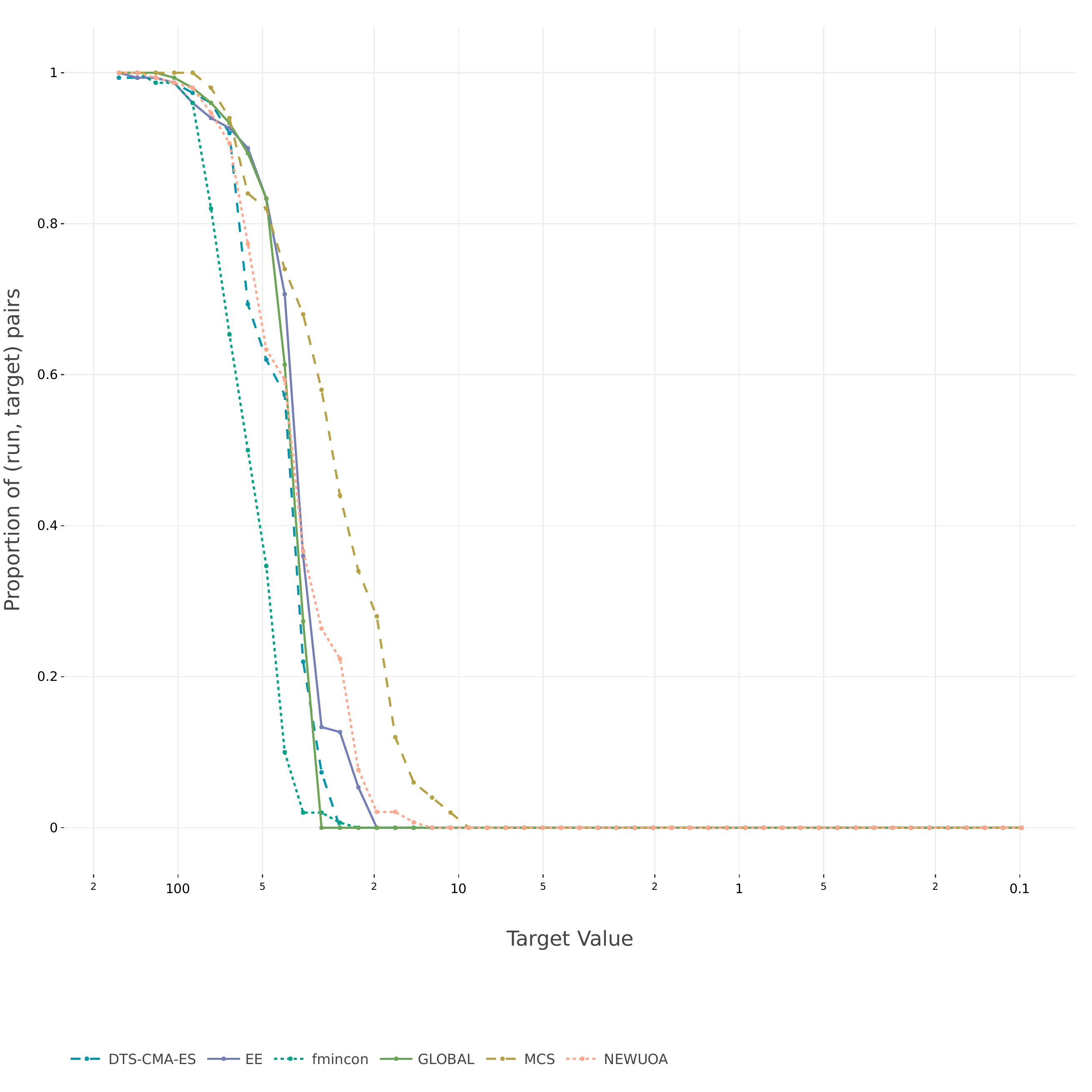}
  \caption{$f_{16}$, $D = 20$. Here \texttt{EXPLO2} is indicated by \texttt{EE}.}
  \label{fig:IOHanalyzer/EE_mul25par32lamLin/EE_mul25par32lamLin_f16d20ECDF}
\end{figure}

\begin{figure}[h]
  \centering
  \includegraphics[trim = 0mm 0mm 0mm 0mm, clip, width=\columnwidth,keepaspectratio]{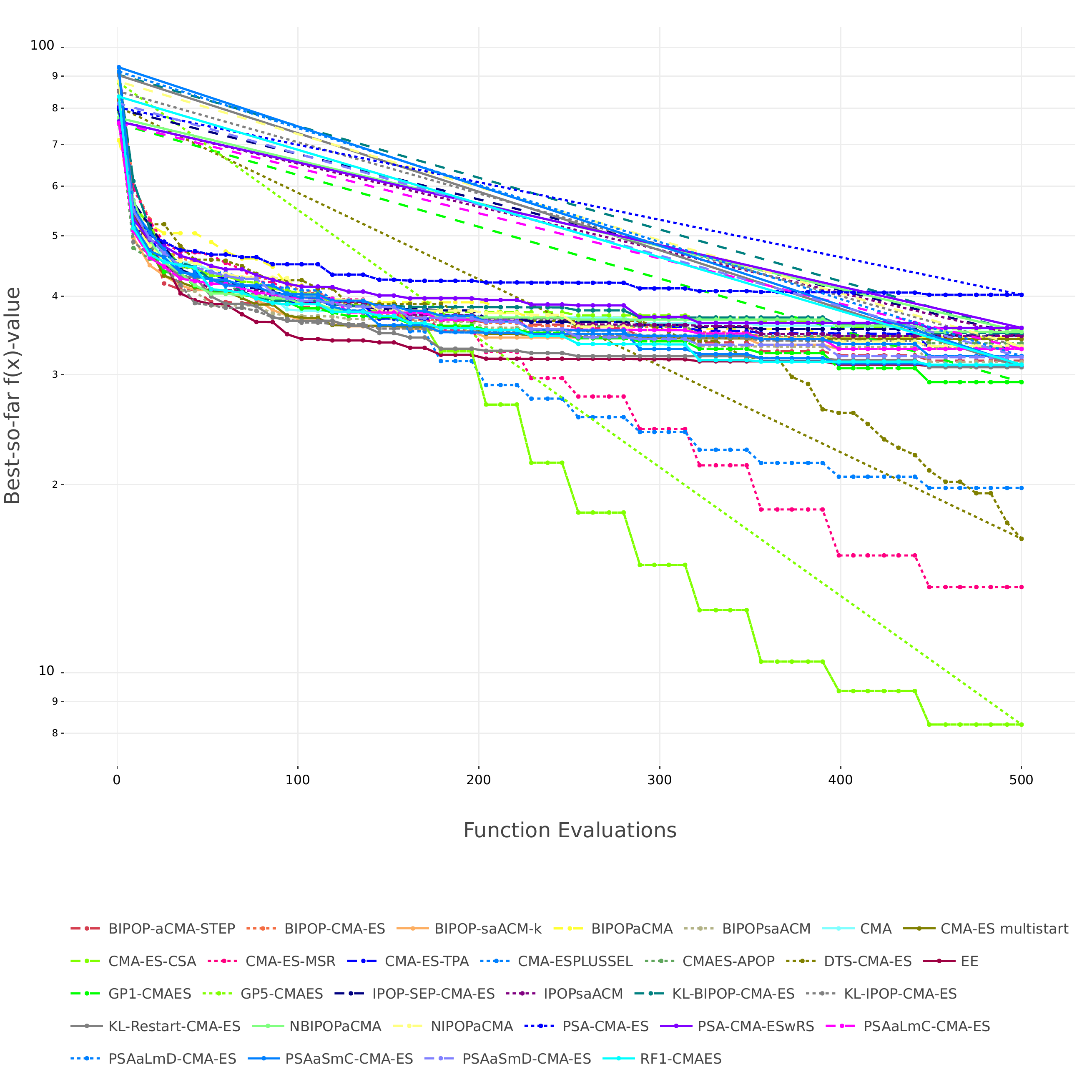}
  \caption{$f_{16}$, $D = 20$. Here \texttt{EXPLO2} is indicated by \texttt{EE}.}
  \label{fig:IOHanalyzer/EE_mul25par32lamLin/EE_mul25par32lamLin_f16d20allCMA}
\end{figure}

\begin{figure}[h]
  \centering
  \includegraphics[trim = 0mm 0mm 0mm 0mm, clip, width=\columnwidth,keepaspectratio]{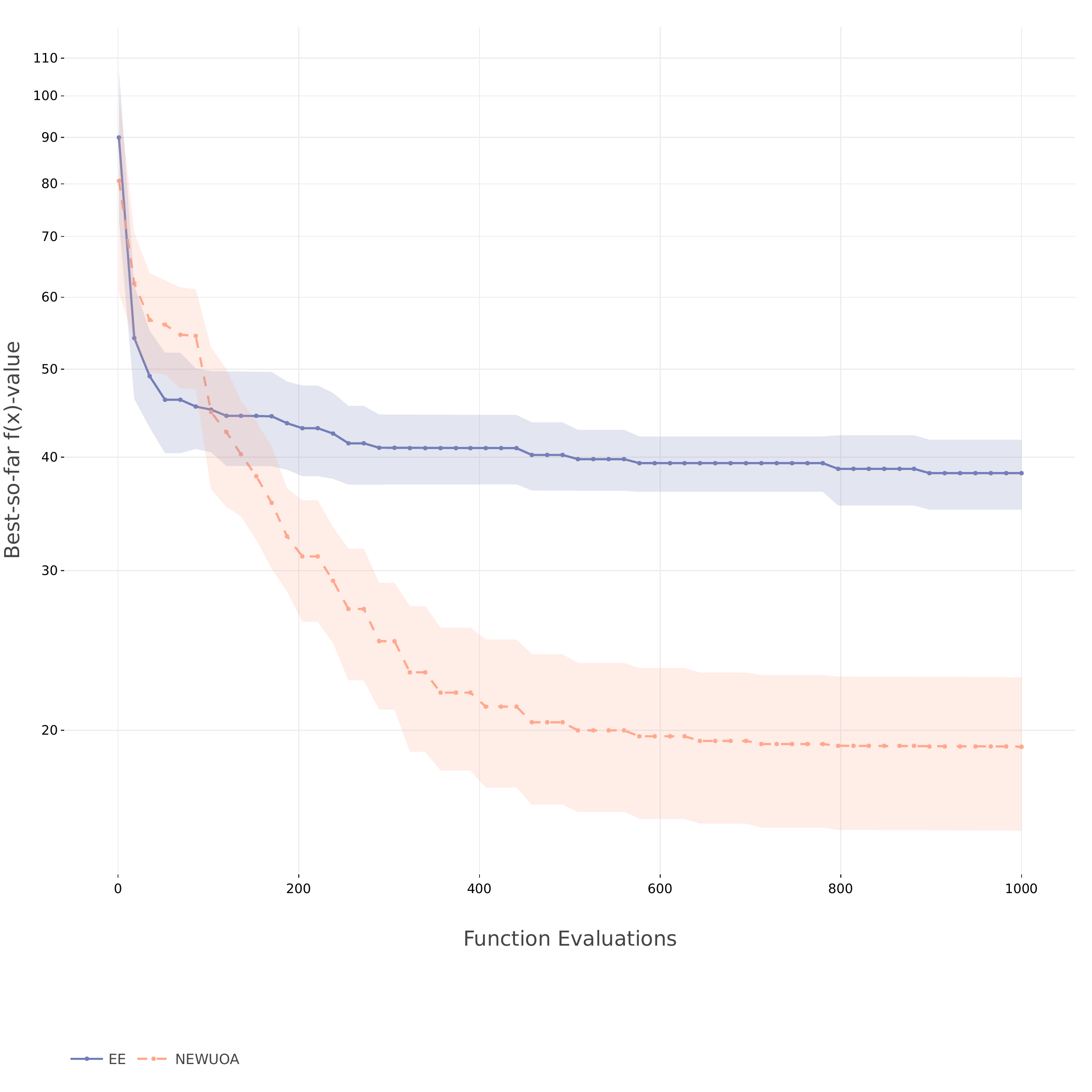}
  \caption{$f_{16}$, $D = 40$. Here \texttt{EXPLO2} is indicated by \texttt{EE}.}
  \label{fig:IOHanalyzer/EE_mul25par32lamLin/EE_mul25par32lamLin_f16d40}
\end{figure}

\begin{figure}[h]
  \centering
  \includegraphics[trim = 0mm 0mm 0mm 0mm, clip, width=\columnwidth,keepaspectratio]{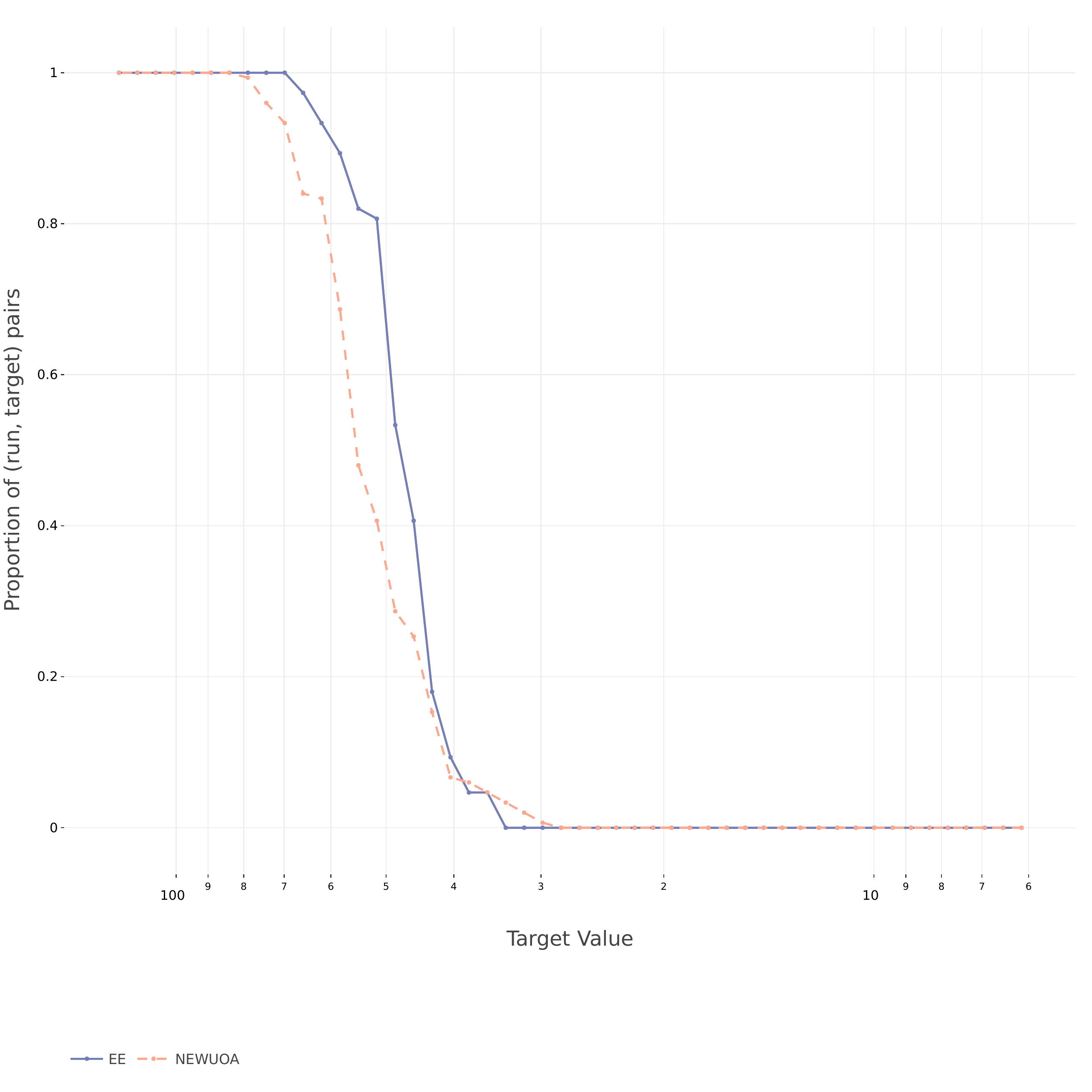}
  \caption{$f_{16}$, $D = 40$. Here \texttt{EXPLO2} is indicated by \texttt{EE}.}
  \label{fig:IOHanalyzer/EE_mul25par32lamLin/EE_mul25par32lamLin_f16d40ECDF}
\end{figure}

\begin{figure}[h]
  \centering
  \includegraphics[trim = 0mm 0mm 0mm 0mm, clip, width=\columnwidth,keepaspectratio]{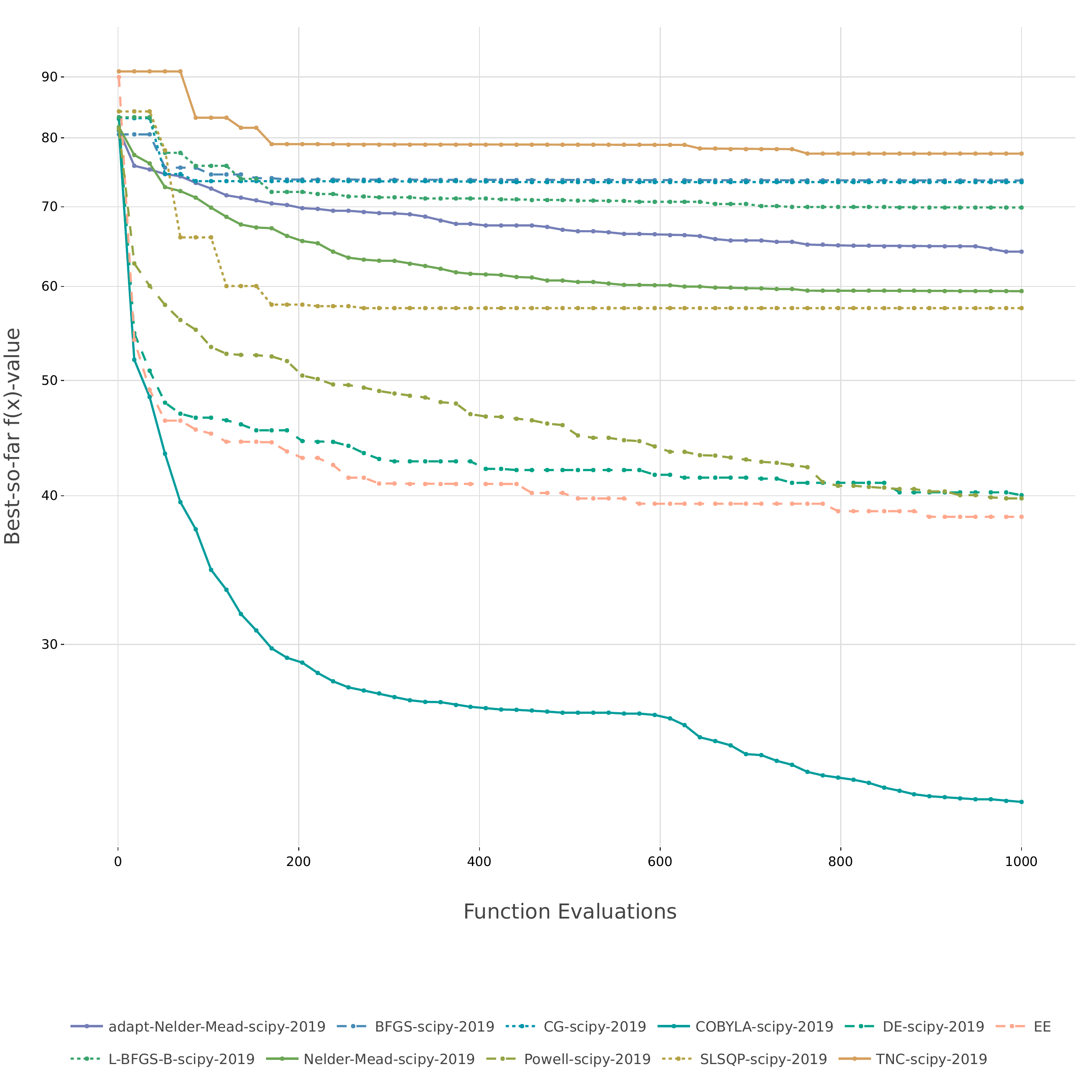}
  \caption{$f_{16}$, $D = 40$. Here \texttt{EXPLO2} is indicated by \texttt{EE}.}
  \label{fig:IOHanalyzer/EE_mul25par32lamLin/EE_mul25par32lamLin_f16d40vScipy}
\end{figure}

\begin{figure}[h]
  \centering
  \includegraphics[trim = 0mm 0mm 0mm 0mm, clip, width=\columnwidth,keepaspectratio]{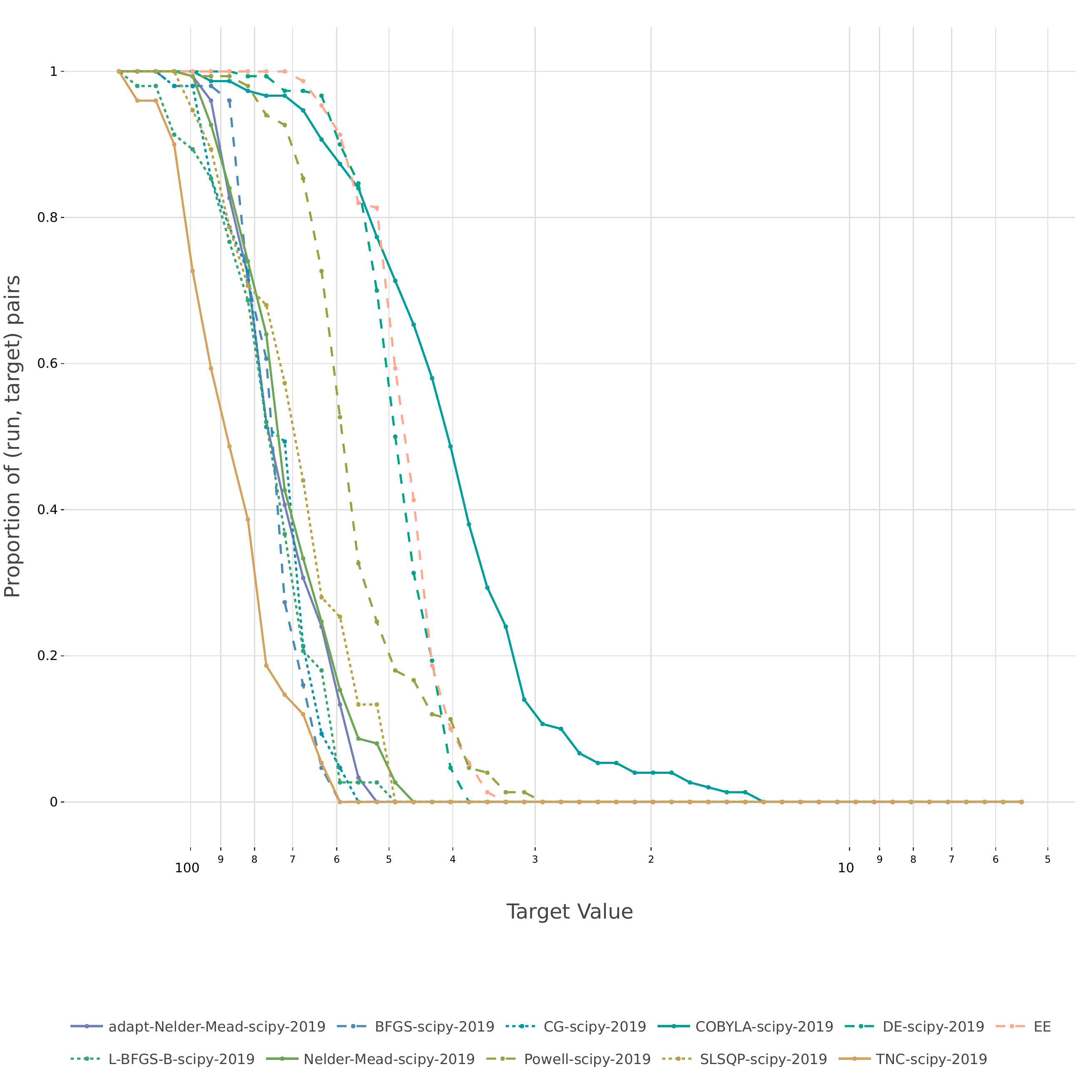}
  \caption{$f_{16}$, $D = 40$. Here \texttt{EXPLO2} is indicated by \texttt{EE}.}
  \label{fig:IOHanalyzer/EE_mul25par32lamLin/EE_mul25par32lamLin_f16d40vScipyECDF}
\end{figure}

\clearpage


\begin{figure}[h]
  \centering
  \includegraphics[trim = 0mm 0mm 0mm 0mm, clip, width=\columnwidth,keepaspectratio]{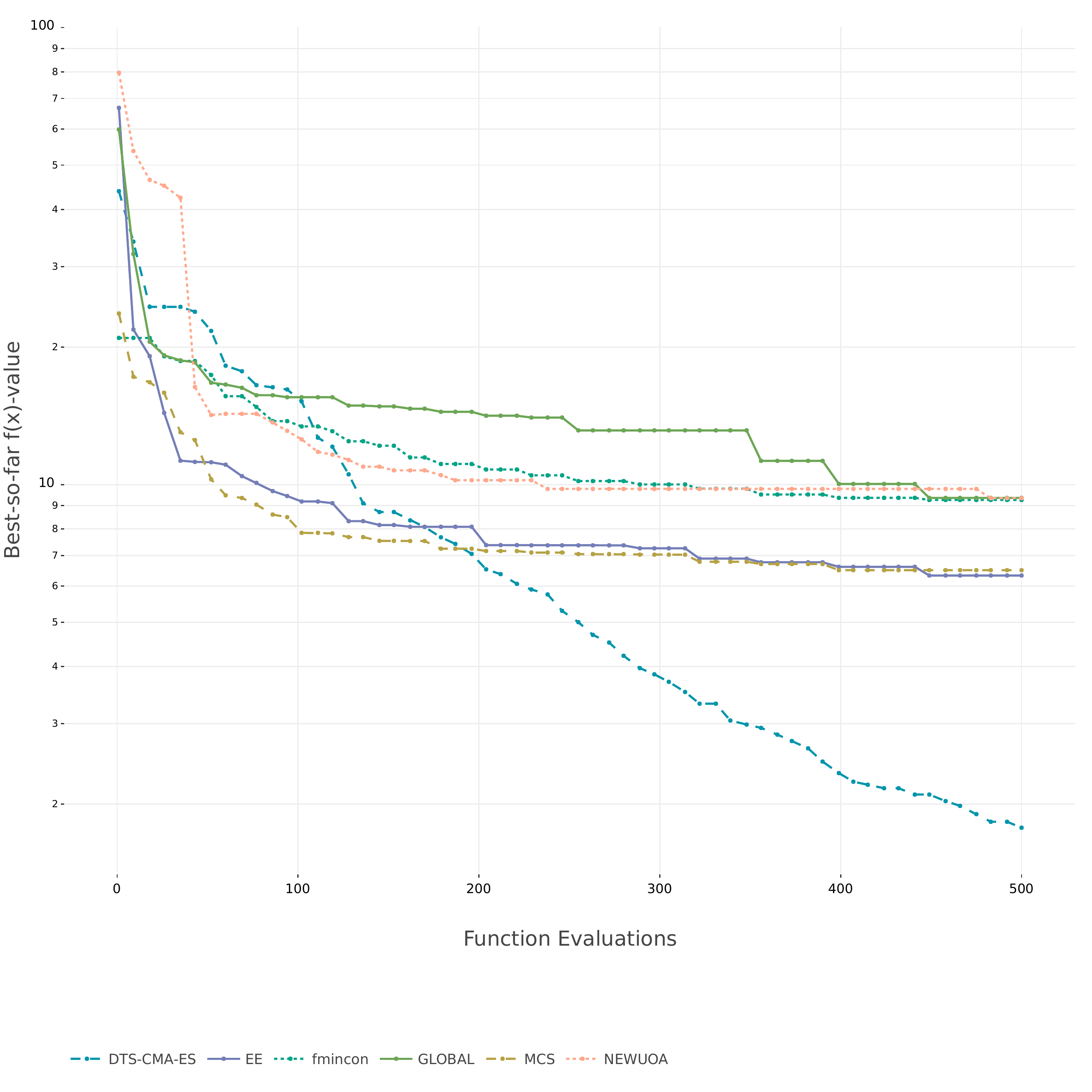}
  \caption{$f_{17}$, $D = 20$. Here \texttt{EXPLO2} is indicated by \texttt{EE}.}
  \label{fig:IOHanalyzer/EE_mul25par32lamLin/EE_mul25par32lamLin_f17d20}
\end{figure}

\begin{figure}[h]
  \centering
  \includegraphics[trim = 0mm 0mm 0mm 0mm, clip, width=\columnwidth,keepaspectratio]{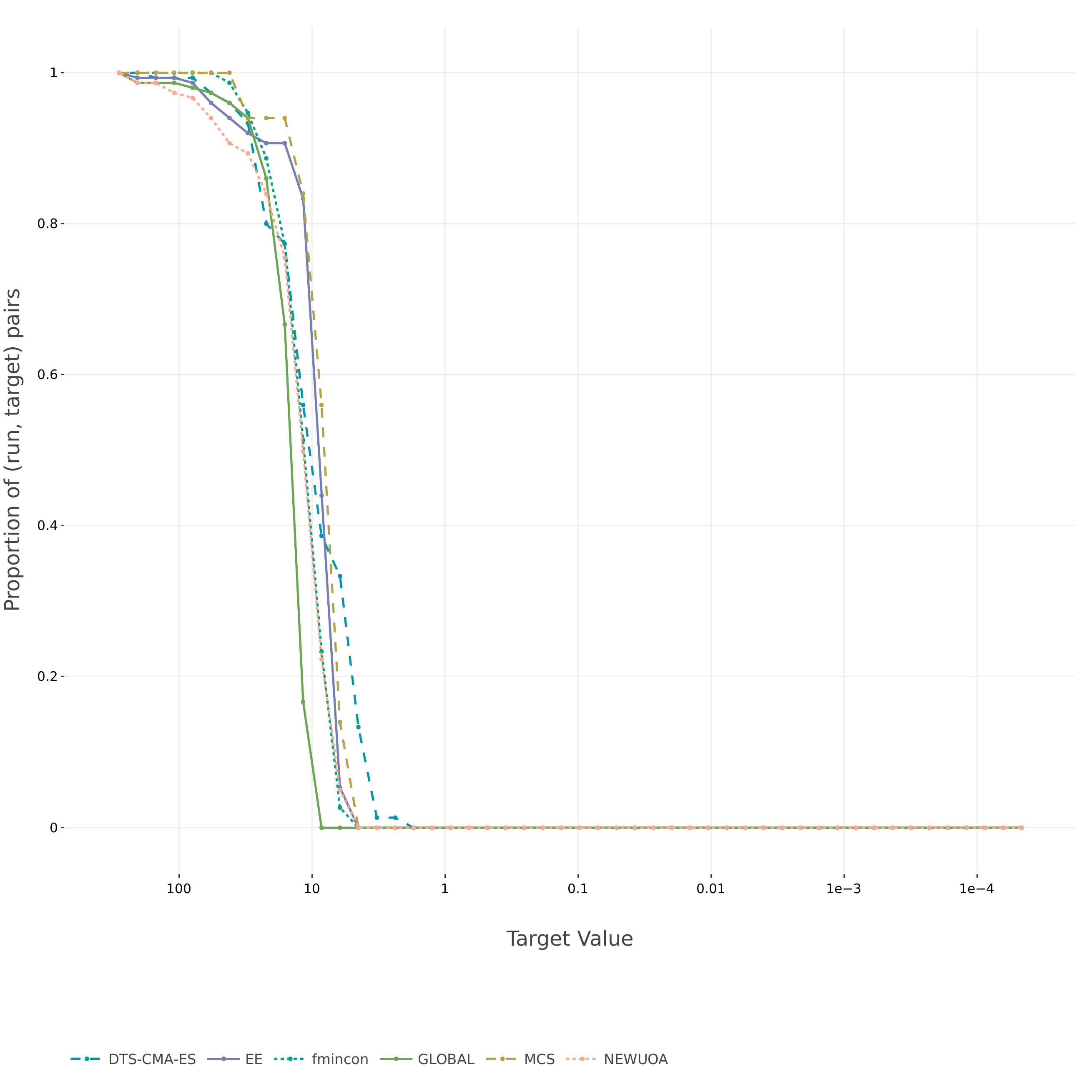}
  \caption{$f_{17}$, $D = 20$. Here \texttt{EXPLO2} is indicated by \texttt{EE}.}
  \label{fig:IOHanalyzer/EE_mul25par32lamLin/EE_mul25par32lamLin_f17d20ECDF}
\end{figure}

\begin{figure}[h]
  \centering
  \includegraphics[trim = 0mm 0mm 0mm 0mm, clip, width=\columnwidth,keepaspectratio]{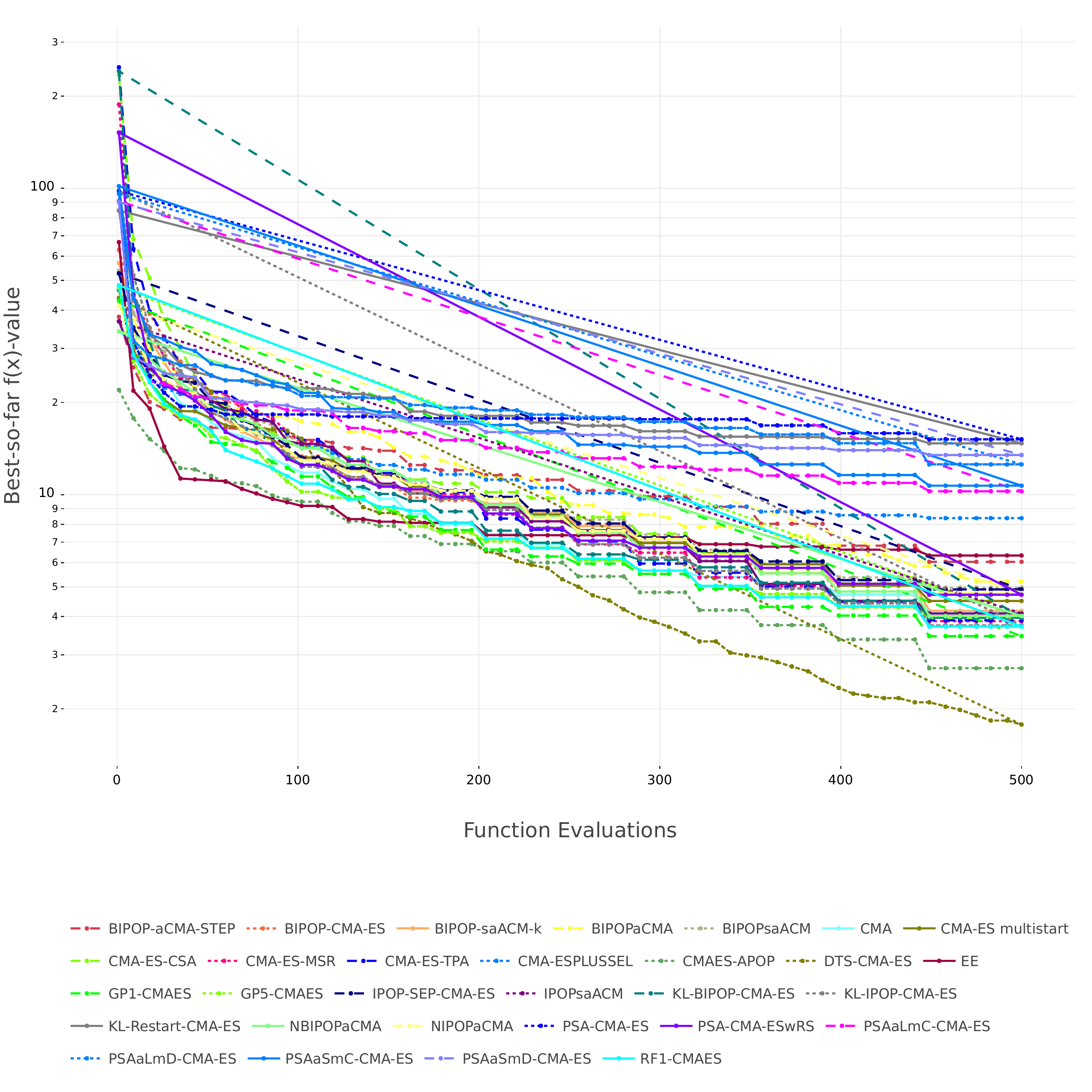}
  \caption{$f_{17}$, $D = 20$. Here \texttt{EXPLO2} is indicated by \texttt{EE}.}
  \label{fig:IOHanalyzer/EE_mul25par32lamLin/EE_mul25par32lamLin_f17d20allCMA}
\end{figure}

\begin{figure}[h]
  \centering
  \includegraphics[trim = 0mm 0mm 0mm 0mm, clip, width=\columnwidth,keepaspectratio]{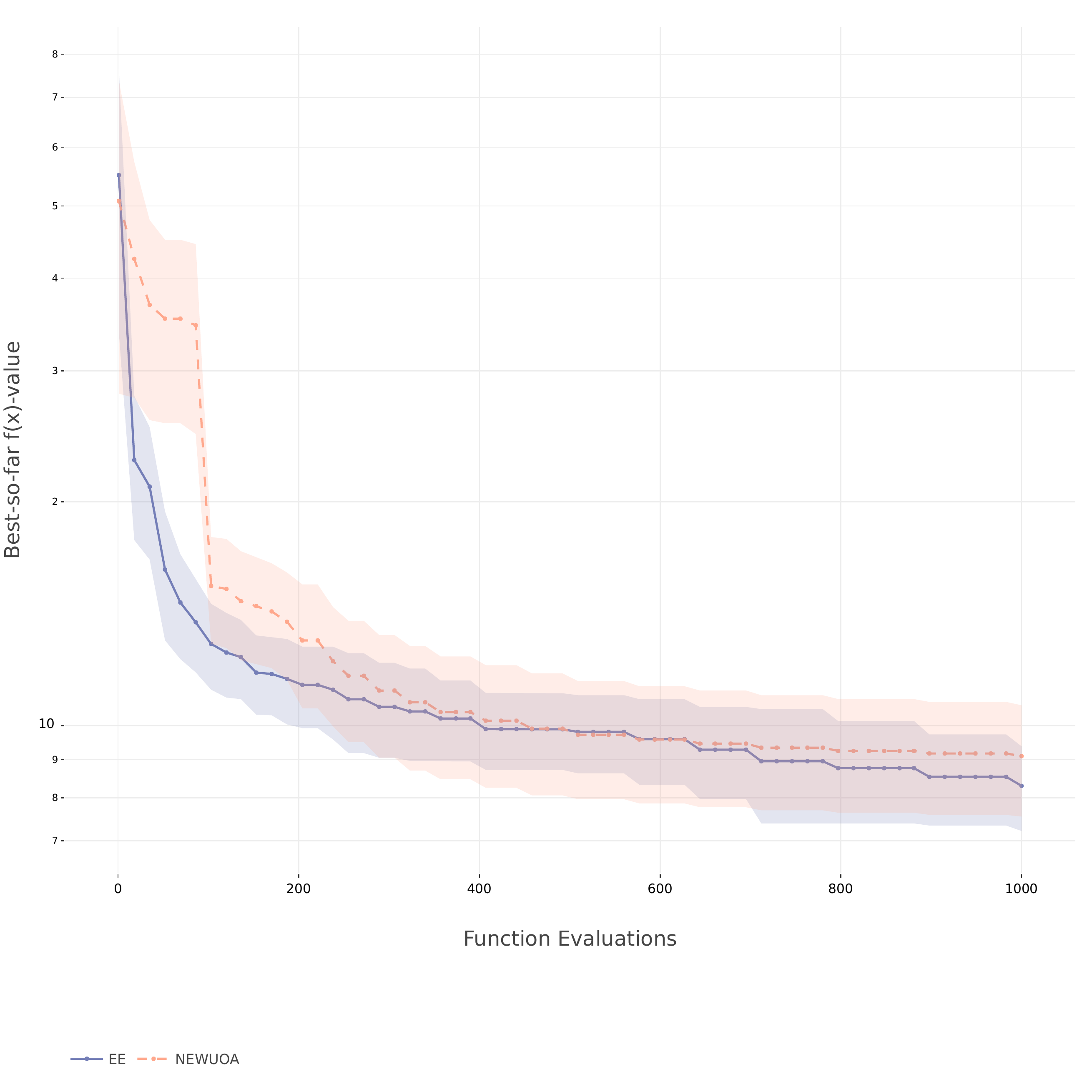}
  \caption{$f_{17}$, $D = 40$. Here \texttt{EXPLO2} is indicated by \texttt{EE}.}
  \label{fig:IOHanalyzer/EE_mul25par32lamLin/EE_mul25par32lamLin_f17d40}
\end{figure}

\begin{figure}[h]
  \centering
  \includegraphics[trim = 0mm 0mm 0mm 0mm, clip, width=\columnwidth,keepaspectratio]{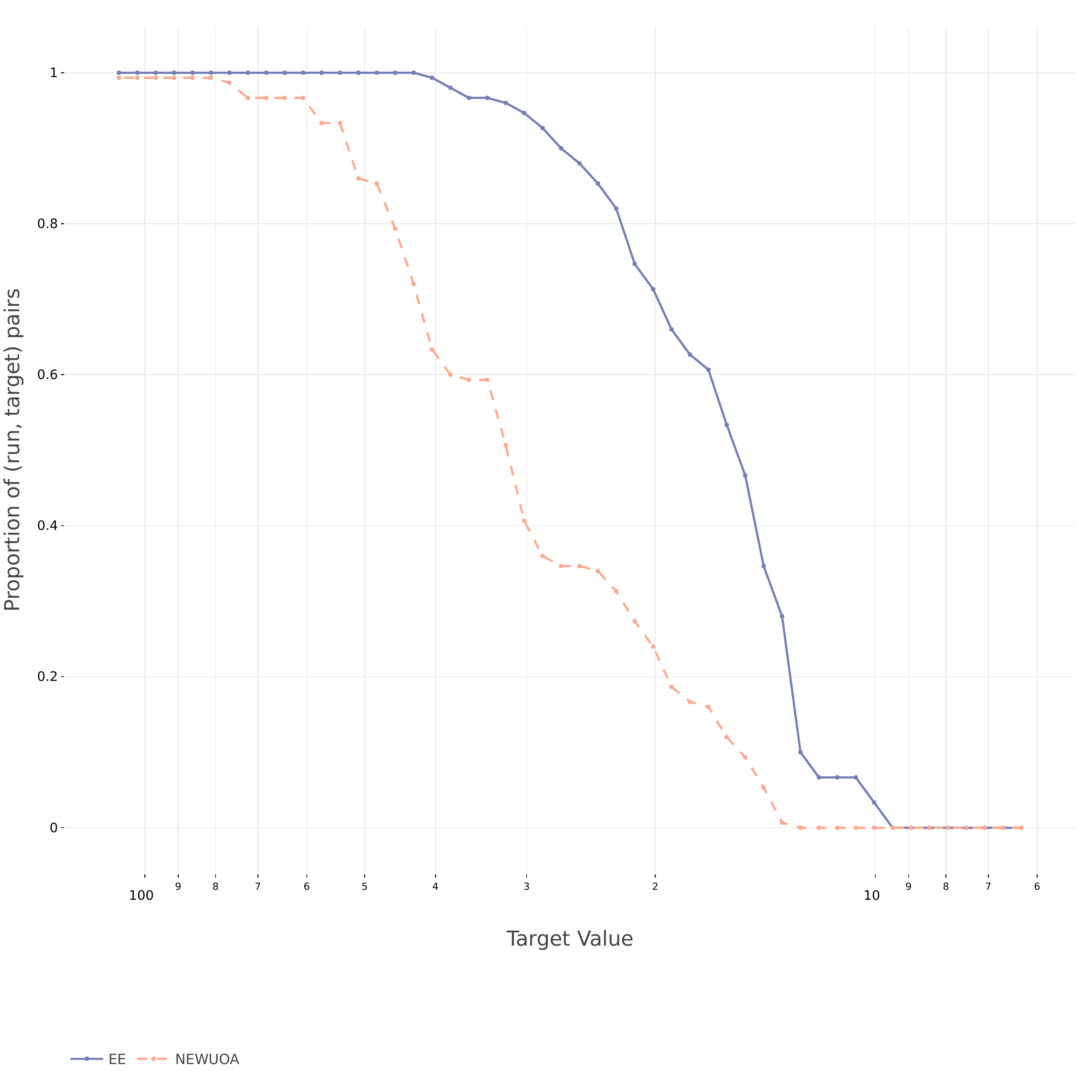}
  \caption{$f_{17}$, $D = 40$. Here \texttt{EXPLO2} is indicated by \texttt{EE}.}
  \label{fig:IOHanalyzer/EE_mul25par32lamLin/EE_mul25par32lamLin_f17d40ECDF}
\end{figure}

\begin{figure}[h]
  \centering
  \includegraphics[trim = 0mm 0mm 0mm 0mm, clip, width=\columnwidth,keepaspectratio]{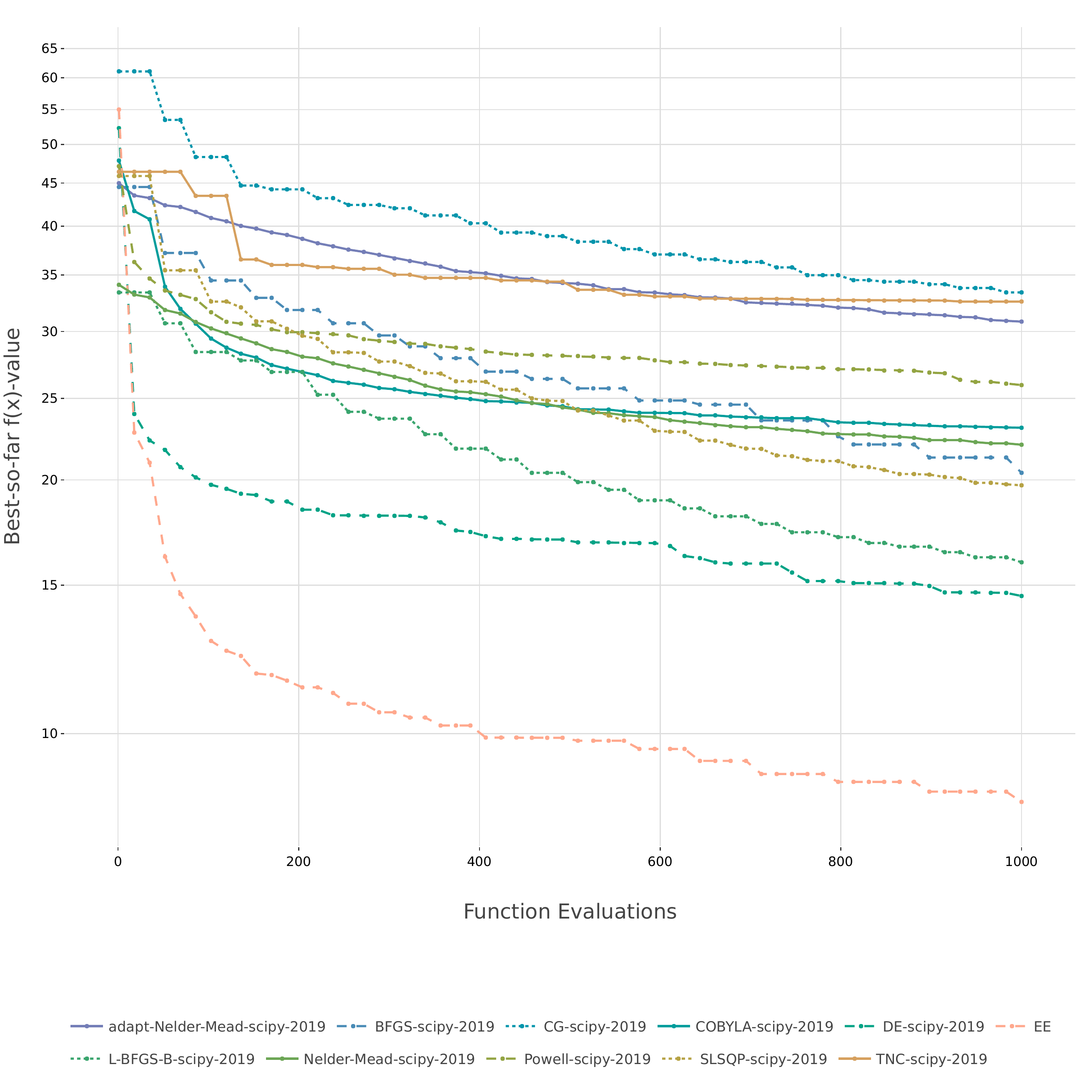}
  \caption{$f_{17}$, $D = 40$. Here \texttt{EXPLO2} is indicated by \texttt{EE}.}
  \label{fig:IOHanalyzer/EE_mul25par32lamLin/EE_mul25par32lamLin_f17d40vScipy}
\end{figure}

\begin{figure}[h]
  \centering
  \includegraphics[trim = 0mm 0mm 0mm 0mm, clip, width=\columnwidth,keepaspectratio]{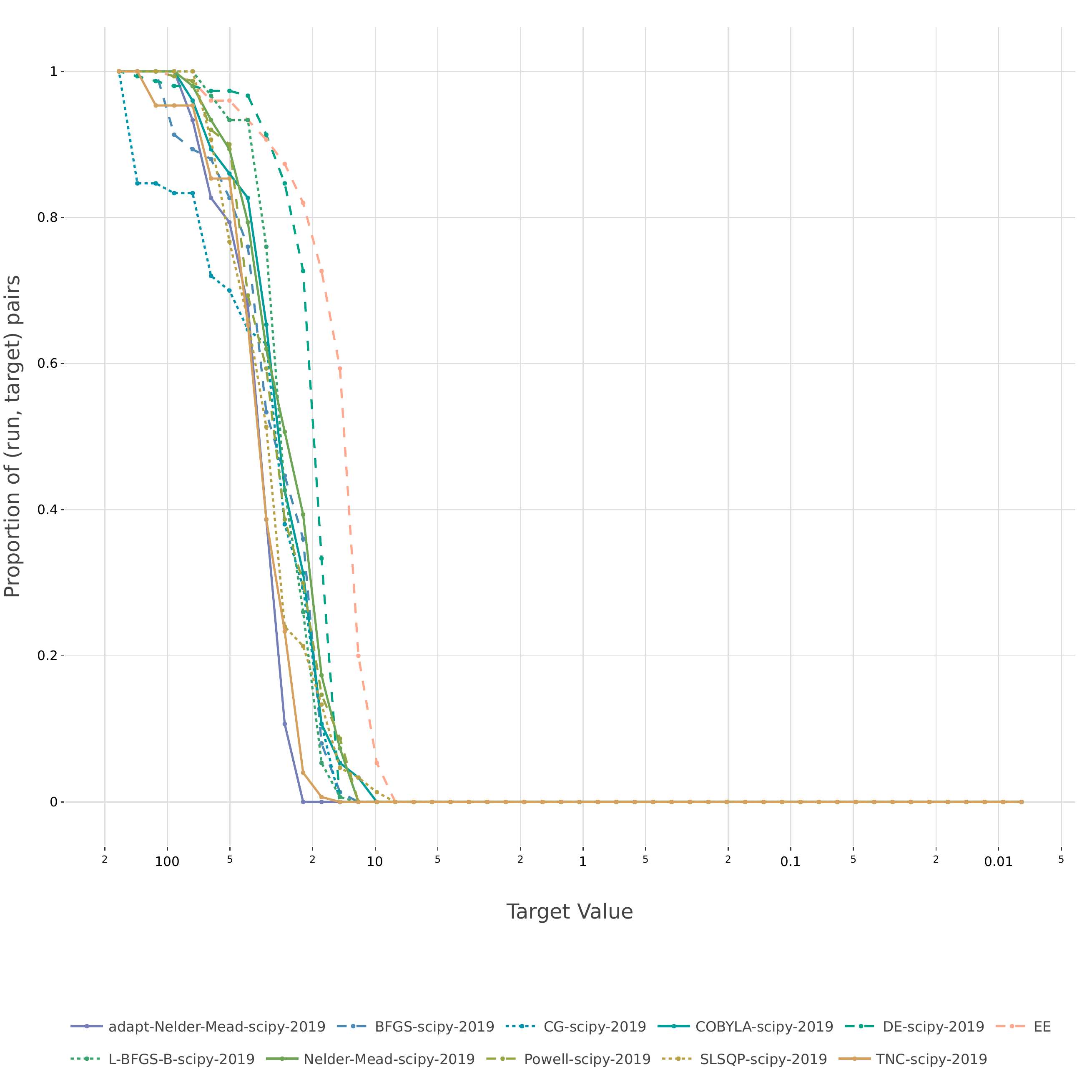}
  \caption{$f_{17}$, $D = 40$. Here \texttt{EXPLO2} is indicated by \texttt{EE}.}
  \label{fig:IOHanalyzer/EE_mul25par32lamLin/EE_mul25par32lamLin_f17d40vScipyECDF}
\end{figure}

\clearpage


\begin{figure}[h]
  \centering
  \includegraphics[trim = 0mm 0mm 0mm 0mm, clip, width=\columnwidth,keepaspectratio]{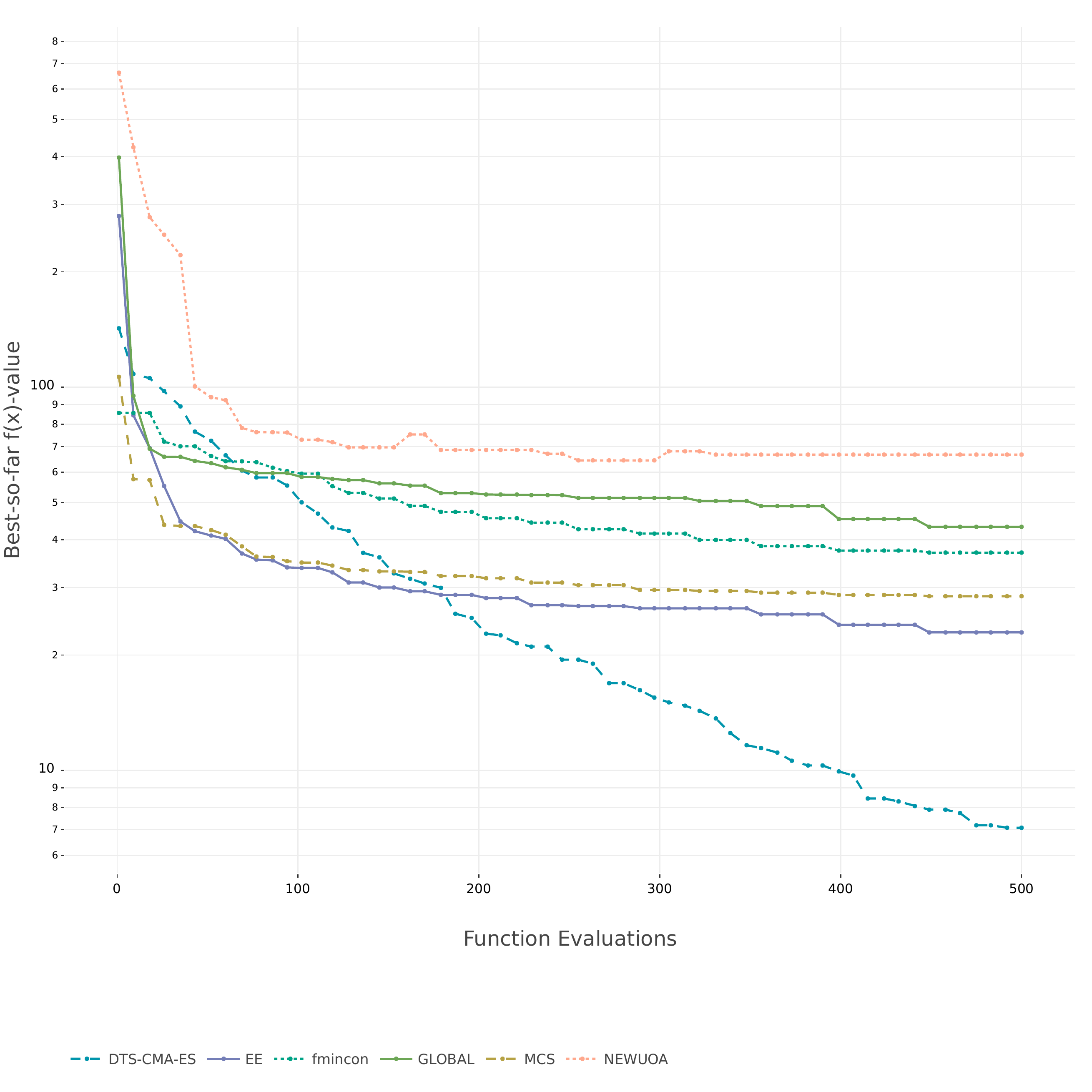}
  \caption{$f_{18}$, $D = 20$. Here \texttt{EXPLO2} is indicated by \texttt{EE}.}
  \label{fig:IOHanalyzer/EE_mul25par32lamLin/EE_mul25par32lamLin_f18d20}
\end{figure}

\begin{figure}[h]
  \centering
  \includegraphics[trim = 0mm 0mm 0mm 0mm, clip, width=\columnwidth,keepaspectratio]{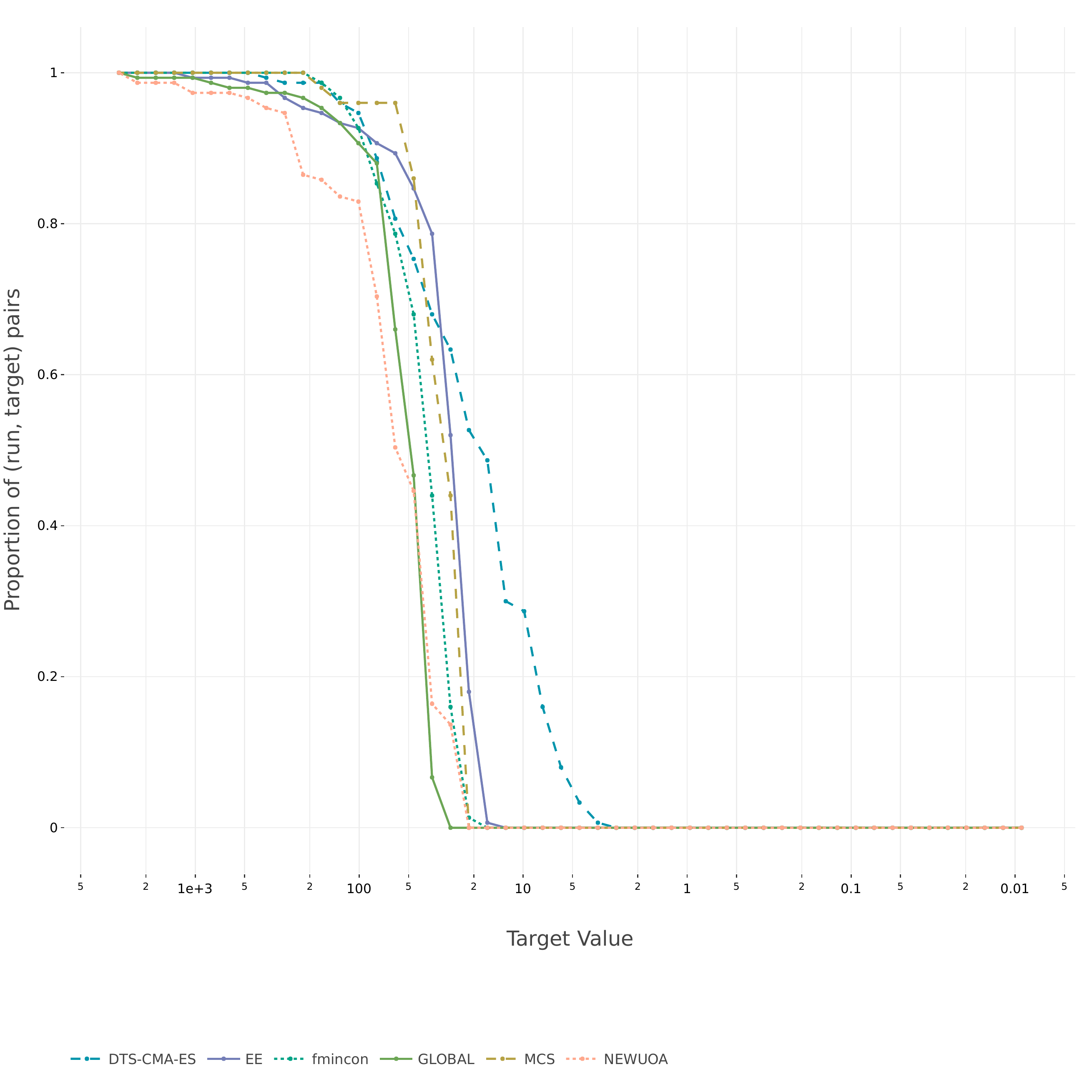}
  \caption{$f_{18}$, $D = 20$. Here \texttt{EXPLO2} is indicated by \texttt{EE}.}
  \label{fig:IOHanalyzer/EE_mul25par32lamLin/EE_mul25par32lamLin_f18d20ECDF}
\end{figure}

\begin{figure}[h]
  \centering
  \includegraphics[trim = 0mm 0mm 0mm 0mm, clip, width=\columnwidth,keepaspectratio]{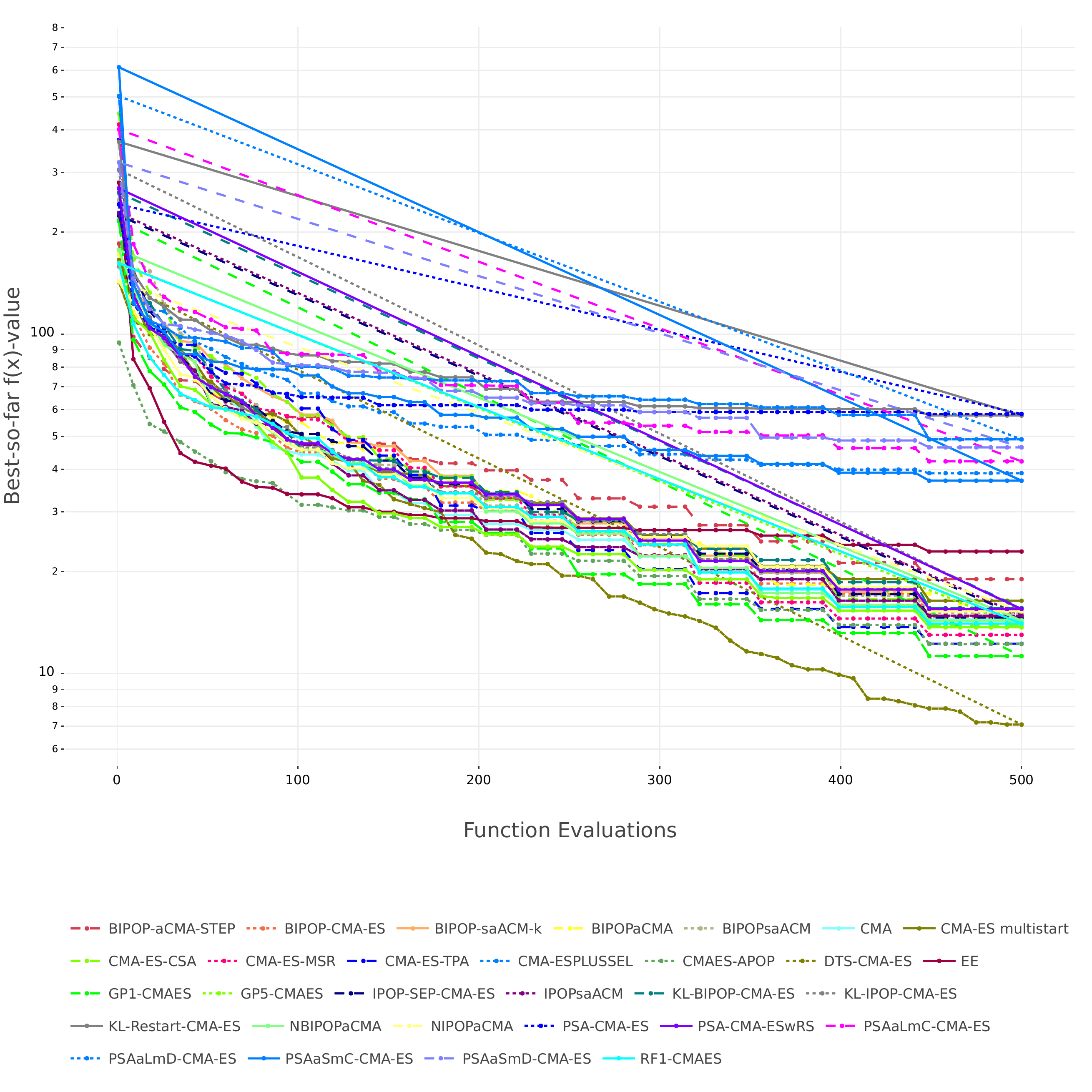}
  \caption{$f_{18}$, $D = 20$. Here \texttt{EXPLO2} is indicated by \texttt{EE}.}
  \label{fig:IOHanalyzer/EE_mul25par32lamLin/EE_mul25par32lamLin_f18d20allCMA}
\end{figure}

\begin{figure}[h]
  \centering
  \includegraphics[trim = 0mm 0mm 0mm 0mm, clip, width=\columnwidth,keepaspectratio]{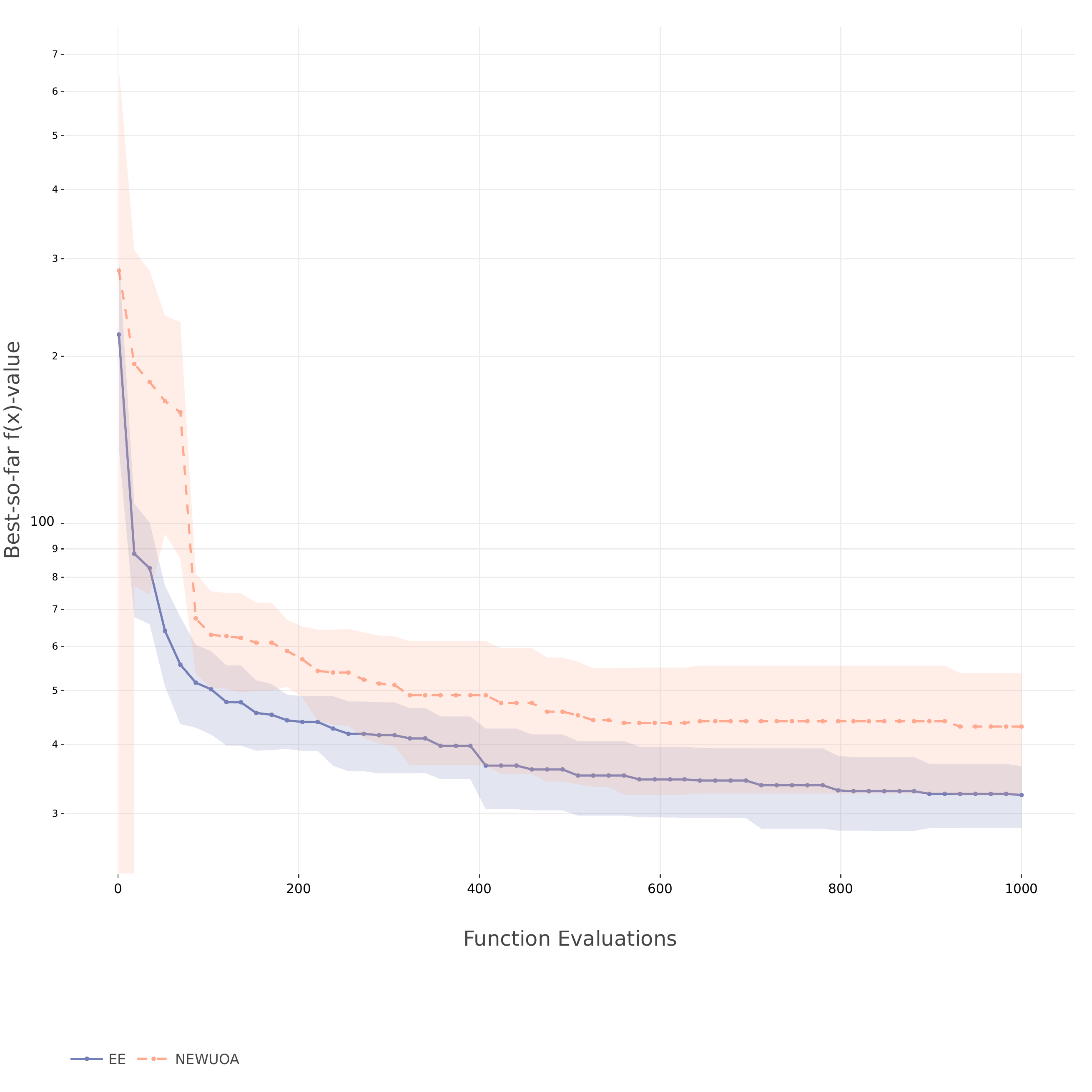}
  \caption{$f_{18}$, $D = 40$. Here \texttt{EXPLO2} is indicated by \texttt{EE}.}
  \label{fig:IOHanalyzer/EE_mul25par32lamLin/EE_mul25par32lamLin_f18d40}
\end{figure}

\begin{figure}[h]
  \centering
  \includegraphics[trim = 0mm 0mm 0mm 0mm, clip, width=\columnwidth,keepaspectratio]{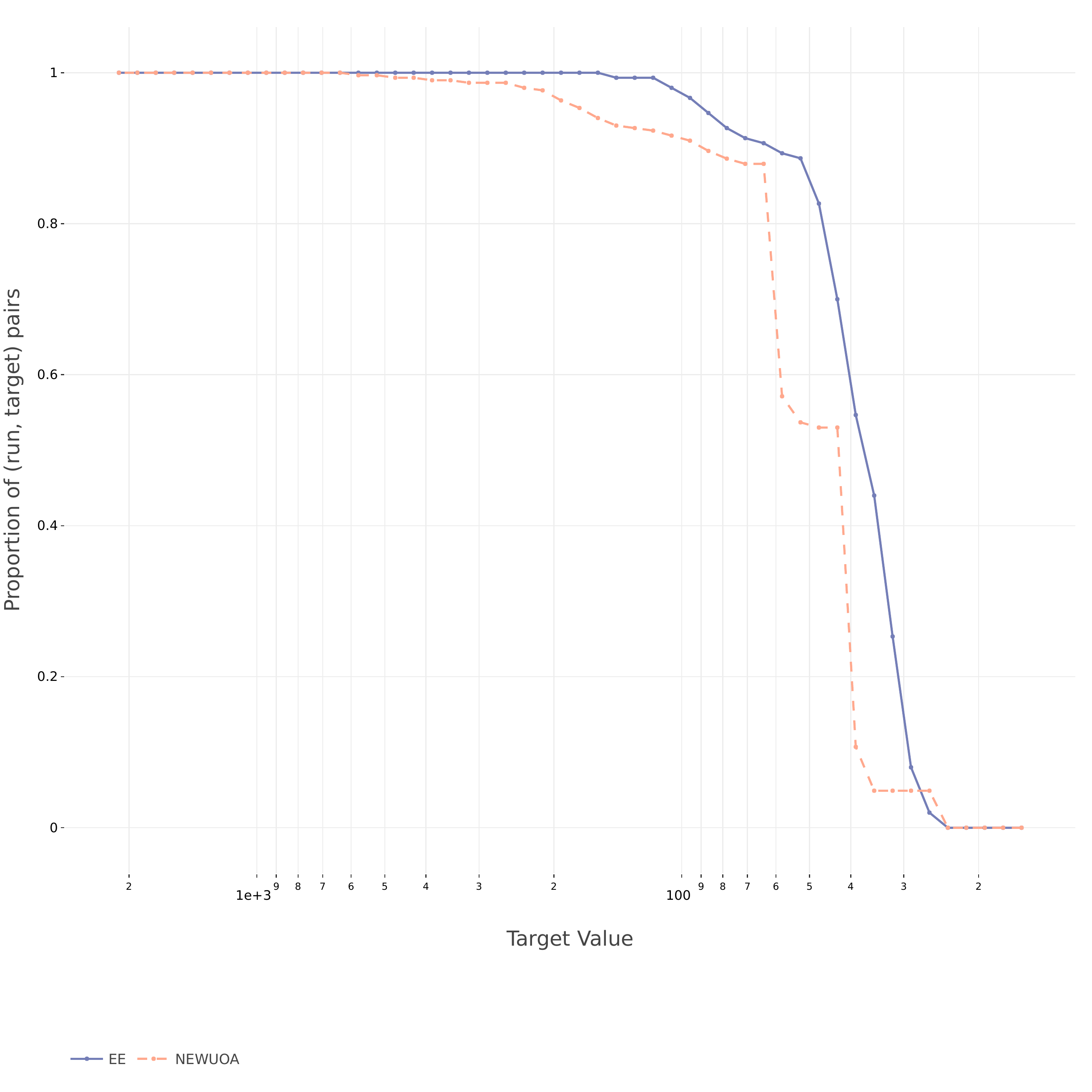}
  \caption{$f_{18}$, $D = 40$. Here \texttt{EXPLO2} is indicated by \texttt{EE}.}
  \label{fig:IOHanalyzer/EE_mul25par32lamLin/EE_mul25par32lamLin_f18d40ECDF}
\end{figure}

\begin{figure}[h]
  \centering
  \includegraphics[trim = 0mm 0mm 0mm 0mm, clip, width=\columnwidth,keepaspectratio]{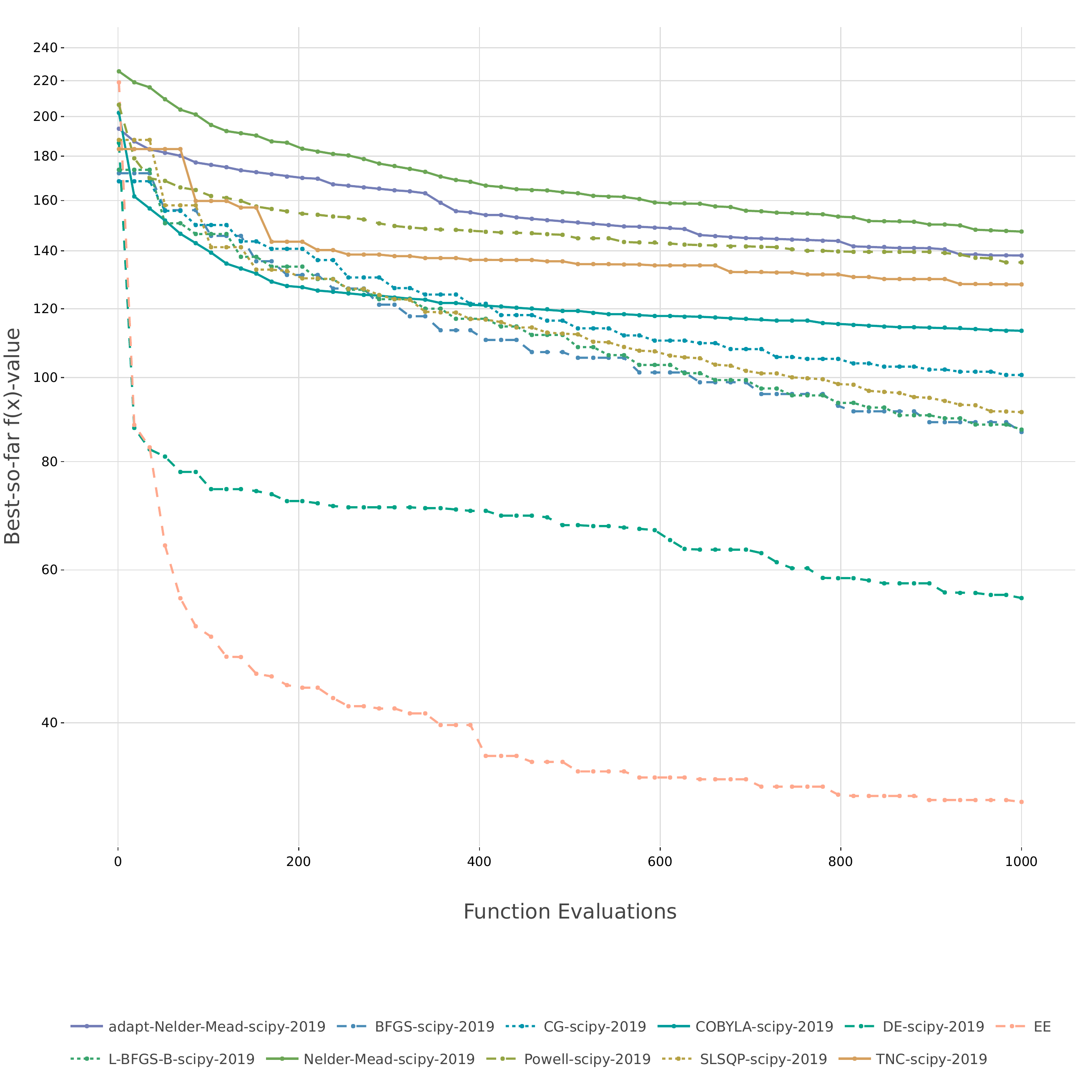}
  \caption{$f_{18}$, $D = 40$. Here \texttt{EXPLO2} is indicated by \texttt{EE}.}
  \label{fig:IOHanalyzer/EE_mul25par32lamLin/EE_mul25par32lamLin_f18d40vScipy}
\end{figure}

\begin{figure}[h]
  \centering
  \includegraphics[trim = 0mm 0mm 0mm 0mm, clip, width=\columnwidth,keepaspectratio]{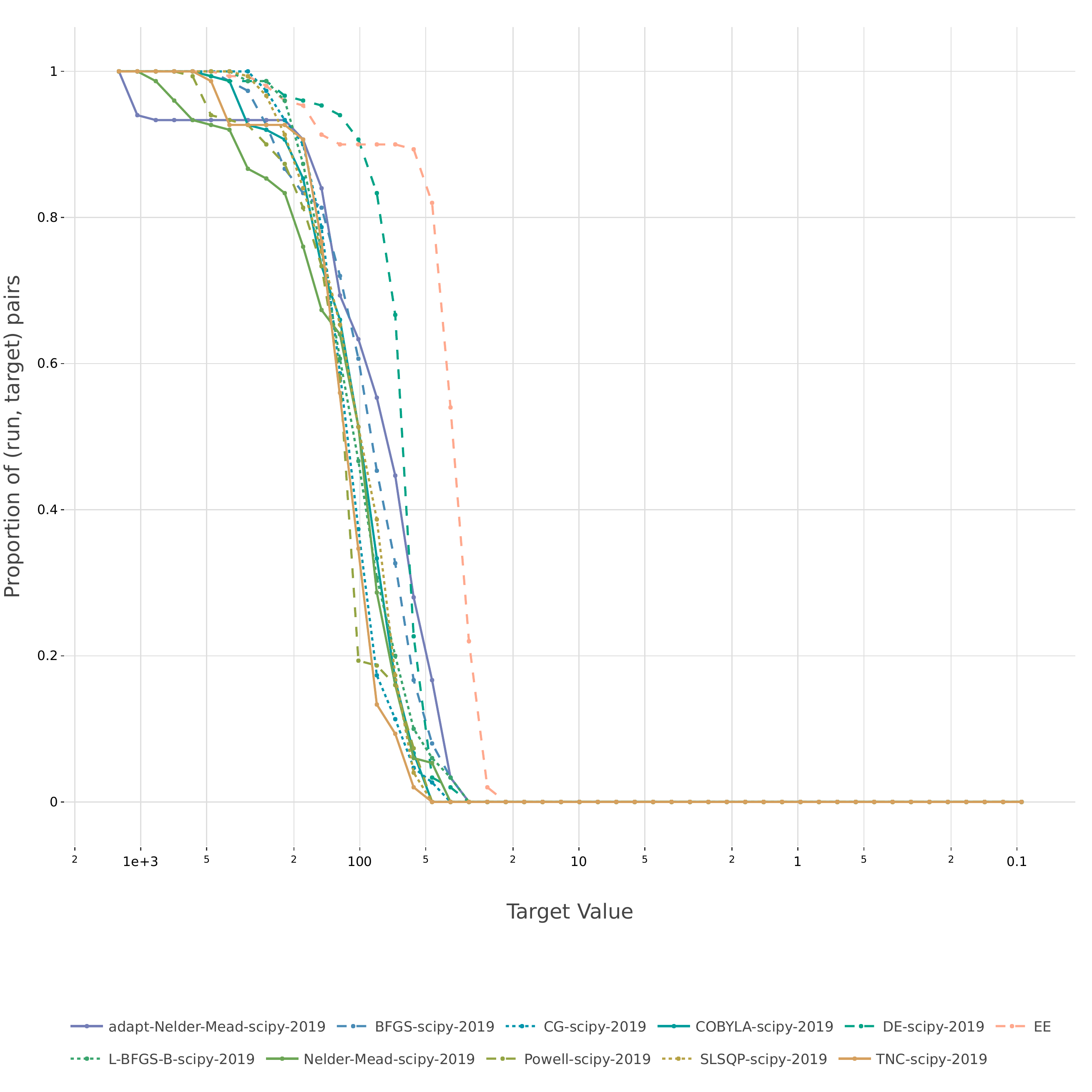}
  \caption{$f_{18}$, $D = 40$. Here \texttt{EXPLO2} is indicated by \texttt{EE}.}
  \label{fig:IOHanalyzer/EE_mul25par32lamLin/EE_mul25par32lamLin_f18d40vScipyECDF}
\end{figure}

\clearpage


\begin{figure}[h]
  \centering
  \includegraphics[trim = 0mm 0mm 0mm 0mm, clip, width=\columnwidth,keepaspectratio]{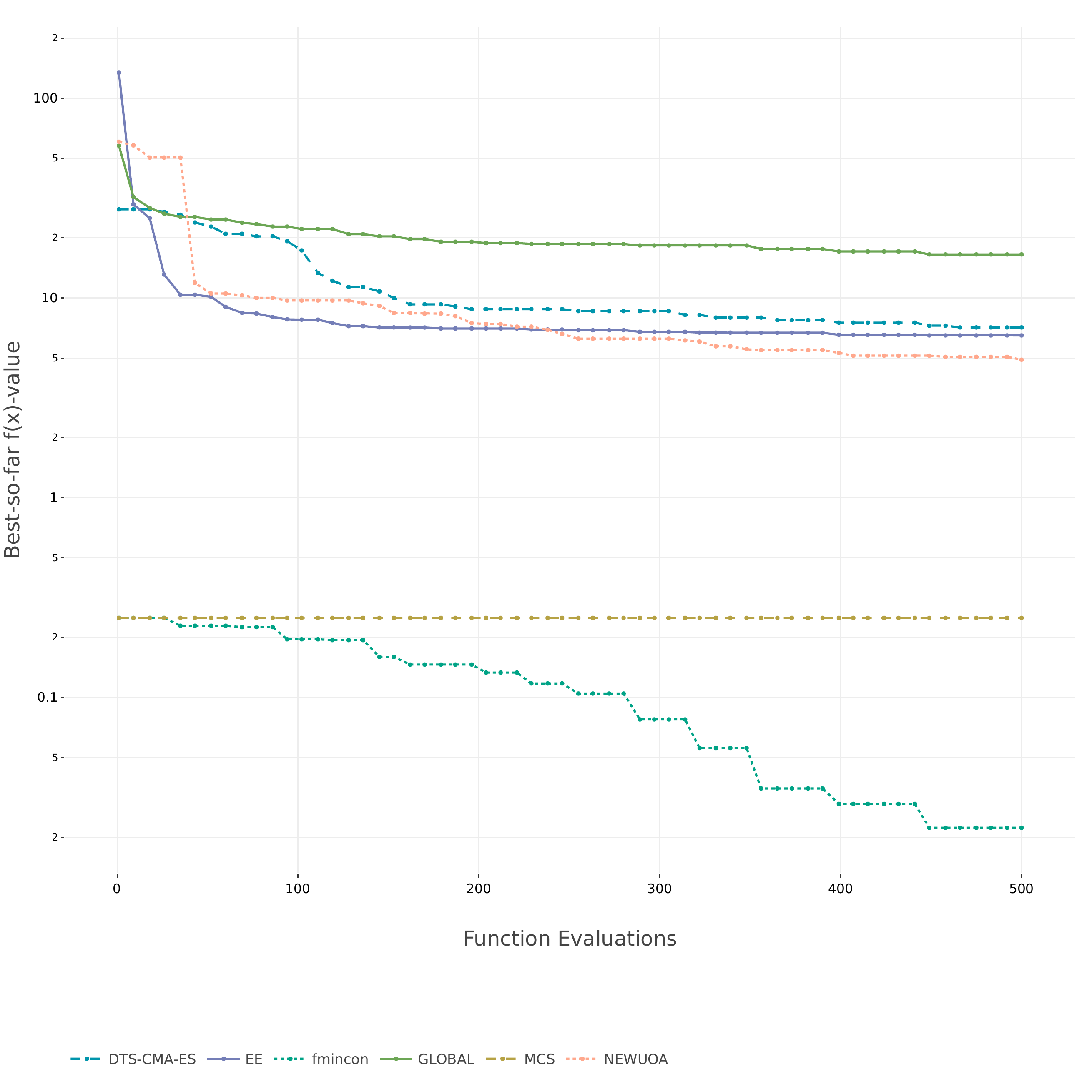}
  \caption{$f_{19}$, $D = 20$. Here \texttt{EXPLO2} is indicated by \texttt{EE}.}
  \label{fig:IOHanalyzer/EE_mul25par32lamLin/EE_mul25par32lamLin_f19d20}
\end{figure}

\begin{figure}[h]
  \centering
  \includegraphics[trim = 0mm 0mm 0mm 0mm, clip, width=\columnwidth,keepaspectratio]{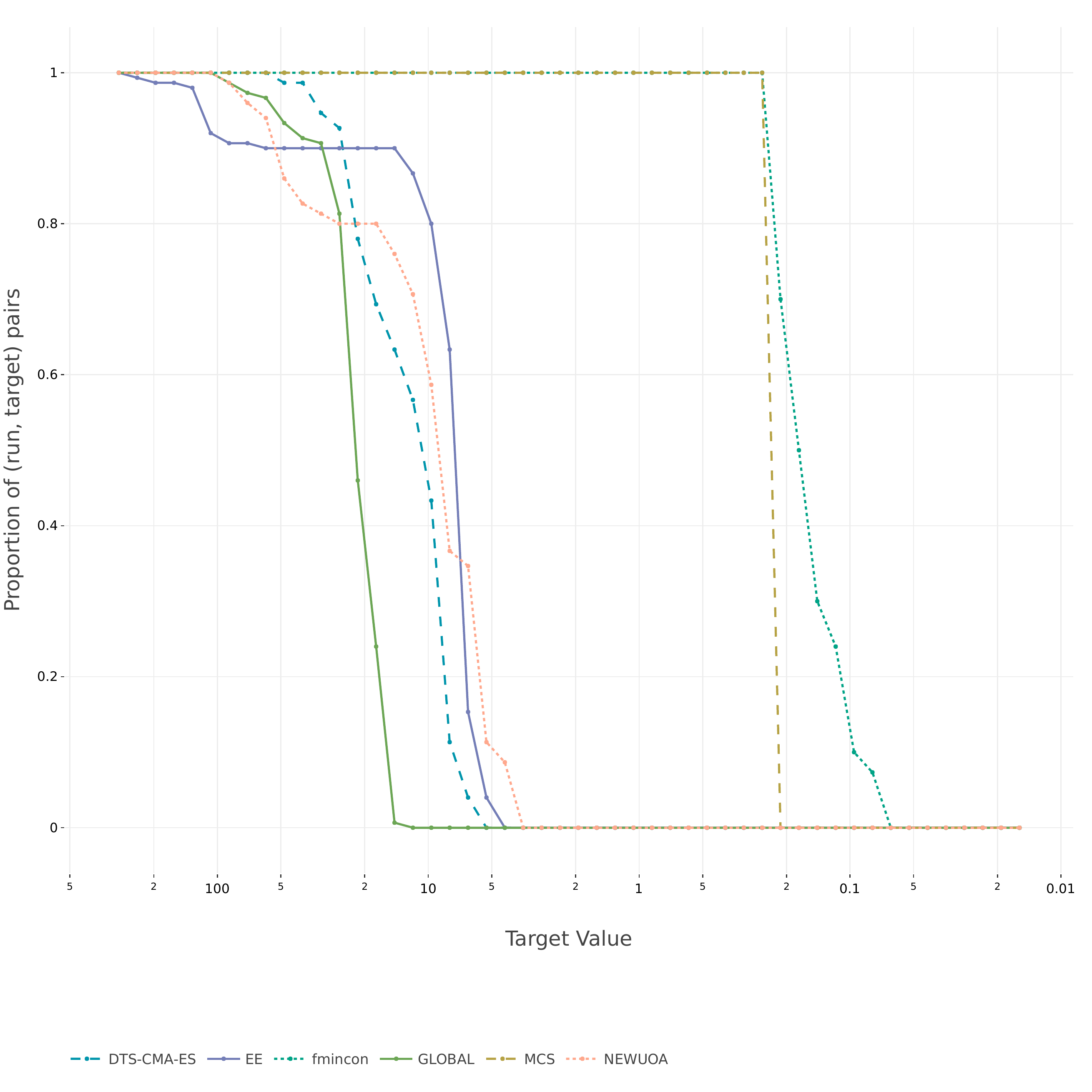}
  \caption{$f_{19}$, $D = 20$. Here \texttt{EXPLO2} is indicated by \texttt{EE}.}
  \label{fig:IOHanalyzer/EE_mul25par32lamLin/EE_mul25par32lamLin_f19d20ECDF}
\end{figure}

\begin{figure}[h]
  \centering
  \includegraphics[trim = 0mm 0mm 0mm 0mm, clip, width=\columnwidth,keepaspectratio]{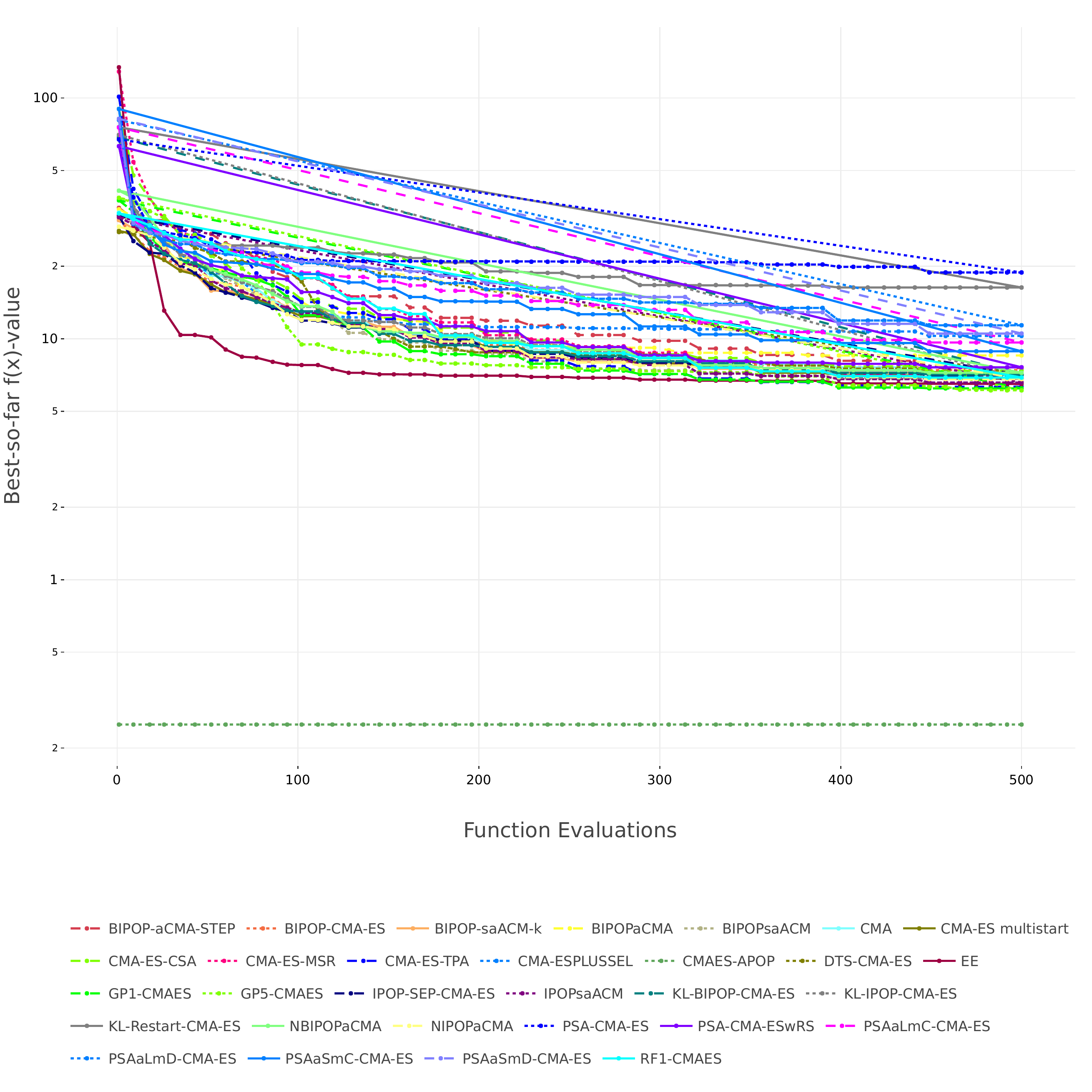}
  \caption{$f_{19}$, $D = 20$. Here \texttt{EXPLO2} is indicated by \texttt{EE}.}
  \label{fig:IOHanalyzer/EE_mul25par32lamLin/EE_mul25par32lamLin_f19d20allCMA}
\end{figure}

\begin{figure}[h]
  \centering
  \includegraphics[trim = 0mm 0mm 0mm 0mm, clip, width=\columnwidth,keepaspectratio]{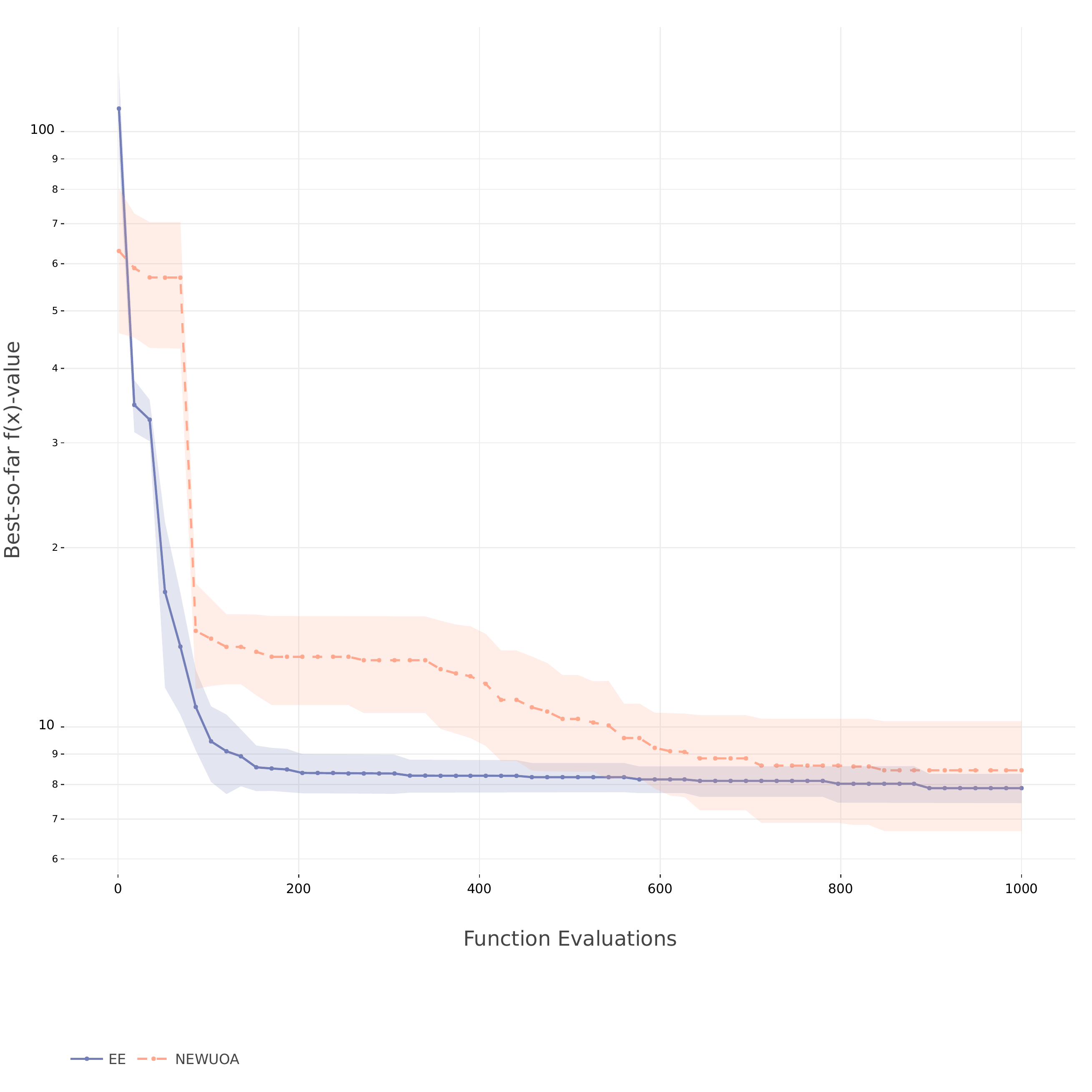}
  \caption{$f_{19}$, $D = 40$}
  \label{fig:IOHanalyzer/EE_mul25par32lamLin/EE_mul25par32lamLin_f19d40}
\end{figure}

\begin{figure}[h]
  \centering
  \includegraphics[trim = 0mm 0mm 0mm 0mm, clip, width=\columnwidth,keepaspectratio]{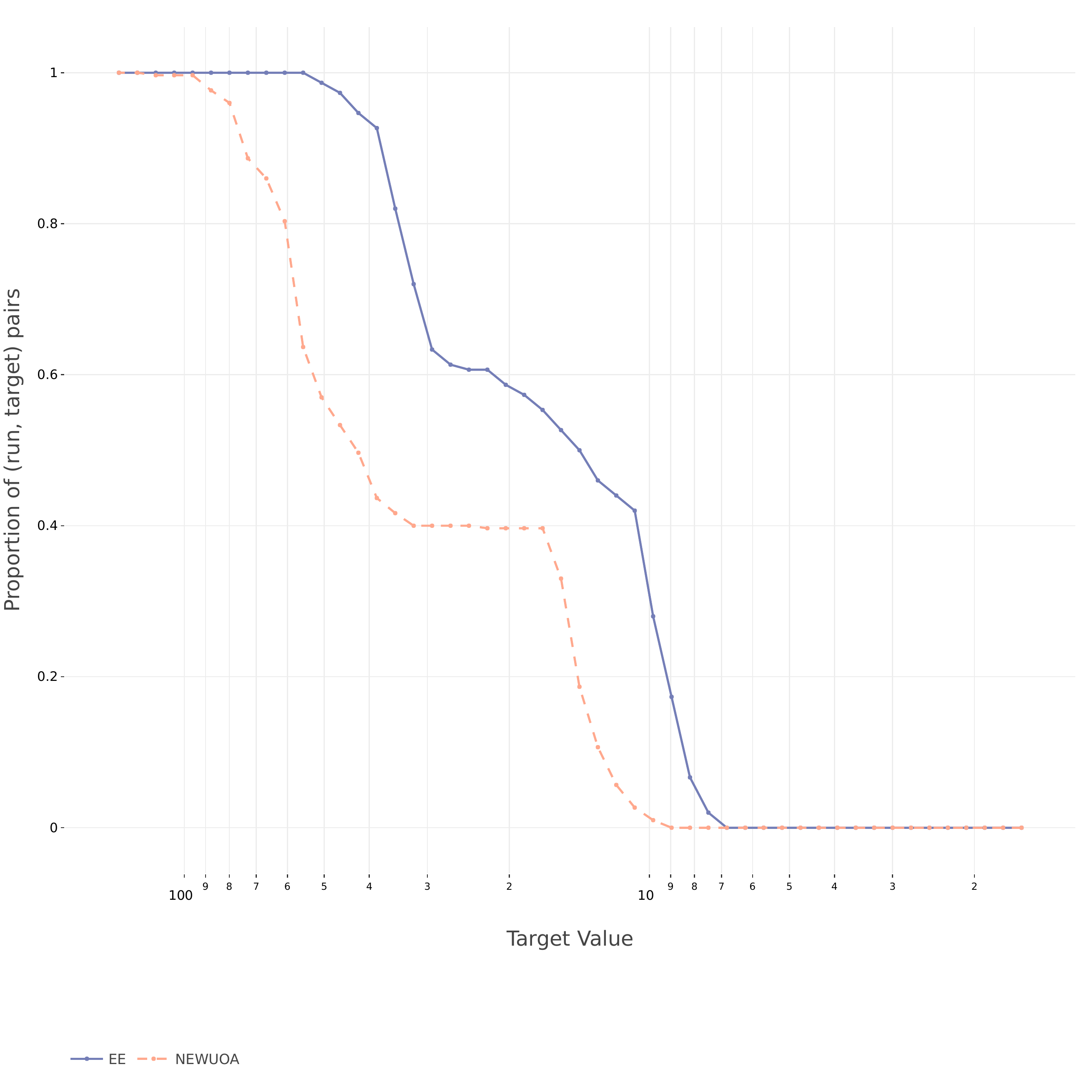}
  \caption{$f_{19}$, $D = 40$. Here \texttt{EXPLO2} is indicated by \texttt{EE}.}
  \label{fig:IOHanalyzer/EE_mul25par32lamLin/EE_mul25par32lamLin_f19d40ECDF}
\end{figure}

\begin{figure}[h]
  \centering
  \includegraphics[trim = 0mm 0mm 0mm 0mm, clip, width=\columnwidth,keepaspectratio]{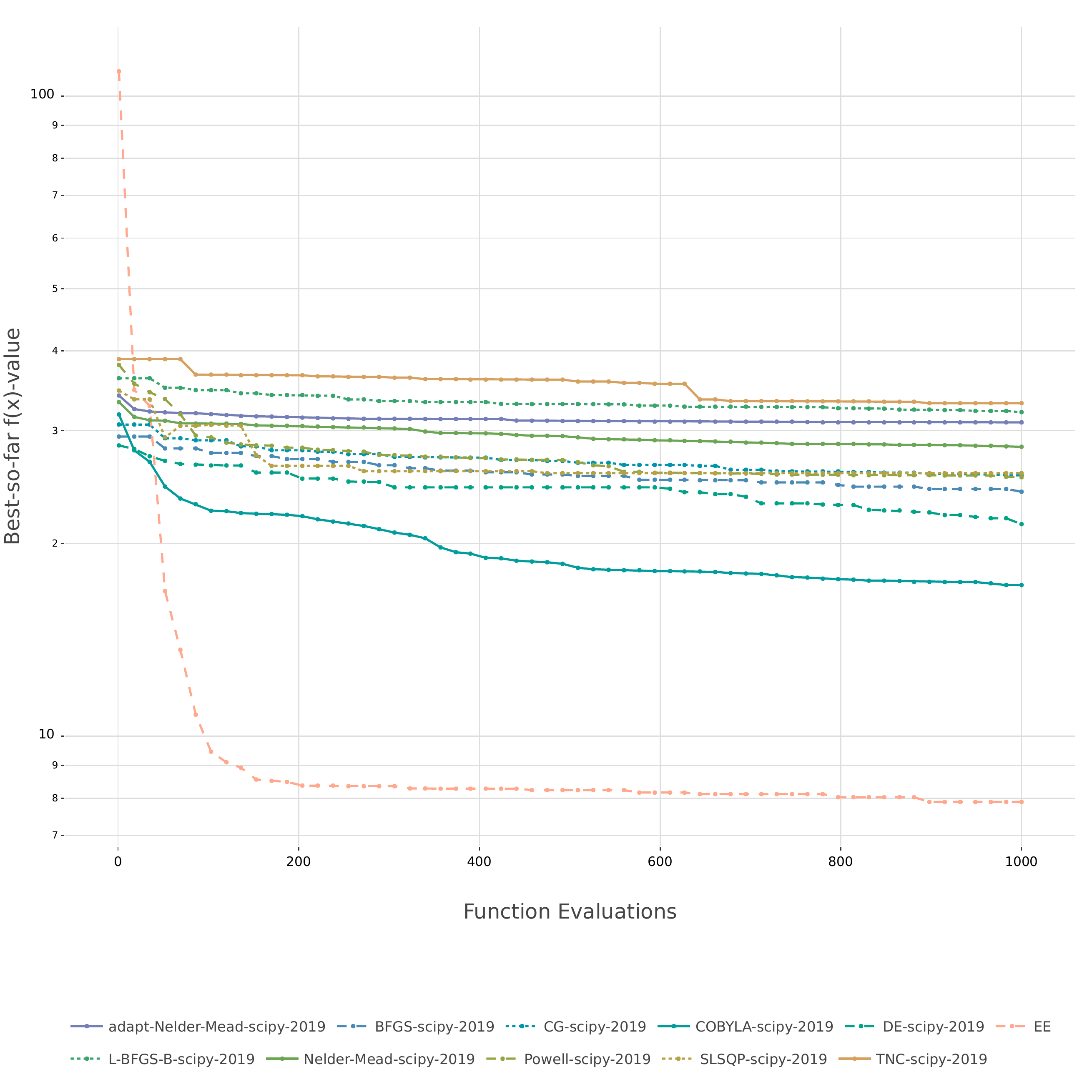}
  \caption{$f_{19}$, $D = 40$. Here \texttt{EXPLO2} is indicated by \texttt{EE}.}
  \label{fig:IOHanalyzer/EE_mul25par32lamLin/EE_mul25par32lamLin_f19d40vScipy}
\end{figure}

\begin{figure}[h]
  \centering
  \includegraphics[trim = 0mm 0mm 0mm 0mm, clip, width=\columnwidth,keepaspectratio]{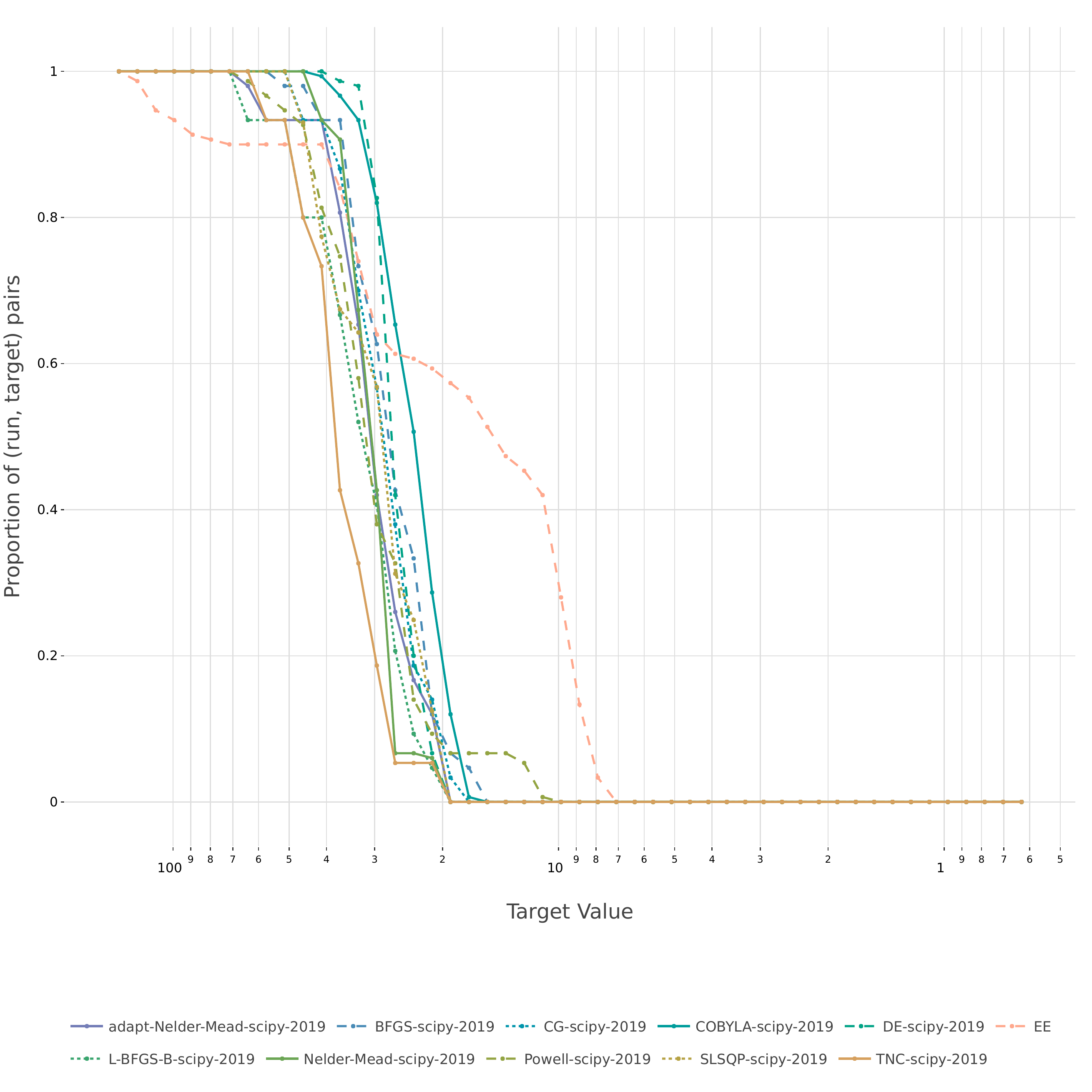}
  \caption{$f_{19}$, $D = 40$. Here \texttt{EXPLO2} is indicated by \texttt{EE}.}
  \label{fig:IOHanalyzer/EE_mul25par32lamLin/EE_mul25par32lamLin_f19d40vScipyECDF}
\end{figure}

\clearpage

\section{\label{sec:ERTscaling}Scaling of runtime with dimension}

The \emph{expected runtime} (ERT), used in the figures and tables,
depends on a given target function value, $\ftarget=\fopt+\Df$, and is
computed over all relevant trials as the number of function
evaluations executed during each trial while the best function value
did not reach \ftarget, summed over all trials and divided by the
number of trials that actually reached \ftarget\
\cite{hansen2012exp,price1997dev}. Statistical significance
is tested with the rank-sum test for a given target $\Delta\ftarget$
using, for each trial,
either the number of needed function evaluations to reach
$\Delta\ftarget$ (inverted and multiplied by $-1$), or, if the target
was not reached, the best $\Df$-value achieved, measured only up to
the smallest number of overall function evaluations for any
unsuccessful trial under consideration.



\begin{figure*}
\centering
\begin{tabular}{@{}c@{}c@{}c@{}c@{}}
\includegraphics[width=0.238\textwidth]{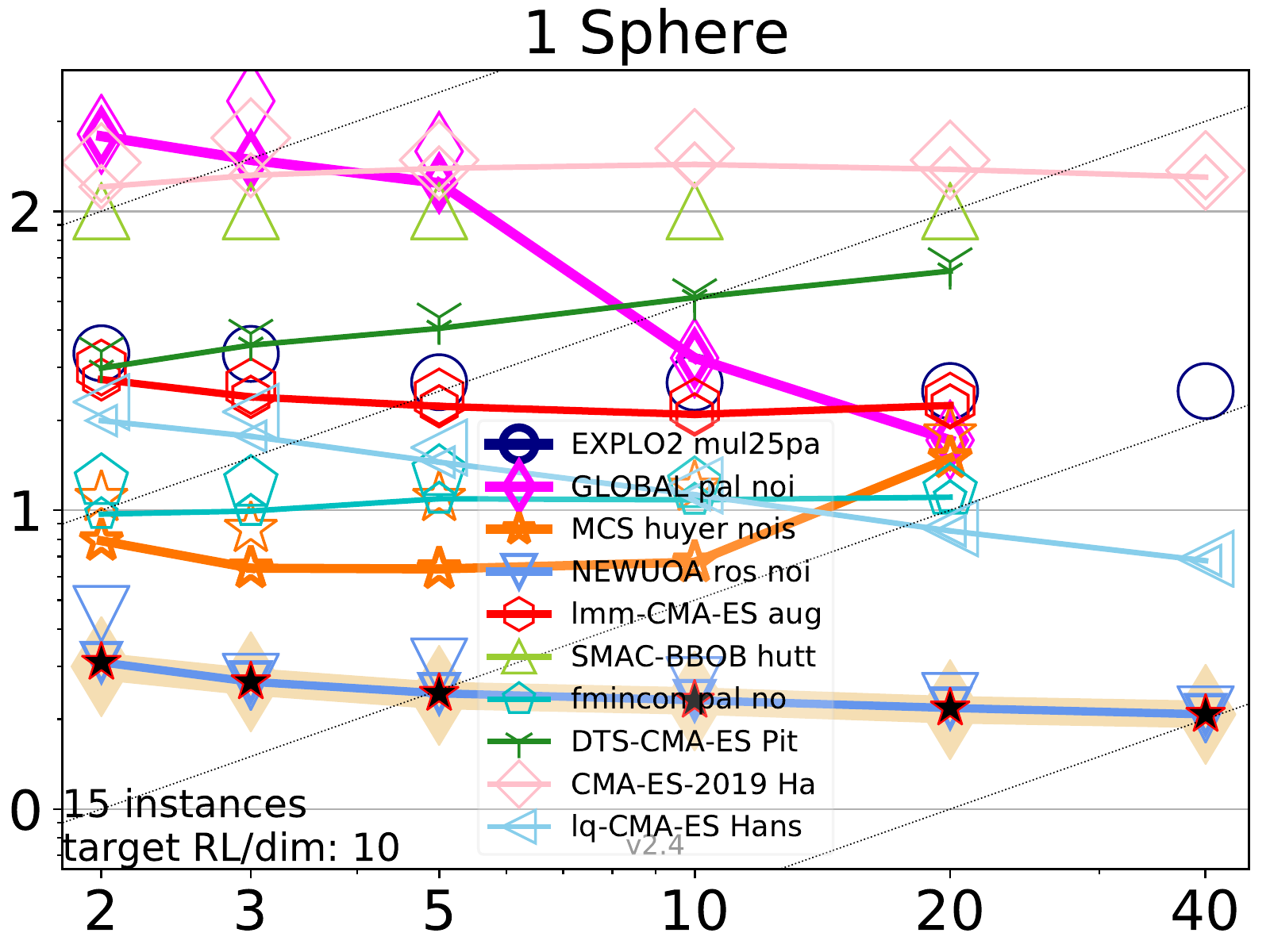}&
\includegraphics[width=0.238\textwidth]{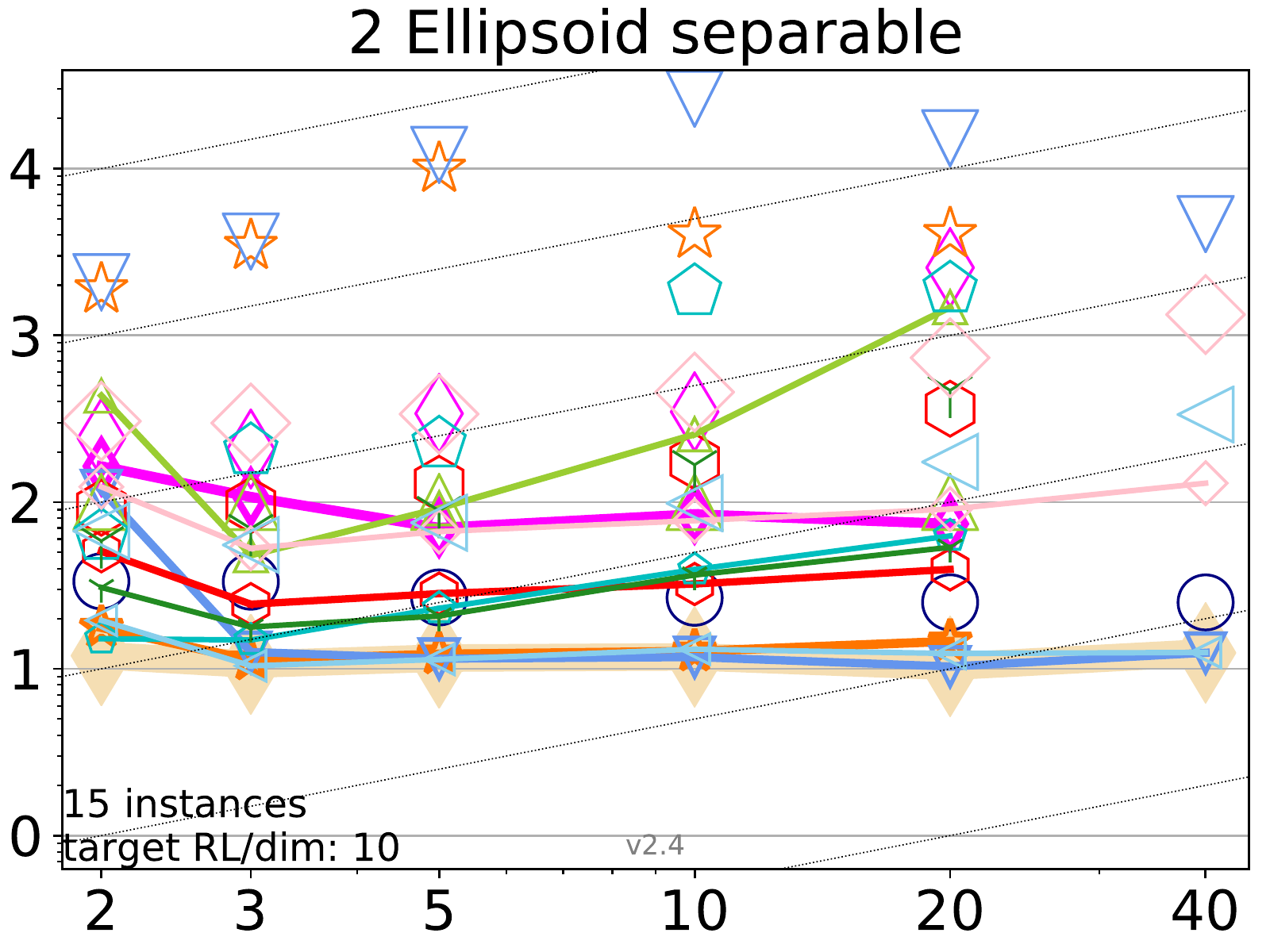}&
\includegraphics[width=0.238\textwidth]{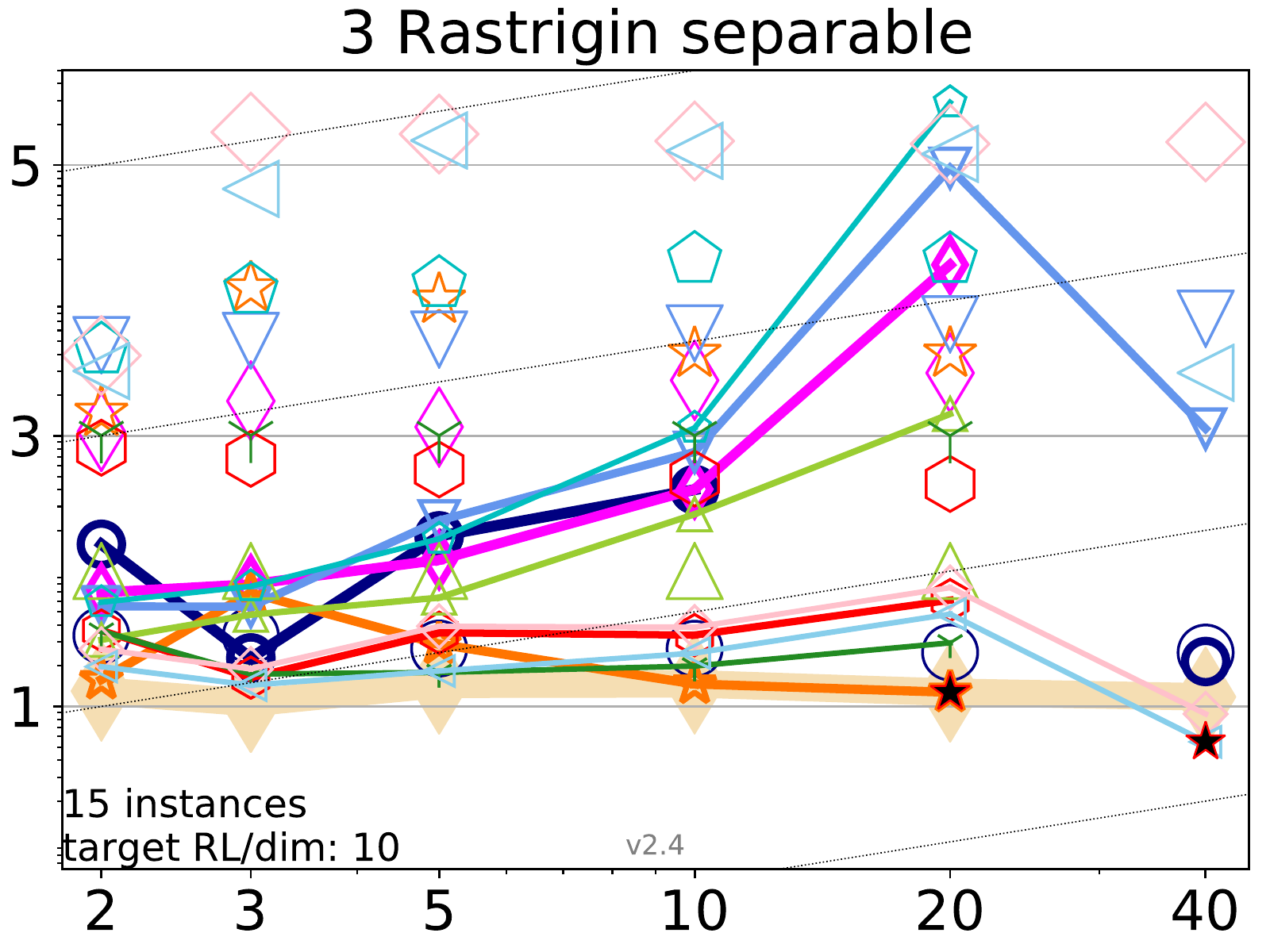}&
\includegraphics[width=0.238\textwidth]{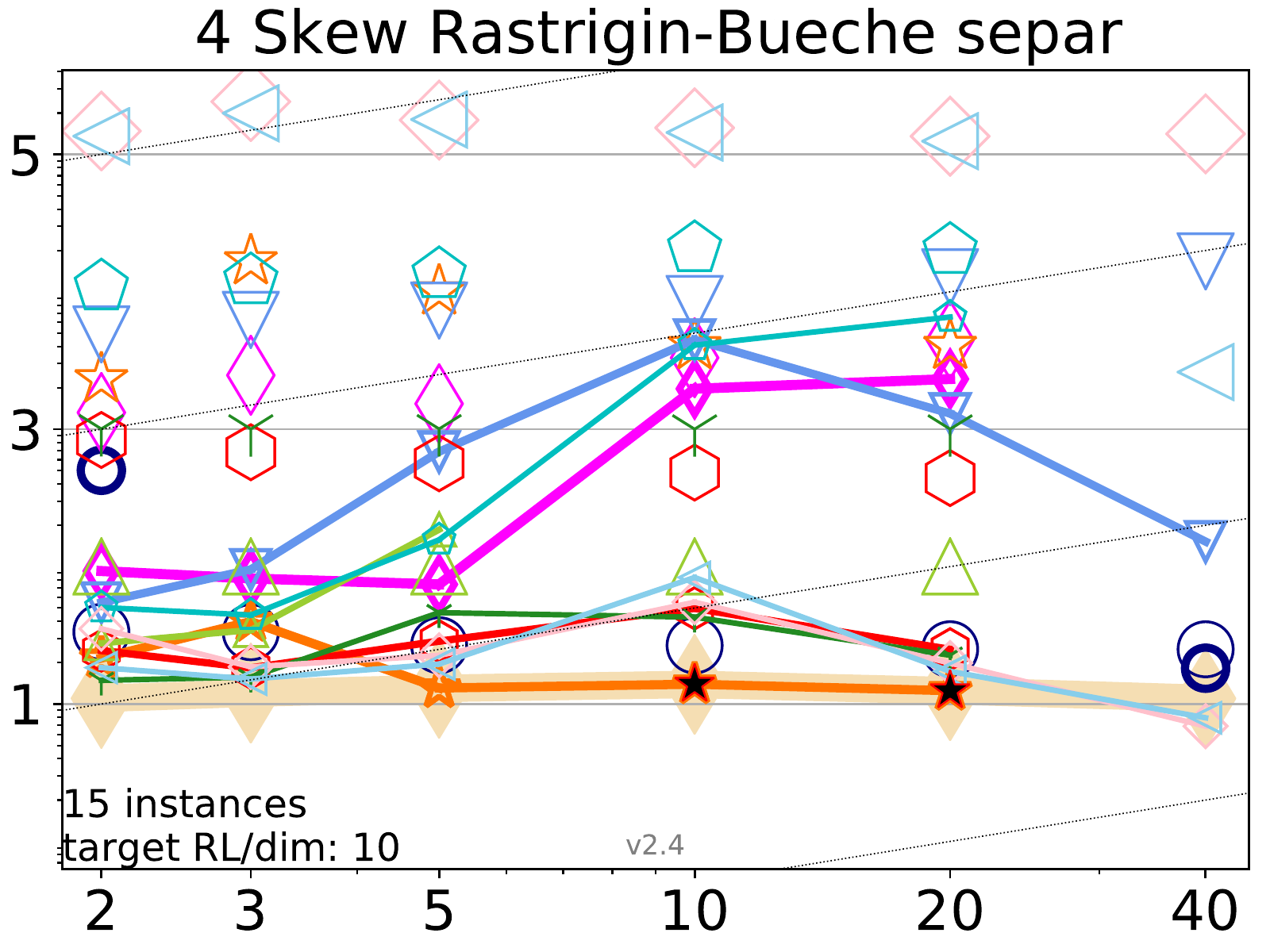}\\
\includegraphics[width=0.238\textwidth]{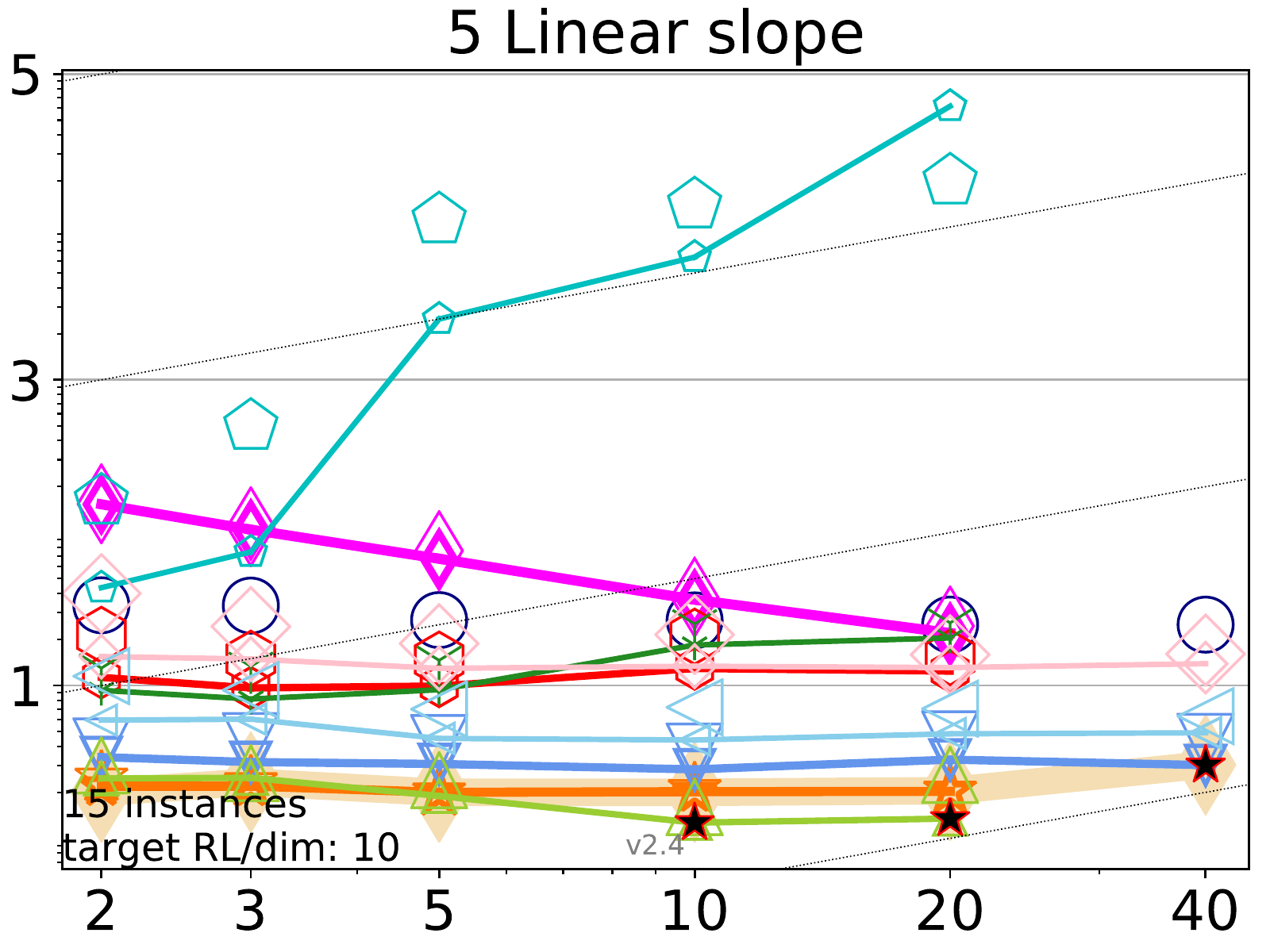}&
\includegraphics[width=0.238\textwidth]{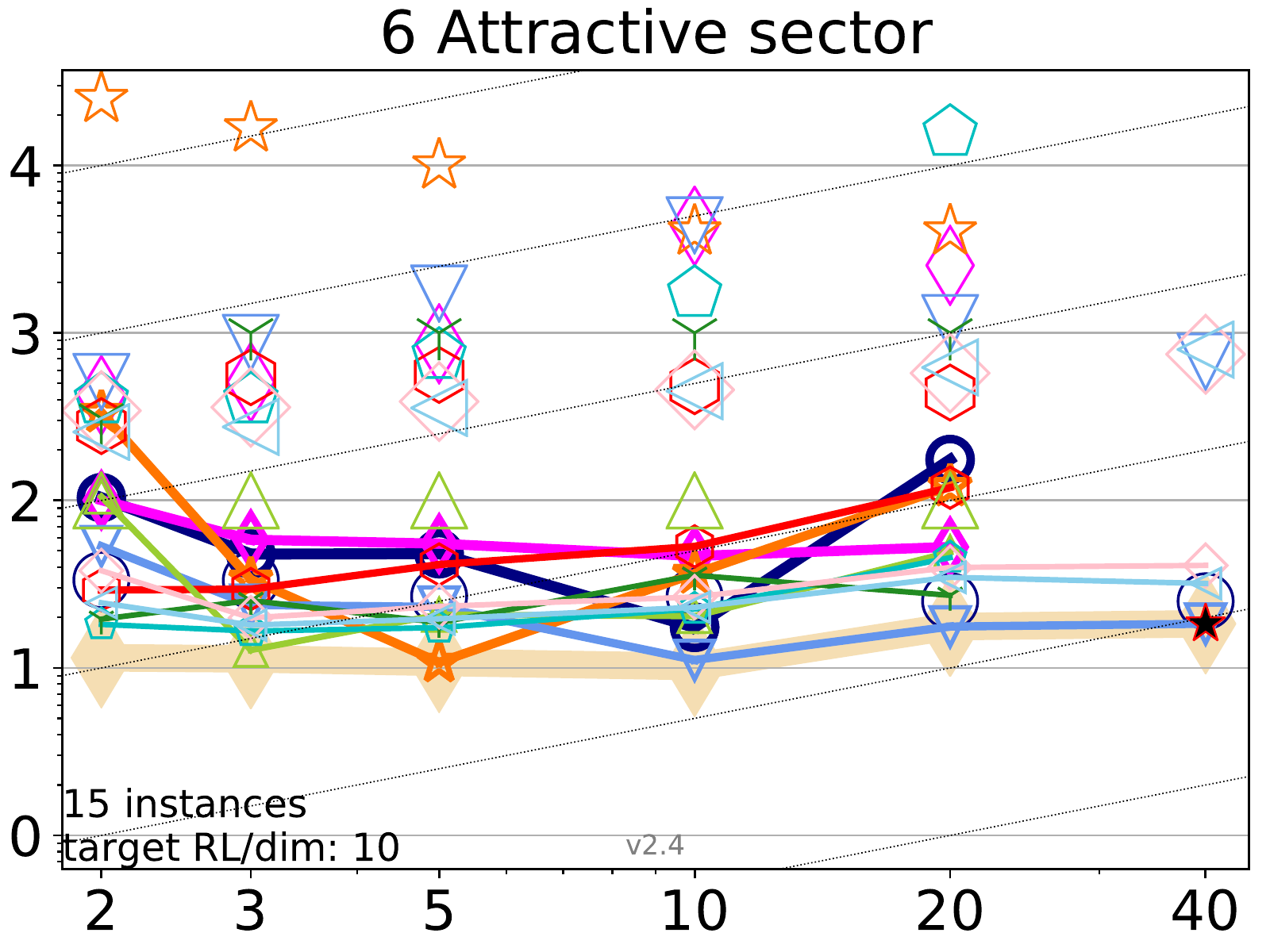}&
\includegraphics[width=0.238\textwidth]{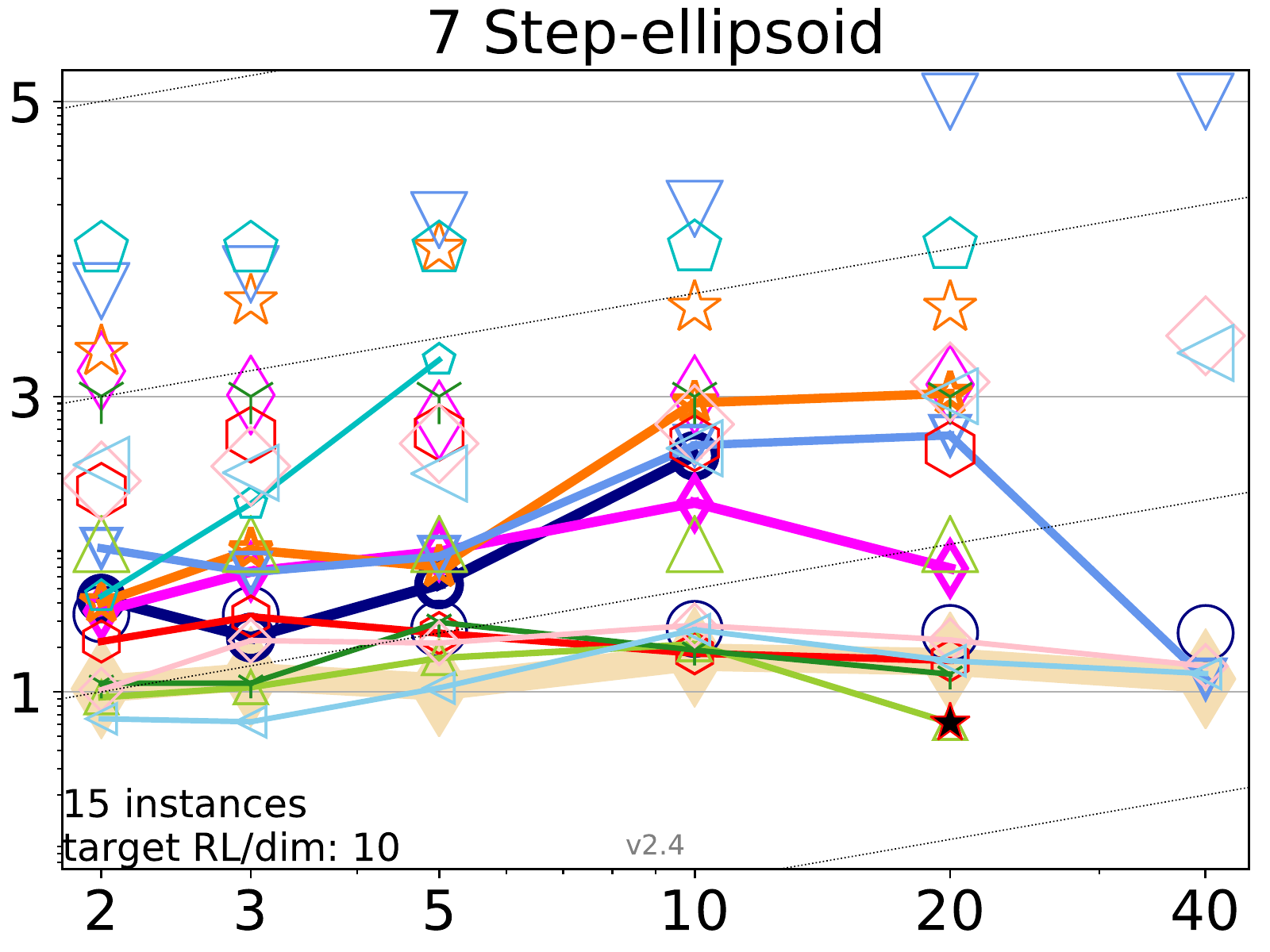}&
\includegraphics[width=0.238\textwidth]{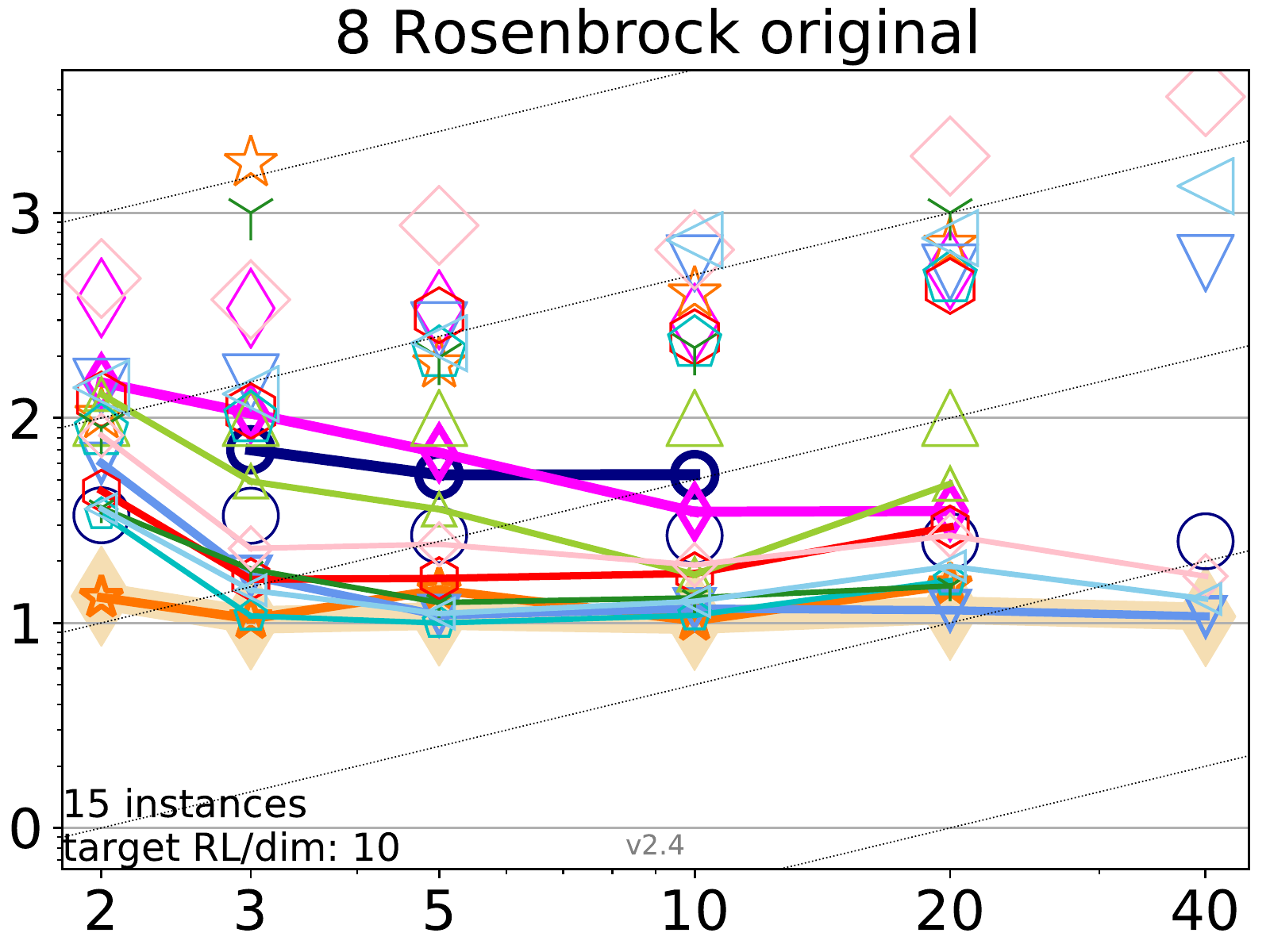}\\
\includegraphics[width=0.238\textwidth]{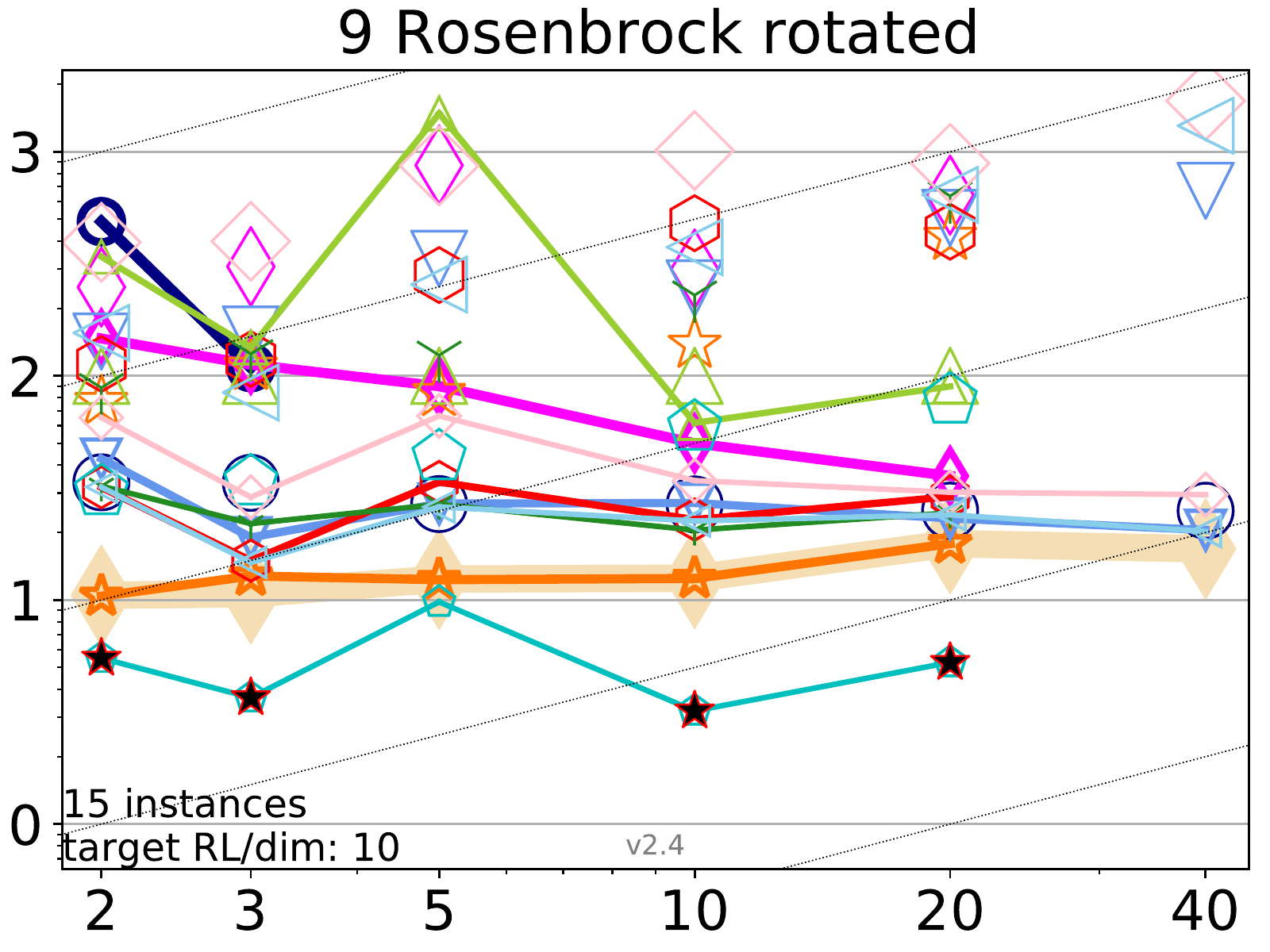}&
\includegraphics[width=0.238\textwidth]{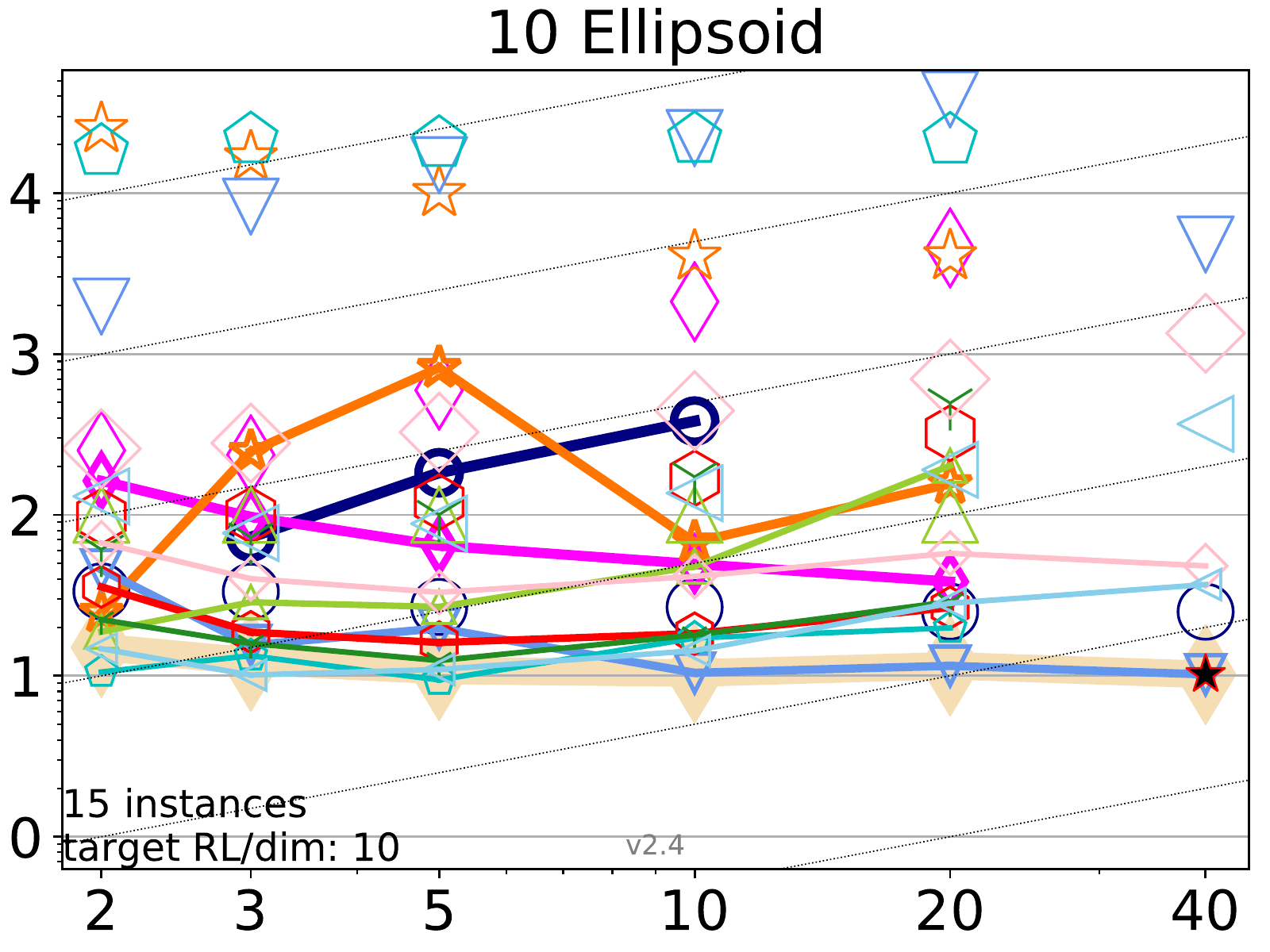}&
\includegraphics[width=0.238\textwidth]{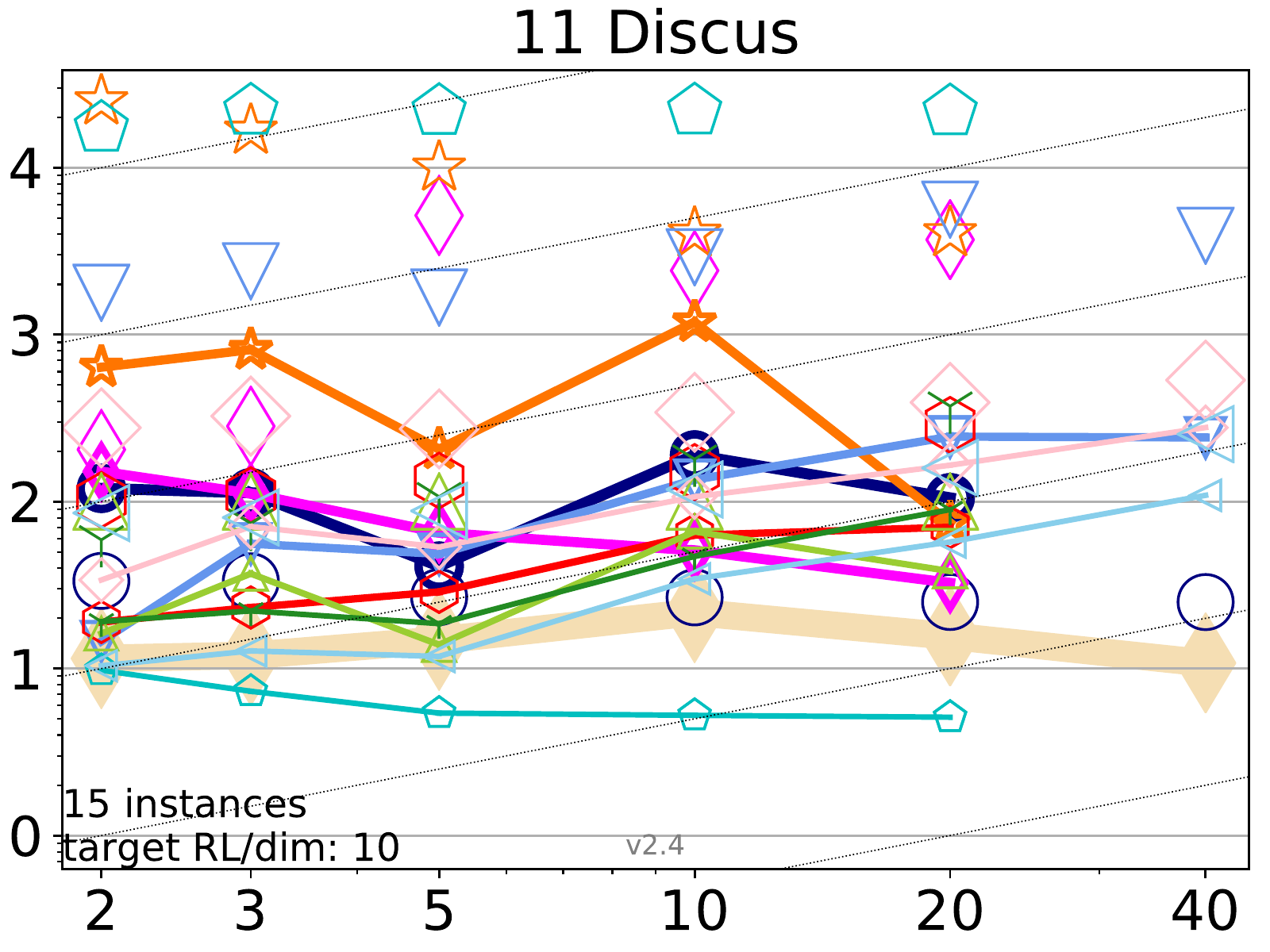}&
\includegraphics[width=0.238\textwidth]{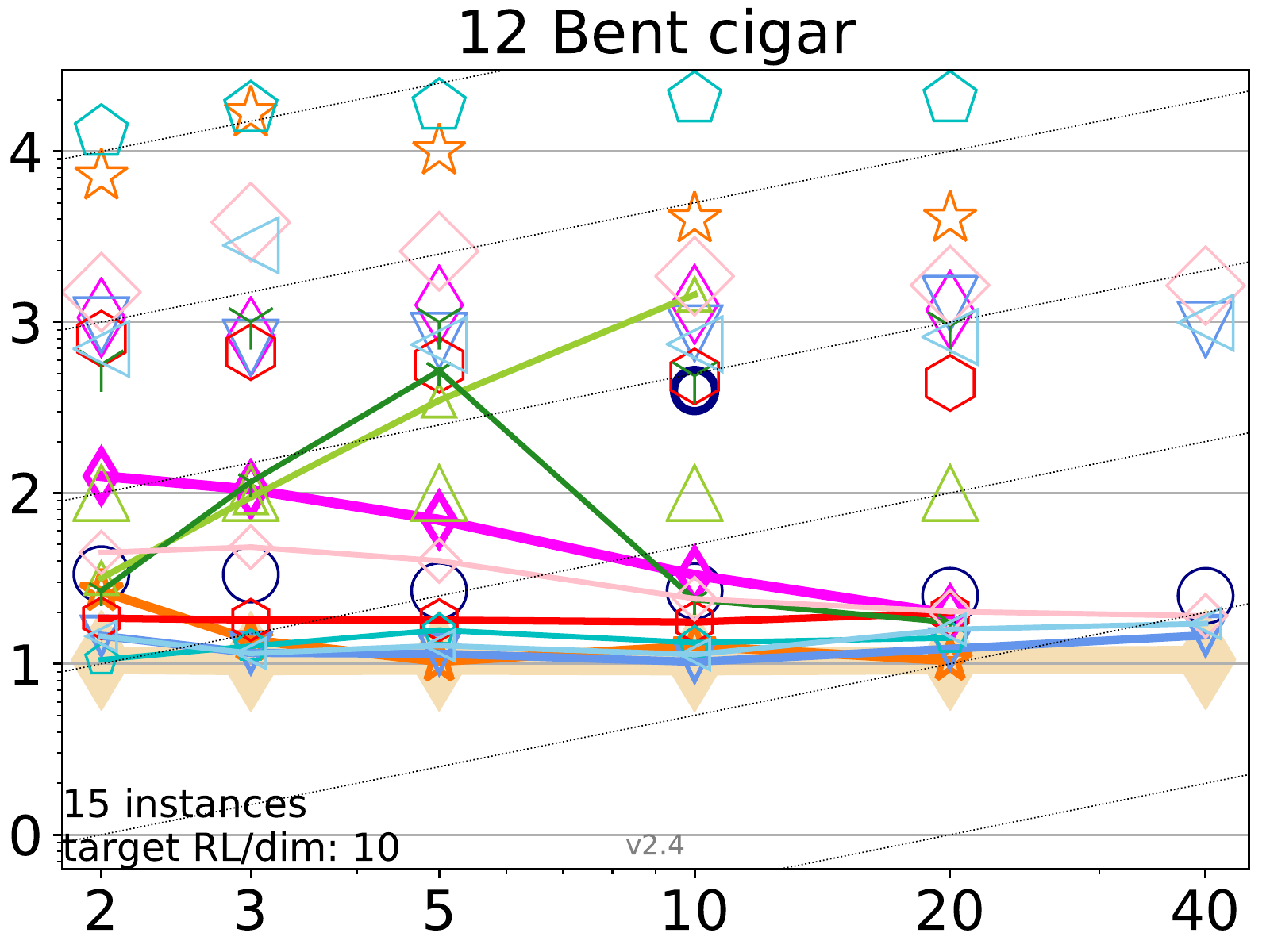}\\
\includegraphics[width=0.238\textwidth]{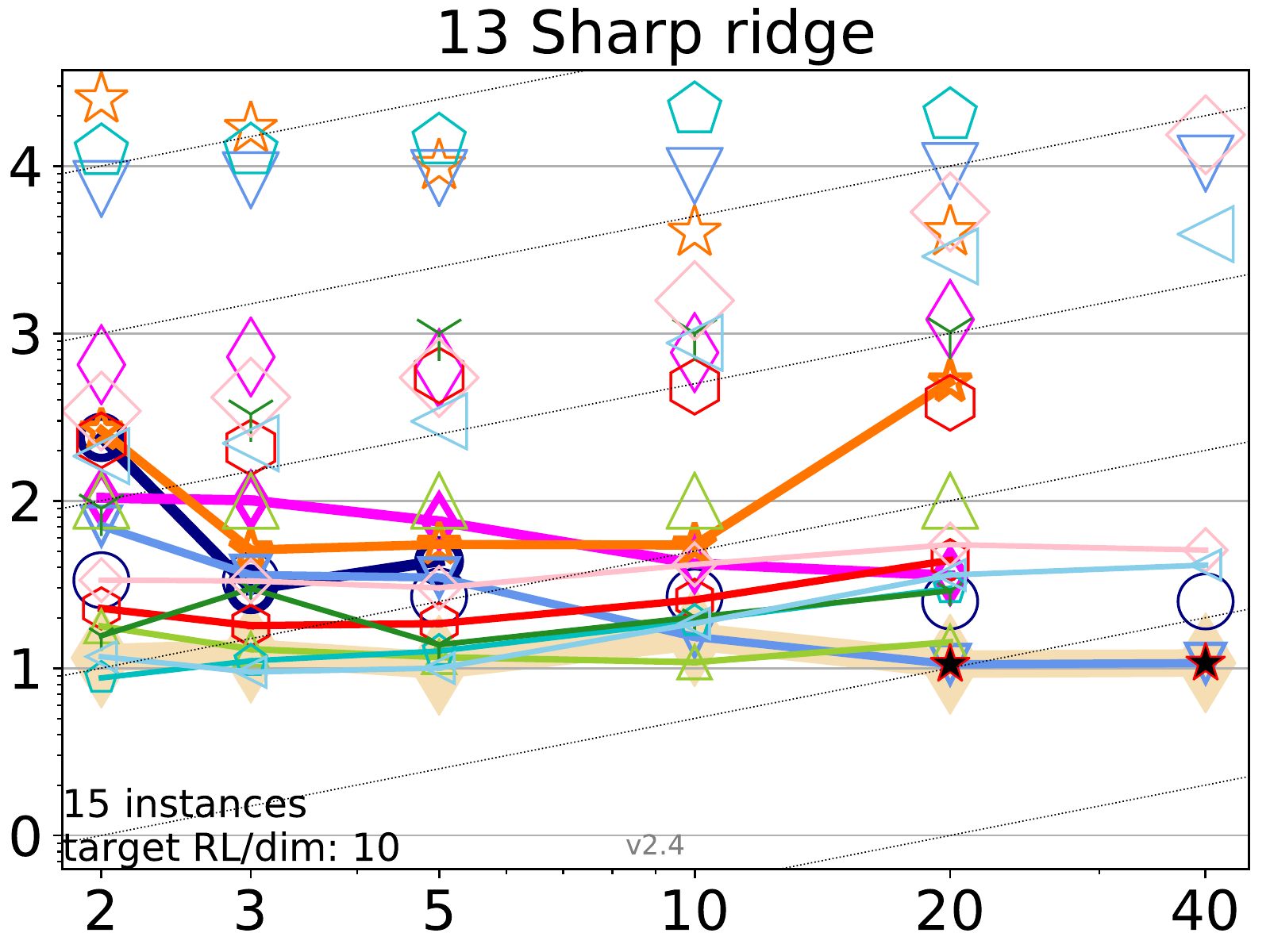}&
\includegraphics[width=0.238\textwidth]{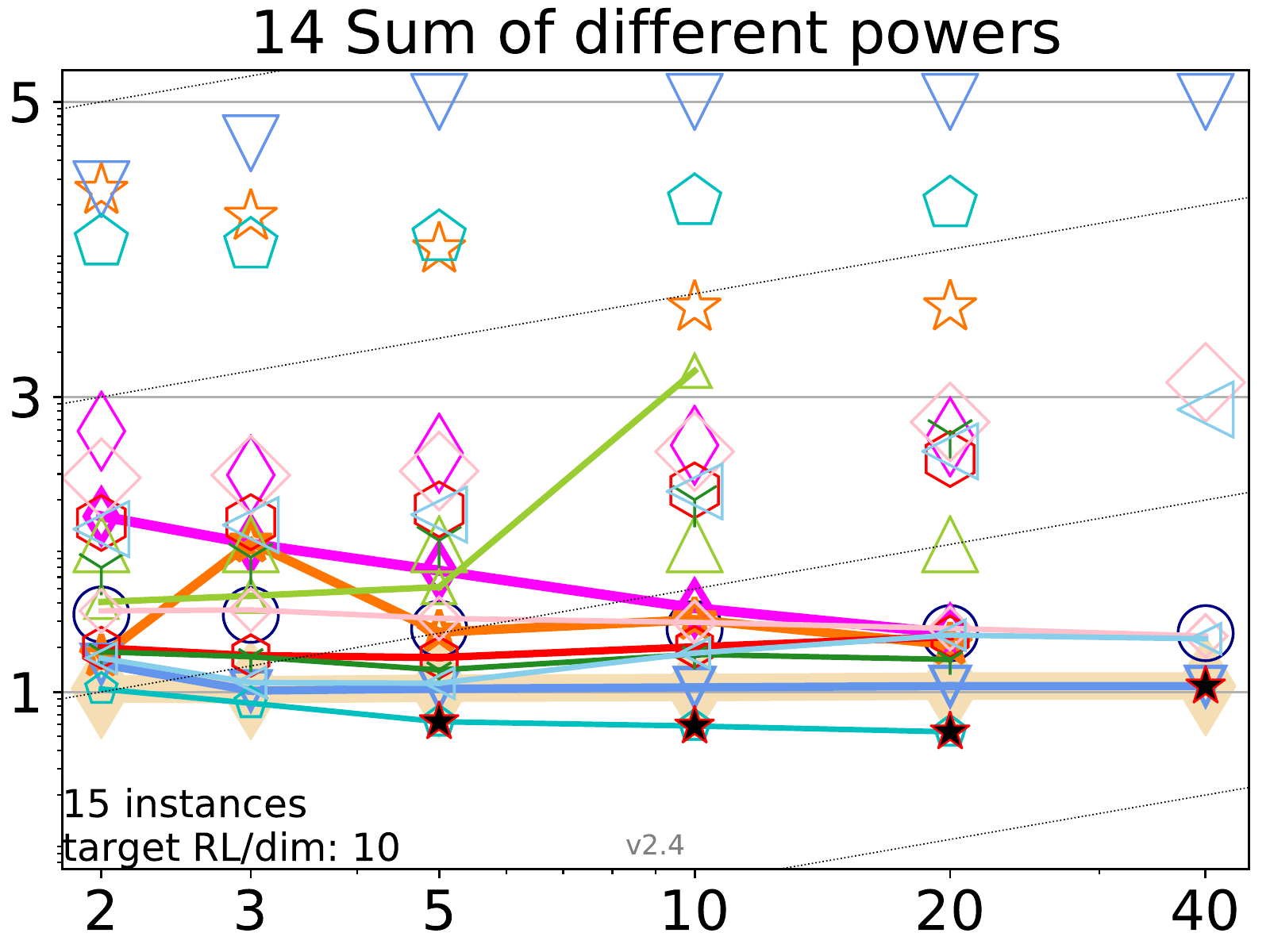}&
\includegraphics[width=0.238\textwidth]{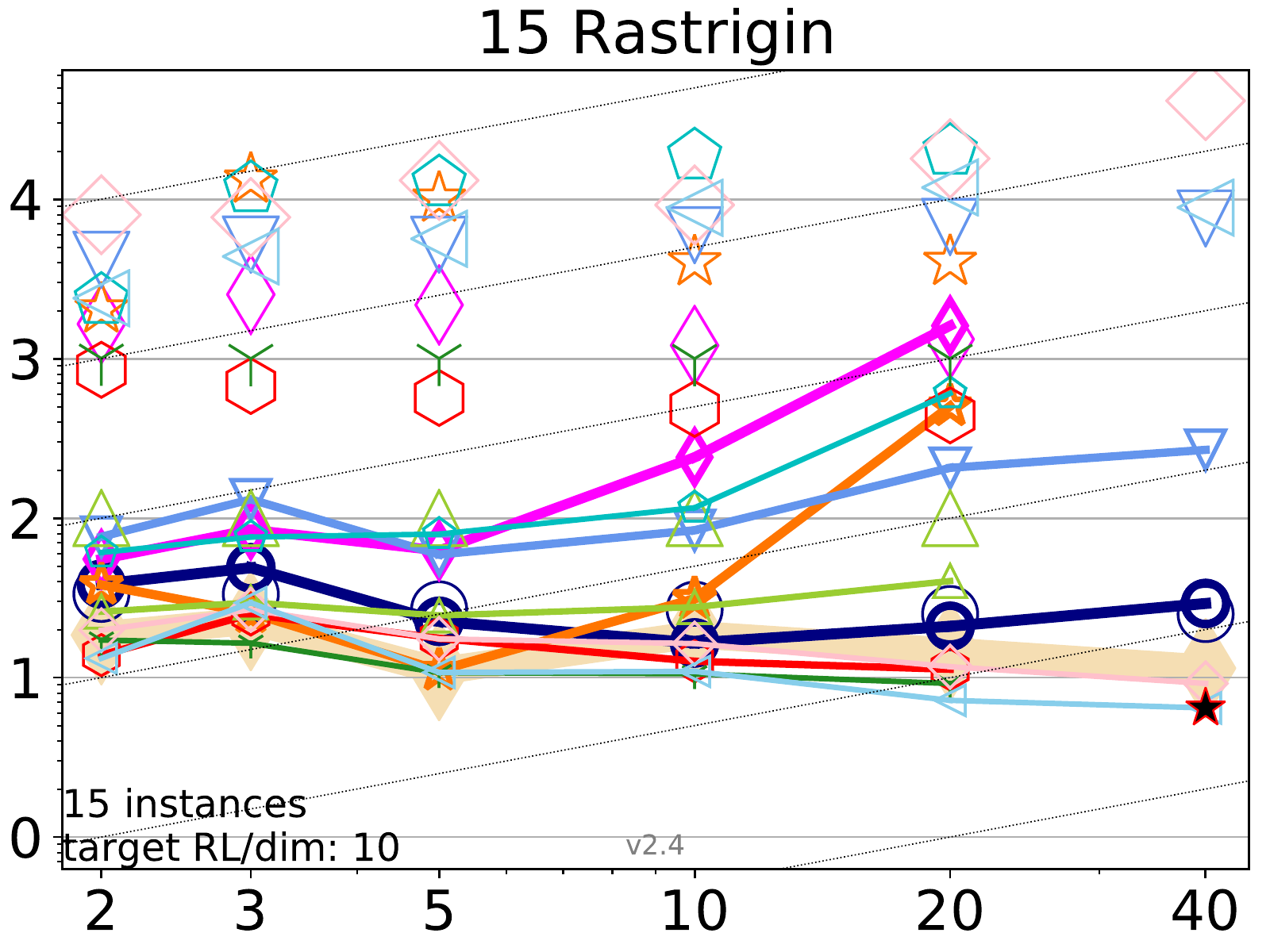}&
\includegraphics[width=0.238\textwidth]{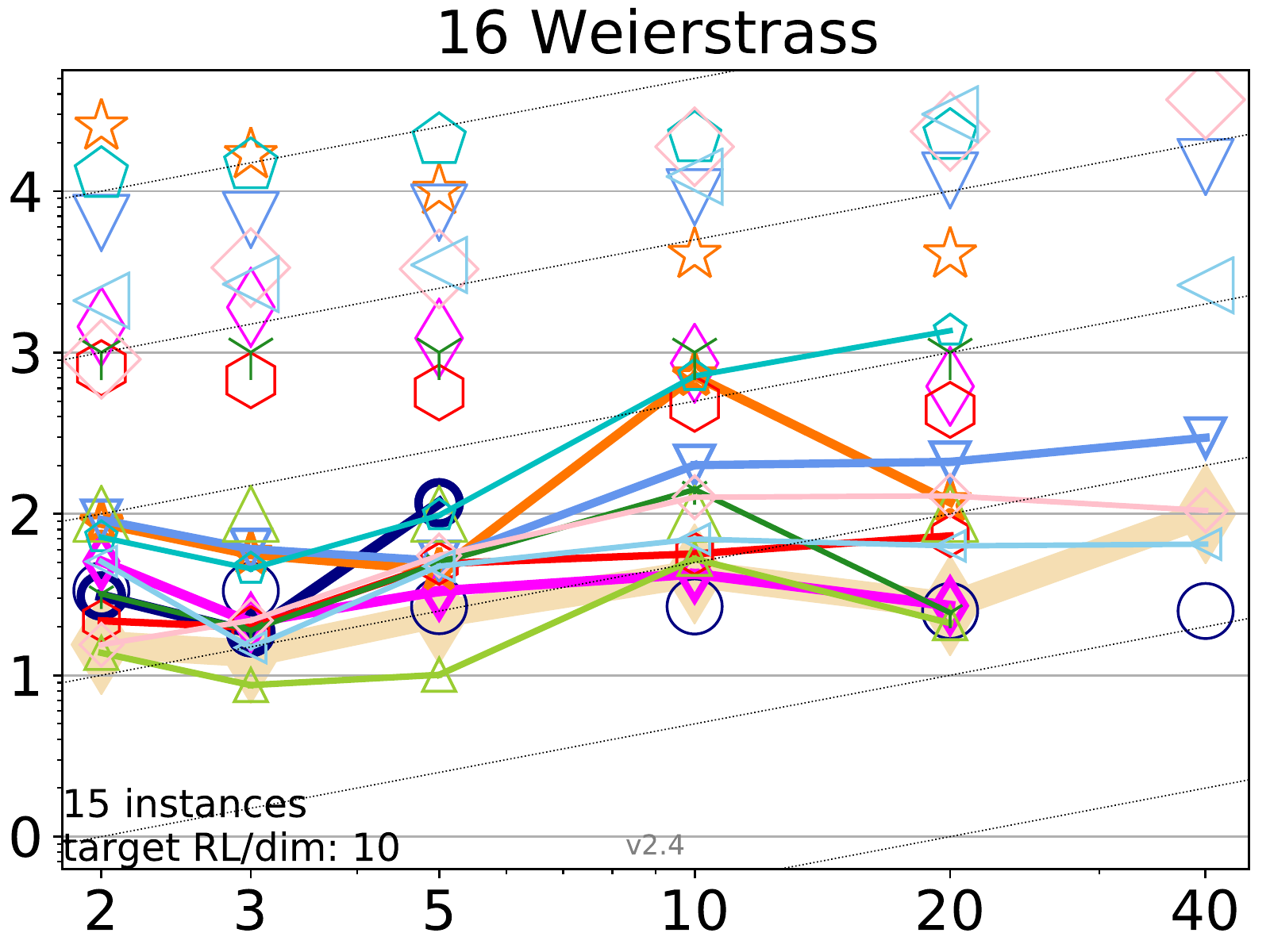}\\
\includegraphics[width=0.238\textwidth]{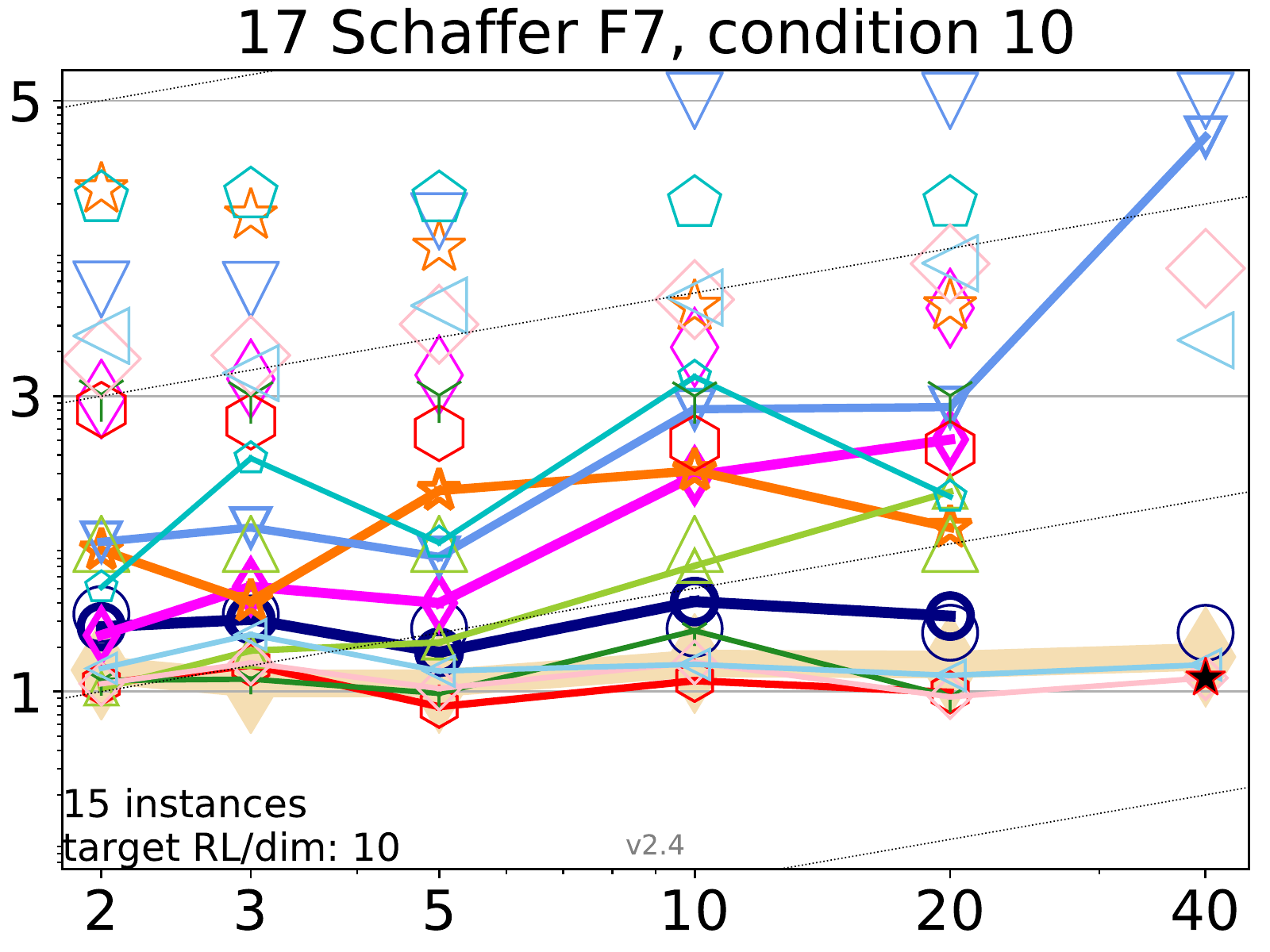}&
\includegraphics[width=0.238\textwidth]{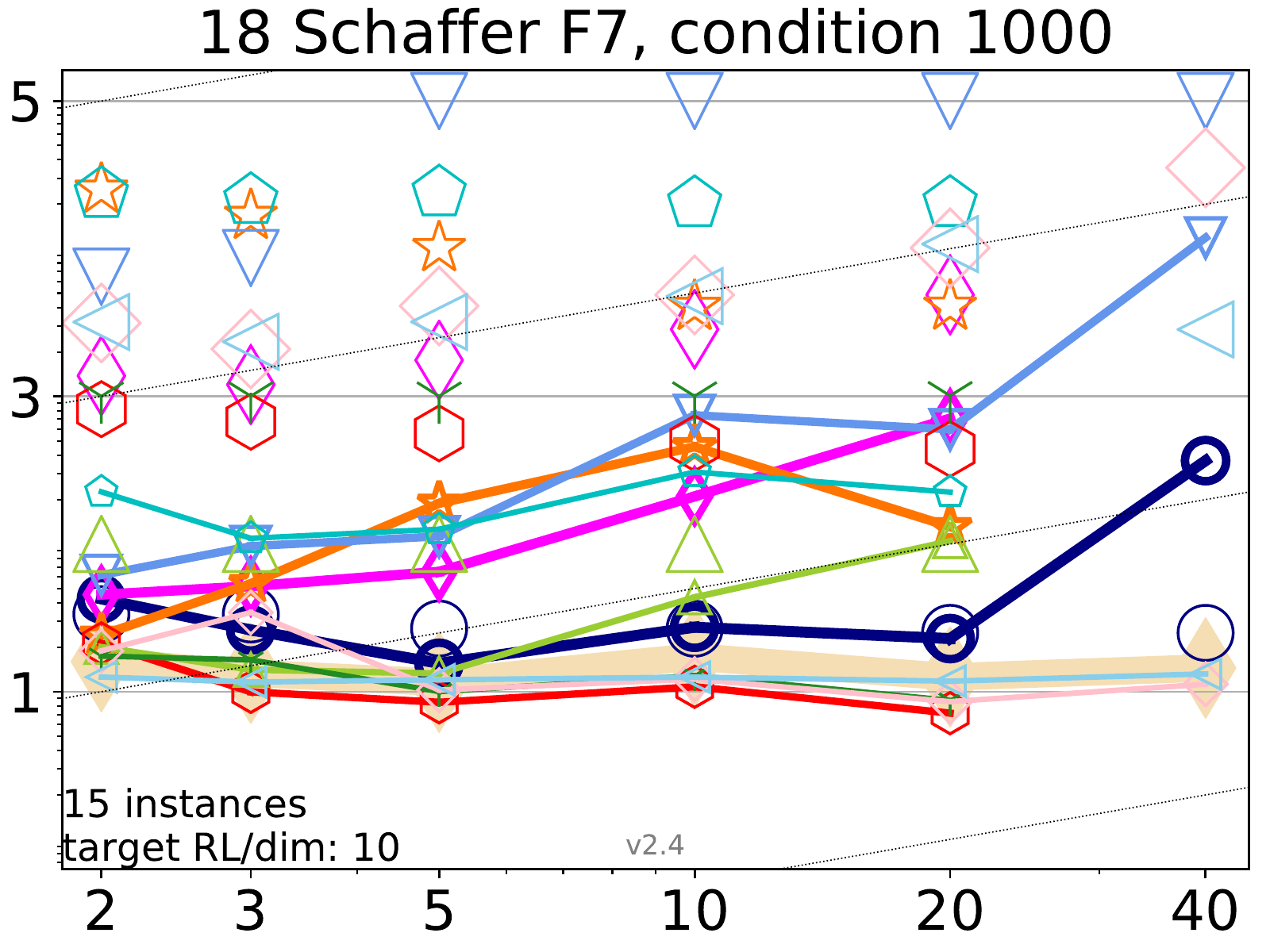}&
\includegraphics[width=0.238\textwidth]{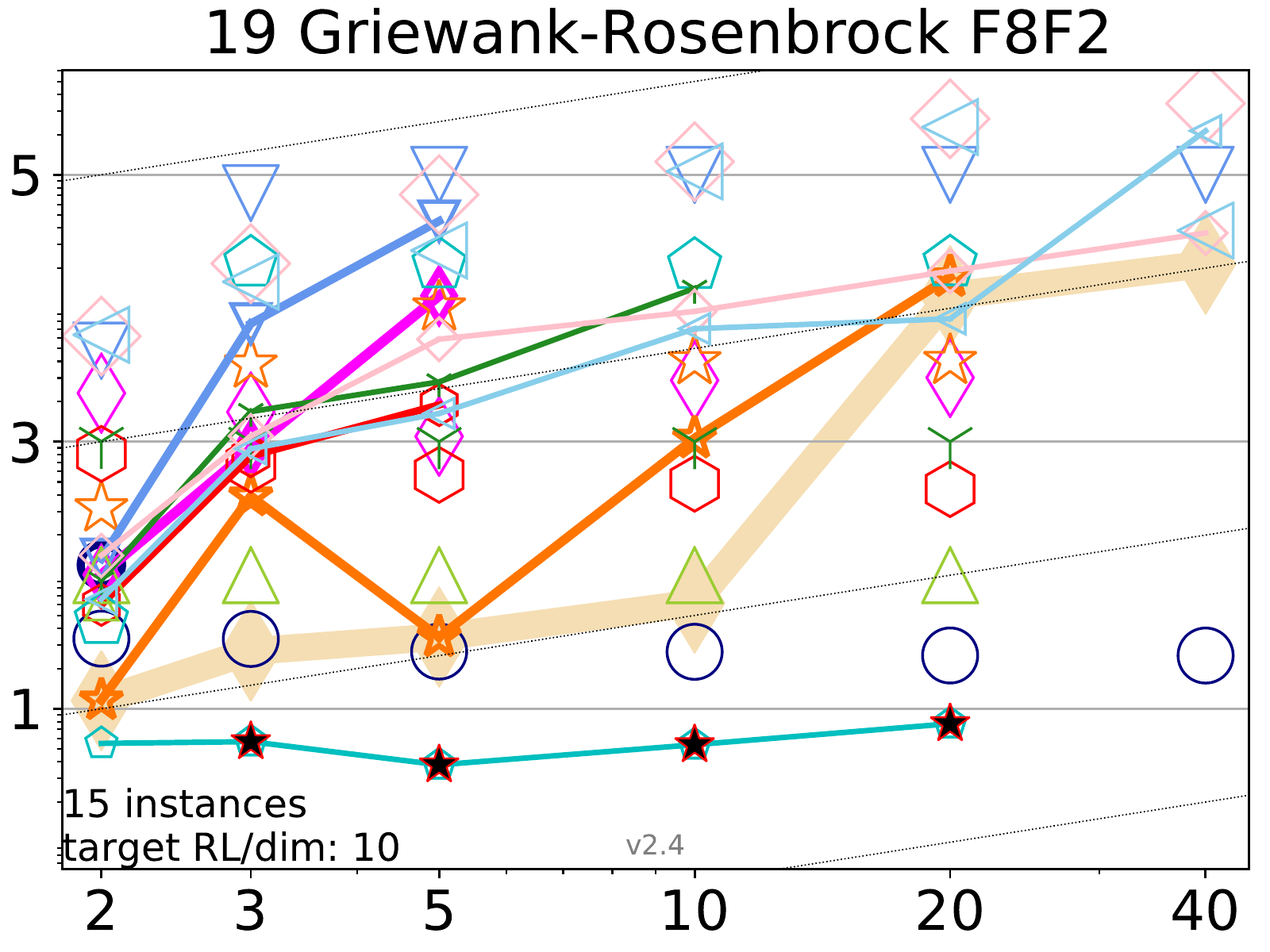}&
\includegraphics[width=0.238\textwidth]{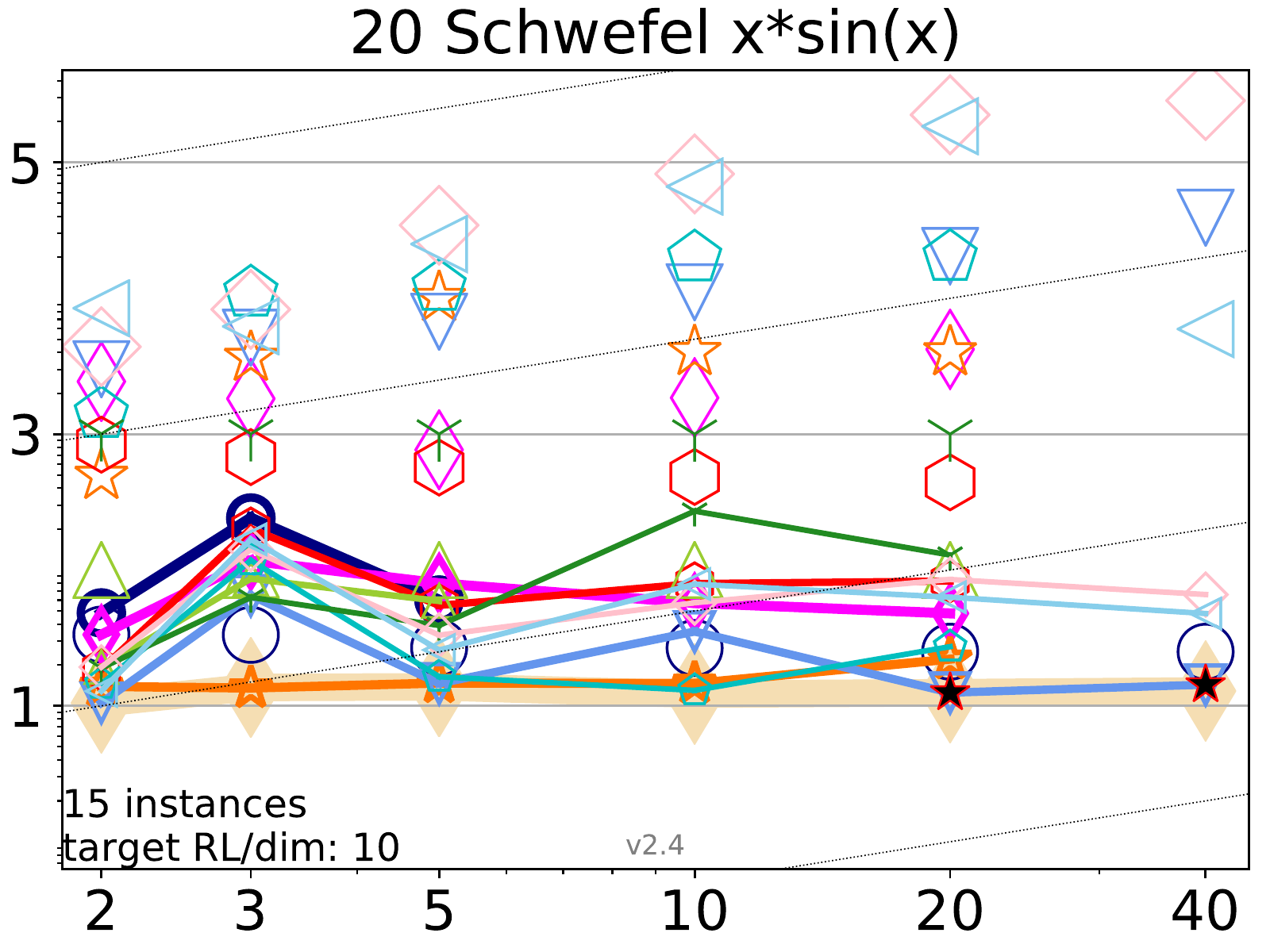}\\
\includegraphics[width=0.238\textwidth]{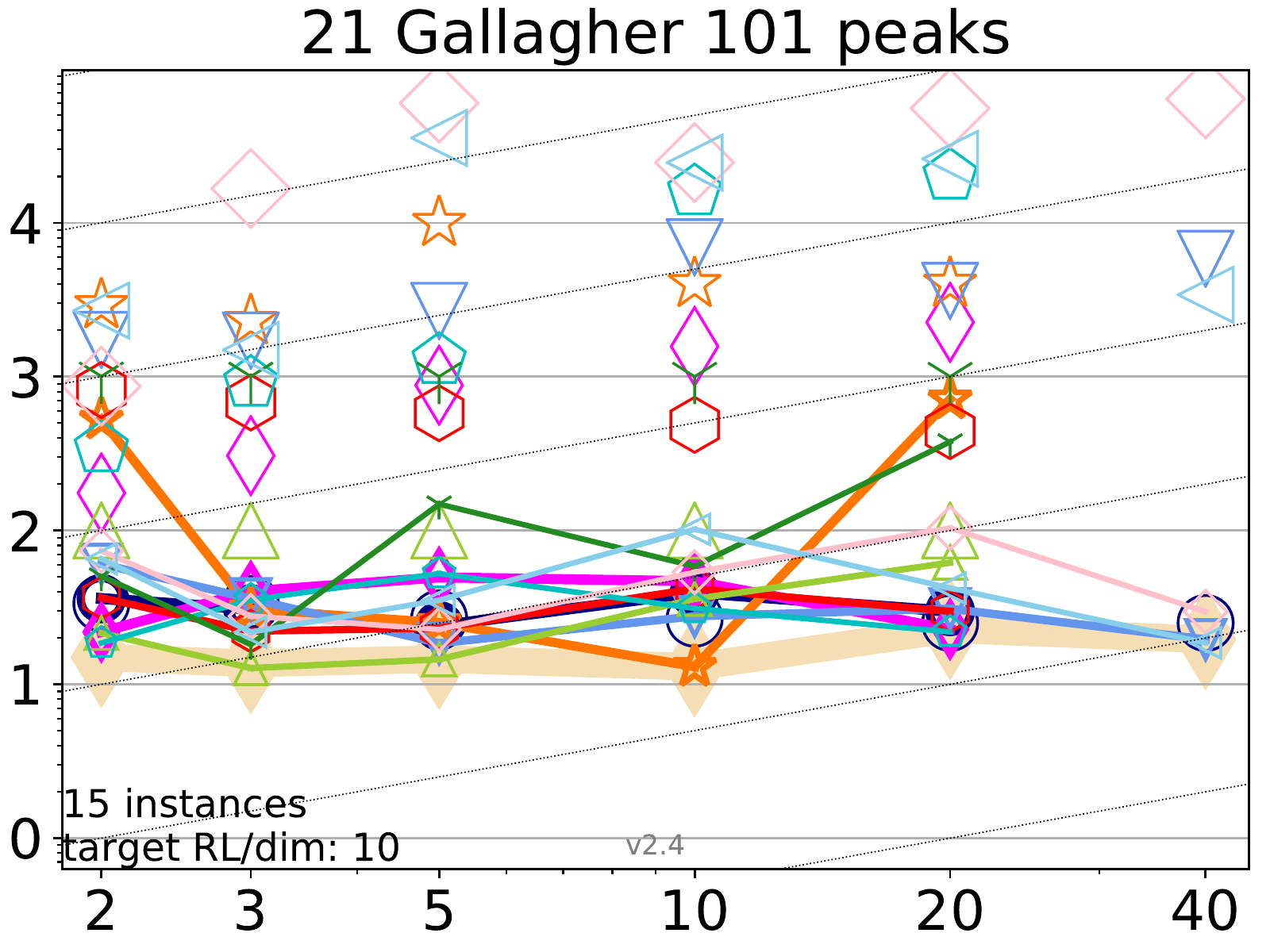}&
\includegraphics[width=0.238\textwidth]{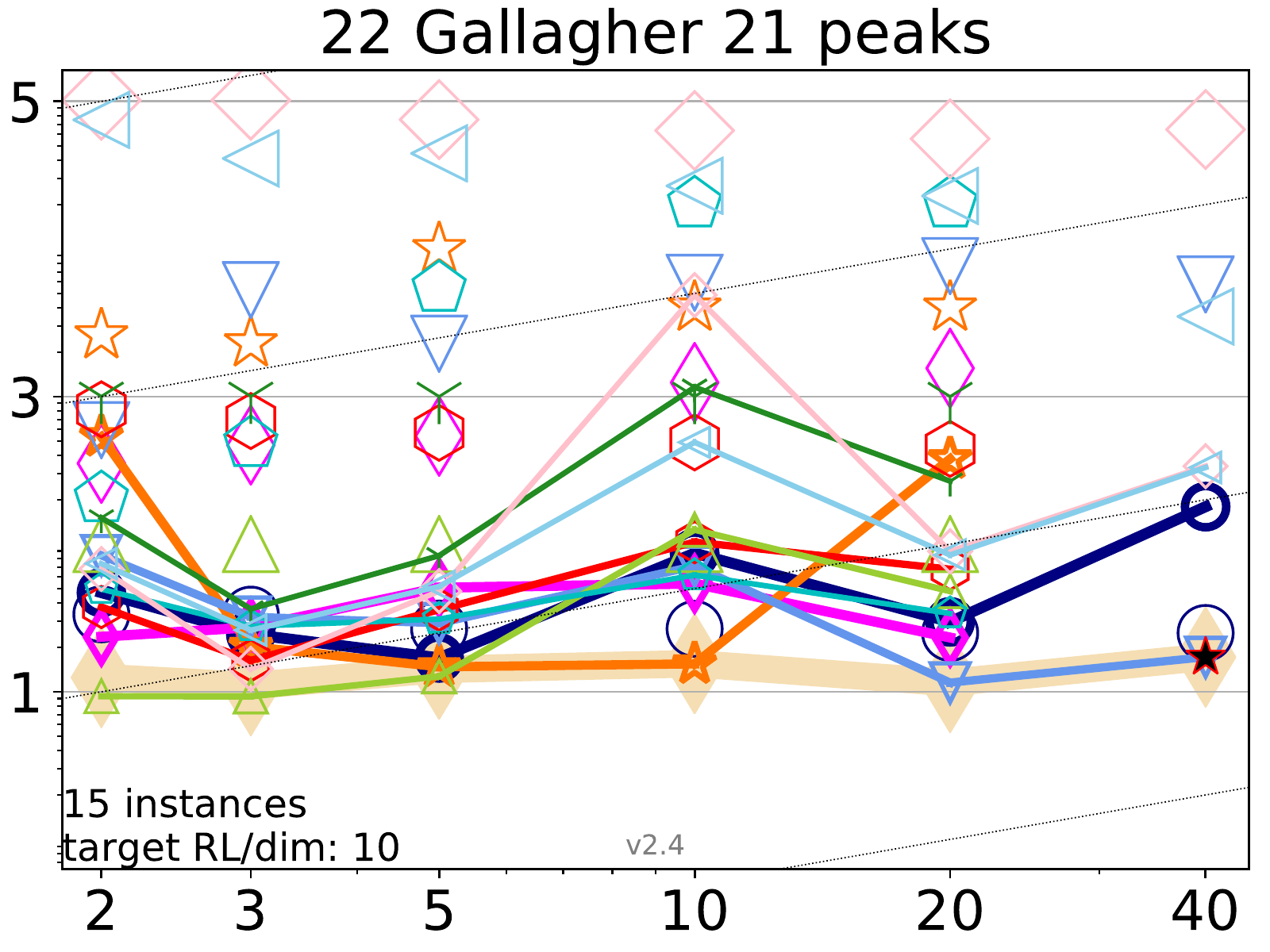}&
\includegraphics[width=0.238\textwidth]{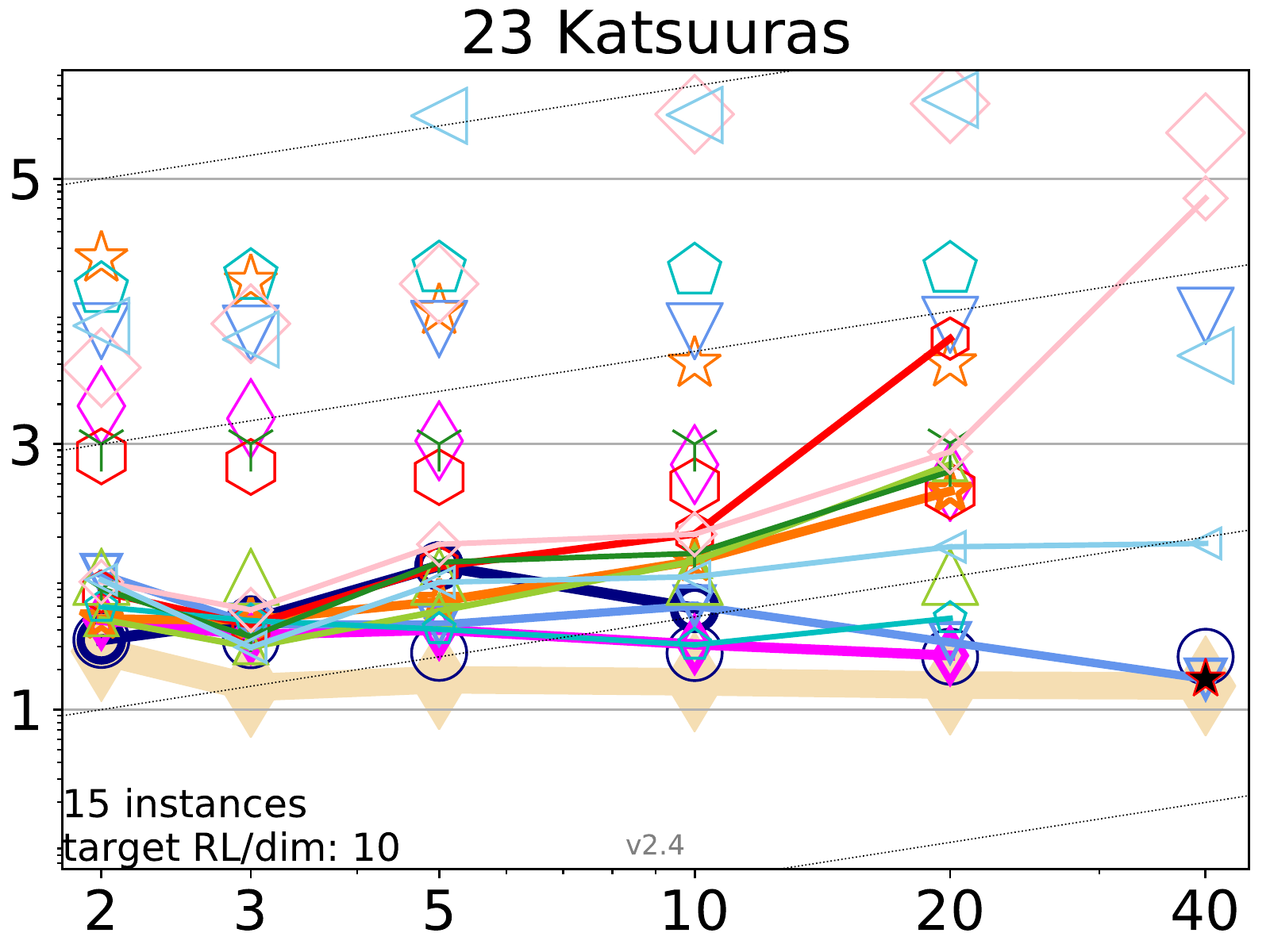}&
\includegraphics[width=0.238\textwidth]{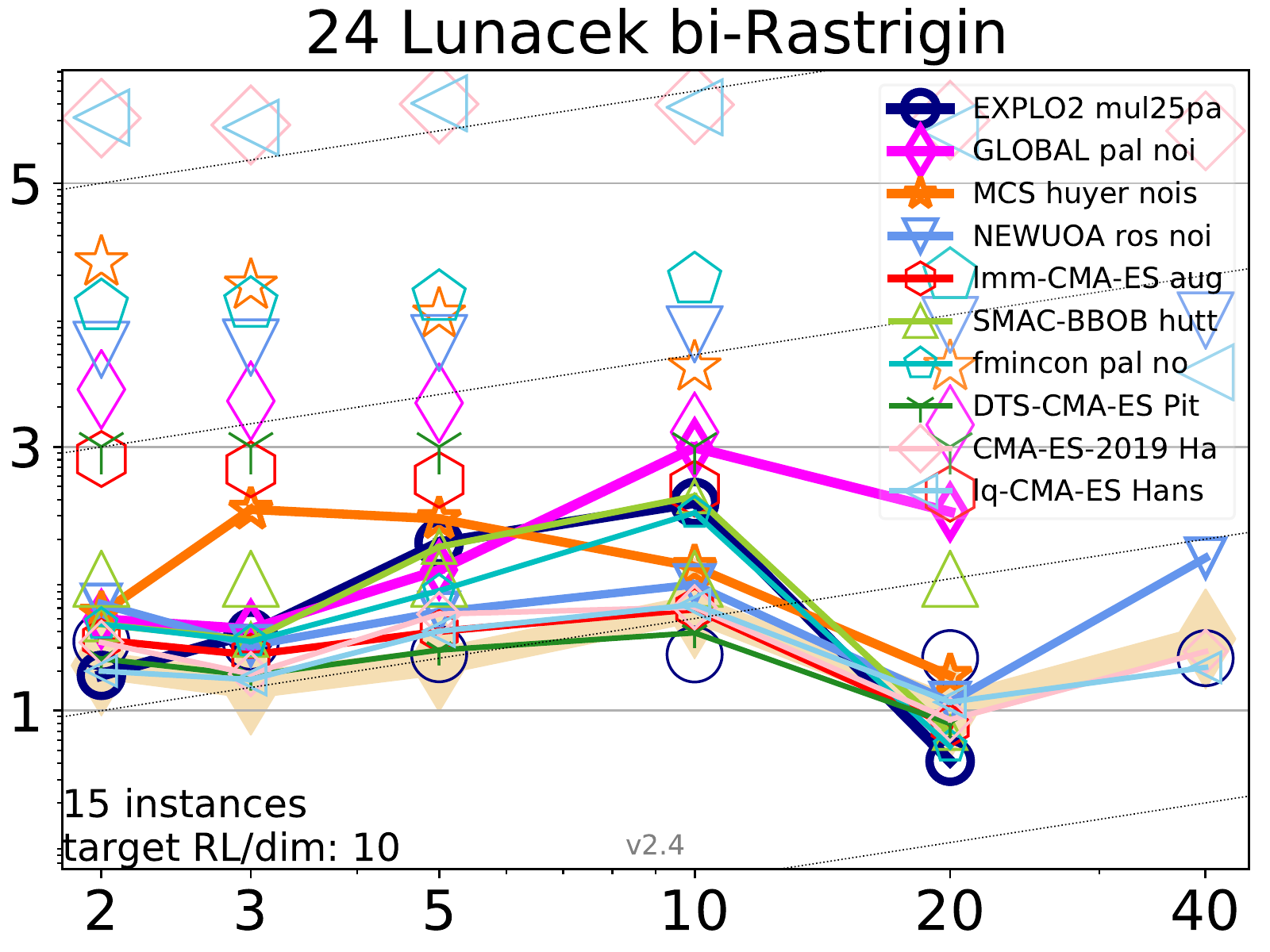}\\
\end{tabular}
\vspace*{-0.2cm}
\caption[Expected running time divided by dimension
versus dimension]{
\label{fig:scaling}
\bbobppfigslegend{$f_1$ and $f_{24}$}  
}
\end{figure*}

\clearpage

\section{\label{sec:ECDFs20}Runtime distributions (ECDFs) per function: $D = 20$}

\begin{figure*}
\centering
\begin{tabular}{@{}c@{}c@{}c@{}c@{}}
\includegraphics[width=0.238\textwidth]{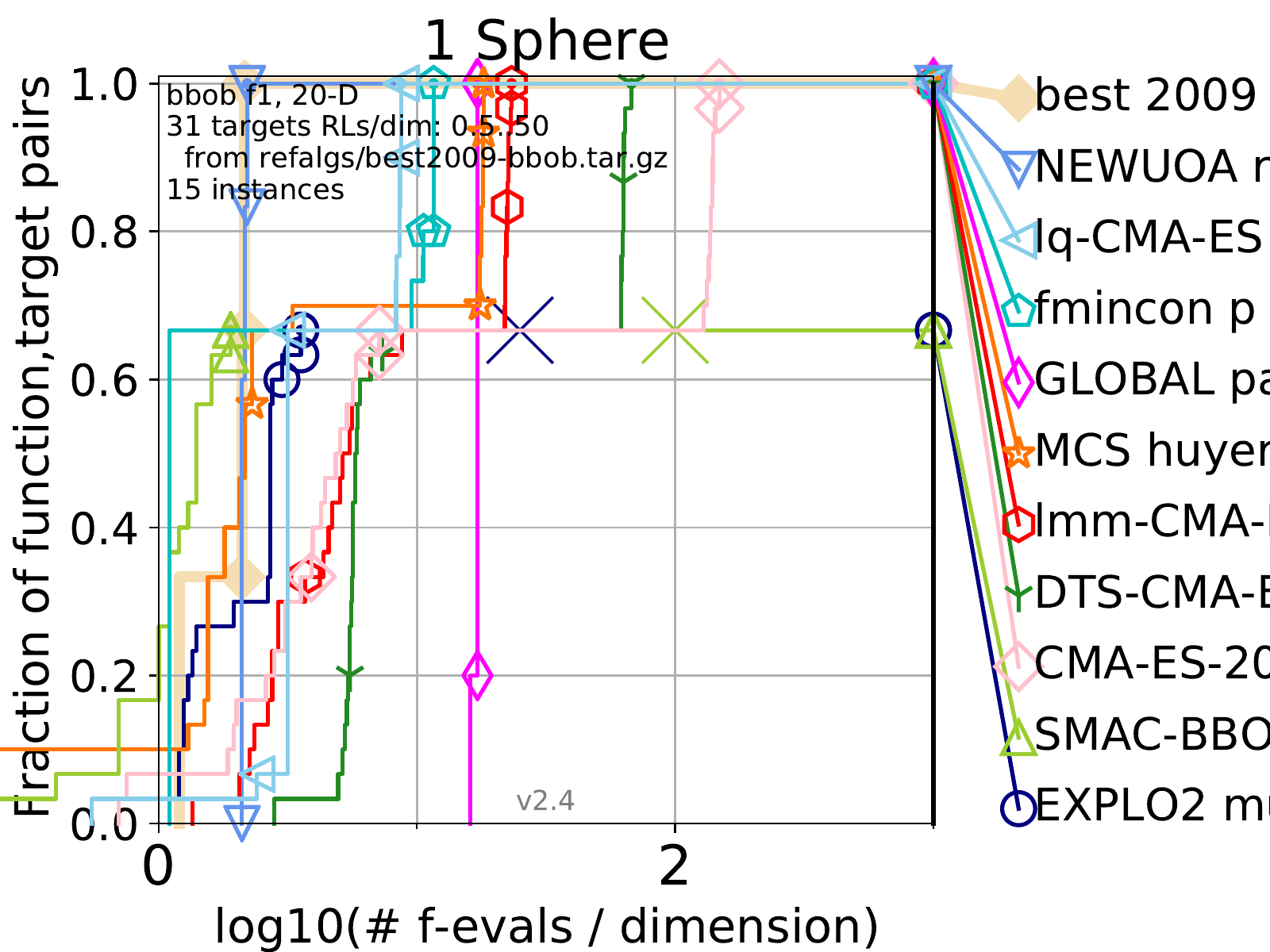}&
\includegraphics[width=0.238\textwidth]{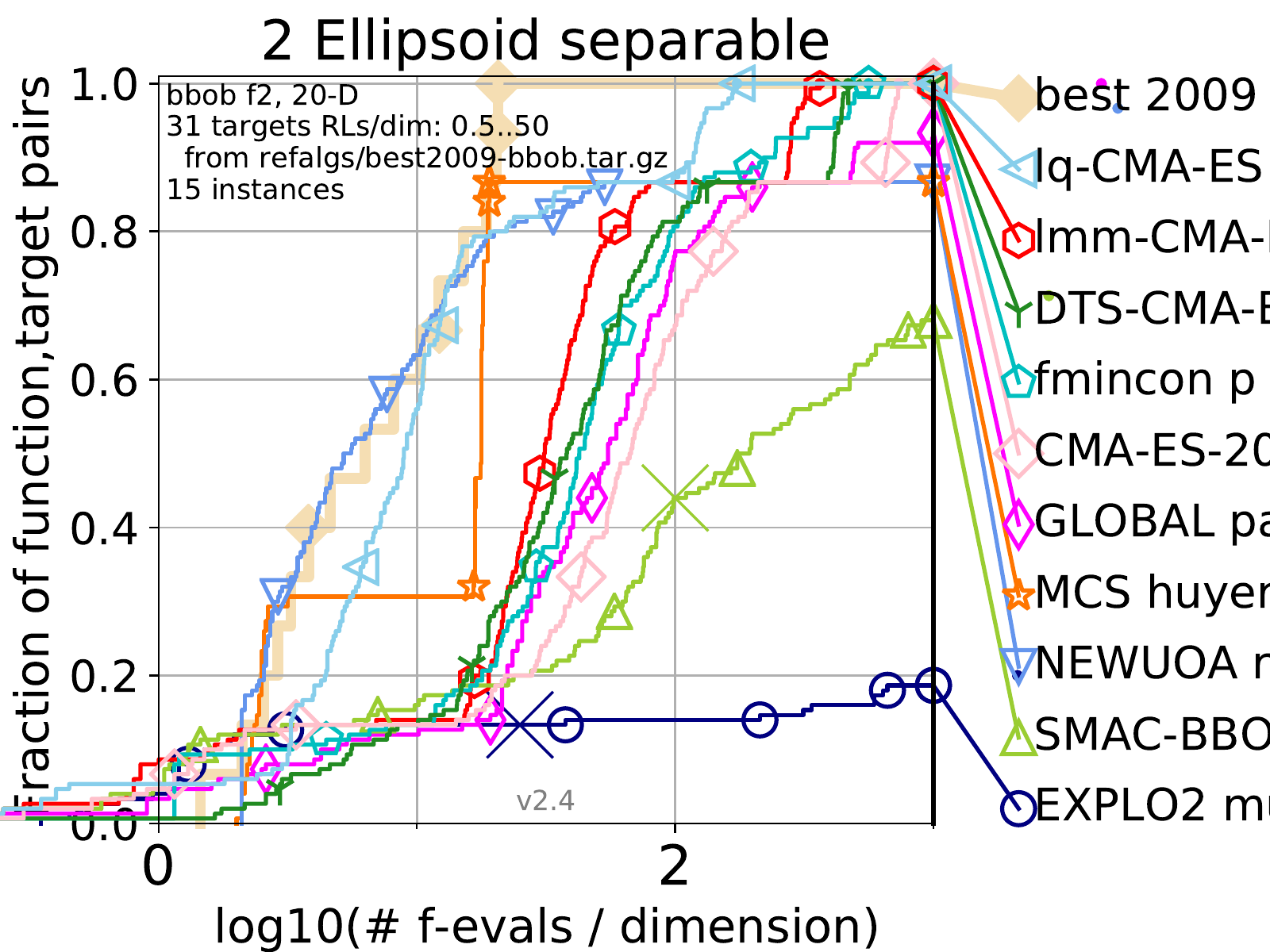}&
\includegraphics[width=0.238\textwidth]{EXPLO_GLOBA_MCS_h_NEWUO_lmm-C_SMAC-_fminc_et_al/pprldmany-single-functions/pprldmany_f003_20D}&
\includegraphics[width=0.238\textwidth]{EXPLO_GLOBA_MCS_h_NEWUO_lmm-C_SMAC-_fminc_et_al/pprldmany-single-functions/pprldmany_f004_20D}\\
\includegraphics[width=0.238\textwidth]{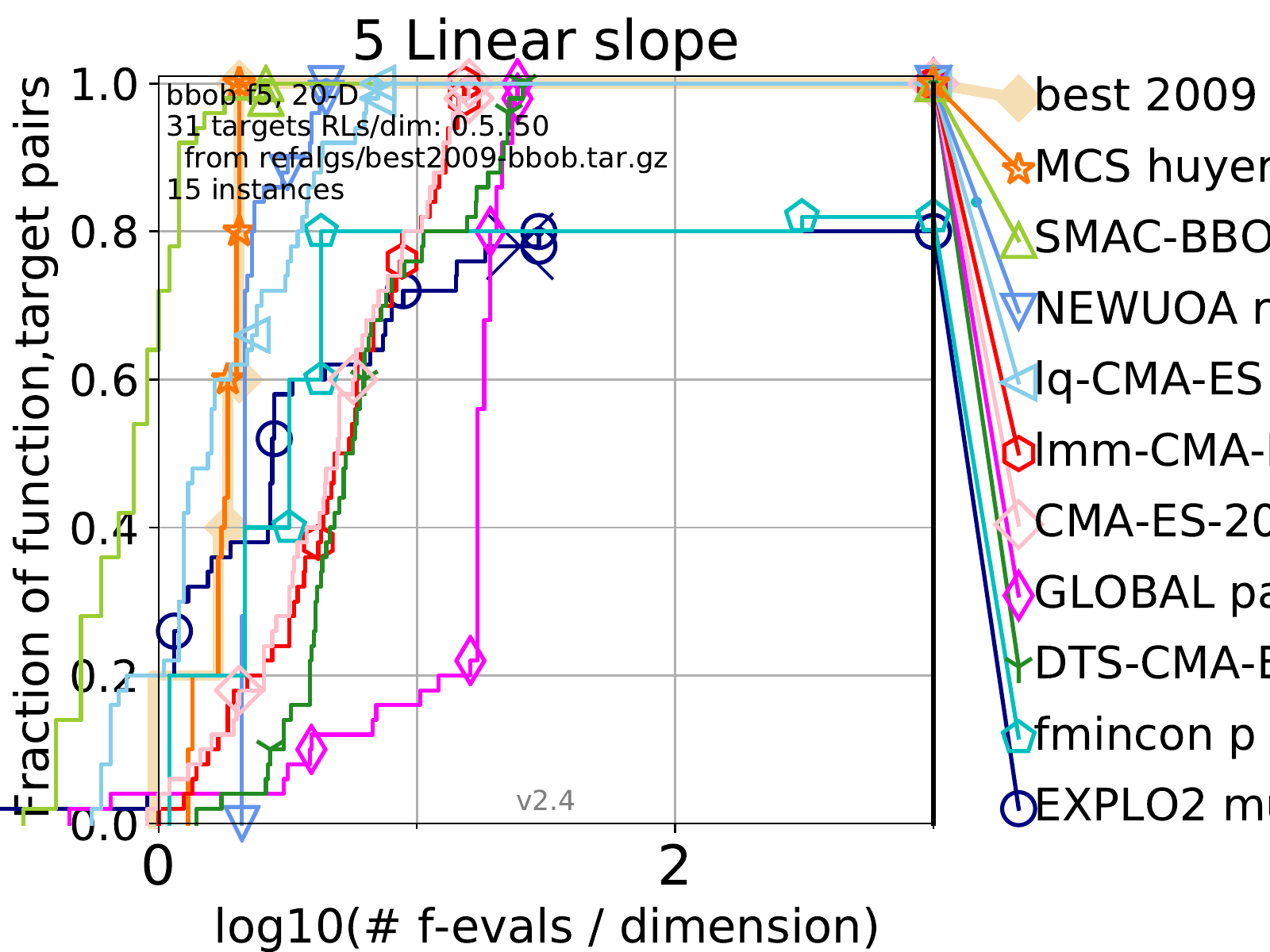}&
\includegraphics[width=0.238\textwidth]{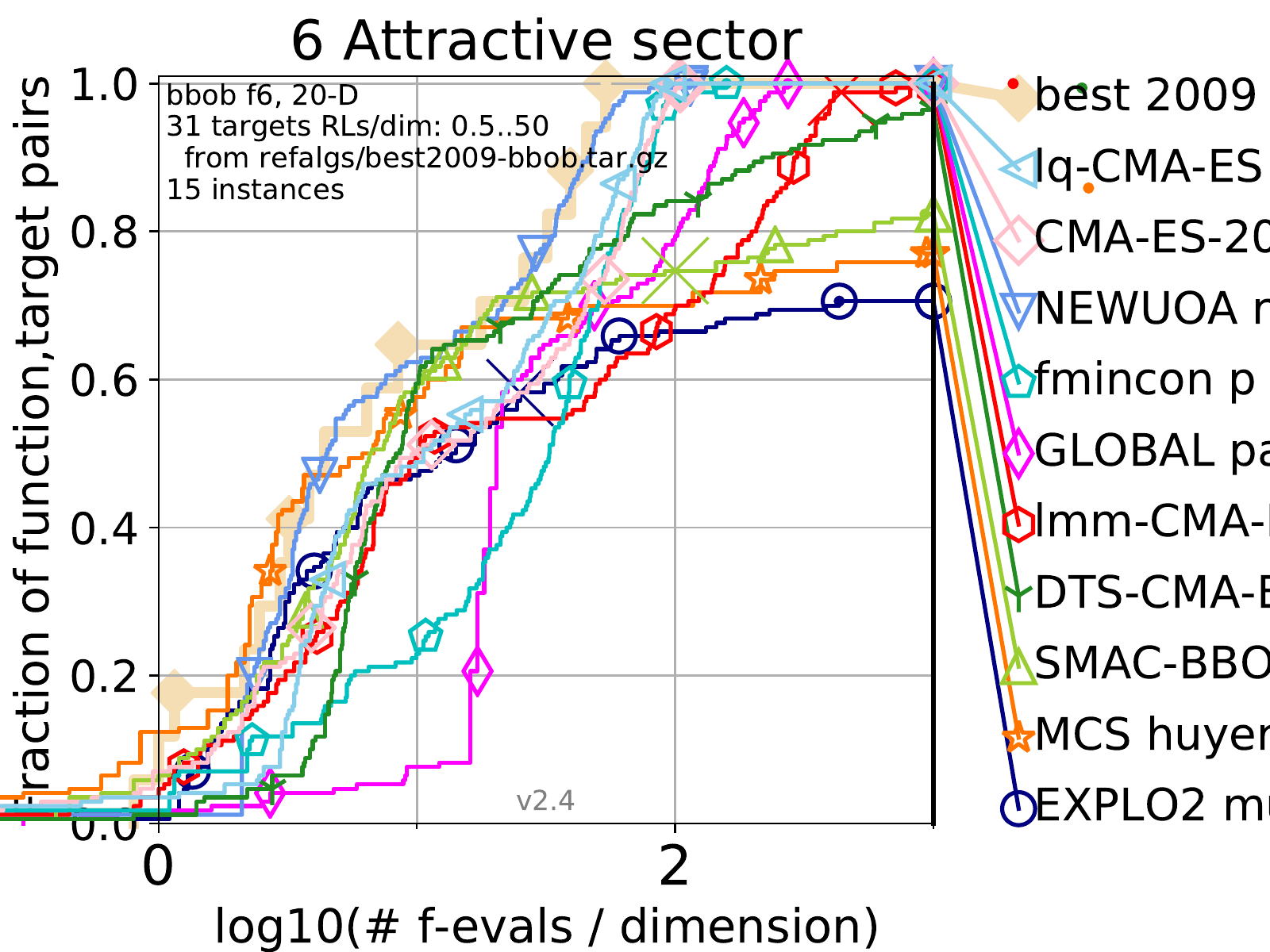}&
\includegraphics[width=0.238\textwidth]{EXPLO_GLOBA_MCS_h_NEWUO_lmm-C_SMAC-_fminc_et_al/pprldmany-single-functions/pprldmany_f007_20D}&
\includegraphics[width=0.238\textwidth]{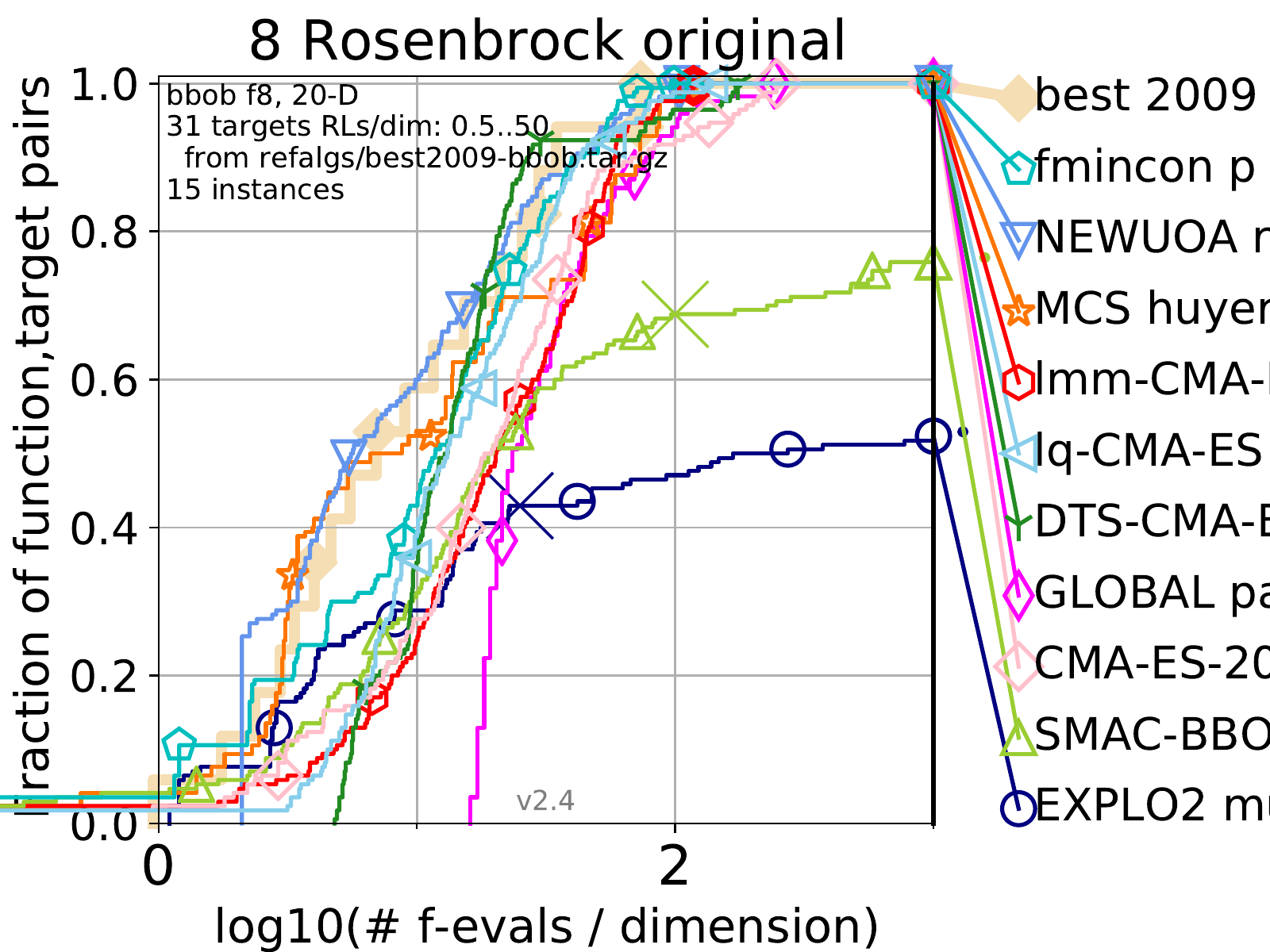}\\
\includegraphics[width=0.238\textwidth]{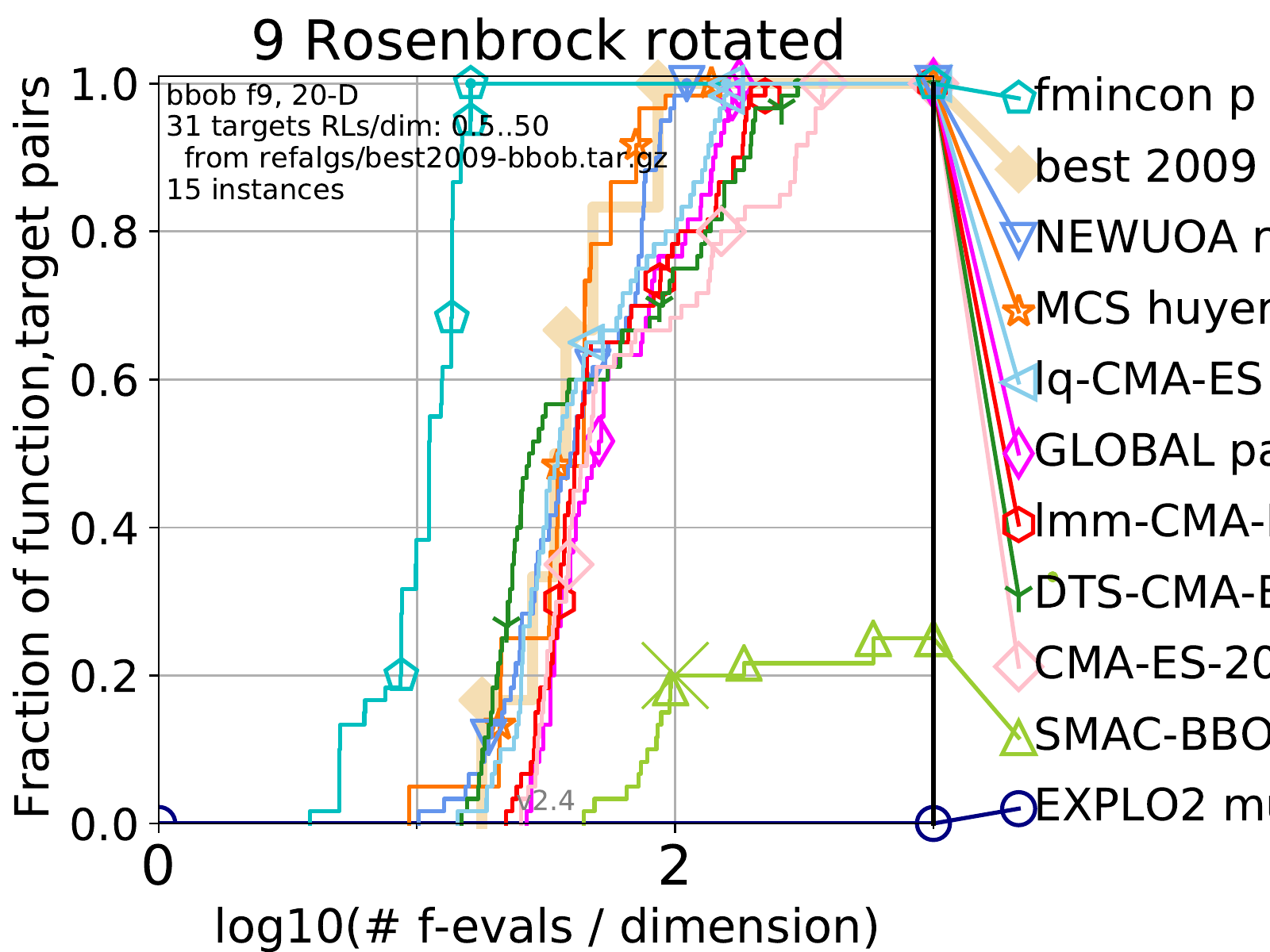}&
\includegraphics[width=0.238\textwidth]{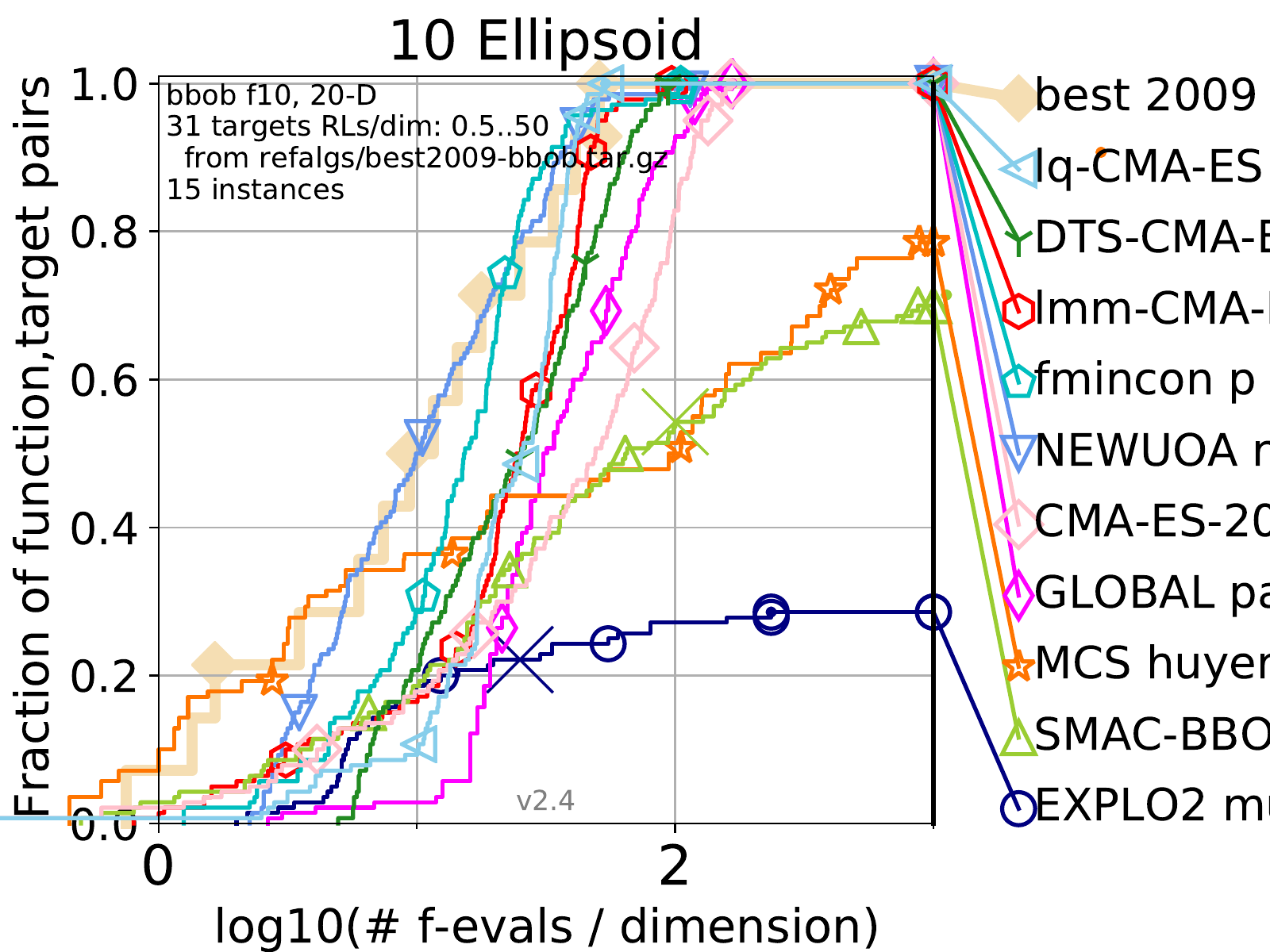}&
\includegraphics[width=0.238\textwidth]{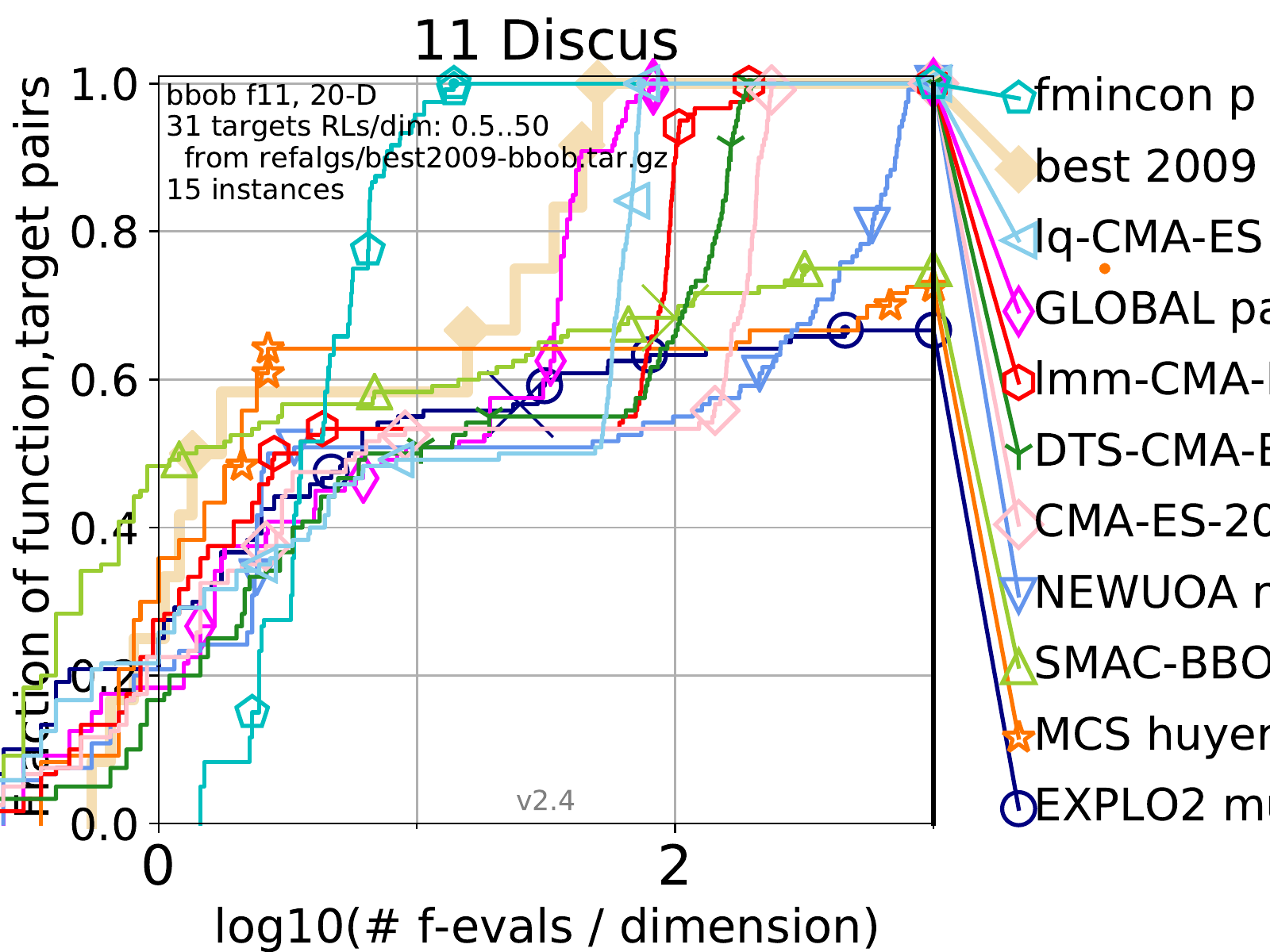}&
\includegraphics[width=0.238\textwidth]{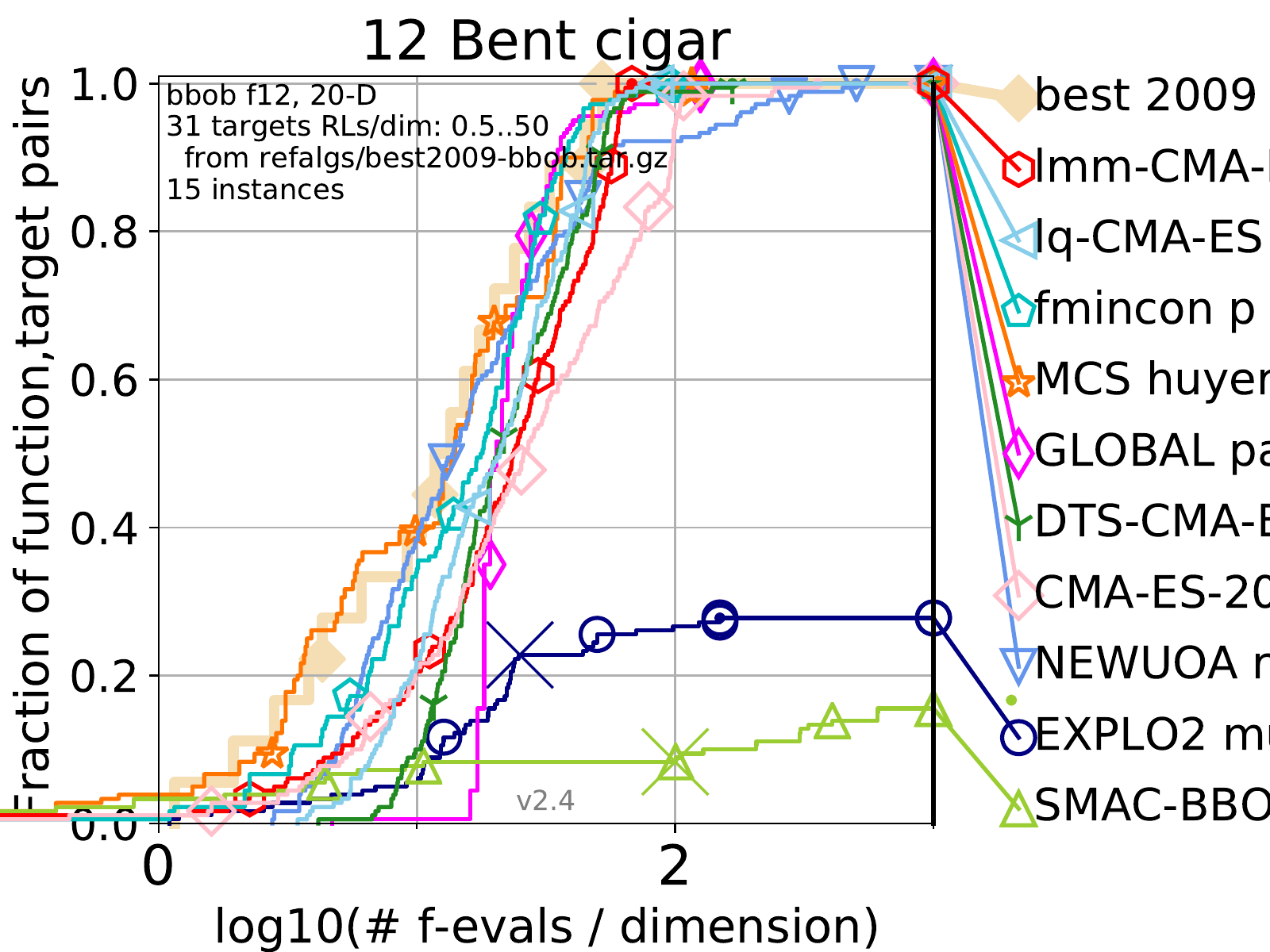}\\
\includegraphics[width=0.238\textwidth]{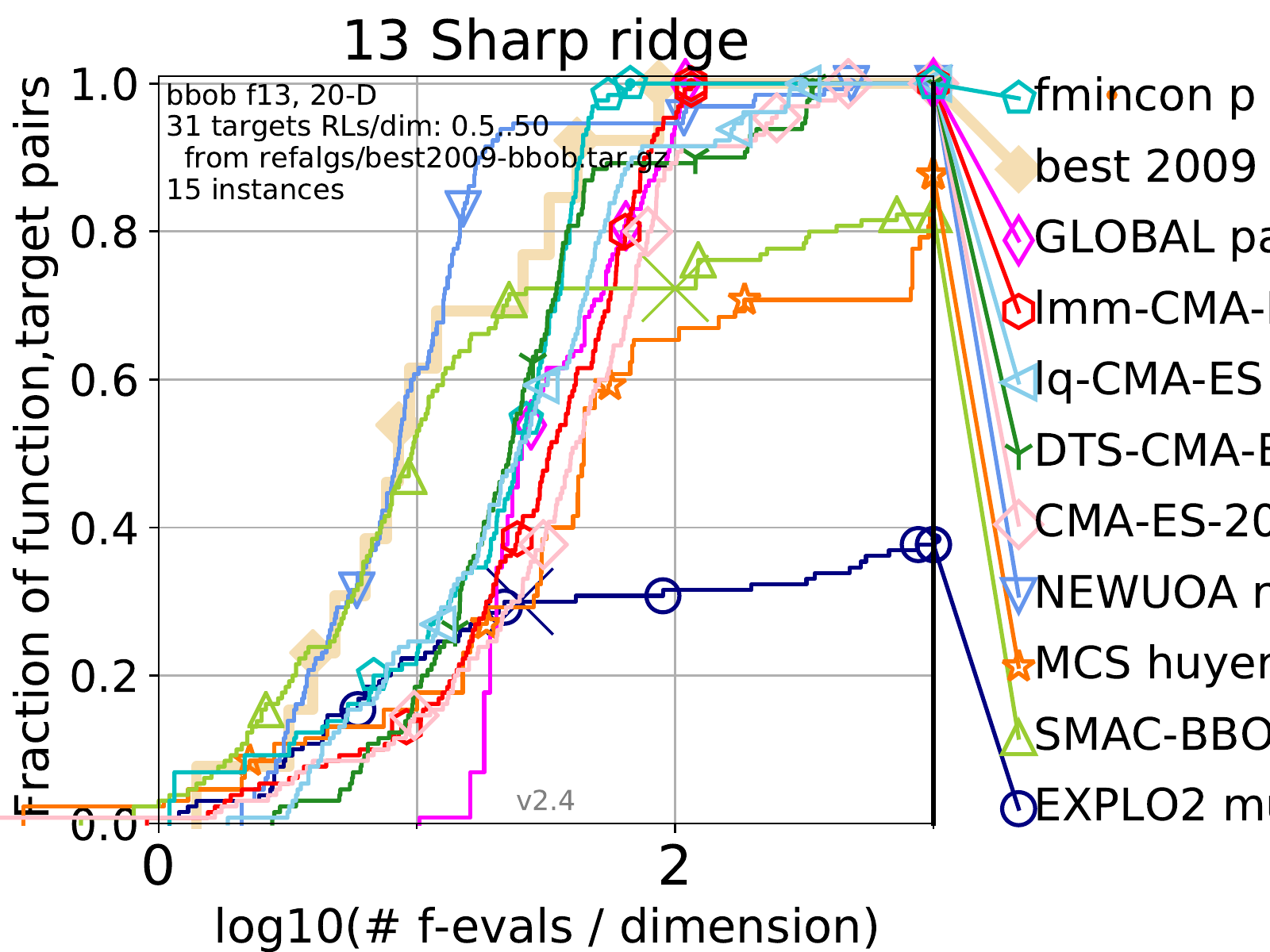}&
\includegraphics[width=0.238\textwidth]{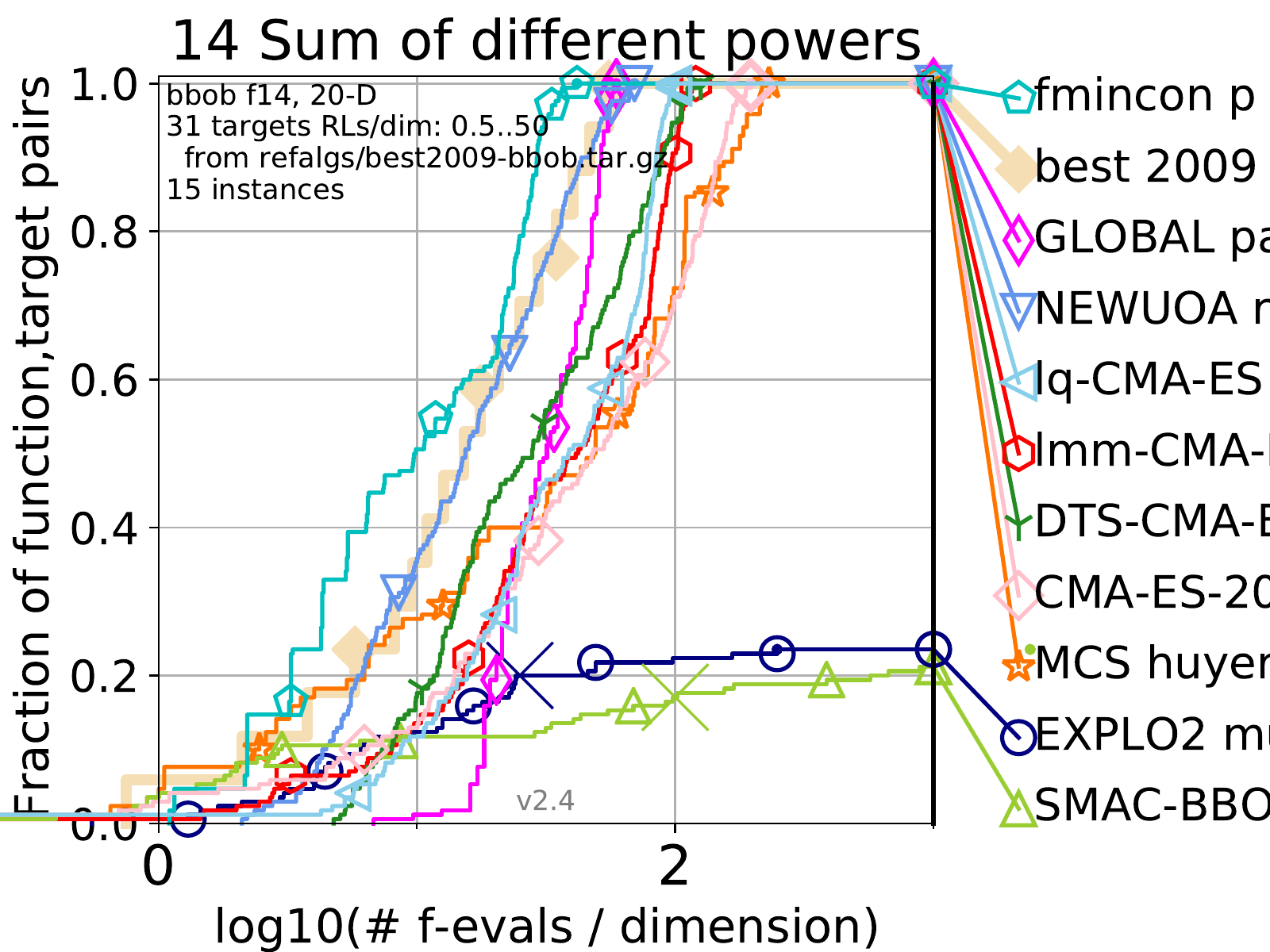}&
\includegraphics[width=0.238\textwidth]{EXPLO_GLOBA_MCS_h_NEWUO_lmm-C_SMAC-_fminc_et_al/pprldmany-single-functions/pprldmany_f015_20D}&
\includegraphics[width=0.238\textwidth]{EXPLO_GLOBA_MCS_h_NEWUO_lmm-C_SMAC-_fminc_et_al/pprldmany-single-functions/pprldmany_f016_20D}\\
\includegraphics[width=0.238\textwidth]{EXPLO_GLOBA_MCS_h_NEWUO_lmm-C_SMAC-_fminc_et_al/pprldmany-single-functions/pprldmany_f017_20D}&
\includegraphics[width=0.238\textwidth]{EXPLO_GLOBA_MCS_h_NEWUO_lmm-C_SMAC-_fminc_et_al/pprldmany-single-functions/pprldmany_f018_20D}&
\includegraphics[width=0.238\textwidth]{EXPLO_GLOBA_MCS_h_NEWUO_lmm-C_SMAC-_fminc_et_al/pprldmany-single-functions/pprldmany_f019_20D}&
\includegraphics[width=0.238\textwidth]{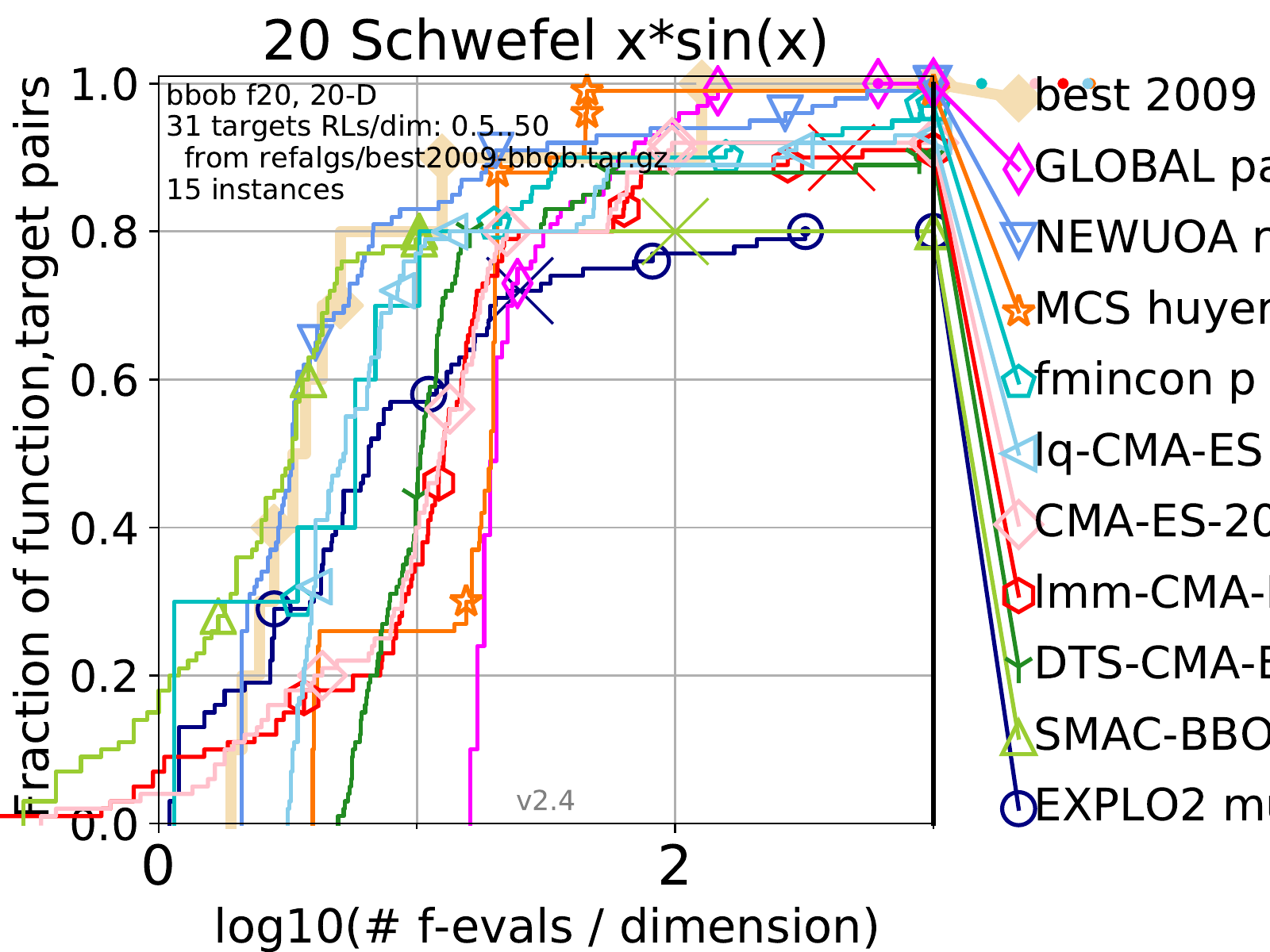}\\
\includegraphics[width=0.238\textwidth]{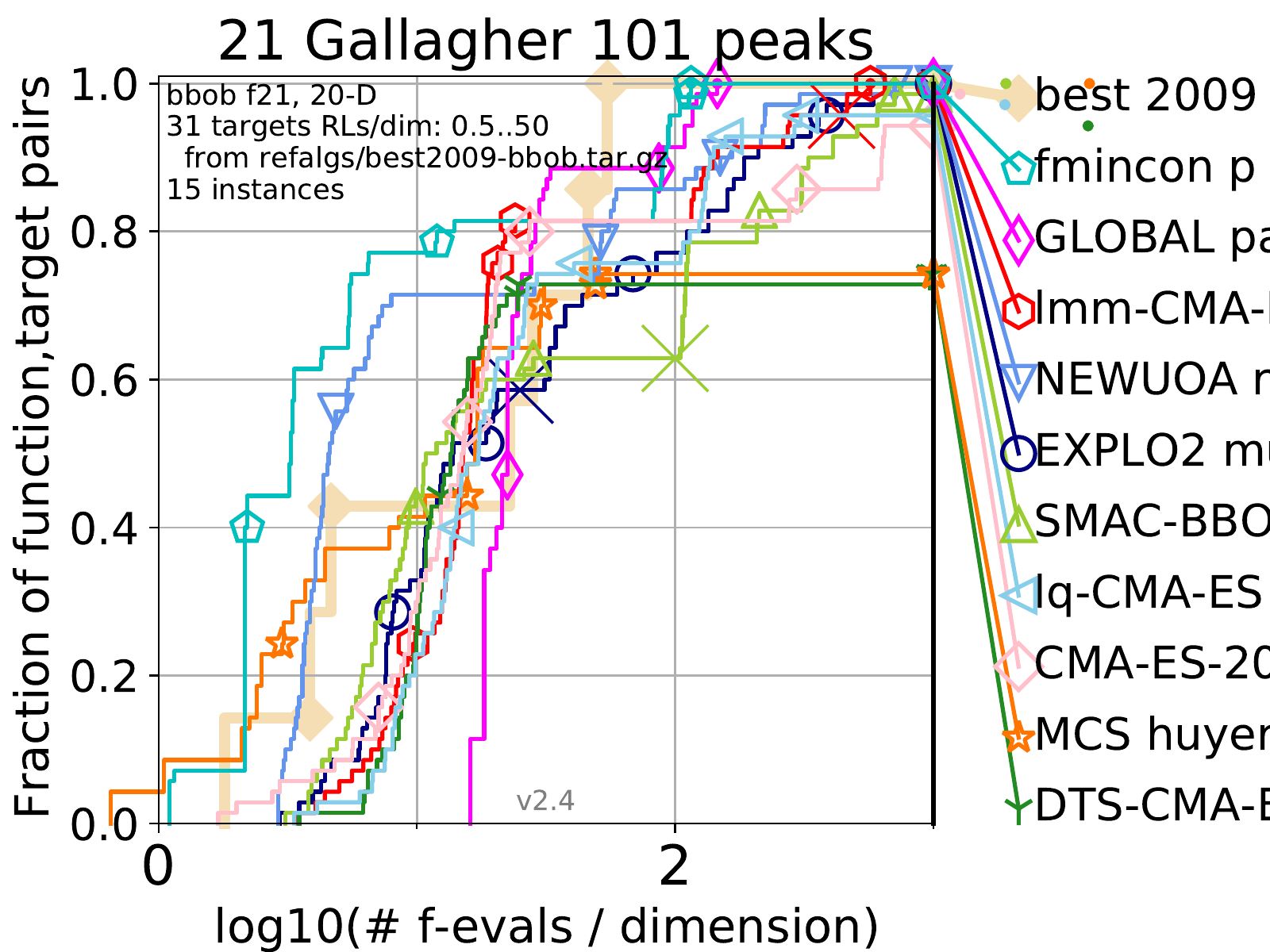}&
\includegraphics[width=0.238\textwidth]{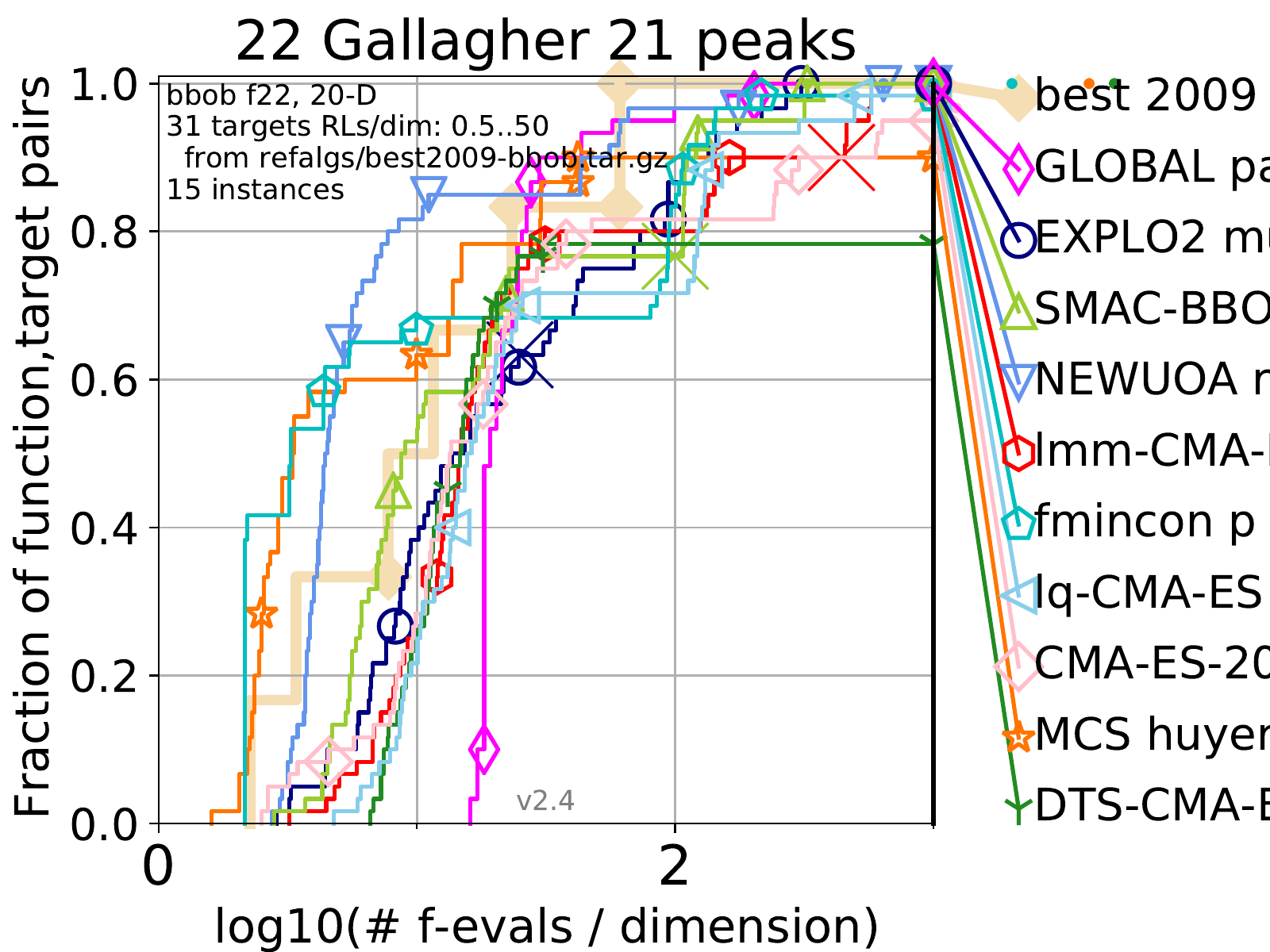}&
\includegraphics[width=0.238\textwidth]{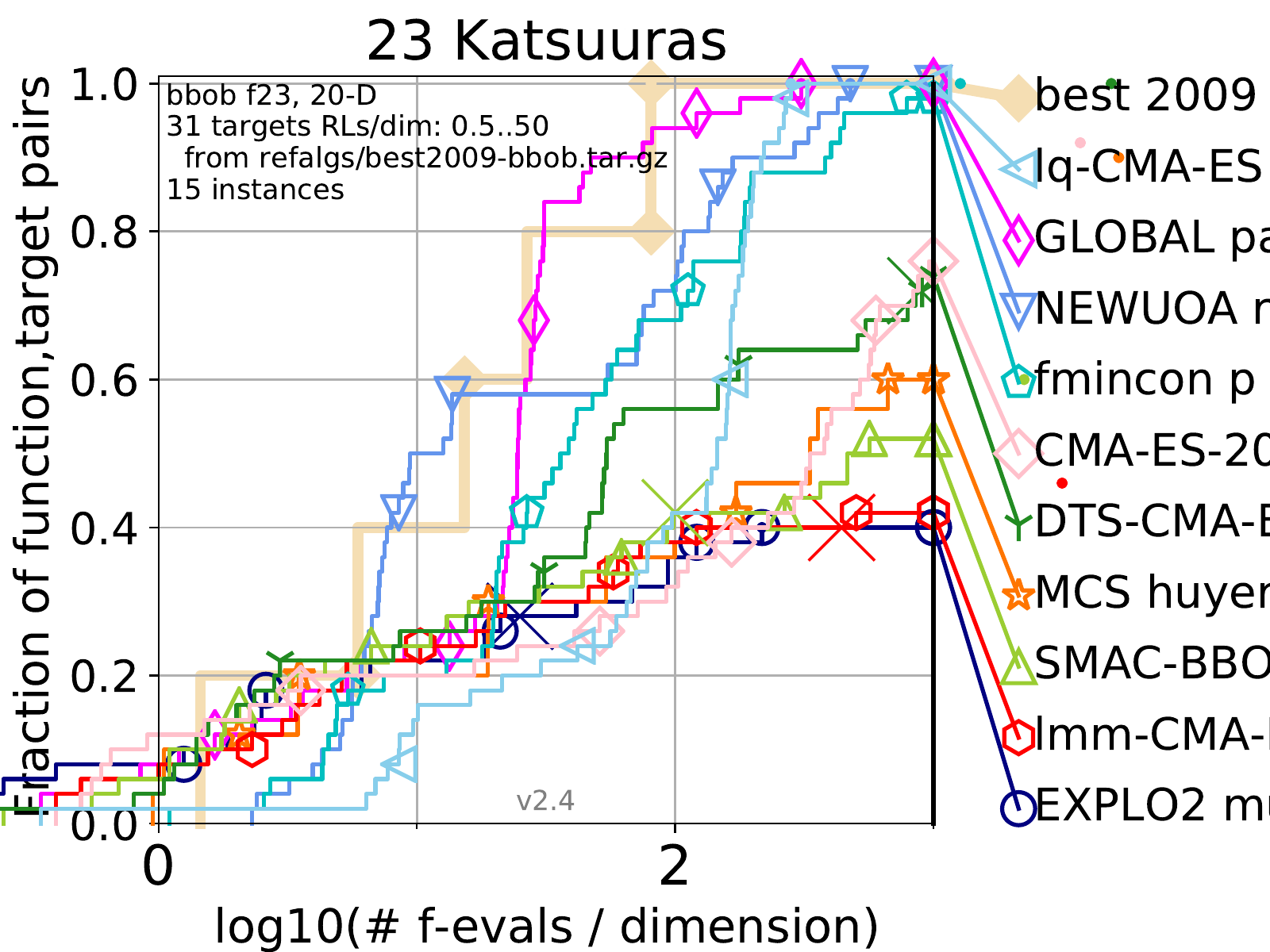}&
\includegraphics[width=0.238\textwidth]{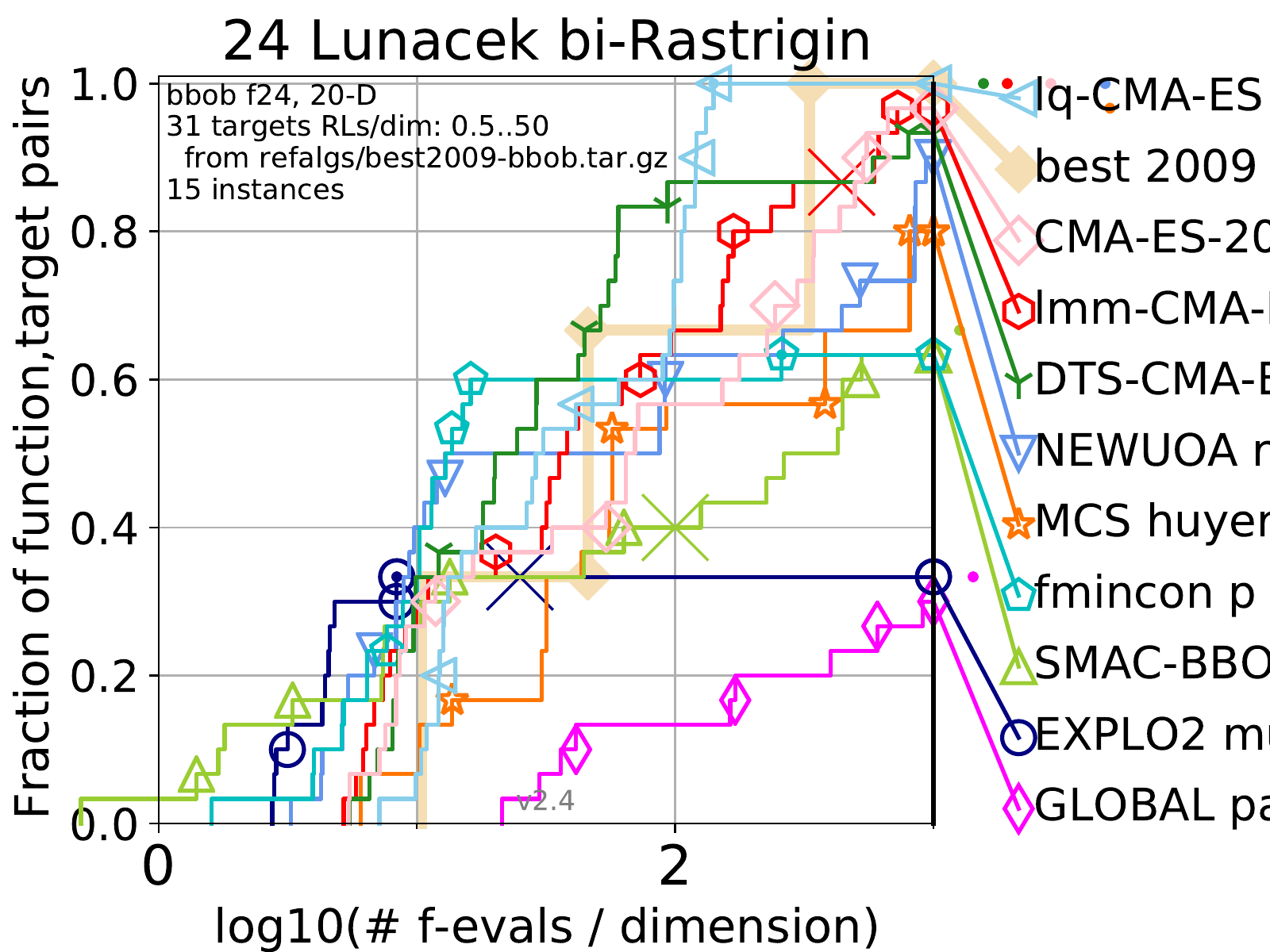}\\
\end{tabular}
\vspace*{-0.2cm}
\caption{
\bbobecdfcaptionsinglefunctionssingledim{20} {\bf Crosses ($\times$) indicate where experimental data ends and bootstrapping begins; algorithms are not comparable after this point.} \texttt{EXPLO2} used default options except for $n_\parallel = 32$. 
}
\end{figure*}

\clearpage

\section{\label{sec:ECDFs40}Runtime distributions (ECDFs) per function: $D = 40$}

\begin{figure*}
\centering
\begin{tabular}{@{}c@{}c@{}c@{}c@{}}
\includegraphics[width=0.238\textwidth]{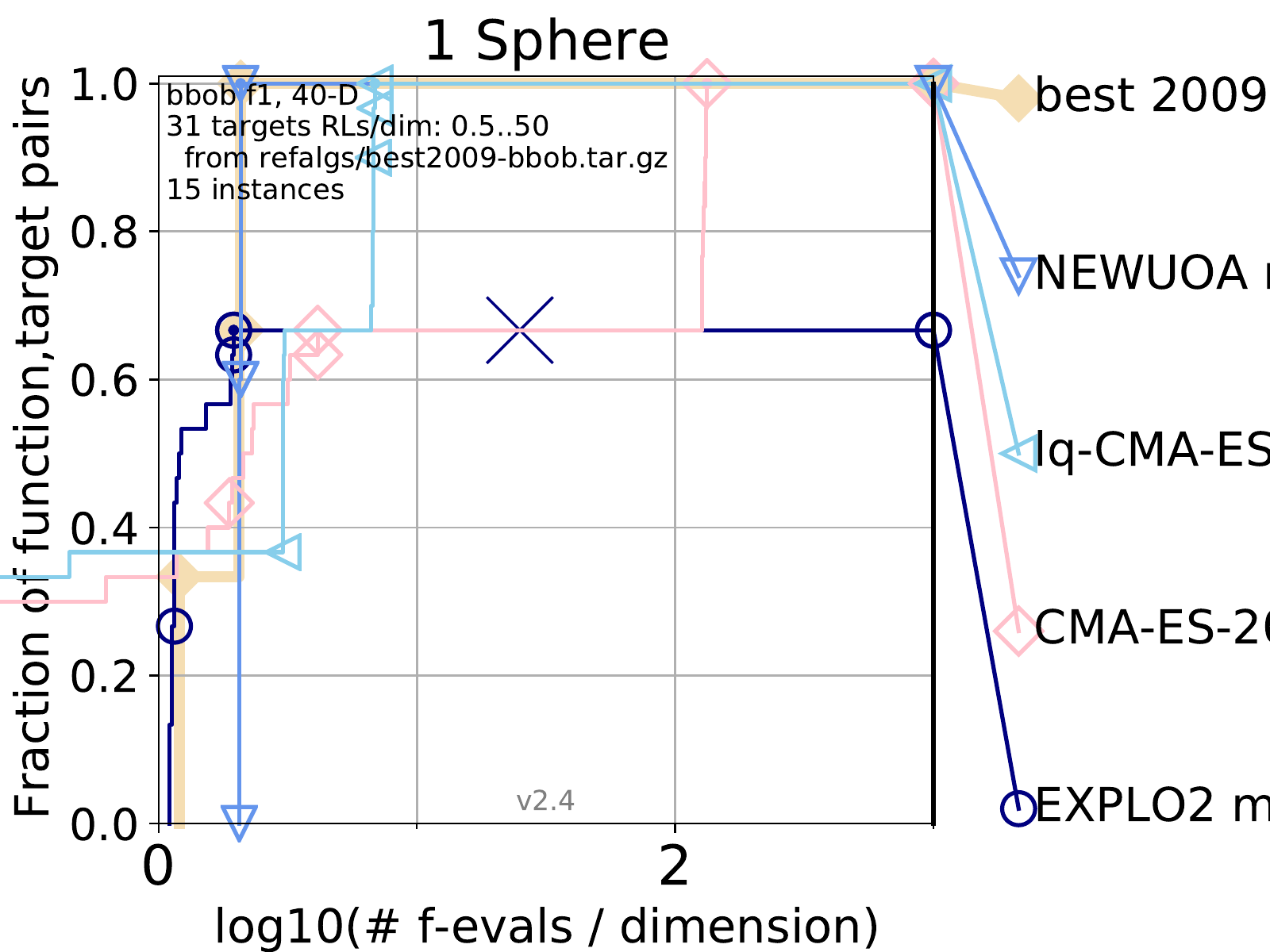}&
\includegraphics[width=0.238\textwidth]{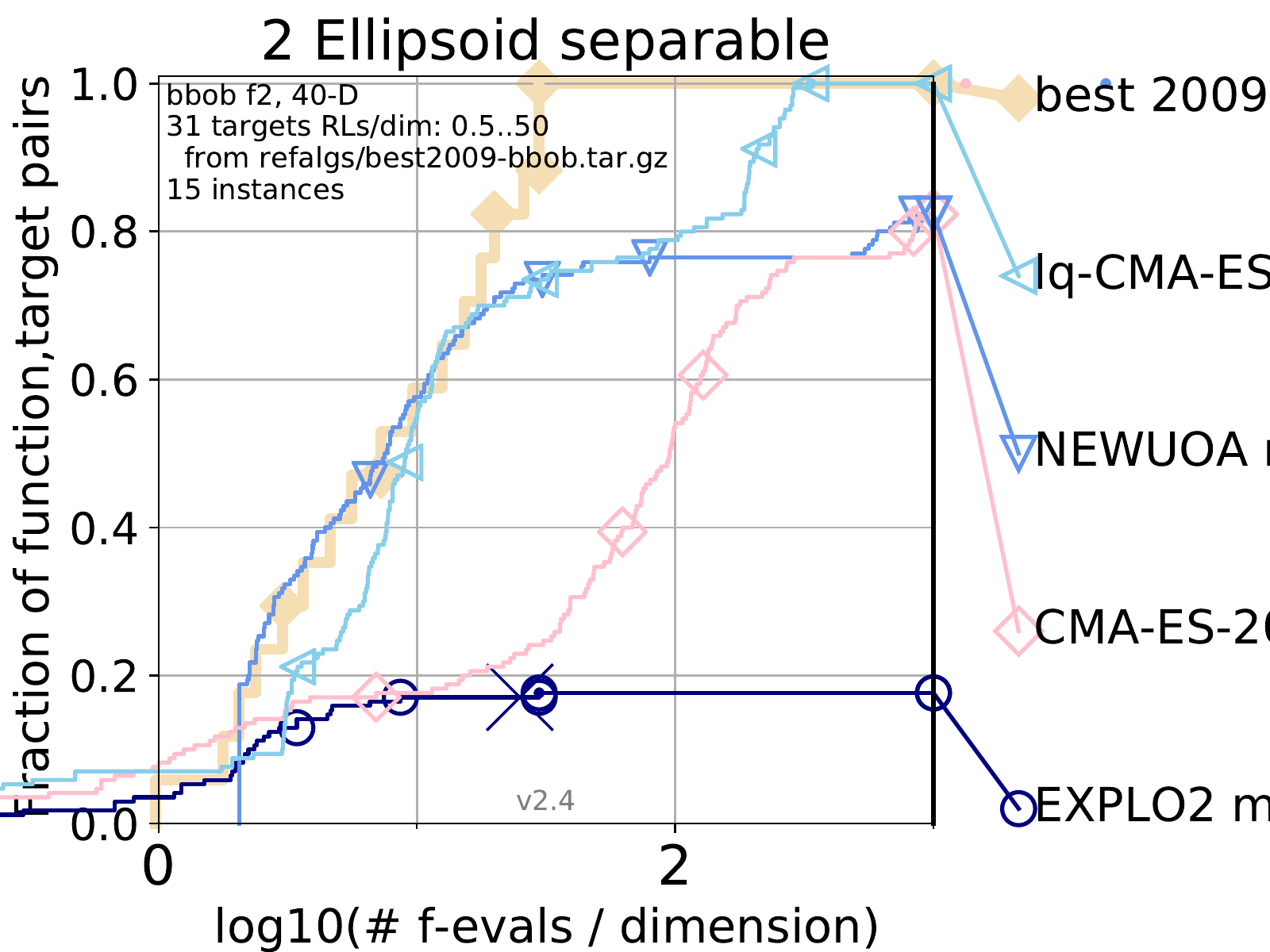}&
\includegraphics[width=0.238\textwidth]{EXPLO_GLOBA_MCS_h_NEWUO_lmm-C_SMAC-_fminc_et_al/pprldmany-single-functions/pprldmany_f003_40D}&
\includegraphics[width=0.238\textwidth]{EXPLO_GLOBA_MCS_h_NEWUO_lmm-C_SMAC-_fminc_et_al/pprldmany-single-functions/pprldmany_f004_40D}\\
\includegraphics[width=0.238\textwidth]{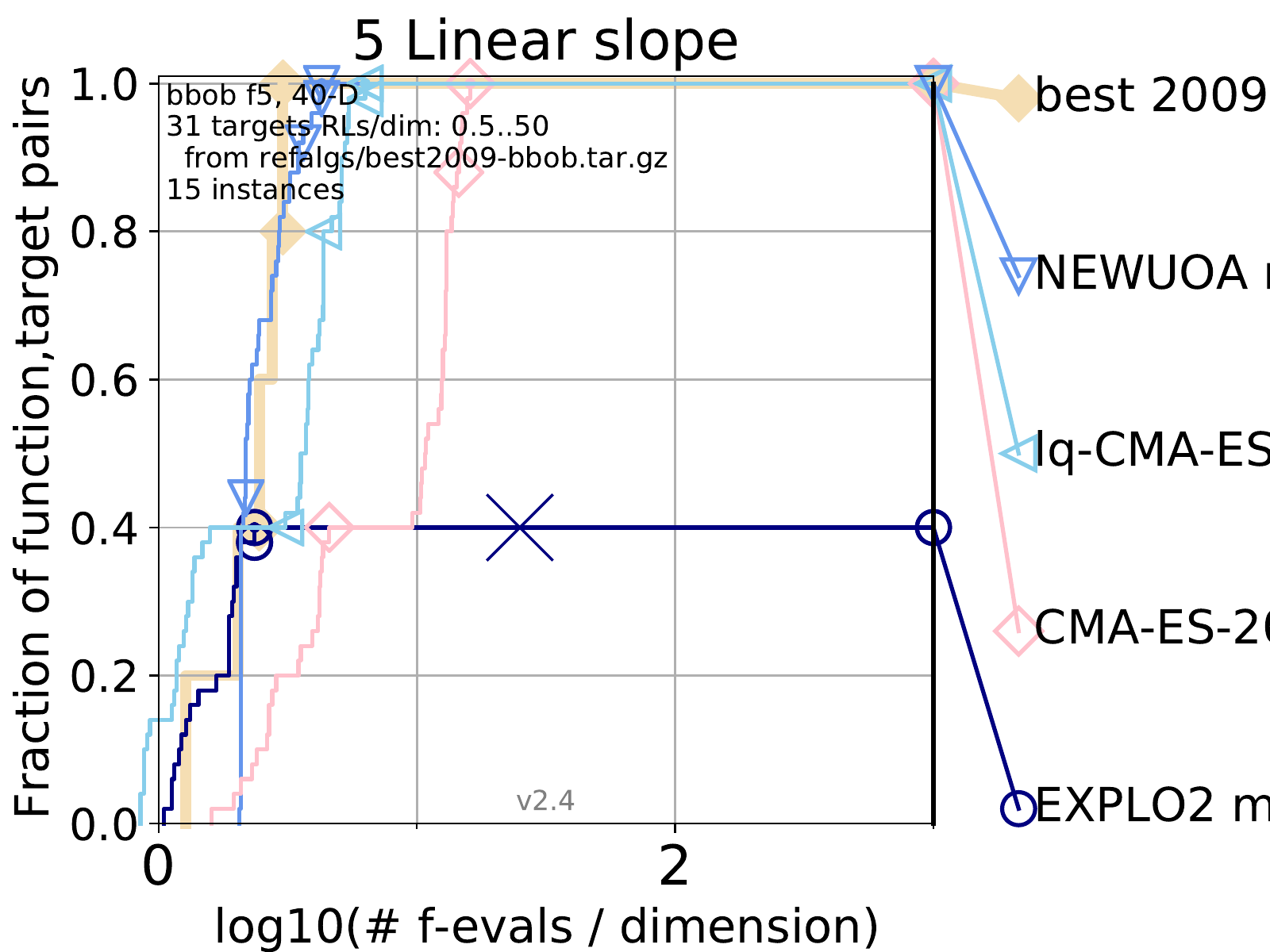}&
\includegraphics[width=0.238\textwidth]{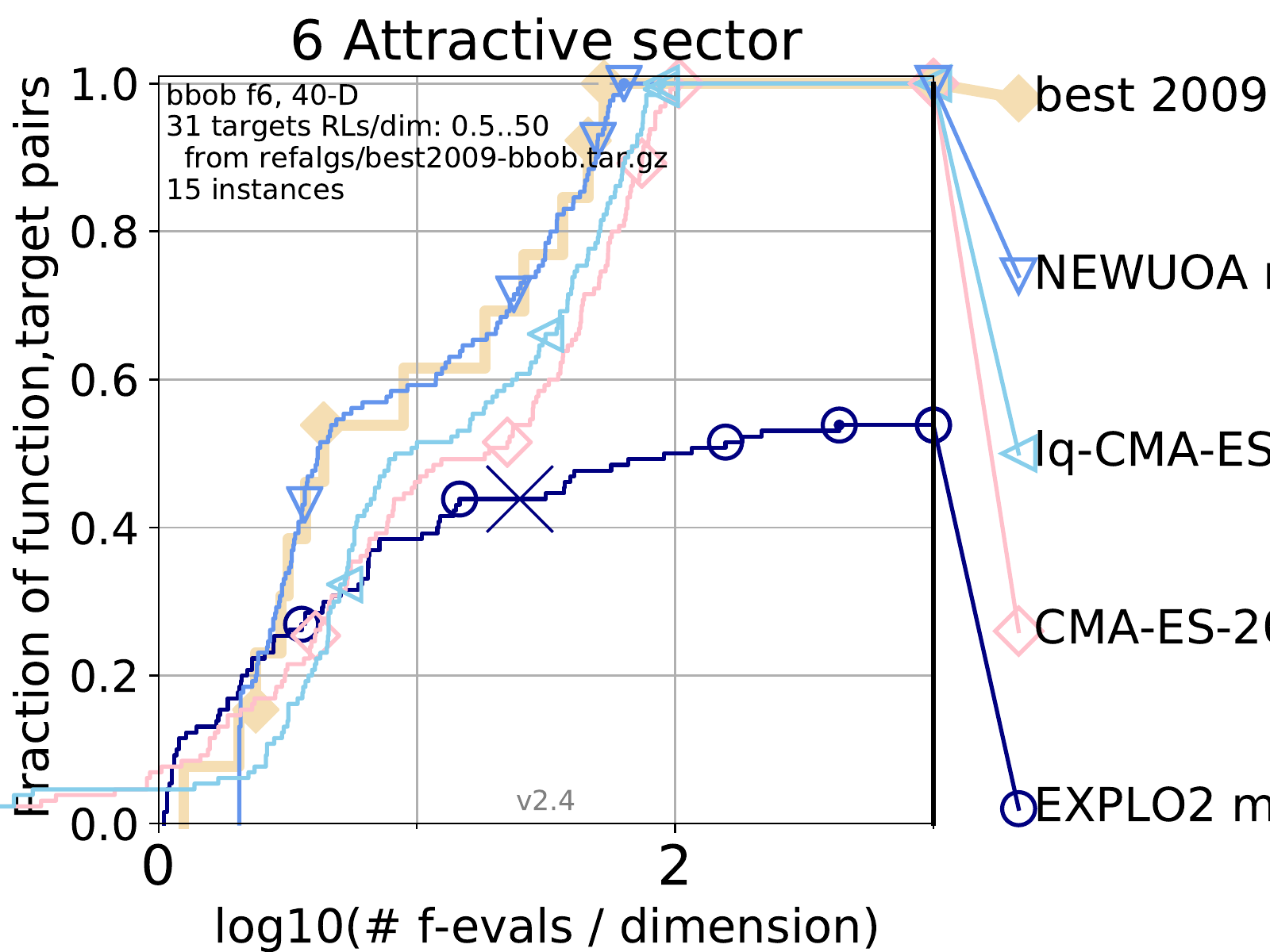}&
\includegraphics[width=0.238\textwidth]{EXPLO_GLOBA_MCS_h_NEWUO_lmm-C_SMAC-_fminc_et_al/pprldmany-single-functions/pprldmany_f007_40D}&
\includegraphics[width=0.238\textwidth]{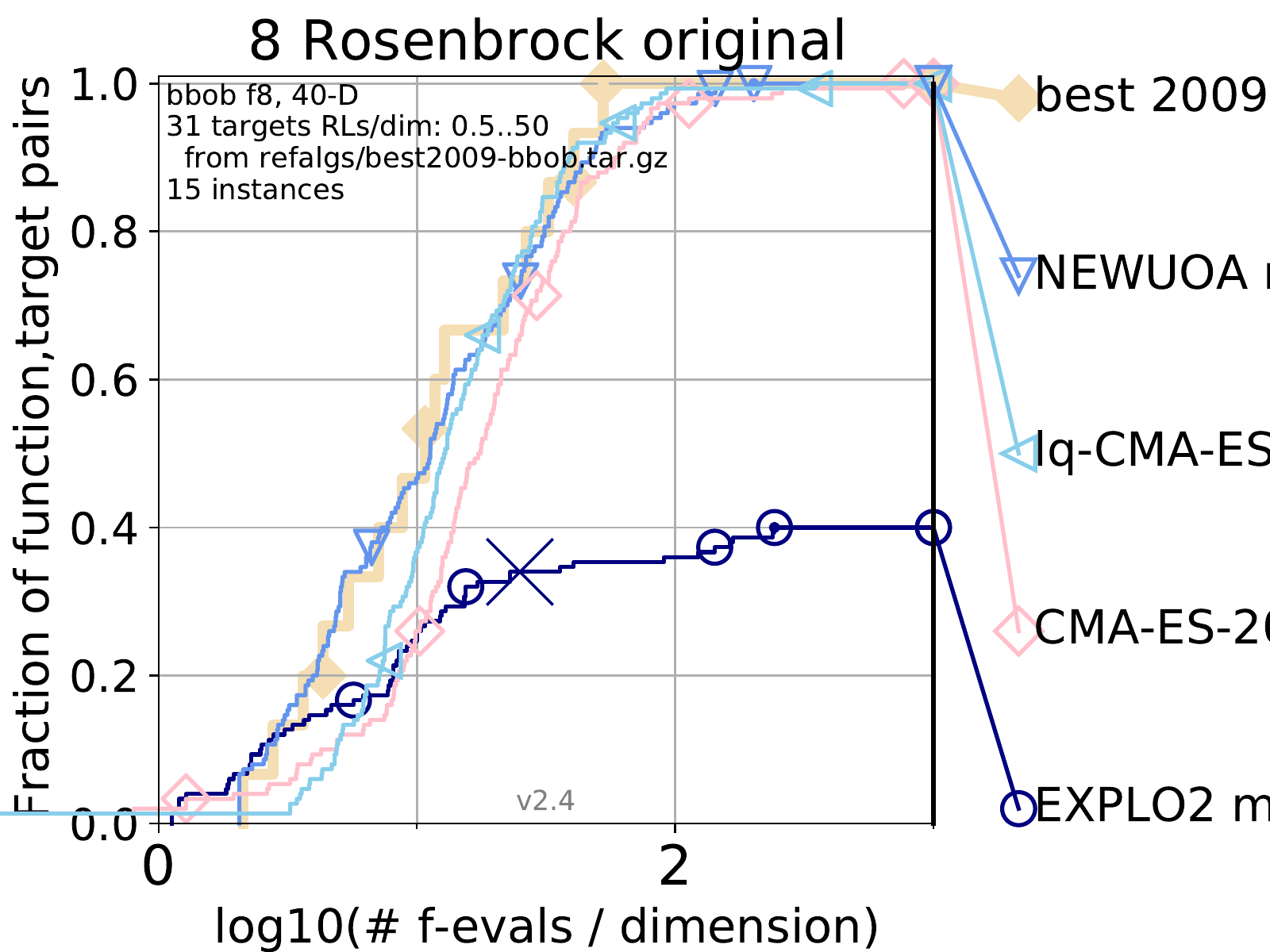}\\
\includegraphics[width=0.238\textwidth]{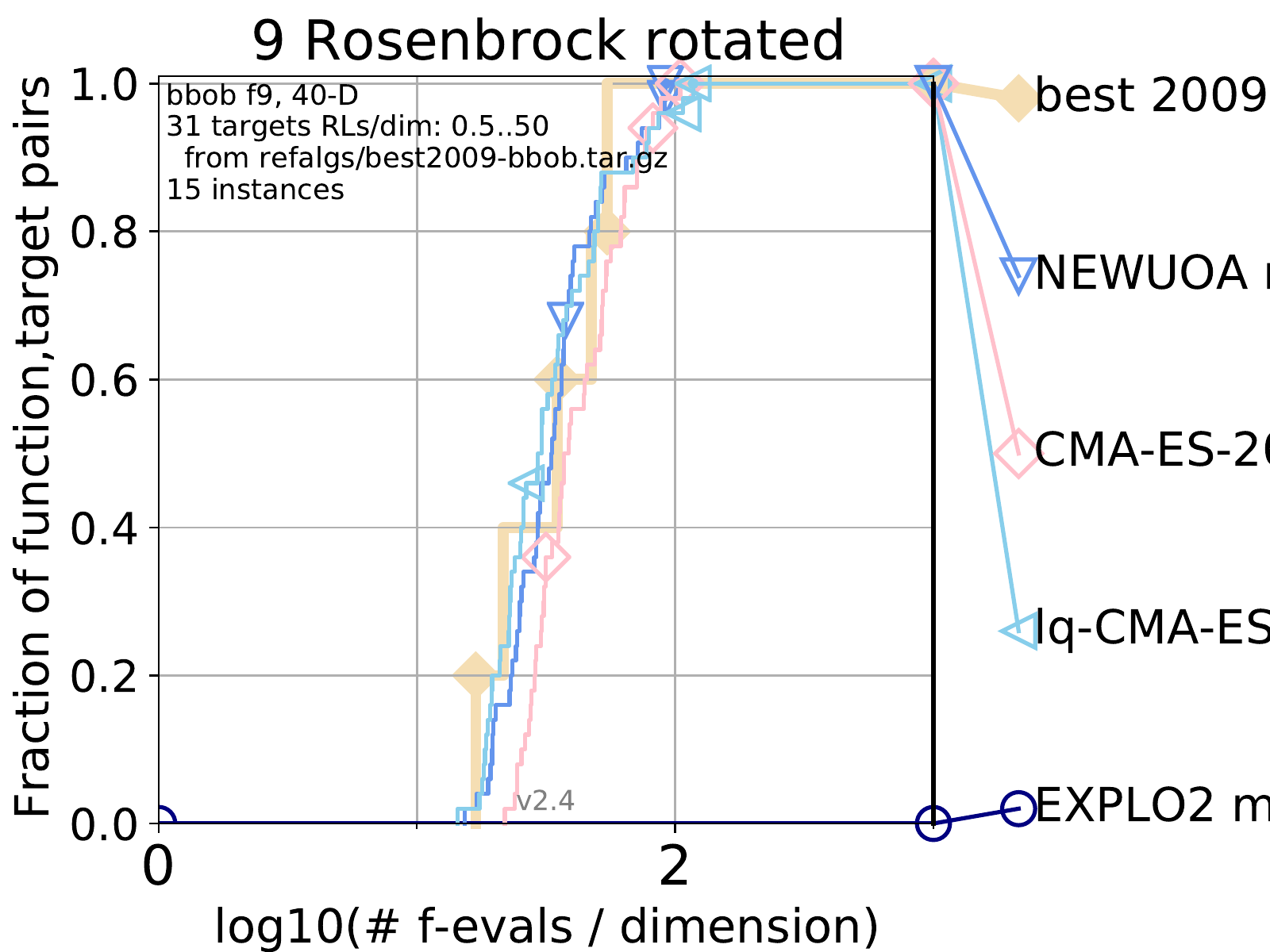}&
\includegraphics[width=0.238\textwidth]{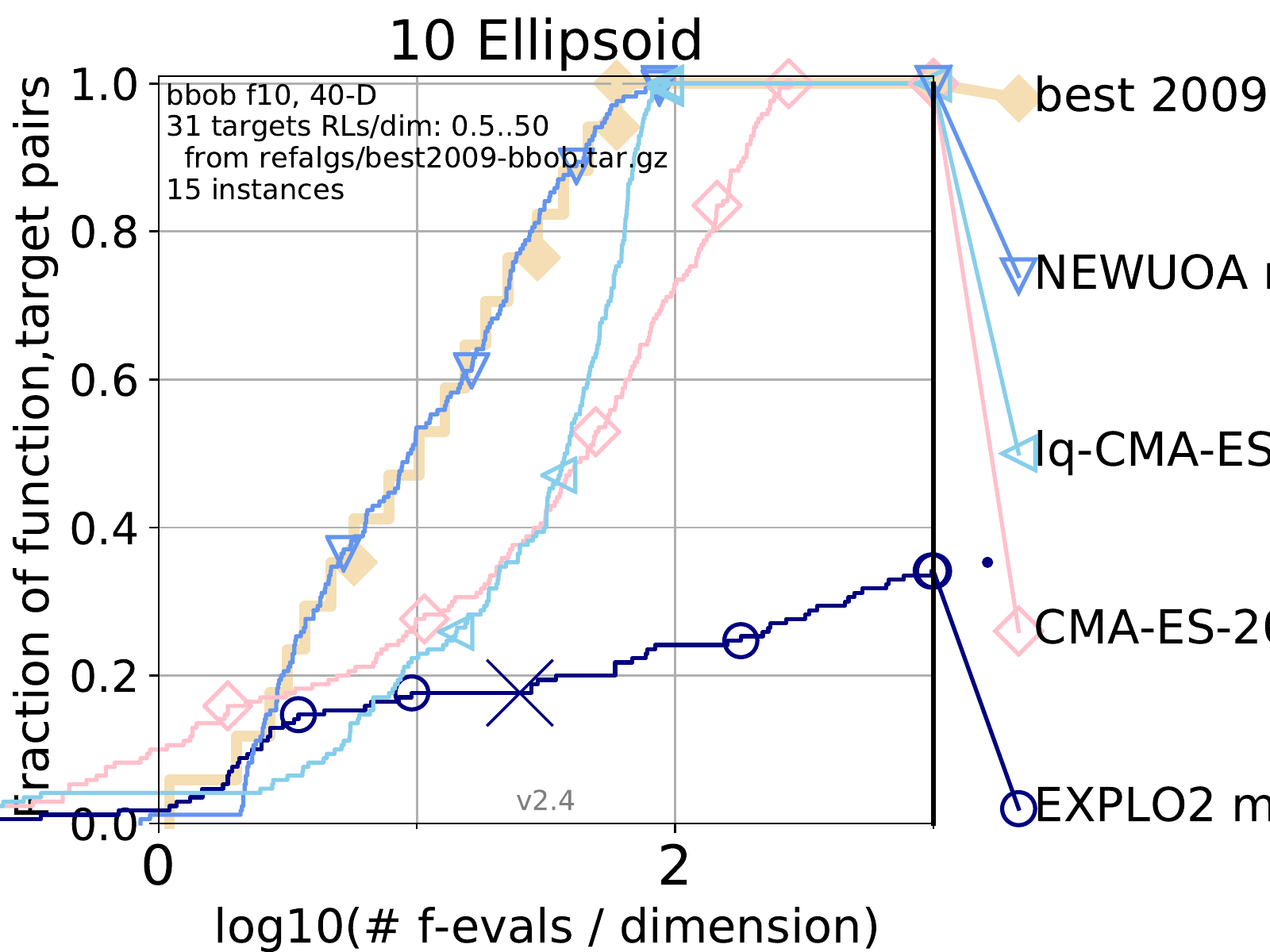}&
\includegraphics[width=0.238\textwidth]{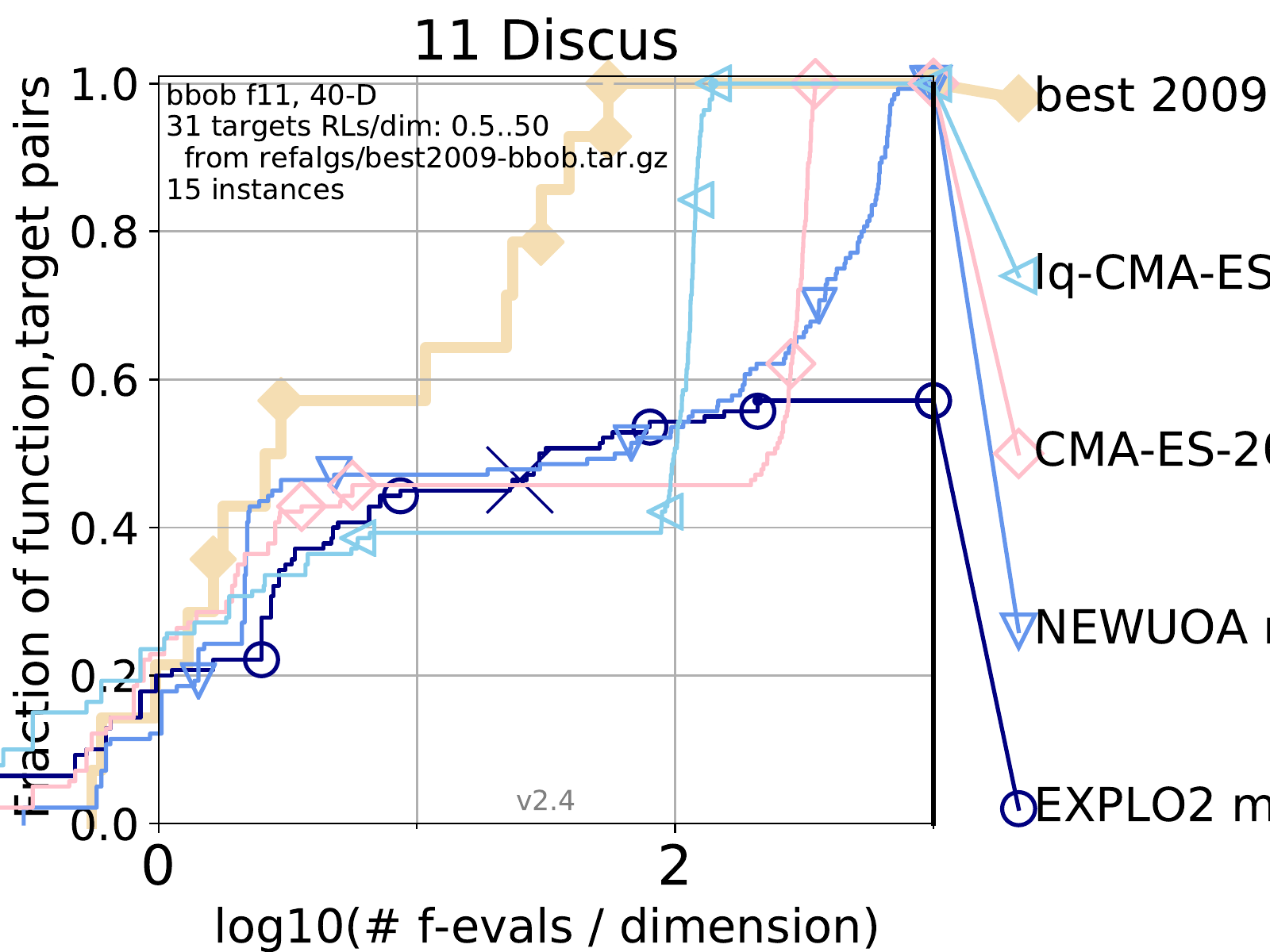}&
\includegraphics[width=0.238\textwidth]{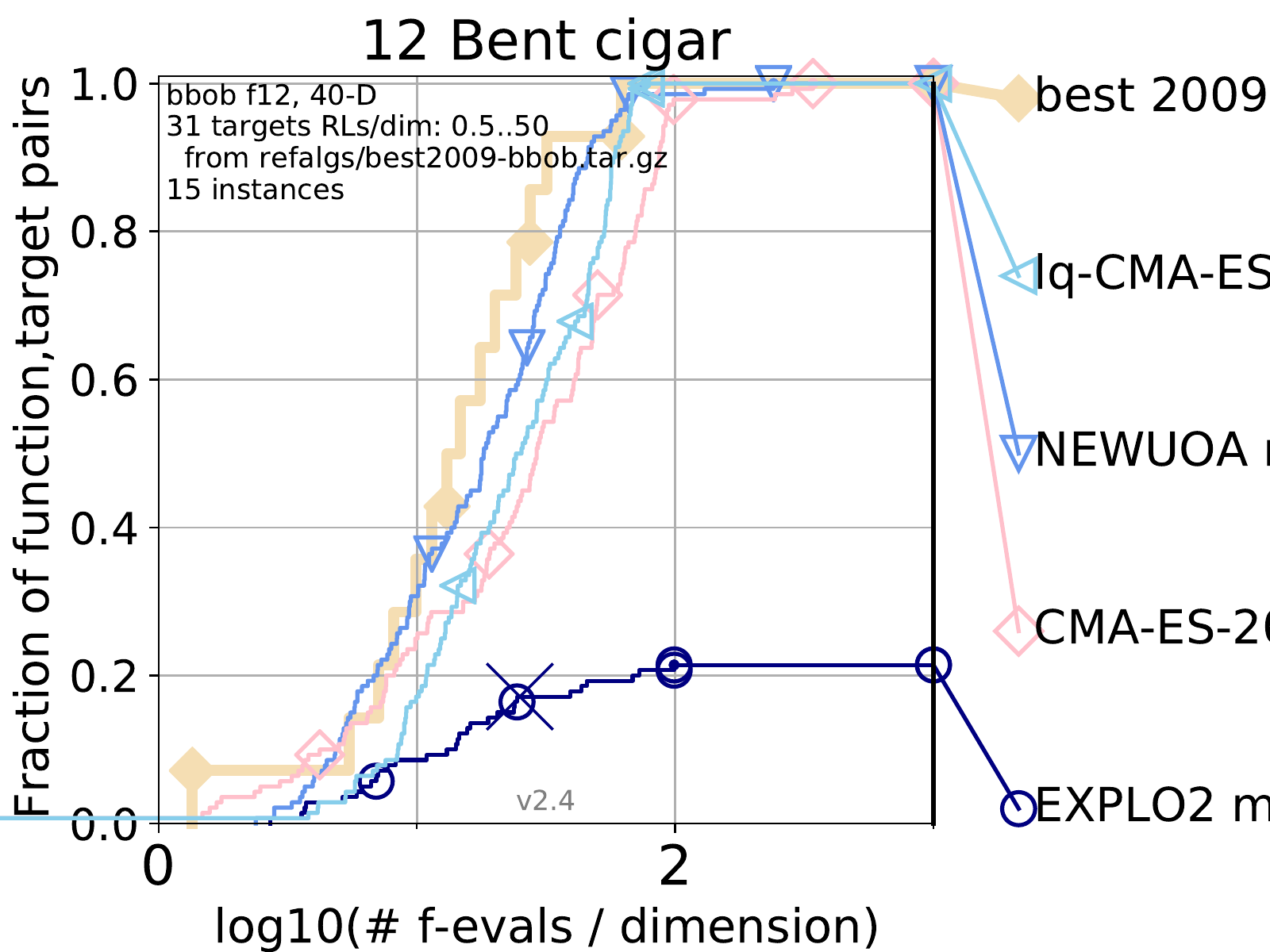}\\
\includegraphics[width=0.238\textwidth]{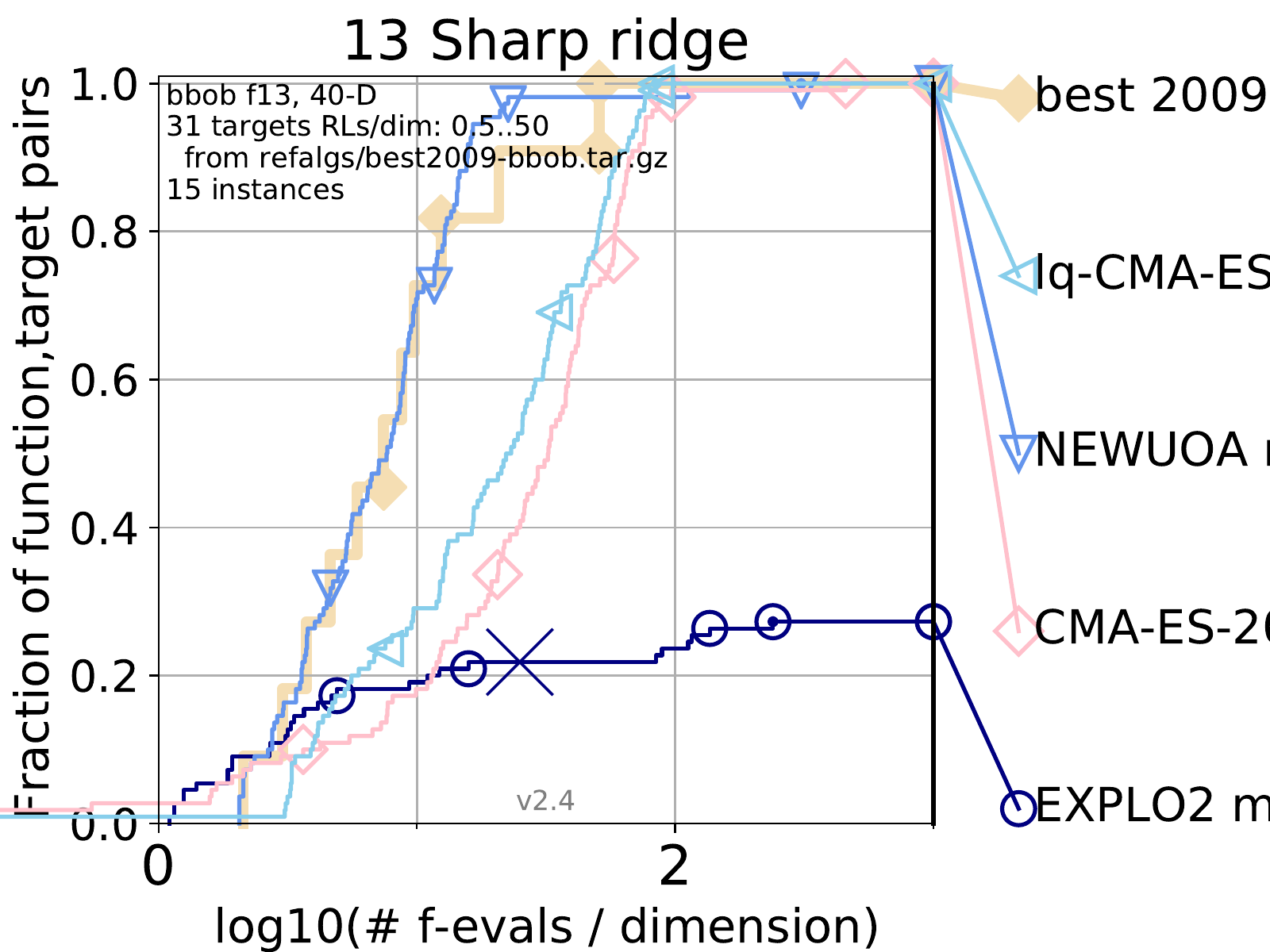}&
\includegraphics[width=0.238\textwidth]{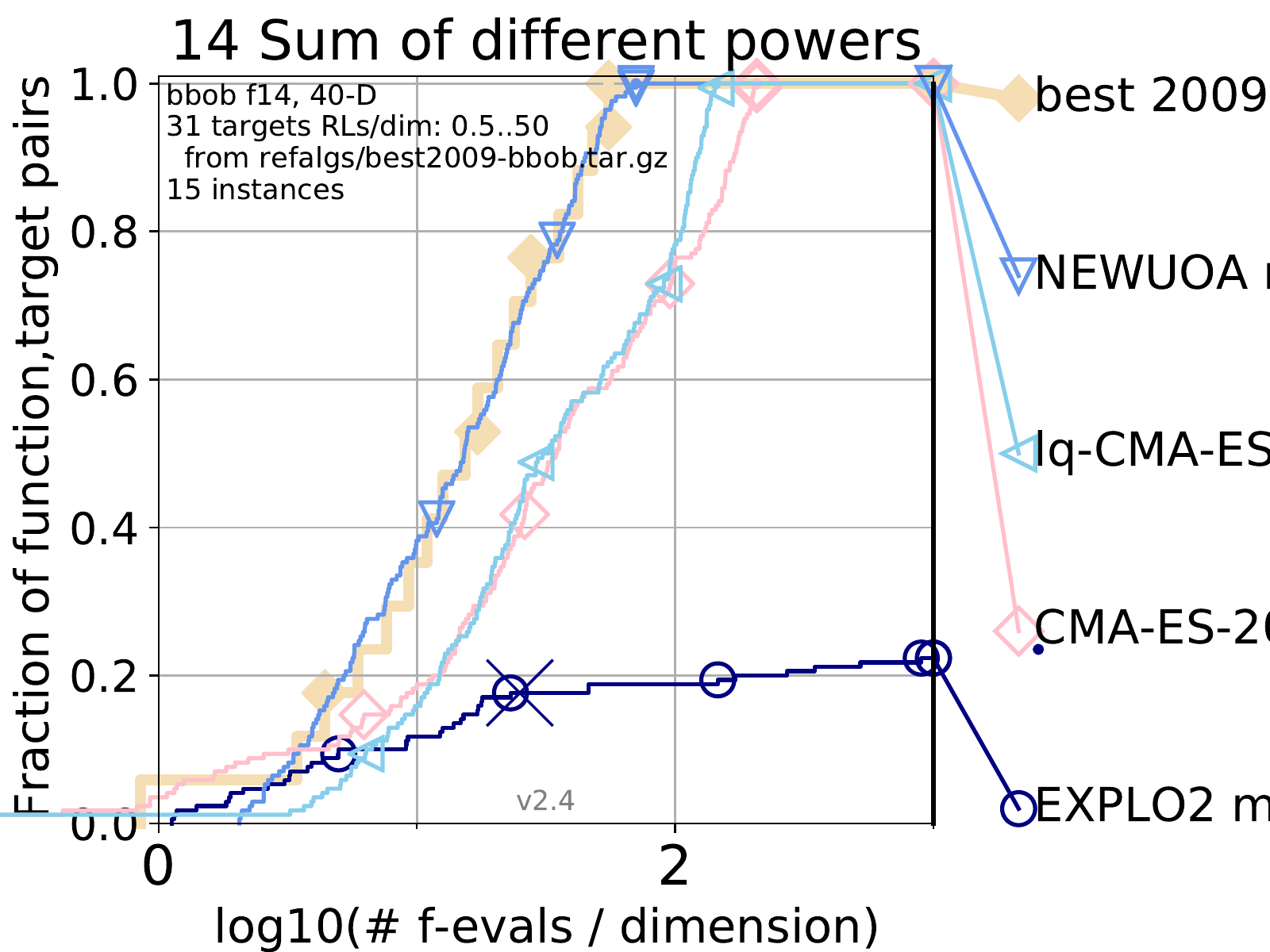}&
\includegraphics[width=0.238\textwidth]{EXPLO_GLOBA_MCS_h_NEWUO_lmm-C_SMAC-_fminc_et_al/pprldmany-single-functions/pprldmany_f015_40D}&
\includegraphics[width=0.238\textwidth]{EXPLO_GLOBA_MCS_h_NEWUO_lmm-C_SMAC-_fminc_et_al/pprldmany-single-functions/pprldmany_f016_40D}\\
\includegraphics[width=0.238\textwidth]{EXPLO_GLOBA_MCS_h_NEWUO_lmm-C_SMAC-_fminc_et_al/pprldmany-single-functions/pprldmany_f017_40D}&
\includegraphics[width=0.238\textwidth]{EXPLO_GLOBA_MCS_h_NEWUO_lmm-C_SMAC-_fminc_et_al/pprldmany-single-functions/pprldmany_f018_40D}&
\includegraphics[width=0.238\textwidth]{EXPLO_GLOBA_MCS_h_NEWUO_lmm-C_SMAC-_fminc_et_al/pprldmany-single-functions/pprldmany_f019_40D}&
\includegraphics[width=0.238\textwidth]{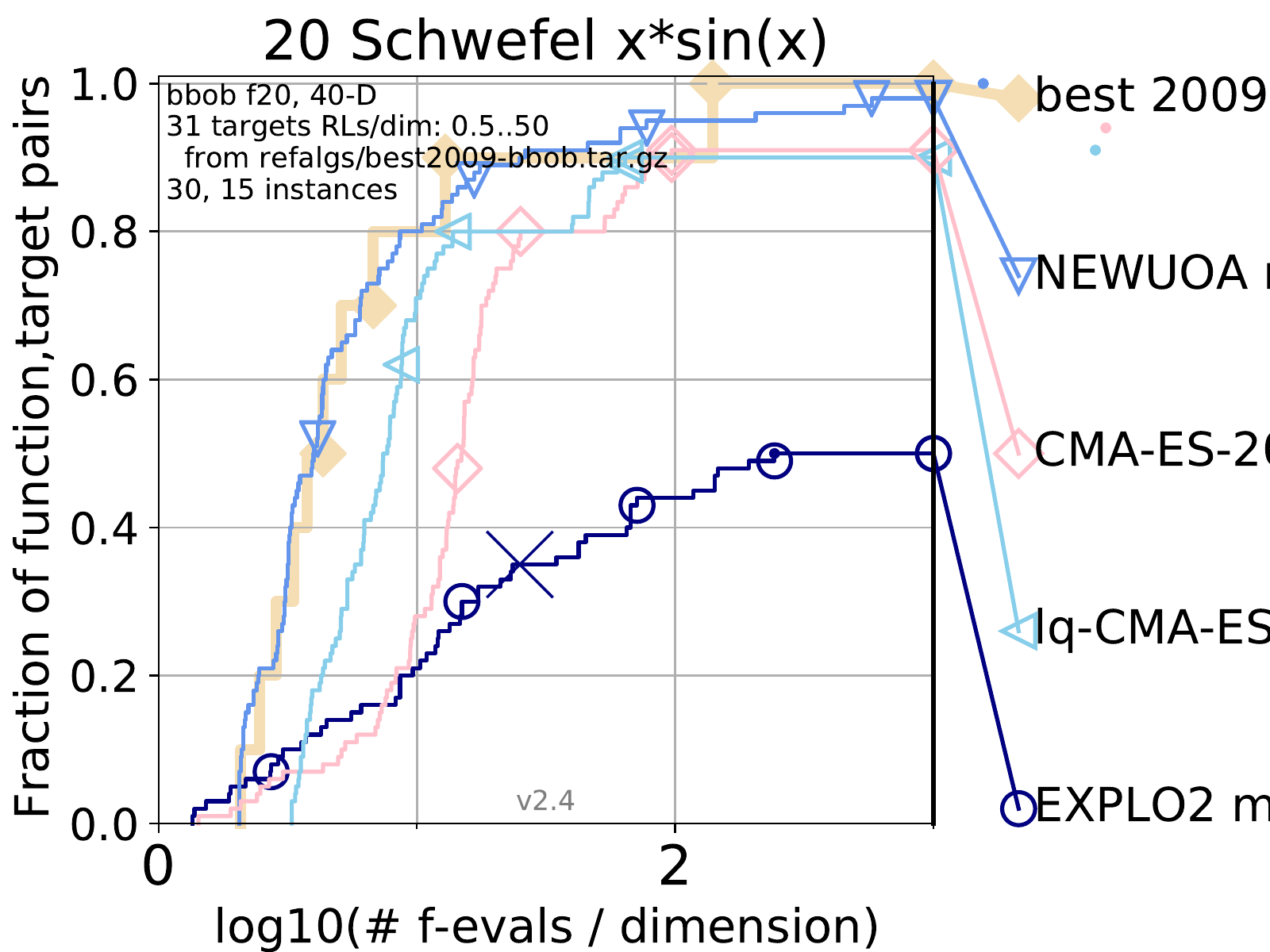}\\
\includegraphics[width=0.238\textwidth]{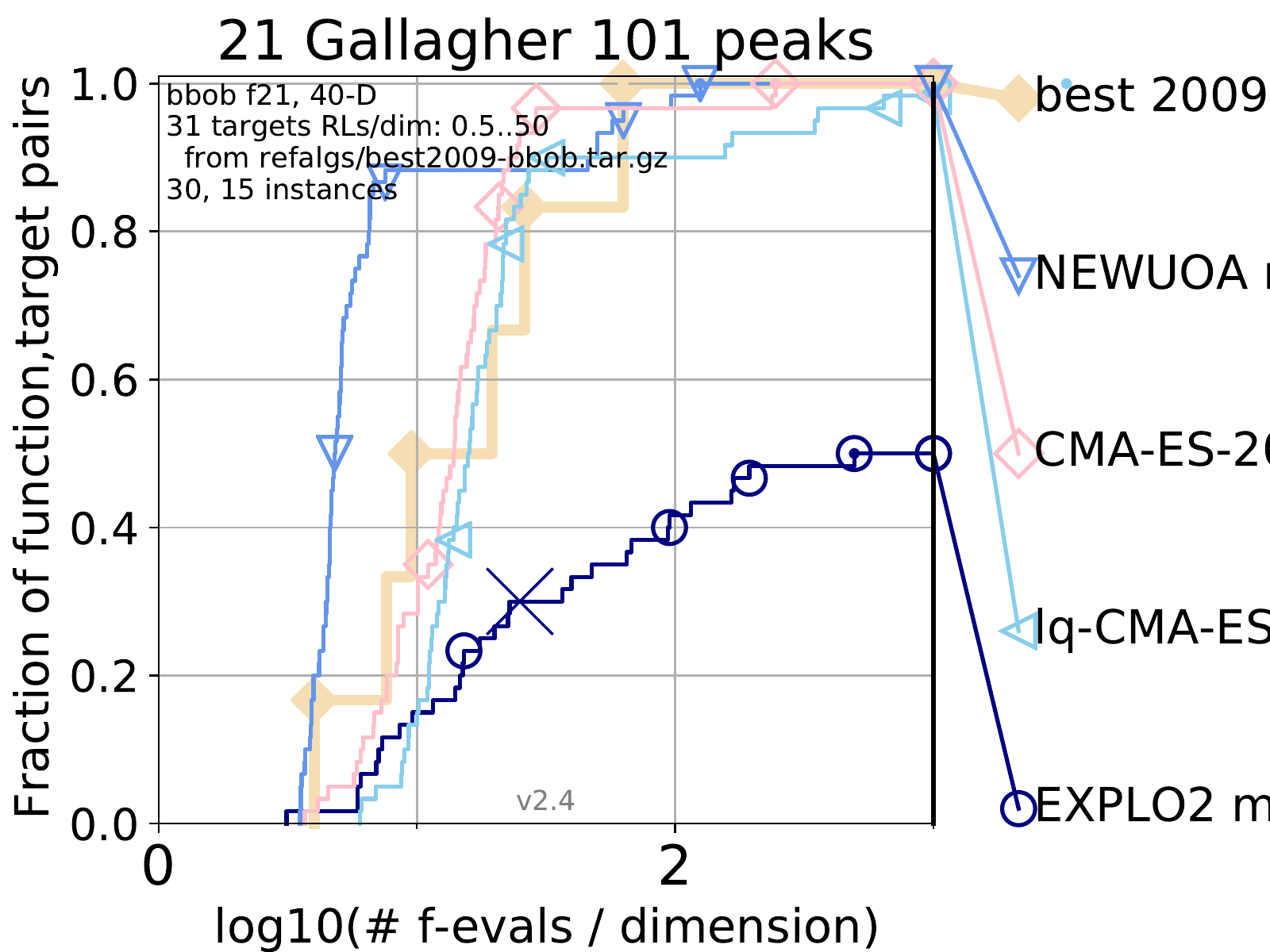}&
\includegraphics[width=0.238\textwidth]{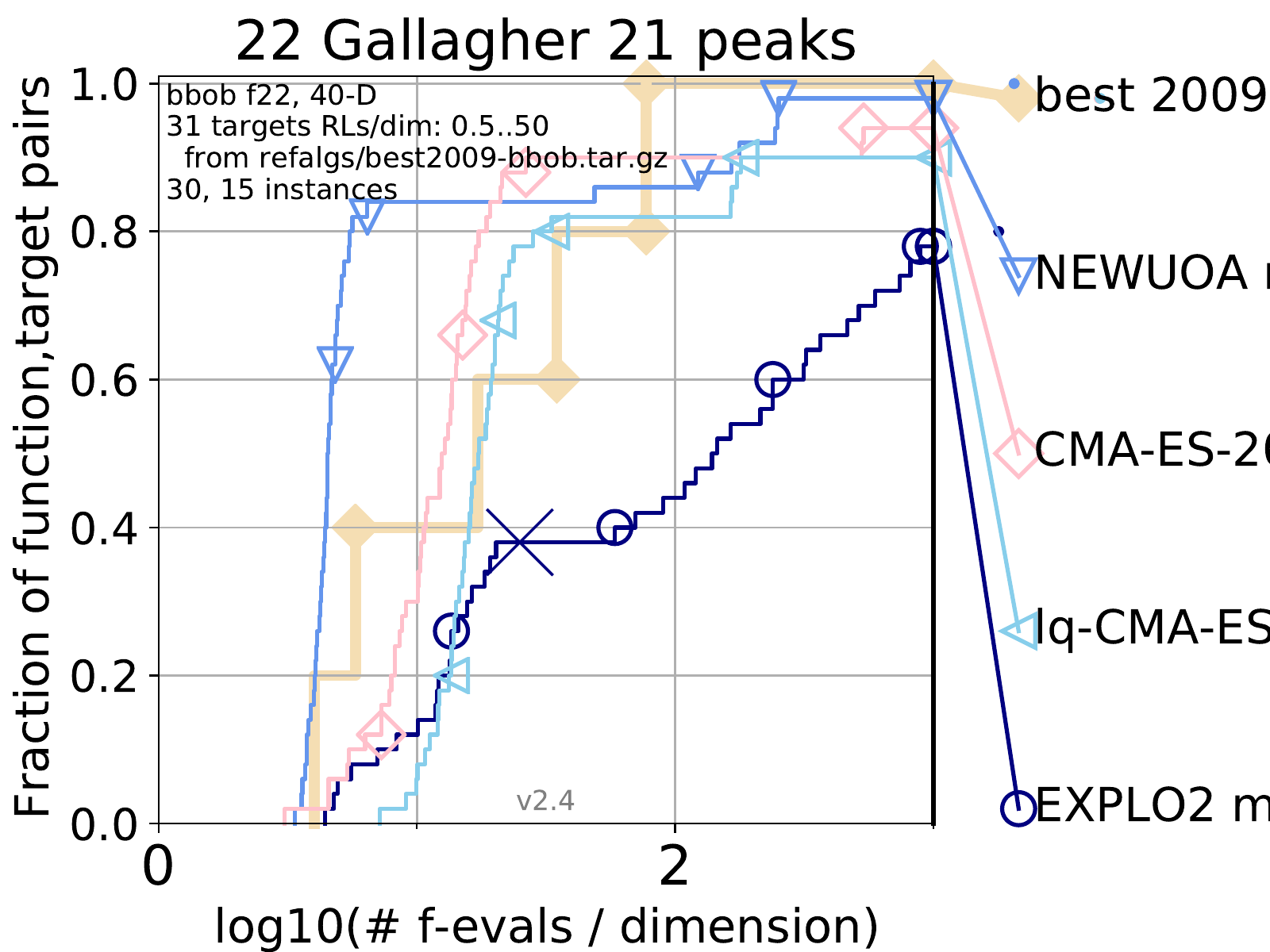}&
\includegraphics[width=0.238\textwidth]{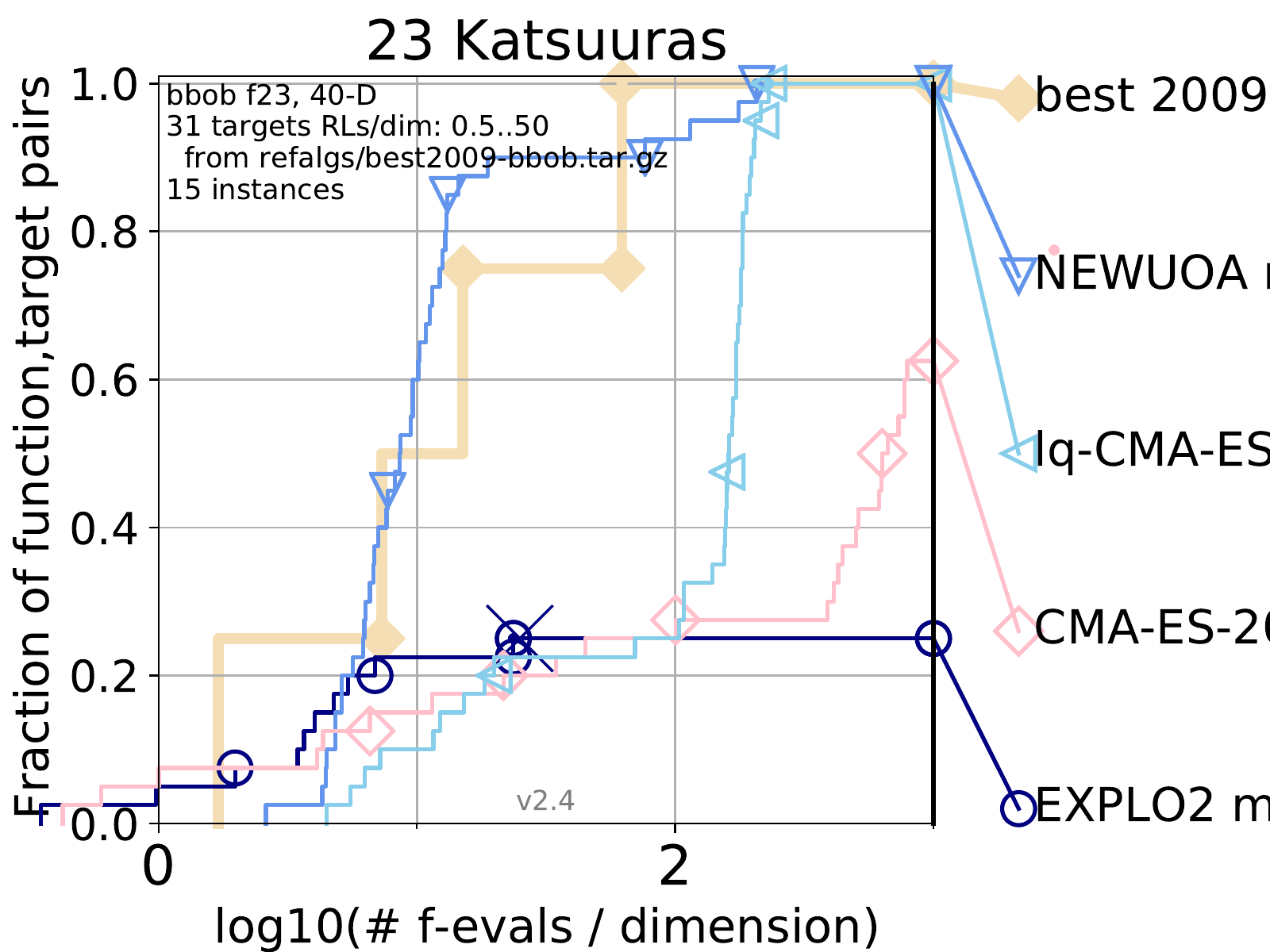}&
\includegraphics[width=0.238\textwidth]{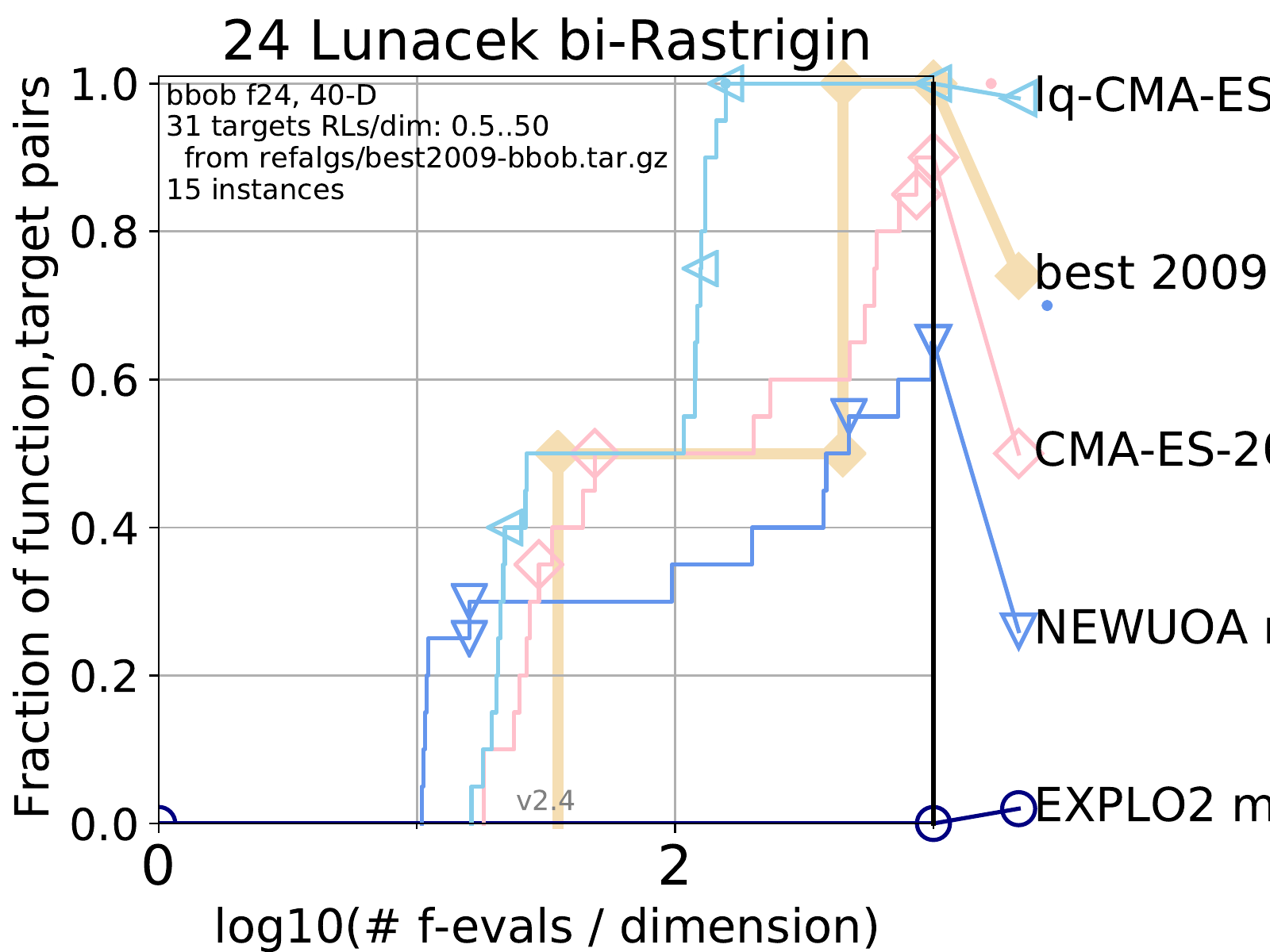}\\
\end{tabular}
\vspace*{-0.2cm}
\caption{
\bbobecdfcaptionsinglefunctionssingledim{40} {\bf Crosses ($\times$) indicate where experimental data ends and bootstrapping begins; algorithms are not comparable after this point.} \texttt{EXPLO2} used default options except for $n_\parallel = 32$. 
}
\end{figure*}

\clearpage

\section{\label{sec:largescale}Results from the large-scale \texttt{BBOB} suite}

\begin{figure*}
\centering
\begin{tabular}{@{}c@{}c@{}c@{}c@{}}
\includegraphics[width=0.48\textwidth]{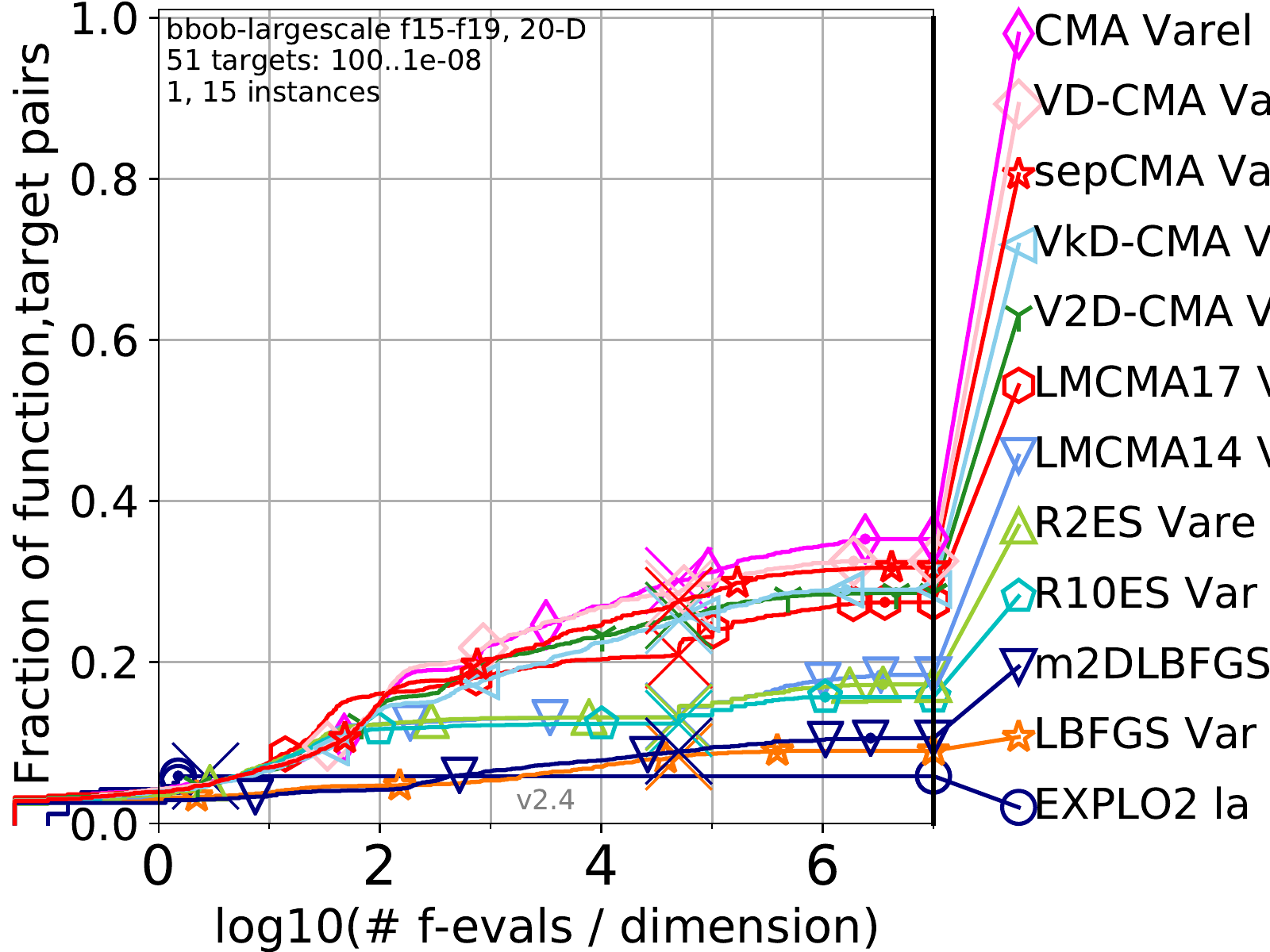}&
\includegraphics[width=0.48\textwidth]{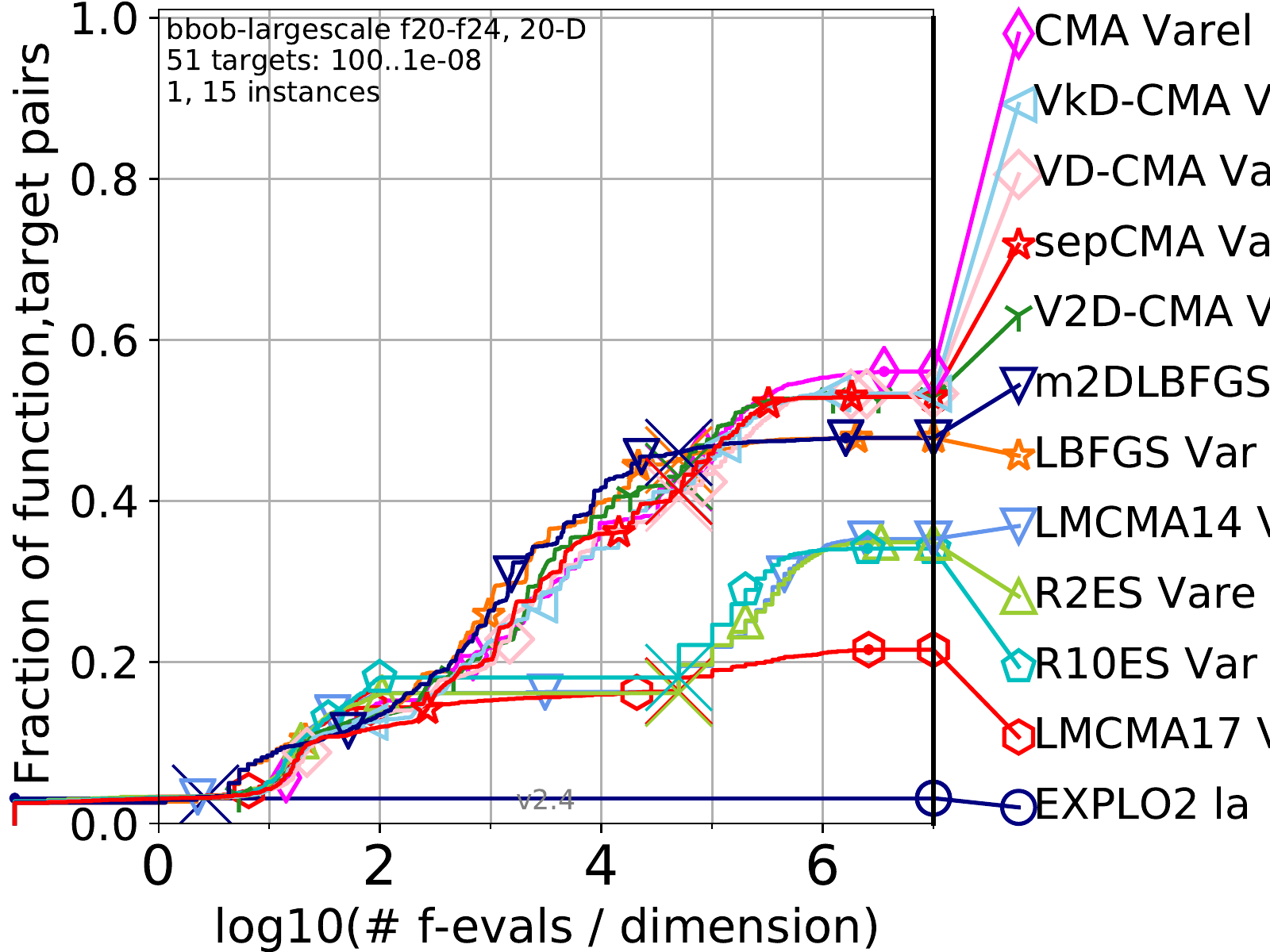}\\
\includegraphics[width=0.48\textwidth]{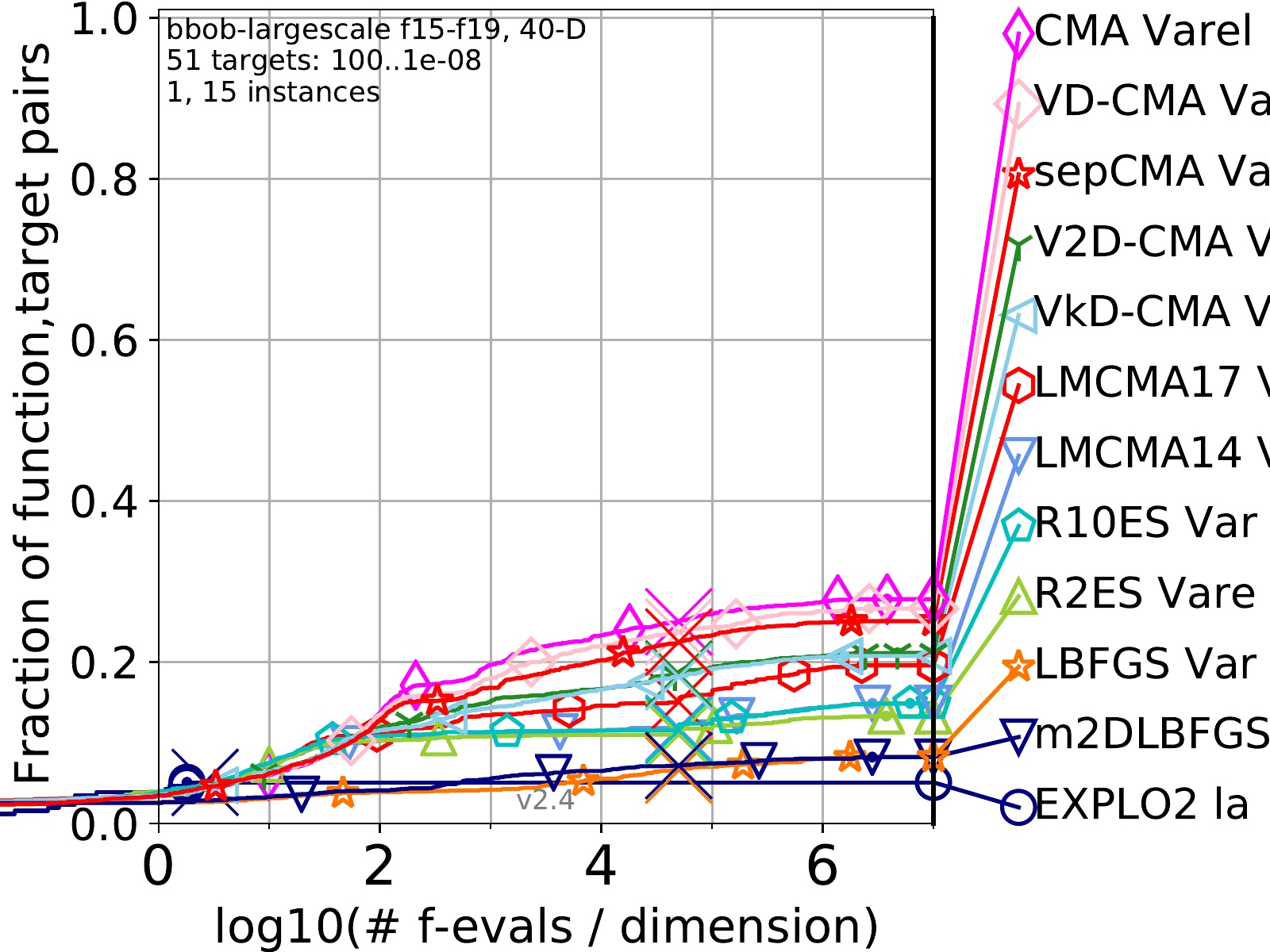}&
\includegraphics[width=0.48\textwidth]{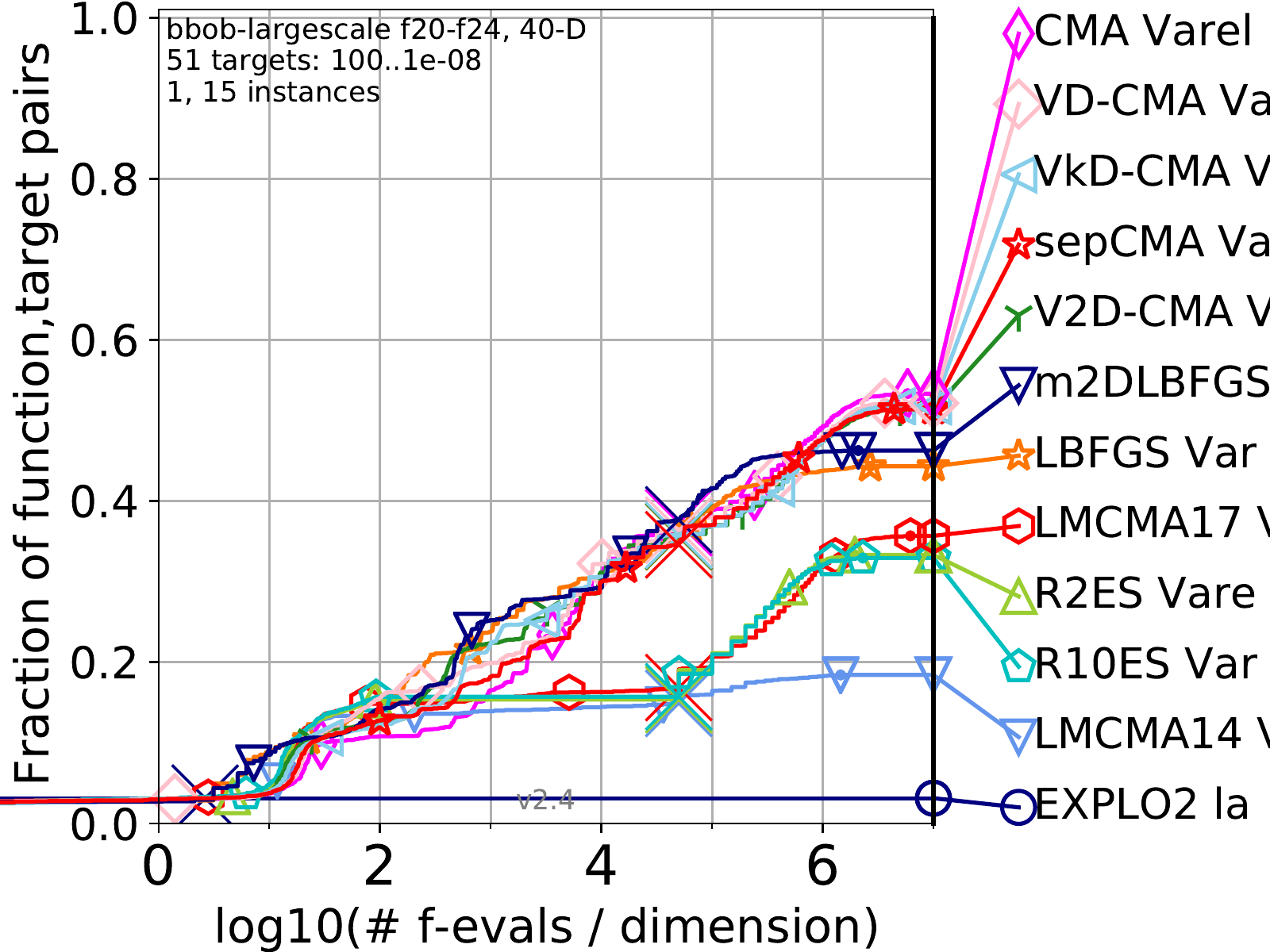}\\
\includegraphics[width=0.48\textwidth]{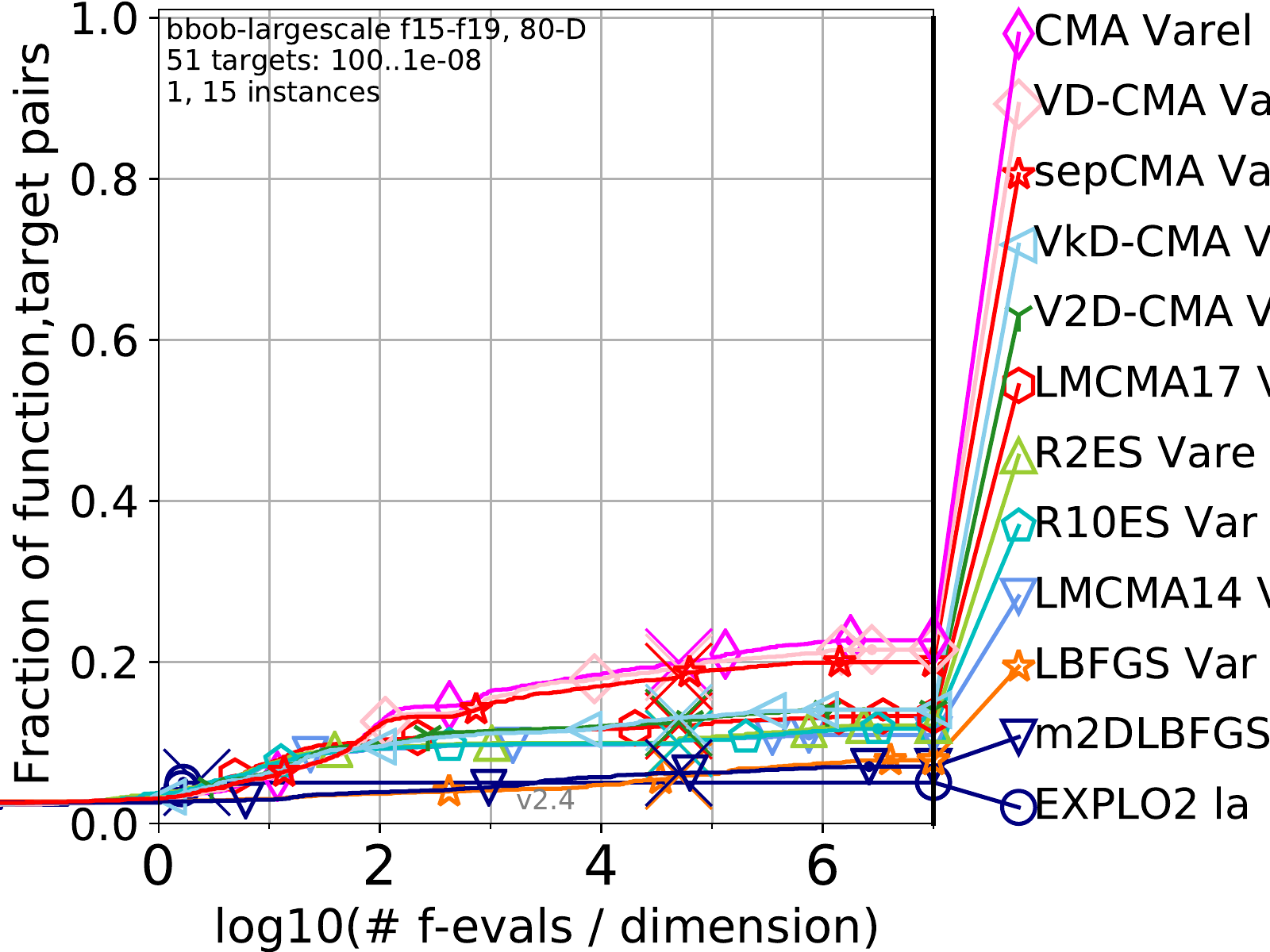}&
\includegraphics[width=0.48\textwidth]{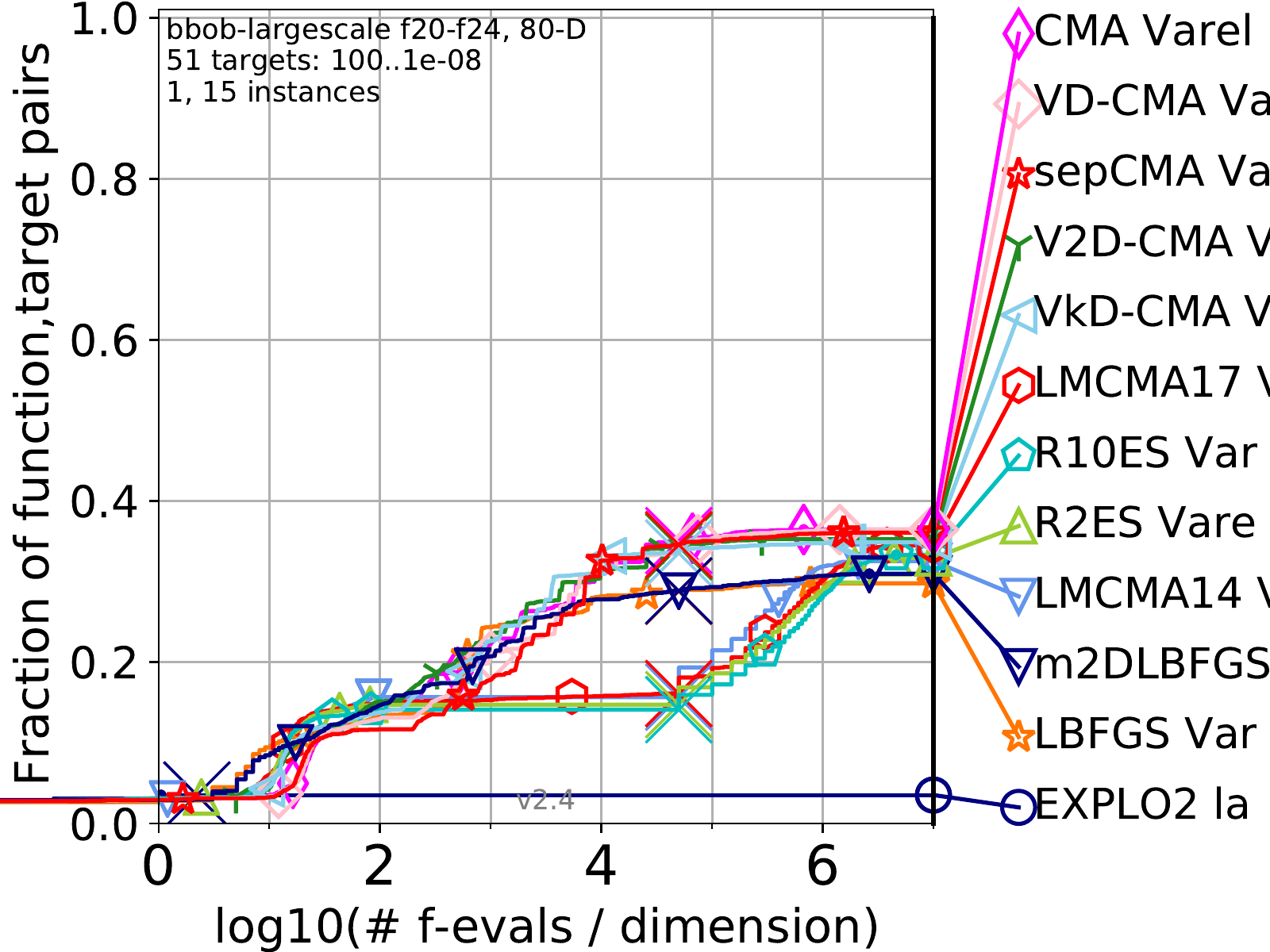}\\
\end{tabular}
\vspace*{-0.2cm}
\caption{
\label{fig:largescale80}
(Left panels) Bootstrapped empirical cumulative distribution of the number of objective function evaluations divided by dimension $D$ for structured multimodal functions $f_{15},\dots , f_{19}$ for $D \in \{20,40,80\}$. The targets are chosen from $10^{[-8 \dots 2]}$ such that the best algorithm from \texttt{BBOB 2009} just not reached them within a given budget of $k \times D$, with 31 different values of $k$ chosen equidistant in logscale within the interval $\{0.5,\dots,50\}$. As reference algorithm, the best algorithm from \texttt{BBOB 2009} is shown as light thick line with diamond markers. {\bf Crosses ($\times$) indicate where experimental data ends and bootstrapping begins; algorithms are not comparable after this point.} \texttt{EXPLO2} used default options except for $n_\parallel = 32$. (Right panels) As in the left panels, but for weakly structured multimodal functions $f_{20},\dots , f_{24}$.
}
\end{figure*}

\begin{figure*}
\centering
\begin{tabular}{@{}c@{}c@{}c@{}c@{}}
\includegraphics[width=0.48\textwidth]{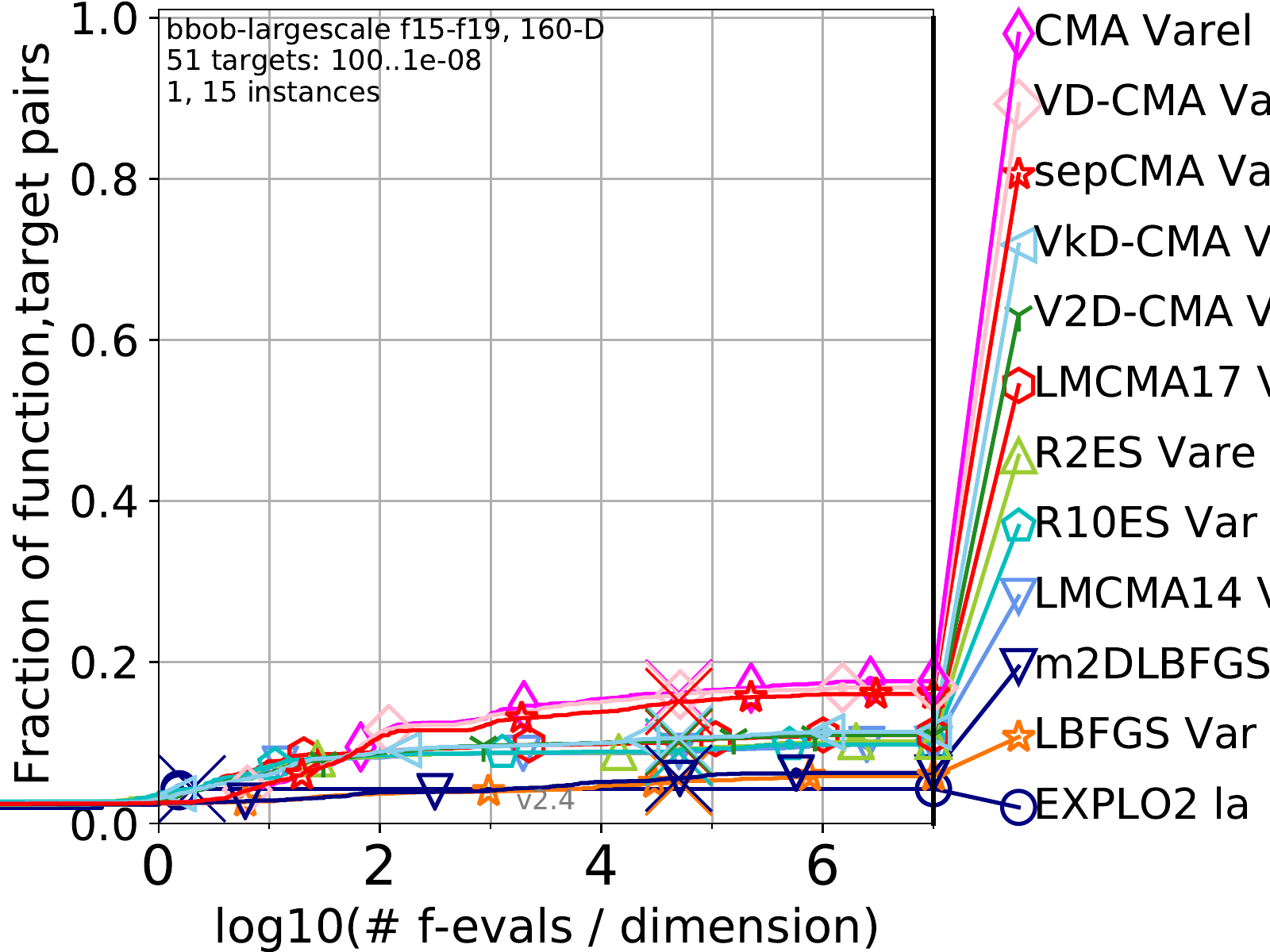}&
\includegraphics[width=0.48\textwidth]{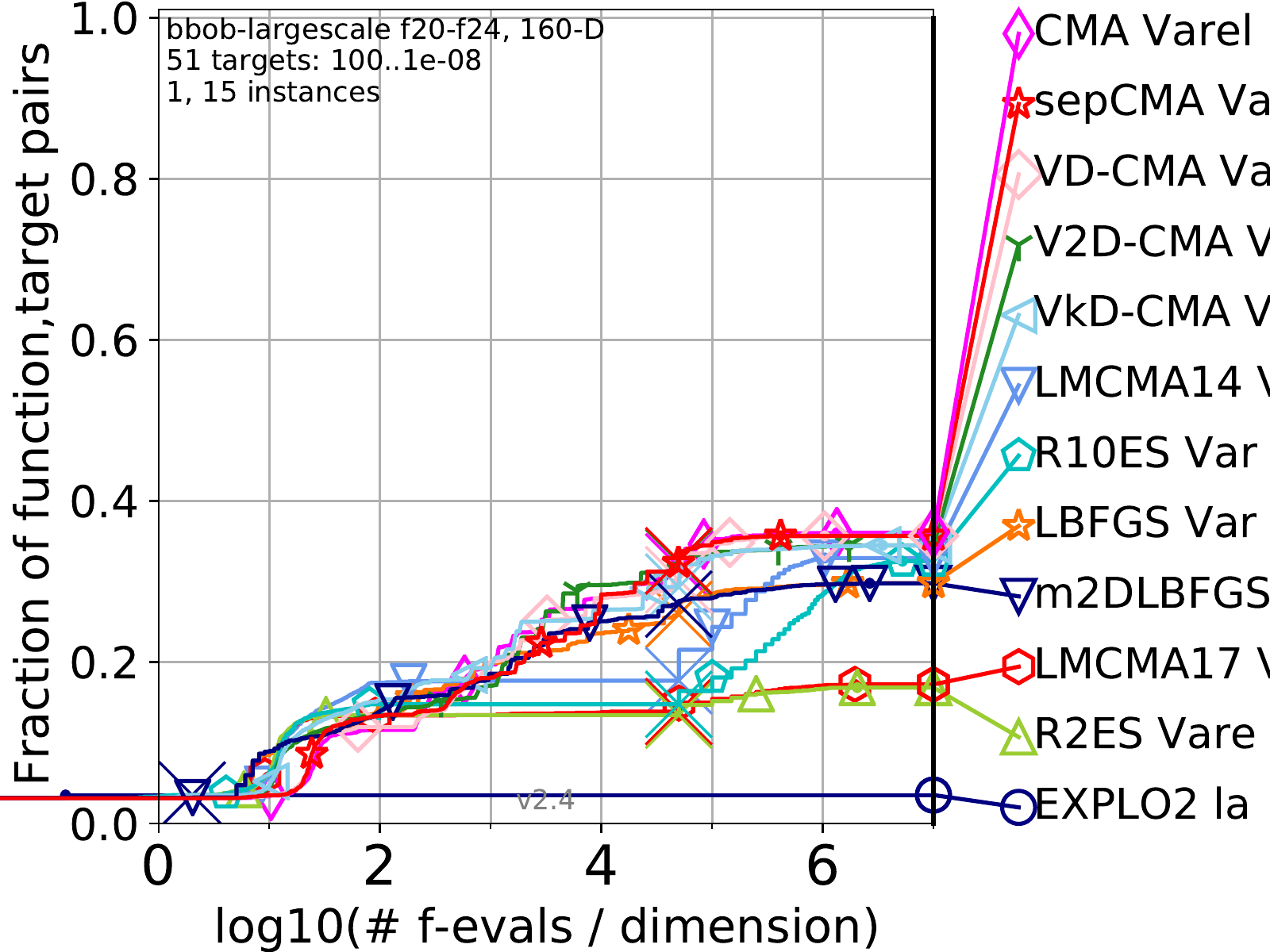}\\
\includegraphics[width=0.48\textwidth]{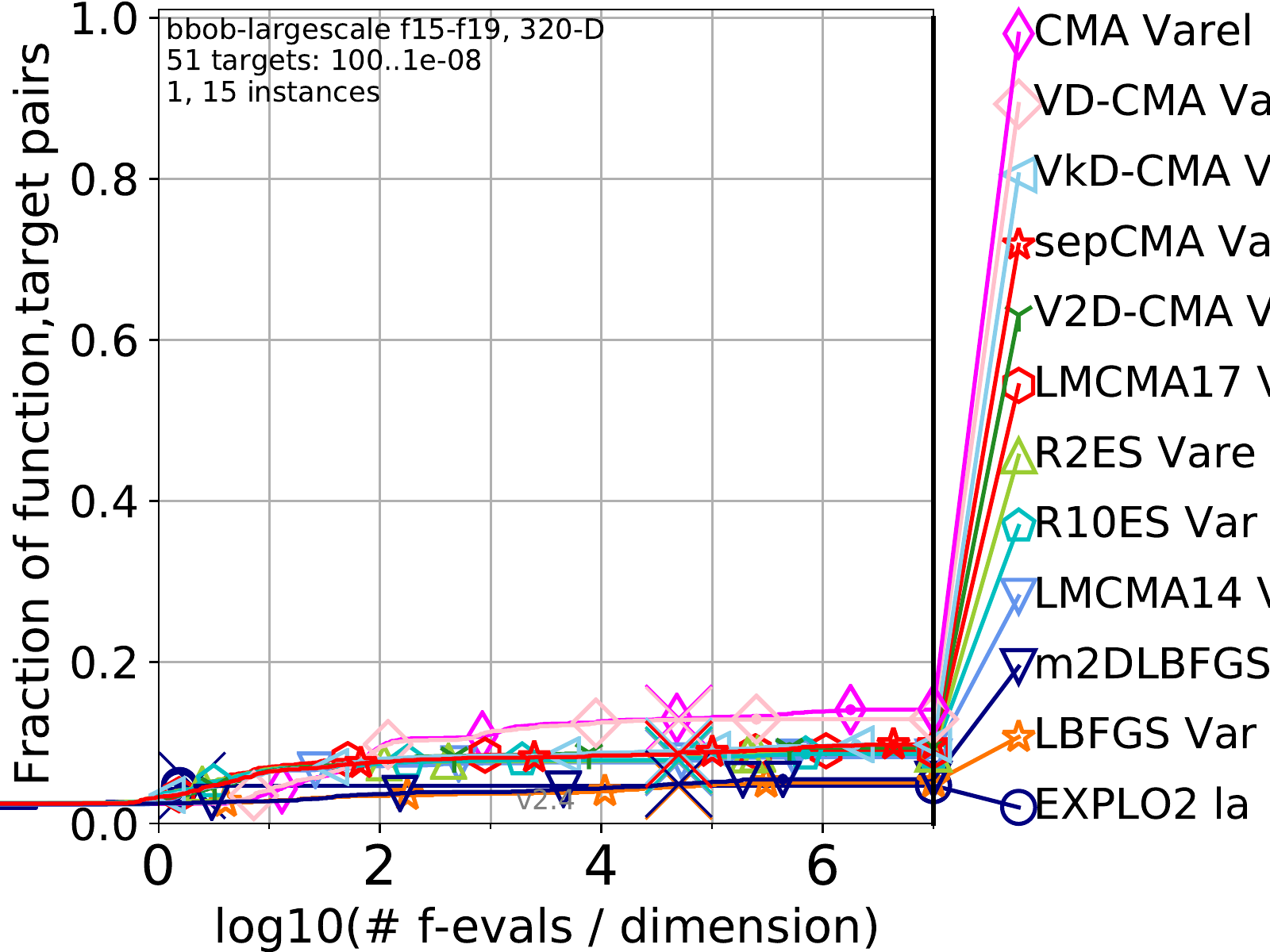}&
\includegraphics[width=0.48\textwidth]{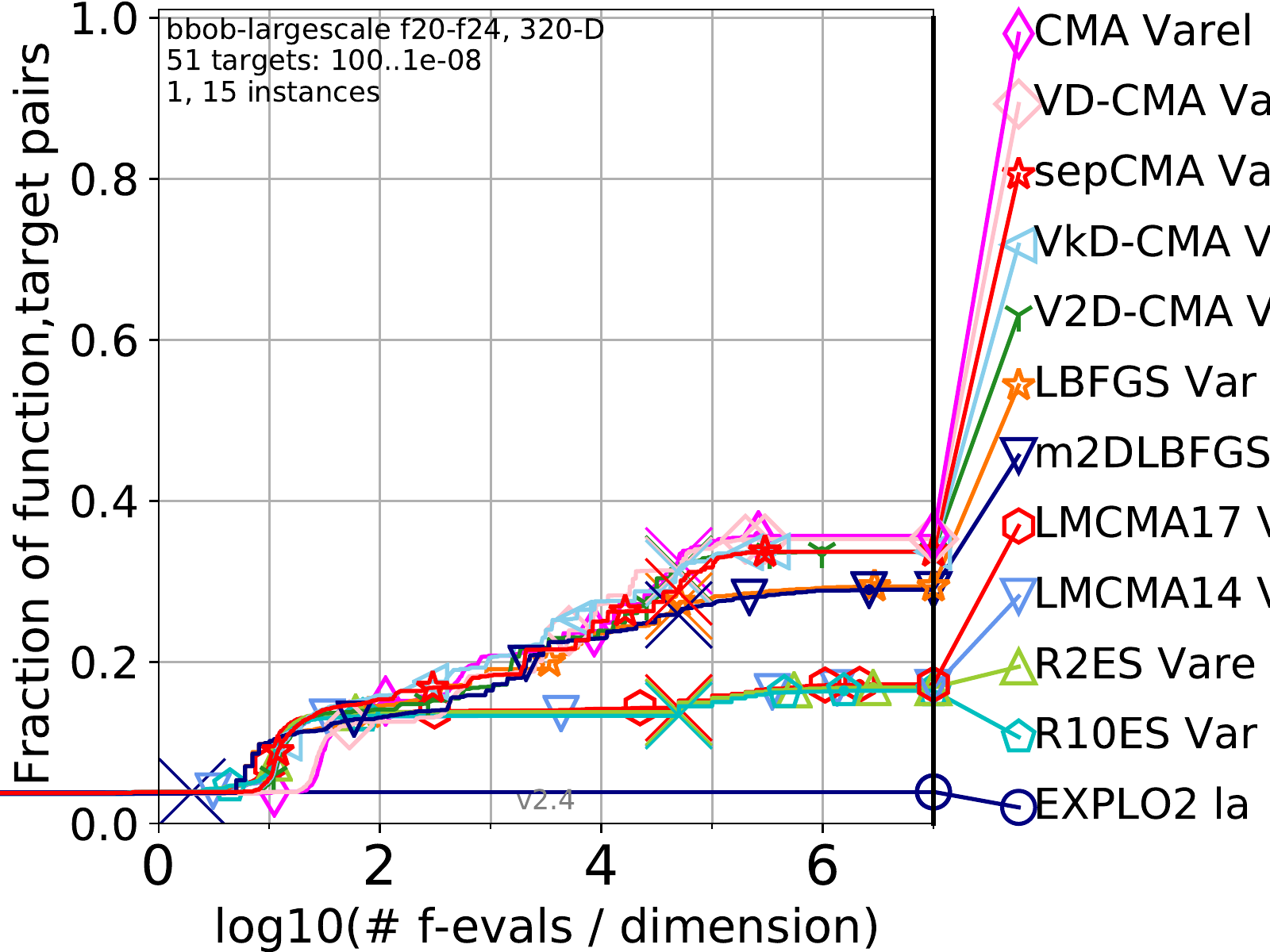}\\
\includegraphics[width=0.48\textwidth]{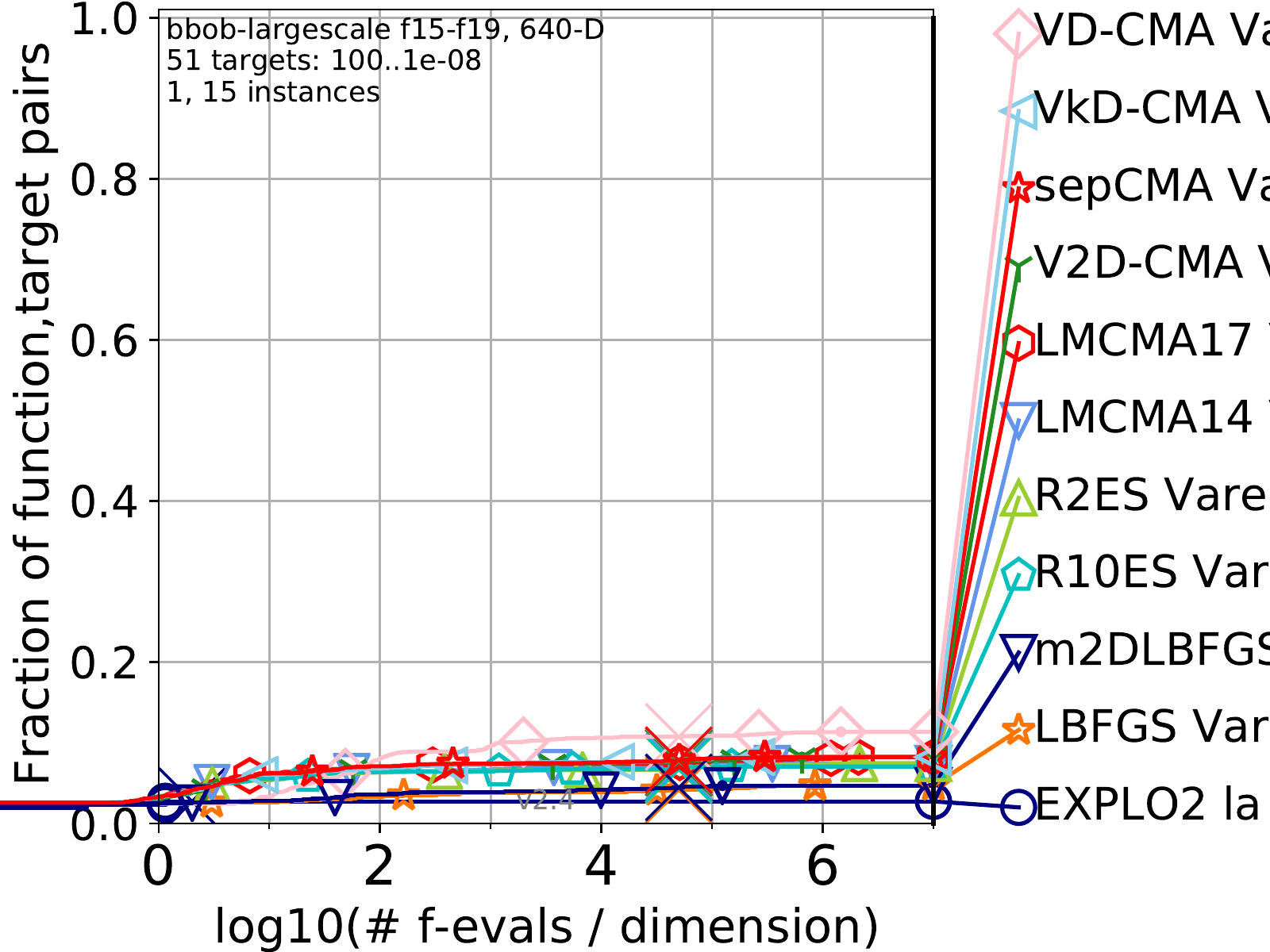}&
\includegraphics[width=0.48\textwidth]{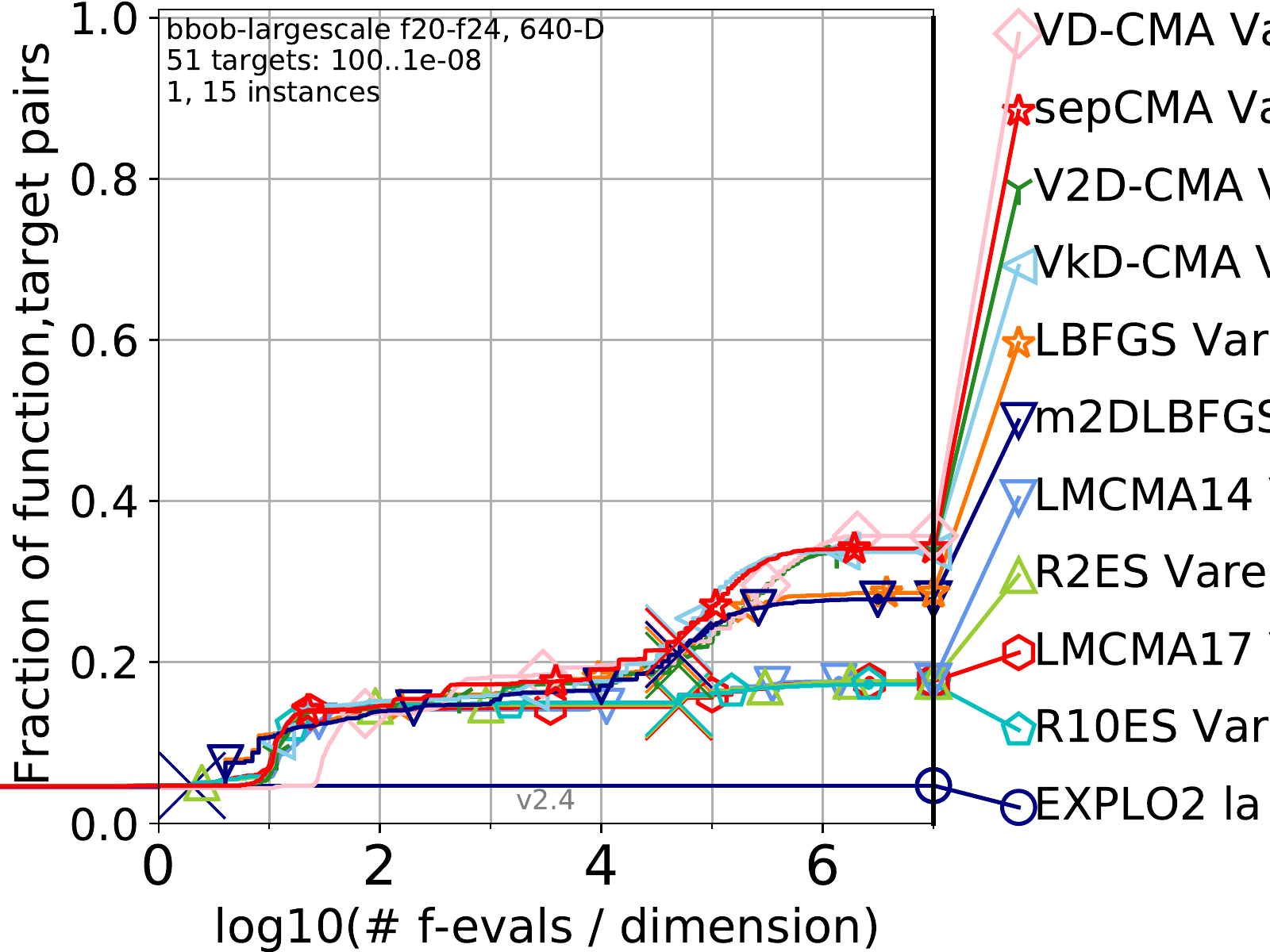}\\
\end{tabular}
\vspace*{-0.2cm}
\caption{
\label{fig:largescale640}
(Left panels) Bootstrapped empirical cumulative distribution of the number of objective function evaluations divided by dimension $D$ for structured multimodal functions $f_{15},\dots , f_{19}$ for $D \in \{160,320,640\}$. The targets are chosen from $10^{[-8 \dots 2]}$ such that the best algorithm from \texttt{BBOB 2009} just not reached them within a given budget of $k \times D$, with 31 different values of $k$ chosen equidistant in logscale within the interval $\{0.5,\dots,50\}$. As reference algorithm, the best algorithm from \texttt{BBOB 2009} is shown as light thick line with diamond markers. {\bf Crosses ($\times$) indicate where experimental data ends and bootstrapping begins; algorithms are not comparable after this point.} \texttt{EXPLO2} used default options except for $n_\parallel = 32$. (Right panels) As in the left panels, but for weakly structured multimodal functions $f_{20},\dots , f_{24}$.
}
\end{figure*}

\clearpage

\section{\label{sec:mixint}Result from the mixed-integer \texttt{BBOB} suite}

\begin{figure*}
\centering
\includegraphics[width=0.48\textwidth]{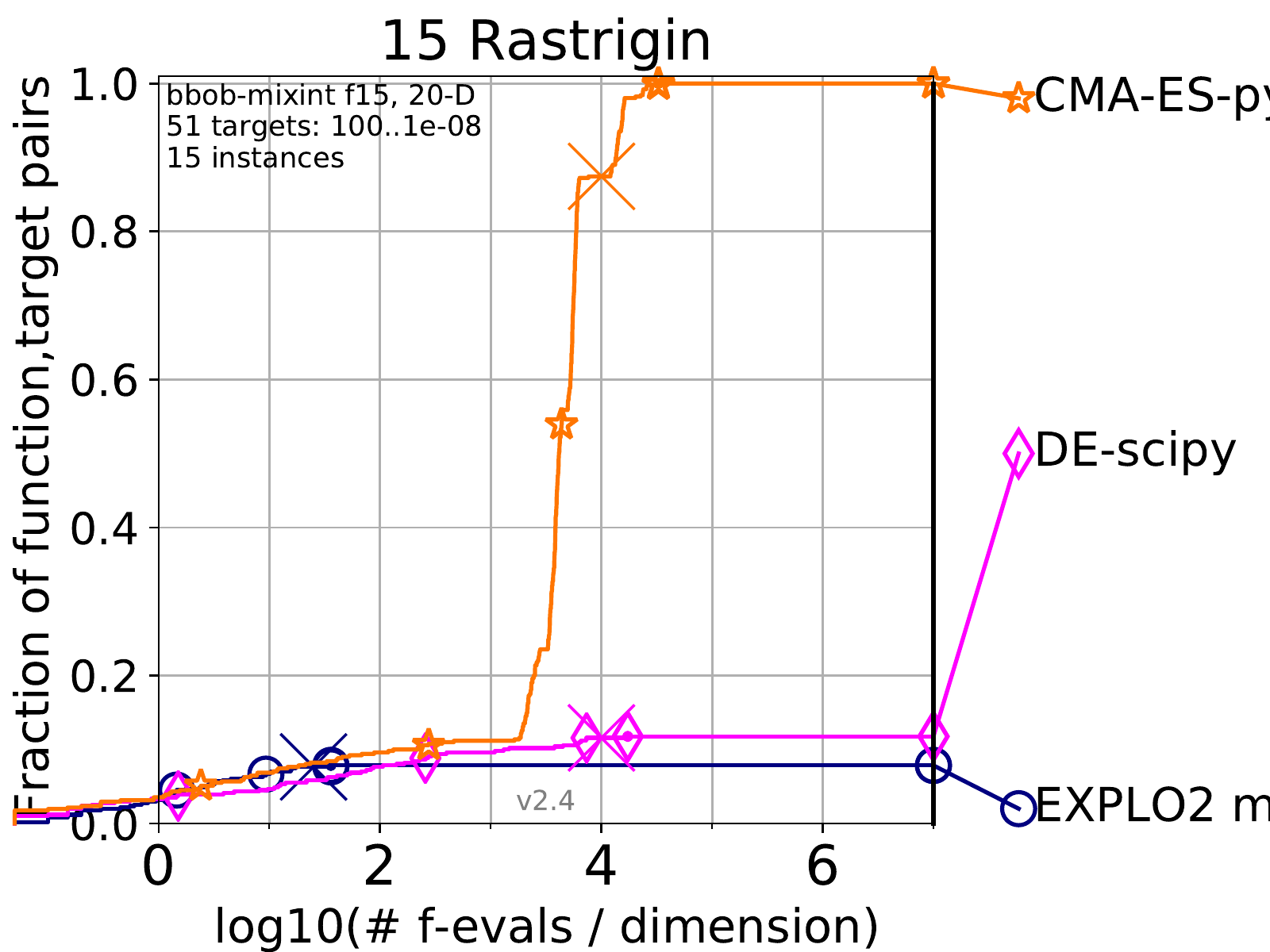}
\vspace*{-0.2cm}
\caption{
Comparison of differential evolution, \texttt{CMA-ES}, and \texttt{EXPLO2} on the mixed-integer version of the Rastrigin function $f_{15}$ for $D = 20$. {\bf Crosses ($\times$) indicate where experimental data ends and bootstrapping begins; algorithms are not comparable after this point.} \texttt{EXPLO2} used default options except for $n_\parallel = 32$.}
\end{figure*}

\clearpage

\section{\label{sec:source}Source code for \texttt{EXPLO2}}

NB. \texttt{parfor} loops are commented out in this code and replaced with \texttt{for} loops. This should be changed if not benchmarking.

{\fontsize{5}{6}
\begin{verbatim}
function [x,y] = explo2(f,lb,ub,N,varargin)

% Optimizer based on a principled tradeoff between exploration (in the
% guise of optimal differential magnitude as diversity proxy) and
% exploitation (in the guise of a convenient radial basis function
% interpolation that is a proxy for the true objective). 
%
% This function requires the optimization toolbox and optionally the
% parallel computing toolbox. (However, it should be relatively
% straightforward to port everything to Octave if needed.)
%
% This approach is only appropriate for functions whose evaluation is
% expensive: as such, it seeks to compete with, e.g., metaheuristics,
% deterministic algorithms such as DIRECT, MCS, or ADC; and other
% surrogate-assisted techniques. The key scaling desiderata are low
% function evaluation budget and high dimension.
%
% By default, execution is serial; this and other options can be set in a
% final input (see below) for which defaults are provided.
%
% Inputs:
%     f,    a handle to a scalar function with column input/output. It is
%           assumed that f takes arguments of the same size as lb/ub
%     lb,   a vector of lower bounds on the domain of f (must be finite)
%     ub,   a vector of upper bounds on the domain of f (must be finite)
%     N,	the number of function evaluations
%     (optional),  options structure with fields including
%         n_parallel,	number of parallel workers
%         n_sample,     number of points for downsampling
%         n_explore,	number of points for computing exploreRange
%         n_tries,      number of tries for optimizing surrogate
%         lambda,       regularization parameter for explore-exploit trade
%         init,         initial evaluation point selection strategy
%     See the default options for detailed explanations. The init option
%     must be one of 'uniform', 'near_corners', or 'corners'.
%
% Outputs: 
%     x,	evaluation points (array of size [dim,N])
%     y,	corresponding function values
%
% Examples:
%     Rastrigin function
%         f = @(x) 10*numel(x)+x(:)'*x(:)-10*sum(cos(2*pi*x(:))); 
%         dim = 2;
%         lb = -5.12*ones(dim,1);
%         ub = 5.12*ones(dim,1);
%     Rosenbrock function
%         f = @(x) sum(100*(x(2:end)-x(1:end-1).^2).^2+(1-x(1:end-1)).^2);
%         dim = 2;
%         lb = -1*ones(dim,1);
%         ub = 3*ones(dim,1);
%     Himmelblau function (dim = 2 only)
%         f = @(x) (x(1)^2+x(2)-11)^2+(x(1)+x(2)^2-7)^2;
%         lb = -5*ones(2,1);
%         ub = 5*ones(2,1);
% NB. Using an anonymous function handle as in these examples causes memory
% to get dragged along during broadcasting. In practice (vs testing),
% however, a function handle should not be anonymous, though likely it
% wouldn't be anyway for this optimization technique to make sense. See
% https://www.mathworks.com/matlabcentral/answers/339728
%
% AKA eeOptimize.m
%
% Last modified 20210727 by Steve Huntsman
%
% Copyright (c) 2021, 2022, Systems & Technology Research. All rights reserved.


%% %% %% %% %% %% HERE FOLLOW ~130 LINES OF PRELIMINARIES %% %% %% %% %% %%

%% Preliminary checks (except for options, which are treated below)
narginchk(4,5); % at most one extra argument, which must be options
if ~isa(f,'function_handle'), error('f must be a function handle'); end
% Bounds
if ~ismatrix(lb), error('lower bound not in matrix form'); end
if ~ismatrix(ub), error('upper bound not in matrix form'); end
if any(size(lb)~=size(ub)), error('lower/upper bound size mismatch'); end
if any(~isreal(lb(:))), error('lower bound not real'); end
if any(~isreal(ub(:))), error('upper bound not real'); end
if any(~isfinite(lb(:))), error('lower bound not finite'); end
if any(~isfinite(ub(:))), error('upper bound not finite'); end
if any(lb>ub), error('lower bound must be less than upper bound'); end
lb = lb(:);
ub = ub(:);
dim = numel(lb);
% Number of function evaluations
if ~isscalar(N), error('N must be scalar'); end
if ~isreal(N), error('N must be real'); end
if ~isfinite(N), error('N must be finite'); end
if N <= dim, error('N must be > dim'); end
if N ~= round(N), error('N must be integral'); end

%% Default options
% Number of parallel workers, set to 1 by default to avoid parfor
defaultOptions.n_parallel = 1;
% Number of points for downsampling. This is necessary to have constant
% runtime per loop iteration. With this in mind, we pick points with
% greatest interpolation error and least values: see main loop.
defaultOptions.n_sample = 100;
% Number of points for computing exploreRange. This is necessary because
% the simple far-field asymptotic behavior does not yield an adequate
% approximation. If n_explore >= 2^dim, then only the corners of the
% bounding box are used below.
defaultOptions.n_explore = 100;
% Number of trials for optimizing surrogate (starting from uniformly random
% location). Quoting [*]:
%     "In problems where the cost of fitting and evaluating a surrogate is
%     much less than the cost of evaluating the true objective function or
%     surrogates is a viable approach to find multiple optima."
% [*] Villanueva, Diane, et al. "Locating multiple candidate designs with
% surrogate-based optimization." WCSMO (2013).
defaultOptions.n_tries = 3;
% Regularization parameter for explore-exploit tradeoff. Alternatives might
% be any sigmoidal function decreasing from 1 at 0 to 0 at 1, e.g., 
%     bf = bumpfun(0,1,0); lambda = arrayfun(bf,linspace(0,1,N));
% The bumpfun function is available from Steve Huntsman.
% 
% Here, we use the following decidedly non-sigmoidal function:
% exp(-(log(2)/log(delta))*log(1-t)) takes the value 1/2 at 1-delta. Taking
% delta = 1/2 yields an affine function, which we have used. Assuming that
% N scales like C*dim, taking delta = 1/dim is also natural (and commented
% out).
delta = 1/2;    % corresponds to defaultOptions.lambda = 1-t
% if N/dim > 1
%     delta = 1/dim;
% else
%     delta = 1/sqrt(N);
% end
defaultOptions.lambda = exp(-(log(2)/log(delta))*log(linspace(1,0,N)));
% Initial evaluation point selection strategy
defaultOptions.init = 'uniform';

%% Set options, using defaults except as specified via input
availableOptions = fieldnames(defaultOptions);  % ignore anything else
if nargin == 5
    options = varargin{end};
    if ~isstruct(options), error('options must be struct'); end
    if numel(options) > 1
        error('options must have a single entry (multiple fields are OK)'); 
    end
    opts = fieldnames(options);
    % Ignore any fields not in availableOptions; check all options below
    ind = ismember(availableOptions,opts);
    for j = 1:numel(availableOptions)
        opt = availableOptions{j};
        if ind(j)
            % Option is specified via input, so do NOT use the default
            eval([opt,' = options.',opt,';']);  % tolerable use of eval
        else
            % Option is not specified via input, so use the default 
            eval([opt,' = defaultOptions.',opt,';']);   % tolerable use of eval
        end
    end
else
    % Use default options
    for j = 1:numel(availableOptions)
        opt = availableOptions{j};
        eval([opt,' = defaultOptions.',opt,';']);   % tolerable use of eval
    end
end

%% Preliminary checks for options--mostly boilerplate
% n_parallel
if ~isscalar(n_parallel), error('n_parallel must be scalar'); end
if ~isreal(n_parallel), error('n_parallel must be real'); end
if ~isfinite(n_parallel), error('n_parallel must be finite'); end
if n_parallel > 128, error('n_parallel must be <= 128'); end  % ad hoc
if n_parallel ~= round(n_parallel), error('n_parallel must be integral'); end
% n_sample
if ~isscalar(n_sample), error('n_sample must be scalar'); end
if ~isreal(n_sample), error('n_sample must be real'); end
if ~isfinite(n_sample), error('n_sample must be finite'); end
if n_sample < 16, error('n_sample must be >= 16'); end  % ad hoc
if n_sample ~= round(n_sample), error('n_sample must be integral'); end
% n_explore
if ~isscalar(n_explore), error('n_explore must be scalar'); end
if ~isreal(n_explore), error('n_explore must be real'); end
if ~isfinite(n_explore), error('n_explore must be finite'); end
if n_explore < 16, error('n_explore must be >= 16'); end  % ad hoc
if n_explore ~= round(n_explore), error('n_explore must be integral'); end
% n_tries
if ~isscalar(n_tries), error('n_tries must be scalar'); end
if ~isreal(n_tries), error('n_tries must be real'); end
if ~isfinite(n_tries), error('n_tries must be finite'); end
if n_tries < 0, error('n_tries must be >= 0'); end
if n_tries ~= round(n_tries)
    error('n_tries must be integral'); 
end
% lambda
% NB. MATLAB's static analyzer needlessly worries about this being defined
if ~ismatrix(lambda), error('lambda not matrix'); end
if numel(lambda) ~= N
    warning('numel(lambda) ~= N; truncating or padding with zeros');
    if numel(lambda) < N
        lambda = [lambda(:)',zeros(1,N-numel(lambda))];
    else
        lambda = lambda(1:N);
    end
end
if ~all(isfinite(lambda)&isreal(lambda))
    error('lambda must be real and finite'); 
end
if any((lambda<0)|(lambda>1))
    warning('lambda not between 0 and 1');
end
% initialization
initStrategies = {'uniform','near_corners','corners'};
if ~ischar(init), error('init must be a char array'); end
ind = ismember(initStrategies,init);
if nnz(ind) ~= 1
    error('init must be ''uniform'', ''near_corners'', or ''corners''');
end

%% %% %% %% %% %% ACTUAL SUBSTANTIVE CODE BEGINS HERE %% %% %% %% %% %%

%% Initial locations for function evaluations
% There are three available strategies:
%     'uniform',        uniform random sampling;
%     'near_corners',	uniform random sampling in small boxes inscribed in 
%                       bounding box corners; 
%     'corners',        deterministic sampling at bounding box corners
% 'uniform' has all the usual advantages and disadvantages, whereas
% 'corners' has the advantage of determinism and the feature of quickly
% putting pressure on the interior of the bounding box, i.e., it
% effectively foreshortens the exploration phase. 'near_corners' has the
% advantage of avoiding degeneracies and artifacts that 'corners' can
% cause, particularly on test problems with symmetry. Both 'near_corners'
% and 'corners' take points near/at lb and corners at locations from it
% that are parallel to the coordinate axes.
N0 = dim+1; % we need this many initial points for general position stuff
if strcmp(init,'uniform')
    x = diag(ub-lb)*rand(dim,N0)+diag(lb)*ones(dim,N0);
    % This assertion fails with measure zero--pretty safe bet...
    ux = unique(x','rows')'; 
    assert(size(ux,2)==size(x,2),'duplicate inputs!');	% ...check anyway
elseif strcmp(init,'near_corners')
    extent = ub-lb;
    margin = .1;
    x = [lb,lb*ones(1,dim)+diag(extent)*(1-margin)]...
        +margin*diag(extent)*rand(dim,dim+1);
elseif strcmp(init,'corners')   % deterministic
    extent = ub-lb;
    x = [lb,lb*ones(1,dim)+diag(extent)];
else
    error('bad init option: this was supposedly already checked!');
end

%% Finish initializing x and y
x = [x,nan(dim,N-N0)];
y = nan(1,N);
if n_parallel > 1
    for n = 1:N0    % use parfor in practice
        y(n) = f(x(:,n));   % broadcast overhead OK in practice
    end
else
    for n = 1:N0
        y(n) = f(x(:,n));
    end
end

%% Initialize t
% Using t >= [positive cutoff] would be problematic because of clustering
% that drives the cutoff to large values. So we work in the limit t -> 0.
% However, using the analytical results of the limit typically gives
% fmincon problems, so we avoid that below.
t = sqrt(eps);

%% Initialize relative error 
relErr = nan(1,N);

%% For estimating range of explore function in main loop
lbub = [lb,ub];
if n_explore >= 2^dim
    bits = double(dec2bin((1:2^dim)-1,dim))'-48;    % ASCII hack
    indCorners = (1:dim)'*ones(1,2^dim)+dim*bits;
end

%% Main loop
n = N0+1;
while n < N
    %%
    disp(['optimizer: function evaluation ',num2str(n),'/',num2str(N)]);
    
    %% Downsample 
    % We pick points with greatest interpolation error and least values.
    % Provided that lambda appropriately decreases to move from exploration
    % to exploitation, incorporating it here tends to explore boundaries
    % more at first; later, it avoids "superfluous" minima
    n_err = round(n_sample*min(1,lambda(n)/lambda(1)));
    if n > n_sample
        [~,ind_err] = sort(relErr,'descend');	% N0 NaNs at first is OK
        topErrors = ind_err(1:n_err);
        % Get indices for least values of y that aren't in topErrors
        [~,ind_y] = sort(y);
        leastValues = setdiff(ind_y,topErrors,'stable');
        ind_sample = [topErrors,leastValues(1:(n_sample-n_err))];
    else
        ind_sample = 1:(n-1);
    end
    x_sample = x(:,ind_sample);
    y_sample = y(ind_sample);
                
    %% Update d, Z, w
    d = squareform(pdist(x_sample'));
    Z = exp(-t*d);
    w = Z\ones(size(Z,1),1);
    
    %% Exploit using Laplacian RBF interpolation
    interpDist = @(xn) vecnorm(x_sample-xn*ones(1,size(x_sample,2)));
    zeta = @(xn) exp(-t*interpDist(xn))';
    interpCoeff = y_sample/Z;
    exploit = @(xn) interpCoeff*zeta(xn);   % surrogate (vs proxy below)
    exploitRange = max(y_sample)-min(y_sample);
    exploitRange = exploitRange+(exploitRange==0);  % for safety    
    
    %% Sequentially select n_parallel points for parallel evaluation later
    for j = 1:n_parallel

        %% Explore using change in magnitude
        explore = @(xn) ((1-zeta(xn)'*w)^2)/(1-zeta(xn)'/Z*zeta(xn));

        %% Estimate range of explore by sampling corners
        warning off;    % explore may be inaccurate; don't remind ourselves
        if n_explore < 2^dim
            % Sample because there are too many corners. Don't worry about
            % possible duplicates--these are unlikely to matter much
            exploreCorners = nan(1,n_explore);
            for k = 1:n_explore
                exploreCorners(k) = explore(lbub((randi(2,dim,1)-1)*dim+(1:dim)'));
            end
        else
            % No point in sampling: just evaluate at all corners
            exploreCorners = nan(1,2^dim);
            for k = 1:2^dim
                exploreCorners(k) = explore(lbub(indCorners(:,k)));
            end
        end
        exploreRange = max(exploreCorners);
        exploreRange = exploreRange+(exploreRange==0);  % for safety
        warning on;

        %% Well-behaved surrogate that incorporates exploration
        % Normalize explore and exploit to be approximately in unit
        % interval. The minus sign is because we want to minimize both
        % terms.
        surrogate = @(xn) exploit(xn)/exploitRange...
            -lambda(n)*explore(xn)/exploreRange;

        %% New evaluation point is local optimum of proxy objective
        optopt = optimoptions(@fmincon,'Display','off');   % no stdout
        try
            warning off;    % can be inaccurate; don't remind ourselves
            surrogateOptimum = Inf;
            % Do not bother with parfor on the surrogate
            for k = 1:n_tries
                x0 = diag(ub-lb)*rand(dim,1)+diag(lb)*ones(dim,1);  % uniform
                [x_sur,y_sur] = ...
                    fmincon(surrogate,x0,[],[],[],[],lb,ub,[],optopt);
                if y_sur < surrogateOptimum
                    xn = x_sur;
                    surrogateOptimum = y_sur;
                else
                    break;  % stop trying as soon as there's no improvement
                end
            end
            warning on;
        catch
            % We don't expect this to happen...
            disp('fmincon failed; evaluating at uniformly random point');
            pause(.01);
            x0 = diag(ub-lb)*rand(dim,1)+diag(lb)*ones(dim,1);  % uniform
            xn = x0;
        end
        
        %% Update x_sample
        x_sample = [x_sample,xn];
        
        %% Update d, Z, w
        d = squareform(pdist(x_sample'));
        Z = exp(-t*d);
        w = Z\ones(size(Z,1),1);
        
        %% Update function handles that feed into explore
        interpDist = @(xn) vecnorm(x_sample-xn*ones(1,size(x_sample,2)));
        zeta = @(xn) exp(-t*interpDist(xn))';
        
    end
       
    %% Memorialize and evaluate new points (in parallel if n_parallel > 1)
    % Unfortunately, running parfor inside a loop is slow, since the
    % parallel pool must be reinitialized on each iteration. There does not
    % seem to be any way around this, though.
    x(:,n-1+(1:n_parallel)) = x_sample(:,(end-n_parallel+1):end);
    temp_y = nan(1,n_parallel);	% this placeholder is needed for parfor
    if n_parallel > 1
        for j = 1:n_parallel	% use parfor in practice
            temp_y(j) = f(x(:,n-1+j));	% Broadcast overhead OK in practice
        end
    else
        temp_y = f(x(:,n));
    end
    y(n-1+(1:n_parallel)) = temp_y;

    %% Get relative errors of interpolation for downsampling 
    relErr = nan(1,n-1+n_parallel);
    for j = 1:numel(relErr)
        relErr(j) = abs(1-exploit(x(:,j))/y(j));
    end
    
    %% Update n
    n = n+n_parallel;

end
\end{verbatim}
}

\clearpage

\section{\label{sec:sourceBench}Source code for benchmarking}

\subsection{\label{sec:sourceBenchFunction}EXPLO2 version for COCO interface}

{\fontsize{5}{6}
\begin{verbatim}
function [x,y] = coco_explo2 (problem, ...
    lower_bounds, upper_bounds, num_integer_vars, num_constraints, budget)

% cf. my_optimizer.m

f = @(x) cocoEvaluateFunction(problem,x);
lb = lower_bounds(:);
ub = upper_bounds(:);
N = budget;
%% EXPLO2 options (don't comment all of these out)
options.n_parallel = 32;             % default: 1
% options.n_sample = 100;             % default: 100
% options.n_explore = 100;            % default: 100
% options.n_tries = 3;                % default: 3
% options.lambda = linspace(1,0,N);   % default: a bit complicated

% dim = cocoProblemGetDimension(problem);
% B = N; 
% lam = []; 
% while B > dim
%     B = floor(B/2); 
%     lam = [lam,linspace(0,1,B)]; 
% end
% lam = [lam,linspace(0,1,N-numel(lam))]; 
% lam = fliplr(lam);
% options.lambda = lam;

% NN = double(N);
% lam = -log(linspace(1/NN,1,NN));
% options.lambda = lam;

% dim = cocoProblemGetDimension(problem);
% B = double(N); 
% lam = []; 
% while B > dim
%     B = floor(B/2); 
%     lam = [lam,-log(linspace(1/B,1,B))]; 
% end
% lam = [lam,-log(linspace(1/(double(N)-numel(lam)),1,double(N)-numel(lam)))]; 
% options.lambda = double(lam);

dim = cocoProblemGetDimension(problem);
options.lambda = [ones(1,N-dim),linspace(1,0,dim)];

% options.init = 'uniform';           % default: uniform
%%
[x,y] = explo2(f,lb,ub,N,options);
\end{verbatim}
}

\subsection{\label{sec:sourceBenchScript}Benchmarking script}

{\fontsize{5}{6}
\begin{verbatim}
% COCO benchmarking of EXPLO2
%
% [From COCO documentation: This example experiment allows also for easy
% implementation of independent restarts by simply increasing
% NUM_OF_INDEPENDENT_RESTARTS. To make this effective, the algorithm should
% have at least one more stopping criterion than just a maximal budget.]
%
more off; % to get immediate output in Octave

%%%%%%%%%%%%%%%%%%%%%%%%%
% Experiment Parameters %
%%%%%%%%%%%%%%%%%%%%%%%%%
BUDGET_MULTIPLIER = 25; % algorithm runs for BUDGET_MULTIPLIER*dimension funevals
NUM_OF_INDEPENDENT_RESTARTS = 1e9; % max. number of independent algorithm
% restarts; if >0, make sure that the
% algorithm is not always doing the same thing
% in each run (which is typically trivial for
% randomized algorithms)

%%%%%%%%%%%%%%%%%%%%%%%%%
% Prepare Experiment    %
%%%%%%%%%%%%%%%%%%%%%%%%%

% choose a test suite and a matching logger, for
% example one of the following:
%
% bbob               24 unconstrained noiseless single-objective functions
% bbob-biobj         55 unconstrained noiseless bi-objective functions
% [bbob-biobj-ext     92 unconstrained noiseless bi-objective functions]
% bbob-largescale    24 unconstrained noiseless single-objective functions in large dimensions
% [bbob-constrained* 48 constrained noiseless single-objective functions]
% bbob-mixint        24 unconstrained noiseless single-objective functions with mixed-integer variables
% bbob-biobj-mixint  92 unconstrained noiseless bi-objective functions with mixed-integer variables
%
% Suites with a star are partly implemented but not yet fully supported.
%
suite_name = 'bbob';
observer_name = 'bbob';
observer_options = strcat('result_folder: EXPLO2_on_', ...
    suite_name, ...
    [' algorithm_name: EXPLO2 '...
    ' algorithm_info: EXPLO2 ']);

% initialize suite and observer with default options,
% to change the default, see 
% http://numbbo.github.io/coco-doc/C/#suite-parameters and
% http://numbbo.github.io/coco-doc/C/#observer-parameters
% for details.
% suite = cocoSuite(suite_name, '', '');
% observer = cocoObserver(observer_name, observer_options);
    suite_options = ['dimensions: 20 function_indices: 15-19'];
    suite = cocoSuite(suite_name, '', suite_options);
    observer = cocoObserver(observer_name, observer_options);

    % set log level depending on how much output you want to see, e.g. 'warning'
    % for fewer output than 'info'.
    cocoSetLogLevel('info');

    % keep track of problem dimension and #funevals to print timing information:
    printeddim = 1;
    doneEvalsAfter = 0; % summed function evaluations for a single problem
    doneEvalsTotal = 0; % summed function evaluations per dimension
    printstring = '\n'; % store strings to be printed until experiment is finished

    %%%%%%%%%%%%%%%%%%%%%%%%%
    % Run Experiment        %
    %%%%%%%%%%%%%%%%%%%%%%%%%
    while true
        % get next problem and dimension from the chosen suite:
        problem = cocoSuiteGetNextProblem(suite, observer);
        if ~cocoProblemIsValid(problem)
            break;
        end
        dimension = cocoProblemGetDimension(problem);

        % printing
        if printeddim < dimension
          if printeddim > 1
            elapsedtime = toc;
            printstring = strcat(printstring, ...
                sprintf('   COCO TIMING: dimension %d finished in %e seconds/evaluation\n', ...
                printeddim, elapsedtime/double(doneEvalsTotal)));
            tic;
          end
          doneEvalsTotal = 0;
          printeddim = dimension;
          tic;
        end  

        %
        lower_bounds = cocoProblemGetSmallestValuesOfInterest(problem);
        upper_bounds = cocoProblemGetLargestValuesOfInterest(problem);
        num_integer_vars = cocoProblemGetNumberOfIntegerVariables(problem);
        if num_integer_vars, warning('integer vars'); end
        num_constraints = cocoProblemGetNumberOfConstraints(problem);
        if num_constraints, warning('constraints'); end
        dimension = cocoProblemGetDimension(problem);
        budget = BUDGET_MULTIPLIER*dimension;

        rng('default'); % for reproducibility at every stage
        [x,y] = coco_explo2(problem, lower_bounds, upper_bounds, ...
            num_integer_vars, num_constraints, budget);

    doneEvalsTotal = budget;
    end

    elapsedtime = toc;
    printstring = strcat(printstring, ...
        sprintf('   COCO TIMING: dimension %d finished in %e seconds/evaluation\n', ...
        printeddim, elapsedtime/double(doneEvalsTotal)));
    fprintf(printstring);

    cocoObserverFree(observer);
    cocoSuiteFree(suite);
\end{verbatim}
}

\end{document}